\numberwithin{equation}{section}
\newcommand{\nn}{\nonumber}
\newcommand{\ms}{\medskip}
\newcommand{\R}{\mathbb{R}}
\renewcommand{\H}{\mathcal H}
\newcommand{\bN}{\mathbb{N}}
\renewcommand{\O}{{\mathcal  O}}
\renewcommand{\d}{\partial}
\newcommand{\dist}{\,\mathrm{dist}}
\newcommand{\sm}{\setminus}
\newcommand{\supp}{\mathrm{supp}}
\newcommand{\diam}{\mathrm{diam}}
\newcommand{\wt}{\widetilde}
\newcommand{\cF}{{\mathcal  F}}
\newcommand{\1}{{\mathds 1}}
\newcommand{\dr}{\partial}
\DeclareMathOperator{\Jac}{Jac}
\DeclareMathOperator{\diver}{div}
\DeclareMathOperator{\sgn}{sgn}
\newcommand{\Ak}{\mathfrak A}
\newcommand{\Rt}{\R^{n-d} \setminus \{0\}}
\newcommand{\Ab}{\mathbb A}
\newcommand{\bb}{\mathfrak b}
\theoremstyle{plain}
\newtheorem{theorem}[equation]{Theorem}
\newtheorem{lemma}[equation]{Lemma}
\newtheorem{corollary}[equation]{Corollary}
\newtheorem{proposition}[equation]{Proposition}
\newtheorem{definition}[equation]{Definition}
\theoremstyle{definition}
\theoremstyle{remark}
\newtheorem{remark}[equation]{Remark}
\newcommand{\re}{\mathbb{R}}
\newcommand{\rn}{{\mathbb{R}^n}}
\newcommand{\rd}{{\mathbb{R}^d}}
\newcommand{\dd}{\mathbb{D}}
\newcommand{\RR}{{\mathbb{R}}}
\newcommand{\NN}{{\mathbb{N}}}
\newcommand{\eps}{\varepsilon}
\newcommand{\vp}{\varphi}
\newcommand{\A}{\mathcal{A}}
\newcommand{\C}{\mathcal{C}}
\newcommand{\po}{{\partial\Omega}}
\newcommand{\D}{\Delta}
\newcommand{\bp}{\noindent {\it Proof}.\,\,}
\newcommand{\ep}{\hfill$\Box$ \vskip 0.08in}
\newcommand{\dint}{\int\!\!\!\int}
\def\div{\mathop{\operatorname{div}}}
\begin{document}

\title{Dahlberg's theorem in higher co-dimension}

\author[David]{Guy David}
\address{Guy David. Univ Paris-Sud, Laboratoire de Math\'ematiques, UMR8628, Orsay, F-91405}
\email{guy.david@math.u-psud.fr}

\author[Feneuil]{Joseph Feneuil}
\address{Joseph Feneuil. School of Mathematics, University of Minnesota, Minneapolis, MN 55455, USA}
\email{jfeneuil@umn.edu}

\author[Mayboroda]{Svitlana Mayboroda}
\address{Svitlana Mayboroda. School of Mathematics, University of Minnesota, Minneapolis, MN 55455, USA}
\email{svitlana@math.umn.edu}
\thanks{Mayboroda is supported in part by the Alfred P. Sloan Fellowship, the NSF INSPIRE Award DMS 1344235, NSF CAREER Award DMS 1220089. David is supported in part by the ANR, programme blanc GEOMETRYA ANR-12-BS01-0014.
Feneuil is partially supported by the ANR project ``HAB'' no. ANR-12-BS01-0013.
This work was supported by a public grant as part of the Investissement d'avenir project, 
reference ANR-11-LABX-0056-LMH, LabEx LMH. 
Part of this work was completed during Mayboroda's visit to Universit\'e Paris-Sud, Laboratoire de Math\'ematiques, Orsay, and Ecole Polytechnique, PMC. We thank the corresponding Departments, together with the Fondation Jacques Hadamard, the \'Ecole des Mines, and the Mathematical Sciences Research Institute (NSF grant DMS 1440140) for support and hospitality.
}

\maketitle

\begin{abstract} In 1977 the celebrated theorem of B.\, Dahlberg established that the harmonic measure is absolutely continuous with respect to the Hausdorff measure on a Lipschitz graph of dimension $n-1$ in $\RR^n$, 
and later this result has been
extended to more general non-tangentially accessible domains and beyond. 

In the present paper we prove the first analogue of Dahlberg's theorem in higher co-dimension, on a Lipschitz graph $\Gamma$ of dimension $d$ in $\RR^n$, $d<n-1$, with a small Lipschitz constant. We construct a linear degenerate elliptic operator $L$ such that the corresponding harmonic measure $\omega_L$ is absolutely continuous with respect to the Hausdorff measure on $\Gamma$. More generally, we provide sufficient conditions on the matrix of coefficients of $L$ which guarantee the mutual absolute continuity of $\omega_L$ and the Hausdorff measure.

\ms

\noindent
{\sc R\'esum\'e.} 
Dans son c\'el\`ebre th\'eor\`eme de 1977, B. Dahlberg a prouv\'e que 
pour les domaines de $\R^n$ born\'es par un graphe lipschitzien de dimension $n-1$,
la mesure harmonique est absolument continue par rapport \`a la mesure de surface,
r\'esultat qui a ensuite \'et\'e \'etendu aux domaines avec acc\`es non-tangentiel, 
et au del\`a.

Dans ce papier on d\'emontre le premier analogue de ce th\'eor\`eme
pour le compl\'ementaire d'un graphe lipschitzien $\Gamma$ de dimension $d < n-1$ 
avec une petite constante de Lipschitz.
On construit un op\'erateur lin\'eaire elliptique d\'eg\'en\'er\'e $L$ dont la mesure 
harmonique associ\'ee $\omega_L$ est absolument continue par rapport \`a la 
mesure de Hausdorff $\H^d$ sur $\Gamma$. Plus g\'en\'eralement, on donne des conditions
suffisantes sur la matrice des coefficients de $L$ pour que $\omega_L$ et 
$\H^d_{\vert \Gamma}$ soient mutuellement absolument continues.

\end{abstract}

\ms\noindent{\bf Key words/Mots cl\'es.}
 boundary with co-dimension higher than 1, degenerate elliptic operators, Dahlberg's theorem, harmonic measure in higher codimension.

\ms\noindent
AMS classification:  42B37, 31B25, 31B25, 35J25, 35J70.

\tableofcontents

\section{Introduction}
\label{S1}

\subsection{History and motivation}

Dimension and structure of harmonic measure have attracted a lot of attention in the past 50 years, with a splash of remarkable new developments on uniformly rectifiable sets most recently.

Following the first results of Carleson \cite{C}, in 1985 the fundamental theorem of Makarov \cite{Mak1}, \cite{Mak2} established that harmonic measure on any continuum on the plane has dimension exactly  $1$. More generally,  for a domain $\Omega$ on the Riemann sphere whose complement has positive logarithmic capacity, the harmonic measure in $\Omega$ is supported in a subset of $\po$ whose 
Hausdorff dimension is at most 1, due to the result of Jones and Wolff \cite{JW}. 
In particular, if $d \in (1, 2)$, $0 < \H^d(E) < \infty$, 
then $\omega$ is always singular with respect to $\H^d|_{E}$.
In space, when the ambient dimension $n$ is greater than or equal to $3$, the situation is more complicated and less understood: on the one hand, Bourgain \cite{Bo} proved that the dimension of harmonic measure always drops: $\dim_\H \omega < n$. On the other hand, even for connected $E=\partial\Omega$, contrary to the planar case, $\dim_\H \omega$ can be strictly bigger than $n-1$, due to a counterexample of Wolff \cite{W} which is nowadays known as the Wolff's snowflake.

Restricting the attention to integer dimensions, one is now bound to consider $d=n-1$ and 
ask on which sets with $0 < \H^{n-1}(E) < \infty$ the harmonic measure is absolutely continuous 
with respect to the Lebesgue measure. This is a very delicate issue, where dimension, 
regularity and topology all play an intricate role, and only the past few years and some outstanding developments in harmonic analysis on uniformly rectifiable sets have brought clarity. 
In short, the emerging philosophy is that the rectifiability of the boundary is necessary for the 
absolute continuity of $\omega$ with respect to $\H^{n-1}$, and that rectifiability,
along with suitable connectedness assumptions, is sufficient. 
Omitting for now precise definitions, let us recall the main results in this regard. 
The 1916 theorem of F.\& M. Riesz has established the absolute continuity of 
the harmonic measure for a simply connected domain in the complex plane 
with a rectifiable boundary \cite{RR}. The quantifiable analogue of this result (the
$A^\infty$ property of harmonic measure) was obtained by Lavrent'ev in 1936 \cite{Lv} 
and the local version, pertaining to subsets of a rectifiable boundary, was proved by 
Bishop and Jones in 1990 \cite{BJ}. In the latter work the authors also showed that some 
connectedness is necessary for the absolute continuity of $\omega$ with respect to $\H^{1}$,
for there exists a planar set with a rectifiable boundary for which the harmonic measure is singular with respect to the Lebesgue measure. 

In higher dimensions, the first breakthrough was the celebrated theorem of Dahlberg which established the absolute continuity of $\omega$ with respect to $\H^{n-1}$ on Lipschitz graphs \cite{Da}. 
It was later extended to non-tangentially accessible, NTA, domains in \cite{DJ}, \cite{Se}. 
Roughly speaking, the non-tangential accessibility is an assumption of quantifiable connectedness, which requires the presence of the interior and exterior corkscrew points, as well as Harnack chains. 
Similarly to the lower-dimensional case, counterexamples show that some topological restrictions 
are needed for the absolute continuity of $\omega$ with respect to $\H^{n-1}$ \cite{Wu}, \cite{Z}. 
Much more recently, in \cite{HM1}, \cite{HMU}, \cite{AHMNT}, the authors proved that under 
a (weaker) 1-sided NTA assumption, the uniform rectifiability of the boundary is equivalent to the complete set of NTA conditions and hence, is equivalent to the absolute continuity of 
the harmonic measure with respect to the Lebesgue measure.  Finally, in 2015 the full converse, ``free boundary"   result was obtained and established that rectifiability is necessary for  the absolute continuity of harmonic measure 
with respect to $\H^{n-1}$ in any dimension $n\geq 2$, without any additional topological assumptions \cite{AHM3TV} (see also \cite{HLMN} on Ahlfors regular sets). 
These results and problems have generated a large
amount of activity and have been tightly intertwined with recent achievements in harmonic analysis on uniformly rectifiable sets, pertaining, in particular, to characterizations of uniform rectifiability via boundedness of singular integrals and, more specifically, Riesz transforms \cite{NToV}, \cite{MMV}. 
While avoiding for now precise definitions, we remind the reader that rectifiability basically 
concerns coverings of a set by countable unions of Lipschitz graphs  
and Dahlberg's theorem for harmonic measure on Lipschitz graphs, much as boundedness of the Cauchy integral on Lipschitz curves \cite{CMM}, underpin the entire theory.

{\it{The main goal of the present paper is to establish the first analogue of Dahlberg's theorem on sets of co-dimension higher than 1.}} As announced in \cite{DFM-CRAS}, we construct a linear degenerate elliptic operator $L$ such that the corresponding harmonic measure $\omega_L$ is absolutely continuous with respect to the Hausdorff measure on a $d$-dimensional Lipschitz graph $\Gamma$, $d<n-1$, with a small Lipschitz constant. More generally, we provide 
sufficient conditions on the matrix of coefficients of $L$ which guarantee mutual absolute continuity of $\omega_L$ and the Hausdorff measure.

Turning to details, we assume that $d$ is an integer and $\Gamma$ is the graph 
of a Lipschitz function $\varphi : \R^d \to \R^{n-d}$, with a small Lipschitz constant.
We want to find an analogue of harmonic measure, that will be defined on $\Gamma$ and
associated to a divergence form operator on $\Omega = \R^n \sm \Gamma$. We write
the operator as $L = - \rm{div} \Ak \nabla$, with $\Ak : \Omega \to \mathbb{M}_n(\R)$, 
and put forward the ellipticity condition with a different homogeneity, i.e., we require that for some 
$C_1 \geq 1$, 
\begin{eqnarray} \label{1.2.1} 
&& \dist(X,\Gamma)^{n-d-1} \Ak(X)\xi \cdot \zeta \leq C_1  |\xi| \,|\zeta|
\ \text{ for } X\in \Omega \text{ and } \xi, \zeta \in \R^n, \\[4pt]
\label{1.2.2}
&& \dist(X,\Gamma)^{n-d-1} \Ak(X)\xi \cdot \xi \geq C_1^{-1}  |\xi|^2
\ \text{ for } X\in \Omega \text{ and } \xi \in \R^n.
\end{eqnarray}
Under merely these assumptions, and even when $d < n-1$ is not an integer and $\Gamma$ 
is merely Ahlfors-regular of dimension $d$, we have recently developed a 
fairly complete
elliptic theory and in particular we can define a harmonic measure that satisfies, among 
others, the doubling property and the change of pole property (see \cite{DFMprelim}). 
In general, such a harmonic measure need not be absolutely continuous on $\Gamma$.
Already in codimension 1, we know that the ellipticity and the boundedness of $A$ are
not enough to ensure absolute continuity  (see \cite{CFK}).
In higher codimension, we don't expect that the assumptions \eqref{1.2.1}--\eqref{1.2.2} 
are sufficient either, even when $\Ak(X)$ is a multiple of the identity matrix, but  
we do not have a counter-example yet.
Due to Dahlberg's theorem, it is, however, the case for the Laplacian when $d=n-1$.
The goal of the present paper is to
find {\em one} elliptic operator $L = - \div \Ak \nabla$ for which the harmonic measure is 
absolutely continuous with respect to the Hausdorff measure when $d<n-1$. 
Ironically, it is not the one associated to the matrix of coefficients $\dist(X,\Gamma)^{n-d-1} I$ 
which perhaps would be a natural first guess, at least not in higher dimensions.

We assume that $\Ak(X) = D(X)^{-n+d+1} I$ for $x\in \Omega$, with 
\begin{equation} \label{1.3}
D(X) = \Big\{ \int_\Gamma |X-y|^{-d-\alpha} d\H^d(y) \Big\}^{-1/\alpha}
\end{equation}
for some constant $\alpha > 0$. 
It is easy to see that 
$D(X)$ is equivalent to $\dist(X,\Gamma)$
(and this would even stay true when $\Gamma$ is an Ahlfors regular set).
When $d=1$ we can also take $\Ak(X) = \dist(X,\Gamma)^{-n+d+1} I$, but
when $d \geq 2$, $\dist(X,\Gamma)$ does not appear to be smooth enough to make our proofs work,   
which is why we use \eqref{1.3} instead. 

With these assumptions we will prove that the harmonic measure described above 
is absolutely continuous with respect to $\H^d_{\vert \Gamma}$, with a density which is 
a Muckenhoupt $A^\infty$ weight. 

This seems to be the first result of this nature in higher co-dimensional sets. 
Some aspects of pre-requisite elliptic theory developed in \cite{DFMprelim} 
existed before \cite{FKS}, \cite{FJK}, and some have even been recently proved 
for non-linear operators, most notably the $p$-Laplacian, on Reifenberg flat sets in \cite{LN}. One could conjecture that the $p$-harmonic measure for a suitable $p$ is also absolutely continuous with respect to the Hausdorff measure but at the moment no result to this effect could be achieved.

To better imagine the properties of the operator $L = - \div \Ak \nabla$ above and associated advantages and challenges, one should notice that it is non-local (the solution in a given ball depends on far away features of the domain, because the distance $D$ and hence, the operator $L$ does). 
In fact, there may be connections with other non-local operators, e.g., fractional Laplacian 
in $\RR^{n-1}$, as the famous Caffarelli-Silvestre extension transforms it into a degenerate operator in $\RR^n_+$ \cite{CS}, though of course, with a different non-degeneracy and, hence, different features than ours. 
In this regard, one should also recall elliptic edge operators which recently enjoyed renewed interest in connection with the K\"ahler-Einstein edge metrics, see, e.g., \cite{JMR}. 
However,  beyond the general idea of treating higher co-dimensional features with elliptic degeneracies, it would be premature to draw connections with the present work. 

\ms
The proof of our main result has two main steps: 
\begin{enumerate}[(1)]
\item construct a nice bi-Lipschitz change of variables that sends $\Gamma$ to a $d$-plane $P_0$;
\item show that for a large class of degenerate elliptic operators on $\R^n \sm P_0$, including the one arising from the aforementioned change of variables, the square function/non-tangential maximal function estimates hold for bounded solutions and further imply the absolute continuity of 
the associated harmonic measure with respect to the surface measure on $P_0$.
\end{enumerate}

This is not a surprising strategy, and our approach owes a lot to  \cite{KKPT}, \cite{KKiPT}, \cite{DPP2015}. However, trying to execute it in higher co-dimension, one quickly learns that 
most of the methods developed in co-dimension one don't work. 
There are three new major players. First, the matrix of coefficients suitable for $(2)$ has to have a special structure in the portion responsible for the $t$ variables (this was not a problem before because such a portion is just one entry in co-dimension 1). 
Secondly, as a result, one needs the aforementioned change of variables to be as close as possible to an isometry at many scales - a property missing in previously used mappings of Lipschitz domains to 
$\RR^{n-1}$ \cite{KenigB}, and only appearing, albeit in a different form, in 
the analysis of Reifenberg flat sets \cite{Toro}, \cite{DT}. Finally, as the
distance is now a factor of the matrix of coefficients, 
it turns out that we want the composition of $D$ with our change of variable
to be close to the distance to $\R^d$
(and this property would also be lost by previously used ``vertical" changes of variables), which ultimately leads to our special choice of $D$. In some sense, a big advantage of Lipschitz graph domains of dimension $n-1$ over more complicated rectifiable sets is the existence of {\it one} special direction, the vertical one. In higher co-dimensional sets this advantage is notoriously missing, and even working formally on a Lipschitz graph, we have to use 
some of the tools of geometric measure theory on uniformly rectifiable domains, 
such as  X.\,Tolsa's $\alpha$-numbers related to the Wasserstein distance, 
to even construct the change of variables. In addition, we lose a possibility to test 
our intuition in the planar case, using conformal mapping techniques, as
for $n=2$ the only higher co-dimensional set could be a point.
A good side of having to overcome all these challenges is a resulting indication that the results can be carried over to much more general sets and, indeed, we conjecture that the absolute continuity of harmonic measure associated to $\Ak(X) = D(X)^{-n+d+1} I$ is absolutely continuous with respect to Hausdorff measure on all uniformly rectifiable domains, in 
contrast with the results for the Laplacian for $d=n-1$ which requires extra topological restrictions due to \cite{BJ}. For now though, we concentrate on Lipschitz graphs, and before passing on to a detailed description of main results, only mention that all aspects of the construction, pertaining to parts (1) or (2) above, are new even in co-dimension one.

\subsection{Main results. Harmonic measure and the $A^\infty$ condition}

We consider a Lipschitz function $\varphi : \, \R^d \to \R^{n-d}$. 
Its Lipschitz constant is denoted by $C_0$, and is defined as the quantity
\begin{equation} \label{defC0}
C_0 := \sup_{\begin{subarray}{c} x,y\in \R^d \\ x\neq y \end{subarray}} \dfrac{|\varphi(x)-\varphi(y)|}{|x-y|} = \sup_{x\in \R^d} \, \sup_{\begin{subarray}{c} h=(h_1,\dots,h_d)\in \R^d \\ |h|=1 \end{subarray}} \left| \sum_{i=1} h_i \dr_{x_i} \varphi(x) \right|.
\end{equation}
Here and throughout the paper,
$\dr_{x_i}$ stands for the partial derivative with respect to the $i^{th}$ 
coordinate of $x \in \R^d$. We shall work with the graph of $\Gamma$, which is
\begin{equation} \label{defGamma}
\Gamma : = \{(x,\varphi(x)), \, x \in \R^d\} \subset \R^n
\end{equation}
(we shall almost systematically  identify $\R^n$ with $\R^d \times \R^{n-d}$) and denote
\begin{equation} \label{defOmega}
\Omega = \R^n \sm \Gamma.
\end{equation}Ahlfors
The set $\Gamma$ is $d$-Ahlfors regular, which means that
there exists a measure $\sigma$ on $\Gamma$ and a constant $C_\sigma \geq 1$ 
such that 
\begin{equation} \label{defADR}
C_\sigma^{-1} r^{-d} \leq \sigma(B(x,r)) \leq C_\sigma r^{d}
\end{equation}
for $x\in \Gamma$ and $r>0$.
Ahlfors regularity is really a property of the set $\Gamma$ rather than of a particular 
measure $\sigma$, because it is not hard to see (but essentially irrelevant for the present paper) 
that if there is a measure $\sigma$ on $\Gamma$
that satisfies \eqref{defADR}, then $\sigma$ is equivalent to $\mathcal H^d_{|\Gamma}$
and $\mathcal H^d_{|\Gamma}$ satisfies \eqref{defADR} too (with a larger constant).
We mention all this because many of the properties that we prove below do not use more than
the Ahlfors regularity of $\Gamma$.
We shall assume later that $\sigma$ is close to $\mathcal H^d_{|\Gamma}$.

We will need to use Corkscrew points for $\Omega$. These are points $A_{x,r} \in \Omega$,
associated to $x\in \Gamma$ and $r > 0$, such that (for some constant $\tau > 0$)
\begin{equation} \label{Corkscrew}
\tau r \leq \dist(A_{x,r},\Gamma) \leq |A_{x,r}-x| \leq r.
\end{equation}
Corkscrew points are very easy to find here, when $\Gamma$ is a Lipschitz graph
(try $A_{x,r} = (x,\varphi(x)+r e)$ for any unit vector $e \in \R^{n-d}$), but they also exist
when $\Gamma$ is any Ahlfors regular set of dimension $d < n-1$, and we can take $\tau$
to depend only on $n$, $d$, and $C_\sigma$ from \eqref{defADR}; 
see Lemma 11.46 in \cite{DFMprelim}.

\medskip
Since the set $\Gamma$ satisfies \eqref{defADR}, it enters the scope of the elliptic theory 
developed in \cite{DFMprelim}. Let us recall some of the main properties that will be needed. 

Let $L = - \div \Ak \nabla$ be a degenerate elliptic operator for which $\Ak$ satisfies  
\eqref{1.2.1} and \eqref{1.2.2}. We say that $u$ is a weak solution of $Lu=0$, if 
$u \in  W^{1,2}_{loc}(\Omega)$ and
\begin{equation} \label{a2.7}
\int_\Omega \Ak \nabla u\cdot \nabla v = 0 \qquad \forall v\in C^\infty_0(\Omega).
\end{equation}
Here, $W^{1,2}_{loc}(\Omega)$
is the set of functions $u\in L^2_{loc}(\Omega)$ whose derivative 
(in the sense of distribution on $\Omega$) also lies in $L^2_{loc}(\Omega)$.

For each $X \in \Omega$, we can define a (unique)
probability measure $\omega^X$ on $\Gamma$, with the following properties.
For any bounded measurable function $f$ on $\Gamma$, the function $u_f$ defined by 
\begin{equation} \label{a2.6}
u_f(X) = \int_\Gamma f(y) d\omega^X(y)
\end{equation}
is a weak solution.
This is only stated in \cite{DFMprelim} when $f \in C^0_0(\Gamma)$ is continuous and 
compactly supported in $\Gamma$ (see Lemma 9.30 and (iii) of Lemma 9.23 there,
and also (8.1) and (8.14) for the definitions) 
and when $f$ is a characteristic function of Borel set (see Lemma 9.38 there); 
the general case would not be hard, but we do not need it anyway. 

There is also a dense subclass on which we can say a little more. 
Denote by ${\mathcal M}(\Gamma)$ the set of measurable functions on $\Gamma$ and then 
define the Sobolev space
\begin{equation}\label{defH}
H=\dot H^{1/2}(\Gamma) :=\left\{g\in {\mathcal M}(\Gamma): \,
\int_\Gamma\int_\Gamma {|g(x)-g(y)|^2 \over |x-y|^{d+1}}d\sigma(x) d\sigma(y)<\infty\right\}.
\end{equation}
\noindent The class $H \cap C^0_0(\Gamma)$ is dense in $C^0_0(\Gamma)$
(see about 13 lines above (9.25) in \cite{DFMprelim} for the proof of density), 
and if $f \in H \cap C^0_0(\Gamma)$, the solution $u_f$ defined by \eqref{a2.6}
lies in the Sobolev space $W^{1,2}(\Omega,\dist(X,\Gamma)^{d+1-n}dX)$, which means that
\begin{equation} \label{a2.8}
\int_\Omega |\nabla u(X)|^2 \dist(X,\Gamma)^{d+1-n} dX <+\infty,
\end{equation}
and also 
\begin{equation} \label{a2.9}
\text{$u_f$ has a continuous extension to $\R^n$, which coincides with $f$ on $\Gamma$.} 
\end{equation}
See (i) of Lemma 9.23 in \cite{DFMprelim}, together with its proof eight lines above (9.25).

It should be stressed that since $\omega^X$ is a probability measure, $u_f$
is a nondecreasing function of $f \geq 0$. This is of course a manifestation of the maximum principle.
We will need some other properties of $L$ and $\omega^X$ when we prove 
Theorems \ref{Itai1} and \ref{Itsf1}, but these will be recorded later.

\ms
Our aim is to find at least one $\Ak$ satisfying \eqref{1.2.1}--\eqref{1.2.2} 
such that the harmonic measure (that is, any $\omega^X$ as before) is absolutely continuous 
with respect to the Hausdorff measure $\sigma$ on $\Gamma$, 
with $A^\infty$ estimates, and this will require additional assumptions.

First of all, we shall restrict to the case when $d$ is an integer and $\Gamma$ is the graph 
of a Lipschitz function (as in \eqref{defGamma}), with a small enough Lipschitz constant, 
but also $L$ will have the special form 
\begin{equation} \label{defL}
L = - \diver D(X)^{d+1-n} \nabla,
\end{equation}
where $D(X)$ is defined as 
\begin{equation} \label{IdefD}
D(X) = D_\alpha(X)= \Big\{ \int_\Gamma |X-y|^{-d-\alpha} d\sigma(y) \Big\}^{-1/\alpha},
\end{equation}
for some constant $\alpha >0$. 
In fact, in \eqref{IdefD} we can only use measures $\sigma$ 
that do not differ much from the restriction of $\H^d$ to $\Gamma$; 
that is, we can for instance  use the product of  $\H^d_{|\Gamma}$ by any function, 
only if that function is sufficiently close, in $L^\infty$-norm, to $1$. 
See Lemma~\ref{L7.1}, the rest of the argument is the same.
When $d=1$, we can also take
\begin{equation} \label{a2.12}
D(X) = \dist(X,\Gamma),
\end{equation}
but in higher dimensions, $\dist(X,\Gamma)$ does not seem to be smooth enough for
our proof to work roughly for the same reason as why the Jones' $\beta_\infty$ coefficients are not suitable in high dimensions, so we'll have to content ourselves with $D_\alpha$.

Observe that \eqref{defL} means that we chose $\Ak = D(X)^{d+1-n}I$, where $I$ is the 
identity matrix. To be able to use the previous theory, we need to check that 
\eqref{1.2.1}--\eqref{1.2.2} are satisfied, and indeed, 
\begin{equation} \label{equivD-bis}
C^{-1} \dist(X,\Gamma) \leq D_\alpha(X) \leq C\dist(X,\Gamma)
\end{equation}
(see Lemma~\ref{lequivD}).

Our main result states the quantitative mutual absolute continuity of the harmonic measure 
$\omega^X$ above and the surface measure $\sigma$, when $\Gamma$ is a Lipschitz graph
with small enough constant. We give the statement first, and then explain 
the $A^\infty$ condition in our context.

\begin{theorem} \label{Main}
Let $\Gamma \subset \R^n$ be, as in \eqref{defGamma}, the graph of a Lipschitz function 
$\varphi : \R^d \to \R^{n-d}$. Define $L = - \diver D^{d+1-n} \nabla$ as in \eqref{defL},
with $D$ as in \eqref{IdefD}, or possibly, if $d=1$, as in \eqref{a2.12}. Then the associated
harmonic measure (defined near \eqref{a2.6}) is $A^\infty$ with respect to 
$\sigma = \H^d_{\vert \Gamma}$ as soon as the Lipschitz constant $C_0$
of \eqref{defC0} is small enough, depending only on $n$, $d$, and $\alpha > 0$.
This means for instance that for every choice of $\tau \in (0,1)$ and $\epsilon \in (0,1)$, 
there exists $\delta \in (0,1)$, that depends only on $\tau$, $\epsilon$, $n$, $d$, and $\alpha$,
such that for each choice of $x\in \Gamma$, $r>0$, a Borel set $E\subset B_\Gamma(x,r)$, and 
a corkscrew point $X = A_{x,r}(\Gamma)$ (as in \eqref{Corkscrew}), 
\begin{equation} \label{AiTh}
\frac{\omega^X_{\Omega,L}(E)}{\omega^X_{\Omega,L}(B_\Gamma(x,r))} < \delta \Rightarrow \frac{\sigma(E)}{\sigma(B_\Gamma(x,r))} < \epsilon.
\end{equation}
\end{theorem}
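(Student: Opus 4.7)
The plan follows the two-step scheme outlined in the introduction. \emph{First}, construct a bi-Lipschitz map $\rho: \R^n \to \R^n$ with $\rho(P_0) = \Gamma$, where $P_0 = \R^d \times \{0\}$, that is close to an isometry at every scale. The naive vertical map $(x,t) \mapsto (x, t+\varphi(x))$ is unsuitable because it distorts the distance function $D$ by first-order quantities that do not decay at small scales. Instead, using a Whitney decomposition of $\Omega$ together with affine $d$-planes $P_Q$ approximating $\Gamma$ on each Whitney cube $Q$ (with approximation quality measured by a Tolsa-type Wasserstein $\alpha$-number $\alpha(Q)$), one builds $\rho$ so that its differential is close to a rotation carrying $P_0$ to $P_Q$, and so that $\sum_Q \alpha(Q)^2 \mathbf{1}_Q$ is a Carleson measure with small norm. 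Smallness of the Lipschitz constant of $\varphi$ is what makes the $\alpha(Q)$ uniformly small; the graph structure of $\Gamma$ is what allows a global construction.

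\emph{Second}, push forward $L$ through $\rho$ to a degenerate elliptic operator $L' = -\div \mathcal{B} \nabla$ on $\R^n \setminus P_0$. The ellipticity relations \eqref{1.2.1}--\eqref{1.2.2} relative to $P_0$ are inherited because $\rho$ is bi-Lipschitz and, crucially, the choice of the smooth average $D_\alpha$ in \eqref{IdefD} makes $D \circ \rho$ close to $\dist(\cdot,P_0)$ (this is what fails for $\dist(X,\Gamma)$ when $d \geq 2$ and forces the regularization). The structural gain from Step~1 is that the $(n-d)\times(n-d)$ ``transverse block'' of $\mathcal{B}$, acting on the variables normal to $P_0$, is a scalar multiple of the identity up to Carleson-controlled perturbations. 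This is the higher co-dimensional analogue of the Kenig--Pipher condition used in codimension one; it is essential here because, unlike the $d=n-1$ case where the transverse block is a single scalar entry, one now has a whole matrix block to tame.

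The heart of the argument is then to prove, for bounded solutions $u$ of $L' u = 0$, square function/non-tangential maximal function estimates $\|S u\|_{L^q(P_0)} \lesssim \|N u\|_{L^q(P_0)}$, with $S$ and $N$ adapted to $P_0$ and to the degenerate weight $\dist(\cdot,P_0)^{d+1-n}$. Following the scheme of \cite{KKPT,DPP2015}, one tests the equation against a truncation of $u^2$ on tent regions: the principal term produces the square function, the good part of $\mathcal{B}$ (the scalar identity part in the transverse directions) allows a clean integration by parts, and the error terms are absorbed using the smallness of the Carleson norm from Step~1 together with a good-$\lambda$/stopping-time argument in dyadic cubes on $P_0$. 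The non-local character of $D_\alpha$ enters only through that Carleson bound. Once $S \lesssim N$ is in hand, one applies \cite{KKPT,KKiPT} to the bounded solutions $u_f$ with $f = \mathbf{1}_E$ to deduce $A^\infty$ for the harmonic measure of $L'$ on $P_0$. Transferring back through $\rho$, which carries $\sigma$ and the harmonic measure for $L'$ to their analogues on $\Gamma$, delivers \eqref{AiTh}.

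The main obstacle is Step~1 combined with the ``matching'' requirement on $D$. The map $\rho$ must be globally bi-Lipschitz, it must send $P_0$ onto $\Gamma$, its differential must be well approximated at every scale by a rotation sending $P_0$ to the best-fit plane $P_Q$ with $\alpha^2$-Carleson-small errors, \emph{and} $D\circ \rho$ must be close to $\dist(\cdot,P_0)$ in a strong enough sense to preserve the structural properties of $\mathcal{B}$. In codimension one, a single vertical deformation essentially does all this at once; in higher codimension there is no preferred normal direction, so the rotation at each scale has to be selected from the Grassmannian, patched across scales by a partition of unity, and integrated to a global diffeomorphism. It is precisely this combination of constraints that dictates the choice of $D_\alpha$ in \eqref{IdefD} rather than $\dist(X,\Gamma)$ itself.
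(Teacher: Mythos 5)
Your two-step strategy (construct an almost-isometric change of variables $\rho: \Omega_0 \to \Omega$, then transfer the problem to a degenerate elliptic operator on $\R^n\setminus\R^d$ with a structured transverse block and prove square function bounds followed by $A^\infty$) is exactly the paper's scheme, and you correctly identify the essential structural features: the $(n-d)\times(n-d)$ block must be close to a scalar multiple of the identity, the Tolsa-type $\alpha$-numbers control the error terms, and the smooth distance $D_\alpha$ of \eqref{IdefD} is forced by the requirement that $D\circ\rho$ be close to $\dist(\cdot,\R^d)$. There are, however, two places where your route diverges from or imprecisely describes the paper's. First, you build $\rho$ by patching rotations on a Whitney decomposition of $\Omega$ with a partition of unity; the paper instead sets $\rho(x,t) = \Phi_r(x) + h(x,t)R_{x,r}(0,t)$ using a continuous mollification $\varphi_r = \varphi\ast\eta_r$ and Gram--Schmidt orthogonalization of the tangent frame, which yields a \emph{globally smooth} map with derivative bounds coming directly from Littlewood--Paley estimates rather than from matching conditions across cube boundaries; moreover the paper's auxiliary factor $h(x,t) = (c_\alpha\lambda(x,|t|))^{1/\alpha}$, which normalizes the average density of $\sigma$, is precisely what makes $|t|/D(\rho(x,t)) - 1$ a Carleson function (\eqref{DrhoisCM}), and this role of $h$ is absent from your sketch. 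Second, and more substantively, your square-function step invokes ``smallness of the Carleson norm'' and a ``good-$\lambda$/stopping-time argument'': the paper's Theorem \ref{Itsf1} needs \emph{neither} — the proof is a direct integration by parts against the test function $u\Psi^2|t|/b$, where the $bI_{n-d}$ block produces an exact cancellation in the radial-derivative term (\eqref{eqsf13}--\eqref{eqsf14}), and all error terms are absorbed by the Carleson hypothesis with an arbitrary constant $M$, not a small one. The $\eps_0$-good-cover stopping-time machinery appears only afterwards, in the $A^\infty$ step (Theorem \ref{Itai1}), following \cite{KKiPT} and \cite{DPP2015}. Your version, if carried through with a genuine good-$\lambda$ argument in the square-function proof, would likely require a smallness hypothesis that the paper does not need; the paper's approach is therefore both more elementary and more general at this stage.
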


\ms
Let us comment a little on the $A^\infty$ condition. In the general context of spaces 
of homogeneous type (metric spaces with a doubling measure $\mu$), 
we say that the measure $\omega$ is $A^\infty$ with respect to the doubling measure $\mu$ 
when the following condition holds:
for every $\epsilon \in (0,1)$, there exists $\delta \in (0,1)$ such that for any $x\in \Gamma$, any $r>0$, and any $E \subset B(x,r)$, we have the implication
\begin{equation} \label{a2.17}
\frac{\omega(E)}{\omega(B(x,r))} < \delta \Rightarrow \frac{\mu(E)}{\mu(B(x,r))} < \epsilon.
\end{equation}
We refer to \cite[Theorem 1.4.13]{KenigB}, \cite[Lemma 5]{CFAinfty},
\cite{GCRF}, or \cite{J} for proofs and additional information about the $A^\infty$ condition. 
It is not hard to show that under these conditions, $\omega$ also is doubling, and $\mu$ 
is $A^\infty$ with respect to $\omega$. That is, the $A^\infty$ relation is symmetric.

For the harmonic measure, in our case or in the original context of co-dimension one,
we cannot say that $\omega^X$ is $A^\infty$ with respect to $\sigma$ in the usual sense, because when $X$ is  very close to the boundary (compared to $r$), $\omega^X$ is very small on most of
$\Gamma \cap B(x,r)$. To say this slightly differently, let $X\in \Omega$ be given and let
$x\in \Gamma$ be such that $|X-x| = \dist(X,\Gamma)$. 
For any $r>0$ large, we can find $y_r\in \Gamma$ such that $2r=|y_r-x|$. 
Then \eqref{a2.17} fails because
\[\frac{\omega^X_{\Omega,L}(B_\Gamma(y_r,r))}{\omega^X_{\Omega,L}(B_\Gamma(x,3r))} \rightarrow 0 \ \ \text{ as } r\to +\infty\]
while
\[\frac{\sigma(B_\Gamma(y_r,r))}{\sigma(B_\Gamma(x,3r))} \geq C^{-1} \]
where $C^{-1}$ depends only on the constant in \eqref{defADR}. 
However, for any $X\in \Omega$ and any ball $B_\Gamma(x,R)$ in $\Gamma$, 
the restriction of $\omega^X_{\Omega,L}$ to $B_\Gamma(x,R)$ is in the class $A^\infty$ 
with respect to the restriction of $\sigma$ to the same $B_\Gamma(x,R)$. 
This latter fact is a straightforward consequence of Theorem \ref{Main}, the Harnack inequality,
and the change of pole (see \cite{DFMprelim}, Lemmas 8.42 and 11.135). If desired, one can replace
the corkscrew point $A_{x,r}$ in our statement by any point of $\Omega \sm B(x,2r)$ at essentially 
no cost, and by any point $X\in \Omega$, but with worse constants that depend on $X$.

\subsection{Main results. Sufficient conditions on elliptic operators on 
$\Omega_0 = \R^n \sm \R^d$ for the absolute continuity of harmonic measure}
The second main result of the present paper is absolute continuity of the harmonic 
measure with respect to the Hausdorff measure on $\RR^d$ for a 
general class of elliptic operators in $\RR^n\setminus \RR^d$, satisfying certain 
structural and Carleson measure conditions. This result is of independent interest 
(see \cite{HKMP} and \cite{KePiDrift} and the discussion below for some analogues in 
co-dimension one, but notice that our sufficient conditions are new even in co-dimension 
one and are weaker than in \cite{KePiDrift}). 
It is also a crucial step of the proof of Theorem~\ref{Main}, which uses a change of variables 
(discussed below) that sends us back to the case where $\Gamma = \R^d \subset \R^n$, 
at the expense of producing a new different operator $L_0$.

We need some definitions. We shall continue to identify $\Gamma_0 = \R^d$ with
$\R^d \times \{ 0 \} \subset \R^n$, and we set $\Omega_0 = \R^n \sm \R^d$.
The running point of $\R^n$ will be denoted by $X = (x,t)$ or $Y=(y,s)$, 
with $x,y\in \R^d$ and $s,t\in \R^{n-d}$. When $t=0$, we may write $x$ instead 
of $(x,0) \in \R^n$.

\begin{definition} \label{defCMI}
A {\bf Carleson measure} on $\Omega_0$ is a positive measure $\mu$ 
on $\Omega_0$ such that for some
constant $C \geq 0$, 
\begin{equation} \label{CM1}
\mu(\Omega_0 \cap B(x,r)) \leq C r^d \ \text{ for $x\in \Gamma_0$ and } r > 0.
\end{equation}
We say that a function $u$ defined on $\Omega_0$ satisfies 
{\bf the Carleson measure condition} when 
\[ 
|u(y,s)|^2  \frac{dy ds}{|s|^{n-d}} \ \text{ is a Carleson measure,} 
\]
that is, when there is a constant $C_u \geq 0$ such that
\begin{equation} \label{CM2}
\int_{(y,s) \in \Omega_0 \cap B(x,r)} |u(y,s)|^2\,\frac{dyds}{|s|^{n-d}} \leq C_u r^d
\ \text{ for $x\in \Gamma_0$ and } r > 0.
\end{equation}
When this happens, we shall more briefly write that
$u \in CM(C_u)$ and refer to the smallest possible $C_u$ in \eqref{CM2} 
as the Carleson norm of $u$ (even though it scales like a square).
\end{definition}

The following result, which 
builds on the ideas from \cite{KKPT,KKiPT,DPP2015}, 
says that for any matrix $L_0$ satisfying \eqref{1.2.1} and \eqref{1.2.2}, the $A^\infty$ absolute continuity of the corresponding harmonic measure follows from
Carleson measure estimates \eqref{CM2} for bounded solutions.
Let us simplify the notation, write $A_0 = A_0(X) = A_0(x,s)$ (instead of $\Ak(X)$)
for the degenerate elliptic matrix that defines our operator $L_0$, and notice that since
$\Gamma_0 = \R^d$, the conditions \eqref{1.2.1} and \eqref{1.2.2} become
\begin{eqnarray} \label{1.2.1a}
&& |s|^{n-d-1} A_0(x,s)\xi \cdot \zeta \leq C_1  |\xi| \,|\zeta|
\ \text{ for } X= (x,s) \in \Omega_0 \text{ and } \xi, \zeta \in \R^n, \\[4pt]
\label{1.2.2a}
&& |s|^{n-d-1} A_0(x,s)\xi \cdot \xi \geq C_1^{-1}  |\xi|^2
\ \, \text{ for } X= (x,s)\in \Omega_0 \text{ and } \xi \in \R^n.
\end{eqnarray}
As before, the operator $L_0=-\div A_0 \nabla$ satisfies the assumptions of \cite{DFMprelim},
so we can construct harmonic measures on $\Gamma_0$
that we denote by $\omega^X = \omega^X_{\Omega_0,L_0}$, $X \in \Omega_0$,
which satisfies the properties described near \eqref{a2.6}. In particular, for every Borel set 
$H \subset \Gamma_0$, we can define a weak solution $u_H$ by
\begin{equation} \label{a2.24}
u_H(X) = \omega^X_{\Omega_0,L_0}(H)  \ \text{ for } X \in \Omega_0
\end{equation}
(compare with \eqref{a2.6}). Here is our sufficient condition for 
$A^\infty$ absolute continuity,
which will be proved in Section \ref{SAinfty}.

\begin{theorem}\label{Itai1} 
Let $L_0=-\div A_0 \nabla$ be a degenerate elliptic operator (with real coefficients) 
on $\Omega_0 = \R^n \setminus \R^d$, and assume that $A_0$ satisfies 
\eqref{1.2.1a}  and \eqref{1.2.2a}. Also assume that there is a constant $C_L$ such that
for any Borel set $H\subset \Gamma_0 = \R^d$, the solution $u_H$ of \eqref{a2.24} is such that 
\begin{equation} \label{a2.26}
|t|\nabla u_H \in CM(C_L).
\end{equation}
Then the harmonic measure $\omega^X_{\Omega_0,L_0}$ is $A^\infty$ 
(with the definition of Theorem \ref{Main}) with respect to the Lebesgue measure on $\R^d$.
\end{theorem}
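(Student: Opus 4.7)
I would follow the Kenig--Koch--Pipher--Toro / Dindo\v s--Petermichl--Pipher template for extracting the $A^\infty$ property from a Carleson-measure estimate on bounded solutions, adapted to the degenerate elliptic framework of \cite{DFMprelim}.

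\emph{Reduction.} Fix $x_0\in\R^d$, $r_0>0$, a Borel set $E\subset B_0:=B_{\Gamma_0}(x_0,r_0)$, and the corkscrew point $A=A_{x_0,r_0}$. Set $u:=u_E$, so that $0\le u\le 1$ and $u(A)=\omega^A(E)$; the hypothesis \eqref{a2.26} translates into $\int_{B(x,r)}|\nabla u|^2|s|^{2+d-n}\,dY\le C\,C_L\,r^d$ for every surface ball $B(x,r)$. By the doubling of $\omega^X$ and the change-of-pole property from \cite{DFMprelim}, $\omega^A(B_0)\ge c>0$, so it suffices to prove that for every $\eps>0$ there is $\delta>0$ with $u(A)<\delta\Longrightarrow |E|/|B_0|<\eps$.

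\emph{Square function versus non-tangential maximal function.} For $x\in\R^d$, introduce the non-tangential cone $\gamma(x)=\{(y,s)\in\Omega_0:|y-x|<|s|\}$ and define
\begin{equation*}
Nu(x):=\sup_{Y\in\gamma(x)}|u(Y)|,\qquad Su(x)^2:=\int_{\gamma(x)}|\nabla u(Y)|^2 |s|^{2-n}\,dY.
\end{equation*}
A Fubini computation, using $|\{x:|y-x|<|s|\}|\approx|s|^d$, turns the Carleson hypothesis into $\int_{B_0}(Su)^2\,dx\lesssim C_L|B_0|$. The analytic core is then the ``good-$\lambda$'' inequality
\begin{equation*}
\bigl|\{x\in B_0:Nu(x)>2\lambda,\,Su(x)\le\eta\lambda\}\bigr|\le C\eta^2\,\bigl|\{x\in 2B_0:Nu(x)>\lambda\}\bigr|
\end{equation*}
for bounded $L_0$-solutions, valid for $\lambda>0$ and $\eta\in(0,1)$; its proof combines the weighted Caccioppoli and interior Harnack inequalities of \cite{DFMprelim} on Whitney regions with the maximum principle on the sawtooth subdomain of $\Omega_0$ generated by the stopping-time cubes of $\{Nu>\lambda\}$. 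Integration in $\lambda$ yields $\|Nu\|_{L^2(B_0)}^2\lesssim\|Su\|_{L^2(2B_0)}^2+u(A)^2|B_0|$; refining the estimate scale by scale on a Christ-type dyadic grid on $\Gamma_0$, and propagating the smallness of $u(A)$ to the values $u(A_Q)$ at stopping-time cubes via Harnack, produces the sharper bound
\begin{equation*}
\bigl|\{x\in B_0:Nu(x)>\tfrac12\}\bigr|\le\eta(\delta)|B_0|,\qquad\eta(\delta)\to 0\ \text{as}\ \delta\to 0.
\end{equation*}

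\emph{Conclusion and principal obstacle.} At Lebesgue-a.e.\ point $x\in E$ the complementary bounded solution $v:=u_{\R^d\setminus E}=1-u$ should converge non-tangentially to $0$, so $u\to 1$ non-tangentially at Lebesgue-a.e.\ point of $E$, and hence $E\subset\{Nu>1/2\}$ up to a Lebesgue-null set. This gives $|E|\le\eta(\delta)|B_0|<\eps|B_0|$ once $\delta$ is small, as required. The hardest step is expected to be the good-$\lambda$ inequality: one must verify that the Caccioppoli, Harnack, and boundary-regularity estimates from \cite{DFMprelim} interact cleanly with the sawtooth subdomains of $\Omega_0$ built from the stopping-time decomposition, and that the degenerate weight $|s|^{d+1-n}$ is absorbed correctly so that the Carleson packing of $|s|^2|\nabla u|^2\,dyds/|s|^{n-d}$ captures the oscillation of $u$ on each Whitney region. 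A secondary technical point is establishing Lebesgue-a.e.\ non-tangential convergence of bounded $L_0$-solutions to their boundary data in higher codimension, which must be derived from the differentiation of $\omega$ and the elliptic theory of \cite{DFMprelim}.
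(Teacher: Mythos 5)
Your overall template (Carleson estimate on bounded solutions $\Rightarrow A^\infty$) is the right one, but the way you propose to close the argument has a genuine gap, and it is not the route the paper takes. The problem is in the step where you pass from the good-$\lambda$ inequality to the ``sharper bound'' $|\{Nu>\tfrac12\}|\le\eta(\delta)|B_0|$. With $u=u_E$, the hypothesis \eqref{a2.26} only gives $\|Su\|_{L^2(2B_0)}^2\lesssim C_L|B_0|$ with the \emph{fixed} constant $C_L$; it does not become small as $\delta=\omega^A(E)/\omega^A(B_0)\to0$. So the chain $\|Nu\|_{L^2(B_0)}^2\lesssim\|Su\|_{L^2(2B_0)}^2+u(A)^2|B_0|$ yields only $|\{Nu>\tfrac12\}|\lesssim (C_L+\delta^2)|B_0|$, which carries no smallness. ``Propagating the smallness of $u(A)$ to the $u(A_Q)$ via Harnack'' cannot rescue this: $u_E$ tends to $1$ non-tangentially on (most of) $E$, so $u$ is genuinely large near $E$, and trying to show that the set where $Nu_E$ is large is small is circular. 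A secondary but real gap is your use of Lebesgue-a.e.\ non-tangential convergence $u_E\to1$ on $E$: the elliptic theory of \cite{DFMprelim} gives such convergence $\omega$-a.e.\ (via points of $\omega$-density), and upgrading it to Lebesgue-a.e.\ is essentially equivalent to the absolute continuity you are trying to prove. Finally, the $N\lesssim S$ good-$\lambda$ inequality itself is not available in this degenerate setting and would be a substantial theorem on its own.

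The paper instead follows \cite{KKiPT} and \cite{DPP2015}: assuming $\omega^{A_\Delta}(E)\le\delta\,\omega^{A_\Delta}(\Delta_r)$, it builds an $\eps_0$-good cover $E\subset\O_k\subset\cdots\subset\O_0=\Delta_r$ of length $k\gtrsim\log\delta/\log\eps_0$ out of a Christ dyadic grid adapted to $\omega^{A_\Delta}$, and then constructs a \emph{single} boundary datum $f=\sum_l f_l$ (an alternating sum of indicators of the $\widehat\O_l$), which is the characteristic function of a Borel set, so that \eqref{a2.26} applies to $u_f$. Using nondegeneracy, doubling, change of pole, boundary H\"older continuity and Harnack, one shows that $u_f$ oscillates by a fixed amount $\eta_1/8$ in each of the $k$ Whitney regions above any $x\in E$, whence $S^{Q_r}u_f(x)^2\gtrsim k$ pointwise on $E$. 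Integrating and invoking the Carleson bound gives $k|E|\lesssim\int_{Q_r}(S^{Q_r}u_f)^2\lesssim C_L|\Delta_r|$, i.e.\ $|E|/|\Delta_r|\lesssim\log\eps_0/\log\delta$, which is $<\eps$ for $\delta$ small. In other words, the smallness is extracted not from smallness of a square function, but from a \emph{lower} bound on the square function of a cleverly chosen solution, played against the fixed Carleson upper bound. If you want to salvage your write-up, you should replace the good-$\lambda$/non-tangential-convergence machinery by this construction.
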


\ms
The next stage is to find reasonable conditions on $A_0$ that imply the estimate \eqref{a2.26},
and hence the $A^\infty$-absolute continuity of $\omega_{\Omega_0,L_0}$. 
Recall from the discussion above that even in co-dimension 1 one does not expect the absolute 
continuity of the harmonic measure with respect to the Lebesgue measure to hold for all elliptic 
operators, due to the counterexamples in \cite{CFK}.

We need more notation. 
In the sequel, $Q$ denotes a cube in $\R^d$ and $l(Q)$ its sidelength. 
The truncated cone of approach to $x\in \Gamma_0$ is
\begin{equation} \label{a2.27}
\gamma^Q(x)=\big\{(y,s)\in \R^n\setminus \R^d :\, |y-x|\leq a |s|
\text{ and } 0<|s|< l(Q) \big\},
\end{equation}
where $a > 0$ is any given constant, and often we just take $a=1$. 
The notation is slightly misleading, because $\gamma^Q(x)$ merely depends on
$l(Q)$ rather than $Q$, but it is convenient and often used. 

Then we define a localized  square function $S^Qu$ and a localized 
non-tangential maximal function ${\mathcal N}^Q$, both on $\Gamma_0$, by
\begin{equation} \label{a2.28}
S^Qu(x)
:=\left(\int_{(y,s)\in \gamma^Q (x)} |\nabla u(y,s)|^2 \, \frac{dyds}{|(y,s)-(x,0)|^{n-2}}\right)^{1/2}
\end{equation}
and
\begin{equation} \label{a2.29}
N^Q u(x)
:=\sup_{(y,s)\in \gamma^Q (x)} |u(y,s)| 
\end{equation}
for $x\in \Gamma_0$.
Our main theorem for elliptic operators on $\RR^n\setminus \RR^d$, which yields square 
function/non-tangential maximal function estimates and ultimately $A^\infty$ 
property of harmonic measure, is the following.

\begin{theorem}\label{Itsf1} 
Let $A_0$ be a degenerate elliptic matrix satisfying \eqref{1.2.1a} and \eqref{1.2.2a} in 
$\Omega_0 = \R^n \setminus \R^d$, and set then $L_0 = - \diver A_0 \nabla$ as above.
Define the rescaled matrix $\A$ by $\A = |t|^{n-d-1} A_0$, so that now 
$L_0 = - \diver |t|^{d+1-n}\A \nabla$, and assume that $\A$ has the following block structure: 
\begin{equation}\label{Ieqsf1}
\A(X)= \left( \begin{array}{cc}
\A^1(X) & \A^2(X) 
\\  \C^3(X) & b(X)I_{n-d}+  \C^4(X)  \end{array} \right), 
\end{equation}
where $\A^1(X)$ is a matrix in $M_{d\times d}$, $\A^2(X)$ is a matrix in $M_{d\times (n-d)}$,
$b$ is a function on $\Omega_0$, $I_{n-d}$ is the identity matrix in
$M_{(n-d)\times (n-d)}$, and in addition we can find constants $M \geq 0$ and $\lambda \geq 1$
such that
\begin{equation} \label{a2.32}
\lambda^{-1} \leq b \leq \lambda \ \text{ on } \Omega_0,
\end{equation}
\begin{equation} \label{a2.33}
|t|\nabla b \in CM(M),
\end{equation}
\begin{equation} \label{a2.34}
\C^3, \, \C^4 \in CM(M).
\end{equation}
Then there is a constant $K > 0$, that depends only on $n$, $d$, the elliptic parameter
in \eqref{1.2.1a} and \eqref{1.2.2a}, $\lambda$, $M$, and $a$ (the aperture in \eqref{a2.27}),
and a constant $k_0 > 1$ that depends only on the aperture $a$,
such that if $u$ is a weak solution to $L_0u=0$, then for every cube $Q \subset\R^d$,
\begin{equation}\label{Ieqsf2a} 
\|S^Q u\|_{L^2(Q)}^2 \leq K \|N^{2Q} u\|_{L^2(k_0Q)}^2.  
\end{equation}
Here $k_0Q$ stands for the cube with the same center as $Q$ and sidelength $k_0 l(Q)$.

Furthermore, under the same conditions on the matrix $\A$, the harmonic measure 
$\omega^X_{\Omega_0,L_0}$ is $A^\infty$ 
with respect to the Lebesgue measure on $\R^d$ (with the definition of Theorem \ref{Main}).
\end{theorem}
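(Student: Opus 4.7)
The plan is to prove the square function/non-tangential maximal function estimate \eqref{Ieqsf2a} by an integration-by-parts argument in the spirit of \cite{KKPT,KKiPT,DPP2015}, and then to deduce the $A^\infty$ conclusion from Theorem~\ref{Itai1}. Setting $w(s)=|s|^{d+1-n}$, Fubini identifies
\[
\|S^Qu\|_{L^2(Q)}^2 \;\sim\; \int_{T_Q}|\nabla u(y,s)|^2\,|s|\,w(s)\,dyds,
\]
where $T_Q$ is an enlarged tent over $Q$ of height comparable to $l(Q)$. Ellipticity dominates $|\nabla u|^2$ by $\A\nabla u\cdot\nabla u$, so the task reduces to controlling $\int |s|\,\eta\,\A\nabla u\cdot\nabla u\,w$, where $\eta$ is a smooth cutoff adapted to $T_Q$. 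Testing the weak form of $L_0 u=0$ against $v=|s|\,\eta\,u$ yields the pivotal identity
\[
\int |s|\,\eta\,\A\nabla u\cdot\nabla u\,w \;=\; -\int u\,\A\nabla u\cdot \nabla(|s|\eta)\,w.
\]

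The key algebraic-geometric observation is that $\nabla(|s|\eta)=\eta\,\hat s+|s|\nabla\eta$ with $\hat s=(0,\,s/|s|)$, and that $\A\nabla u\cdot\hat s$ sees only the $t$-rows of $\A$: by the block structure \eqref{Ieqsf1},
\[
\A\nabla u\cdot\hat s \;=\; b\,\d_r u \;+\; \C^3\nabla_x u\cdot\hat s \;+\; \C^4\nabla_t u\cdot\hat s,\qquad \d_r=\hat s\cdot\nabla_t.
\]
The leading piece $\int u\,b\,\d_r u\,\eta\,w = \tfrac12\int b\,\eta\,\d_r(u^2)\,w$ is then integrated by parts along the radial coordinate $r=|s|$, exploiting the crucial fact that $w$ is exactly the weight for which the radial field is divergence-free in $t$: $\div_t(\hat s\,w)\equiv 0$. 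This produces $-\tfrac12\int u^2\,\d_r(b\eta)\,w$ together with boundary contributions at $r=0$ and $r=l(Q)$, both dominated by $\|N^{2Q}u\|_{L^2(k_0Q)}^2$ via standard trace and NT-maximal bounds (a preliminary truncation in $r$ ensures the trace at $r=0$ is legitimate). The $|s|\,\nabla\eta$-contribution, supported near the top/side of $T_Q$, is absorbed into the NT-norm via a Caccioppoli estimate.

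The main obstacle is the absorption of the remaining errors from $\C^3,\C^4$ and from $\d_r b$. Cauchy--Schwarz combined with the Carleson embedding theorem applied to the Carleson measures $|\C^3|^2\,\tfrac{dyds}{|s|^{n-d}}$, $|\C^4|^2\,\tfrac{dyds}{|s|^{n-d}}$, and $|\nabla b|^2|s|^2\,\tfrac{dyds}{|s|^{n-d}}$ (norms $\leq M$ by \eqref{a2.32}--\eqref{a2.34}) bounds these errors by
\[
\epsilon \int|\nabla u|^2\,|s|\,\eta\,w \;+\; C_\epsilon M\,\|N^{2Q}u\|_{L^2(k_0Q)}^2.
\]
For $\epsilon$ small the first term absorbs back into the left-hand side, provided a preliminary qualitative finiteness of the left-hand side. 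Because $M$ is not assumed small, this absorption cannot be closed in one stroke; one must iterate via a stopping-time decomposition of $k_0Q$ \`a la \cite{DPP2015}: partition the base cube into a sparse family of subcubes on which the local Carleson norms of $\C^3,\C^4,|t|\nabla b$ are small, close the absorption on each stopping region (where everything is genuinely perturbative), and glue the scales together via a good-$\lambda$ inequality relating $S^Q u$ and $N^{2Q}u$. This stopping-time/good-$\lambda$ step is the technical heart of the argument.

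For the $A^\infty$ conclusion, applying \eqref{Ieqsf2a} to the harmonic extension $u=u_H$ of a Borel set $H\subset\R^d$, which satisfies $\|u_H\|_\infty\leq 1$ and hence $\|N^{2Q}u_H\|_{L^2(k_0Q)}^2\leq Cl(Q)^d$, yields $\int_{T_Q}|\nabla u_H|^2\,|s|\,w\,dyds\leq K\,l(Q)^d$, which is precisely the Carleson condition $|t|\nabla u_H\in CM(C_L)$ required in \eqref{a2.26}. Theorem~\ref{Itai1} then delivers the $A^\infty$ absolute continuity of $\omega^X_{\Omega_0,L_0}$ with respect to Lebesgue measure on $\R^d$.
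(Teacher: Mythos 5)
Your proposal follows the same overall strategy as the paper: test the weak equation against a weighted cutoff of $|s|u$, exploit that the $t$-rows of $\A$ have the form $bI_{n-d}+(\C^3\;\C^4)$, integrate the leading radial piece by parts in polar coordinates (using that $\div_t(\hat s\,|s|^{d+1-n})=0$), and control the error terms involving $\nabla b$, $\C^3$, $\C^4$, and $\nabla\Psi$ by Cauchy--Schwarz plus the Carleson embedding theorem and by non-tangential maximal bounds. The identification of the crucial algebraic fact --- that $\A\nabla u\cdot\hat s$ only sees the last $n-d$ rows, so the cancellation happens precisely where the block structure helps --- is correct and matches the paper. Your final deduction of the $A^\infty$ property from Theorem~\ref{Itai1} via the trivial bound $N^{2Q}u_H\leq 1$ is also exactly what the paper does.

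However, there is a genuine error in the absorption step, and the stopping-time/good-$\lambda$ machinery you introduce to repair it is both unnecessary and, as presented, a gap. You claim that because $M$ is not assumed small, ``this absorption cannot be closed in one stroke.'' This is false. Writing $J=\int |s|^{d+2-n}|\nabla u|^2\Psi^2$ and $A=\|N^{2Q}u\|_{L^2(k_0Q)}^2$, the Cauchy--Schwarz/Carleson estimate of the $\nabla b$ and $\C^3,\C^4$ error terms yields a bound of the form
\[
J \;\lesssim\; \sqrt{MA}\,\sqrt{J} \;+\; A,
\]
\emph{not} $J\lesssim \epsilon J + C_\epsilon M A + \ldots$ with an unremovable coupling. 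One then solves the quadratic inequality (or equivalently takes $\epsilon=1/2$ in AM--GM and absorbs) and obtains $J\lesssim (1+M)A$; large $M$ simply inflates the final constant $K$, which is allowed. The only point requiring care is the \emph{qualitative} finiteness of $J$ needed to justify the subtraction, which is handled by truncating the cutoff away from $\{t=0\}$ and letting the truncation parameter tend to zero --- a remark you do make in passing. In short, the entire stopping-time decomposition, the sparse family of subcubes, and the good-$\lambda$ gluing are a red herring here: they address a non-issue and, as stated, the ``technical heart of the argument'' you announce is never actually supplied, so this portion of the proof does not close.

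A secondary, harmless difference: the paper puts a factor $b^{-1}$ in the test function (taking $v=u\Psi^2|t|\,b^{-1}$) so that the $b$'s cancel identically in the leading radial term and $\nabla b$ appears only via the product rule on $b^{-1}$; you instead keep $b$ in the leading term and differentiate it under the radial integration by parts. Both routes produce the same $\nabla b$ Carleson error and are equivalent, so this is merely a cosmetic variation.
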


Here and throughout the paper, when we say in \eqref{a2.34} that $\C^j \in CM(M)$, 
we mean that each entry $\C^j_{i,k}$ of $\C^j$ lies in $CM(M)$; the fact that we take 
the sup norm on matrices rather than a more reasonable norm is irrelevant here.

\ms
The theorem will be proved in Sections \ref{SSQR} and \ref{SAinfty}, 
Theorem \ref{tsf1} and Corollary~\ref{cai5}.
Let us discuss some aspects of its statement. First, the finiteness of the quantities in \eqref{Ieqsf2a} is not guaranteed (but we do mean, as a part of the statement, that the finiteness of the right-hand side implies the finiteness of the left-hand side); but it holds in the following case. 
Let $H$ be any Borel set on $\RR^d$ and $u_H$ be the solution with data given by the characteristic function of $H$, defined in \eqref{a2.24}. Then (since $\omega$ is a probability measure) $N^{2Q}(u_H) \leq 1$, which by \eqref{Ieqsf2a} implies that for every cube $Q \subset\R^d$
\begin{equation} \label{a8.16}
\|S^Q u_H\|_{L^2(Q)}^2 \leq C |Q|.
\end{equation}
The latter, by Fubini's theorem, yields \eqref{a2.26} -- see Remark~\ref{rSfCm}.
This is the reason why, having proved \eqref{Ieqsf2a} and Theorem~\ref{Itai1}, we can conclude 
that for operators satisfying conditions of Theorem~\ref{Itsf1} the harmonic measure 
$\omega^X_{\Omega_0,L_0}$ is $A^\infty$ 
with respect to the Lebesgue measure on $\R^d$ (with the definition of Theorem \ref{Main}).

Turning to the conditions on $A_0$ proposed in Theorem \ref{tsf1}, we observe that
we did not impose any condition on the first $d$-lines of the elliptic and
bounded matrix $\A$. In the case of co-dimension 1, the reader should compare to \cite{KePiDrift} where the {\it full} matrix $\A$ is assumed to satisfy $|t|\nabla \A\in CM(M)$. One could always add to the latter a Carleson measure perturbation, that is, to treat  $\A+\C$, with $|t|\nabla \A\in CM(M)$ and $\C\in CM(M)$ due to \cite{FKP}, but still imposing these conditions on the full matrix of coefficients (as opposed to our statement appealing only to the last $n-d$ lines). On top of it, to be even more precise, both \cite{KePiDrift} and \cite{FKP} require slightly stronger Carleson measure conditions, dealing with the suprema of coefficients on Whitney cubes, that is, $(|t|\nabla \A)_W\in CM(M)$ and $(\C)_W\in CM(M)$ where $F_W(x,t)=\sup_{B((x, t), t/2)} |F|$, $(x, t)\in \RR^n$. In all those directions, even in co-dimension one the result of Theorem~\ref{Itsf1} is new. Observe, for instance, that it yields the $A^\infty$ property for the harmonic measure of an operator associated to a block matrix 
\begin{equation}\label{Ieqsf1b}
\A(X)= \left( \begin{array}{cc}
\A^1(X) & 0
\\ 0& I_{n-d} \end{array} \right), 
\end{equation}
and for its Carleson measure perturbations, $\A+\C$, {\it without any assumptions on $\A^1$.} In the case when $\A_1$ is $t$-independent, this is a consequence of the resolution of the famous Kato problem \cite{AHLMcT}, but the observation that one take any elliptic $\A^1$, to our knowledge, is new.

Note also that in the case of co-dimension 1 one could take ${\mathcal B}^3 + \C^3$ in place of $\C^3$, with $|t|\nabla {\mathcal B}\in CM(M)$, at the expense of a harmless drift term (this is the reason why our results include the aforementioned case $|t|\nabla \A\in CM(M)$). A version of this should be also possible in our context, but we choose not to develop it here as strictly speaking one would have to revisit  \cite{DFMprelim} for construction of solutions of operators with drift terms.

We remark, parenthetically, that in co-dimension 1 another important class of elliptic operators whose harmonic measure is absolutely continuous with respect to the Hausdorff measure on $\RR^d$ (or on a Lipschitz graph) is the class of  operators with $t$-independent coefficients \cite{HKMP}. Those do not make much sense in our context as the dependence on the distance to the boundary and hence, on $|t|$, is exactly the feature that allows us to access the higher codimension.

\ms
At this point, we see that in order to prove Theorem \ref{Main}, we now want to construct a change
of variables $\rho$ that transforms the operator $L$ in that theorem into an operator $L_0$ that satisfies the assumptions of Theorem \ref{Itsf1}.

\subsection{A bi-Lipschitz change of variables}
\label{Abil}

Let us now say a few words about the change of variables that we will construct.
This will be a bi-Lipschitz mapping $\rho : \R^n \to \R^n$, with 
\begin{equation} \label{a2.42}
\rho(\Gamma_0) = \Gamma \ \text{ and (hence) } \ \rho(\Omega_0) = \Omega.
\end{equation}
When we conjugate our operator $L$ by $\rho$, we obtain the operator 
$L_\rho = - \div A_\rho \nabla$, which satisfies the conditions \eqref{1.2.1}--\eqref{1.2.2} 
relatively to the boundary $\Gamma_0 = \R^d$ and the domain $\Omega_0 = \R^n \setminus \R^d$. Besides, the harmonic measure associated to $\Omega$ and $L$ is $A^\infty$ with respect to the measure $\sigma$ when the harmonic measure associated to $\Omega$ and $L$ is $A^\infty$ 
with respect to the Lebesgue measure $ \lambda^d$ on $\Gamma_0 = \R^d$,
where the $A^\infty$-absolute continuity of the harmonic measure is taken as in Theorem \ref{Main}. 
Indeed, since $\rho$ is bi-Lipschitz, it is easy to check that for a Borel set $H\subset \Gamma$ the Hausdorff measure of $H$ is equivalent to the Lebesgue measure of $\rho^{-1}(H)$. Moreover, if $X\in \Omega$, $\omega^{X}_{\Omega,L}(H) = \omega^{\rho^{-1}(X)}_{\Omega_0,L_\rho}(\rho^{-1}(H))$; and since again $\rho$ is bi-Lipschitz, the fact that $X = A_{x,r}(\Gamma)$ is a Corkscrew point implies that $\rho^{-1}(X)$ is Corkscrew point of the form $A_{\rho^{-1}(x),r}(\Gamma_0)$. 
Theorem \ref{Main} will be thus proven if $L_\rho$ satisfies the assumption of Theorem \ref{Itsf1}.

The change of variables $\rho$  will also be a differentiable mapping, with an invertible differential, denoted by $\Jac$, 
such that 
\begin{equation} \label{a2.43}
\frac12 |v| \leq |\Jac(X) \cdot v| \leq 2 |v| \ \text{ for $X\in \Omega_0$ and } v \in \R^n.
\end{equation}
For the moment, this is quite natural, and easy to obtain if the Lipschitz constant $C_0$
for the mapping $\varphi$ whose graph is $\Gamma$ is small enough.
Now the goal of the change of variables is to transform the operator $L$ defined on $\Omega$
by the matrix $\Ak$ defined by \eqref{defL}, with a function $D$ coming from \eqref{IdefD}
or perhaps \eqref{a2.12}, into an operator $L_0$ on $\Omega_0$ that 
satisfies the assumptions of Theorem \ref{Itsf1}. A computation, that will be done
in Lemma \ref{newLaplacianprop}, shows that $L_0$ is the operator associated to the normalized
matrix $\A$, where 
\begin{equation} \label{defAc}
\A(x,t) = \left(\frac{|t|}{D(\rho(x,t))}\right)^{n-d-1} |\det(\Jac(x,t))| (\Jac(x,t)^{-1})^T \Jac(x,t)^{-1},
\end{equation}
where $X = (x,t)$ is the running point of $\Omega_0$ and $(\Jac(x,t)^{-1})^T$ denotes 
the transpose of the inverse of the differential $\Jac(x,t)$. 

The general shape of the matrix, which is given by $(\Jac(x,t)^{-1})^T \Jac(x,t)^{-1}$, is 
symmetric, but since we want the block on the bottom right of $\A$ to be Carleson-close 
to a matrix $b(X) I_{n-d}$, we want the $t$-part of $\Jac(X)$ to be as close as possible 
to a multiple of an isometry.
In addition, \eqref{a2.33} tells us that we do not want to let $b(X)$ vary too fast,
which will force us to control the Jacobian determinants of the matrices.
Let us say how these constraints influence our definition of $\rho$. In this case the formula
tells a lot on what we want to do.

We shall use a smooth, nonnegative, radial function $\eta : \R^d \to \R$, compactly supported in
the unit ball and such that $\int \eta(x) dx = 1$, and its dilations $\eta_r$, $r > 0$, defined by
\begin{equation} \label{defeta}
\eta_r(x) = \frac1{r^d} \ \eta\left( \frac {x}{r}  \right).
\end{equation}
Then we define functions $\varphi_r : \R^d \to \R^{n-d}$ by
\begin{equation} \label{a2.46}
\varphi_r = \varphi \ast \eta_r,
\end{equation}
and then $\Phi$ and $\Phi_r$, with values in $\R^n$, by
\begin{equation} \label{a2.47}
\Phi(x) =  (x,\varphi(x)), \ \text{ and } \ 
\Phi_r(x) =  (x,\varphi_r(x)) = (\Phi \ast \eta_r)(x)
\end{equation}
for $x\in \R^d$. Thus $\Phi$ is the standard parameterization of $\Gamma$, and
$\Phi_r$ parameterizes a graph $\Gamma_r$ which is a nice approximation of
$\Gamma$ at the scale $r$. 

Denote by $X = (x,t)$ the running point of $\R^n$, and also set $r = |t|$. We will take
\begin{equation} \label{a2.48}
\rho(x,0) = \Phi(x) = (x,\varphi(x))
\ \text{ for } x\in \R^d
\end{equation}
and
\begin{equation} \label{defrhointro}
\rho(x,t) = \Phi_r(x) + h(x,t) R_{x,r}(0,t) = (x,\varphi_r(x)) + h(x,t) R_{x,r}(0,t)
\ \text{ for } (x,t) \in \Omega_0,
\end{equation} 
where $R_{x,r}$ is a linear isometry
of $\R^n$ and $h$ is a positive function on $\Omega_0$ such that 
$C^{-1} \leq h \leq C$ for some positive constant $C$.

We shall construct $R_{x,r}$ so that it maps $\R^d$ to the 
$d$-plane $P(x,r)$ tangent to $\Gamma_r$ at the point $\Phi_r(x)$.
In fact, we are even more interested by the fact that
$R_{x,r}$ maps $\R^{n-d} = (\R^d)^\perp$ to the orthogonal plane to $P(x,r)$
at $\Phi_r(x)$, but the two are are equivalent anyway. Our function $h$ will change slowly
(in a way that is controlled by Carleson measures), so \eqref{defrhointro} is a way to make
$\rho$ close to an isometry in the $t$-variables. It is also important that the variations
of $\rho$ in the $t$ variables are almost orthogonal to the approximate direction
of the tangent plane, which is a property that other standard changes of variables
do not have.

The role of $h$ is a little more subtle. It is connected to the fact that we want to control
the variations of the coefficient $b(X)$ above. We will choose it 
to control the $D$-distance from $\rho(X)$ to $\Gamma$, in such a way  
that 
\begin{equation} \label{DrhoisCM}
\frac{|t|}{D(\rho(y,t))} - 1 \text{ satisfies the Carleson measure condition}, 
\end{equation}
and this in turn will allow us to control $b$.

The idea of changing variables to get a slightly more complicated operator in
the simpler domain $\Omega_0$ is classical. There are, however, only two changes of variables 
that have been used in this context before. The first one, $\rho(x, t)=(x, t+\vp(x))$, $(x, t)\in \RR^n$, yields the operators with $t$-independent coefficients (see, e.g., \cite{JK}, \cite{KKPT}, \cite{HKMP}) 
and the second one, somewhat closer to ours, $\rho(x, t)=\Phi_r(x) +(0, t)$, $(x, t)\in \RR^n$, 
known as the Necas-Kenig-Stein change of variables (see, e.g., \cite{KePiDrift}), 
yields the operators with coefficients whose gradient is a Carleson measure.  
However, both of them move points vertically and lack some of the delicate 
properties that we now require, almost an isometry for $\rho$ and Carleson measure estimates 
for $D\circ \rho$ to mention only a few.
Even in co-dimension $1$, the present change of variables is new in the context of elliptic operators, 
and seems to be much more adapted because it tends to preserve the orthogonal direction.
On the other hand, in the realm of Reifenberg flat sets, the idea of trying to preserve the orthogonal
direction is not new; see for instance \cite{Toro}, and more closely \cite{DT} where somewhat  similar problems arise.

Unfortunately, our construction does not work when the Lipschitz constant $C_0$ is large
(because injectivity fails), and this does not seems easy to fix. 
So we may need to find other ways to treat general Lipschitz graphs and other nice sets.

\ms
At this point our meticulously chosen $\rho$ and $D$ are
good enough to apply Theorem \ref{Itsf1}
(we even get that the matrix $\A$ satisfies slightly stronger conditions, which will be described
in Lemma~\ref{MainS1}), and hence get Theorem~\ref{Main}. 

Let us also mention that some regularity of $D$ is needed to establish \eqref{DrhoisCM}.
For the soft distance functions of \eqref{IdefD}, we will get the desired control on $D$,
which in particular allows us to define $\rho$ so that \eqref{DrhoisCM} holds, from
a result of Tolsa \cite{Tolsa09} that controls some Wasserstein distances between $\sigma$
and Lebesgue measures on $d$-planes. These distances control both the flatness of $\Gamma$
and the repartition of $\sigma$ on $\Gamma$; that is, both the regularity of $\Gamma$ and $D$. 

In the case of the Euclidean distance in \eqref{a2.12}, we need the $L^\infty$-based
P. Jones numbers $\beta$, which we can only control well in dimension $d=1$; otherwise,
the distance does not seem to be regular enough for our method to apply.

\subsection{Organization of the paper}

In Section \ref{Sorthbasis}, we set the notation, and construct an orthonormal basis of the 
approximate tangent plane $P(x,r)$, that will be used to define the linear isometries $R_{x,r}$.
This completes the definition of $\rho$, modulo the choice of $h$ which is left open for the moment.

In Section \ref{Srho} we prove that if $C_0$ is small enough, $\rho$ is a locally smooth
and globally bi-Lipschitz change of variable, with \eqref{a2.42} and \eqref{a2.43}. The main point
is the injectivity of $\rho$, which we prove with a simple topological argument. We do not
give a formula for $\rho^{-1}$.

In Section \ref{SCMforJac} we write the matrix
$|\det(\Jac(x,t))| (\Jac(x,t)^{-1})^T \Jac(x,t)^{-1}$ that shows up in \eqref{defAc}
in an appropriate form, and prove Carleson measure estimates for some of its
coefficients. The estimates in this section come from elementary linear algebra 
(to decompose the matrix) and standard Littlewood-Paley theory for bounded functions
(for the Carleson bounds).

In Section \ref{SCMforD}, we use P. Jones $\beta$-number and localized 
Wasserstein
distances, together with the uniform rectifiability of $\Gamma$, to control the geometry
of $\Gamma$ and $\sigma$, choose the function $h$, and prove \eqref{DrhoisCM}
in particular. 

Once all this is done, we can check in Section \ref{Scollect} that the matrix of $L_0$
of the conjugated operator satisfy stronger assumptions than needed for 
Theorem \ref{Itsf1}. As was explained above, this completes the proof of 
Theorem \ref{Main}, modulo Theorems \ref{Itsf1} and \ref{Itai1}.

Theorem \ref{Itsf1} and Theorem \ref{Itai1} are of independent interest, and their
proofs, given in Section \ref{SSQR} and Section \ref{SAinfty}, can be read independently 
from the rest of this paper.

\ms
In the sequel, the letter $C$ denotes a positive constant whose dependence is either recalled or obvious from the context, and that may change from one inequality to another. 
The expression $A\lesssim B$, where $A$ and $B$ are two expressions depending on some parameters, means that there exists $C>0$ such that $A \leq CB$, and the dependence of $C$ 
on the parameters will be either given by the context or recalled. 
In addition, $A \approx B$ is used if $ A \lesssim B \lesssim A$.
  
\section{Construction of a field of orthonormal bases of $\R^n$}

\label{Sorthbasis}

In this section we use the Gram-Schmidt orthogonalization algorithm to construct an orthonormal 
basis of $\R^n$ that starts with an orthonormal basis of the tangent plane to $\Gamma_r$
at $\Phi_r(x)$.

For this section we only need to know that $\Gamma$ is a Lipschitz graph, not that 
the Lipschitz constant $C_0$ is small; the constants will be allowed to depend on $C_0$,
but if $C_0 \leq 1$ as in the next sections, they depend only on $n$ and $d$. 

We start with some amount of notation.
We have our Lipschitz function $\varphi$, $C_0$ as in \eqref{defC0}, the graph
$\Gamma$ and its parameterization $\Phi$, as in \eqref{defGamma} and \eqref{a2.47},
and their approximations $\varphi_r$, $\Gamma_r$, and $\Phi_r$.

\medskip

Here is the {\bf notation for derivatives} that we shall try to use systematically. 
If $f$ is a function defined on $\R^d$, on $\R^d\times (0,+\infty)$, or on $\Omega_0 = \R^d \times (\Rt)$), then, for any $i\in \{1,\dots,d\}$, the notation $\dr_{x_i} f$ denotes the derivative of $f$ with respect to the $i^{th}$ coordinate (of the first variable). We use $\dr_r f$ to denote the derivative of a function $f$ defined on $\R^d\times (0,+\infty)$ with respect to the second variable and, for $j\in \{d+1,\dots,n\}$, the function $\dr_{t_j} f$ is the derivative of a function $f$ defined on $\R^d \times (\Rt)$ with respect to the $(j-d)^{th}$ coordinate of the second variable (or the $j^{th}$ coordinate, if $f$ is seen as a function defined on a subset of $\R^n$).
When $f$ takes value in $\R^{n-d}$ (resp. in $\R^n$), then the quantities $\dr_{x_i} f$, $\dr_r f$ and $\dr_{t_j} f$ are lines vectors, and $|\dr_{x_i} f|$, $|\dr_r f|$ and $|\dr_{t_j} f|$ denotes their classical Euclidean norm in $\R^{n-d}$ (resp. in $\R^n$). The terms $\nabla_x f$, $\nabla_{x,r} f$ and $\nabla_{x,r} f$ are used for
\begin{equation} \label{defgradient}
\begin{pmatrix} \dr_{x_1} f \\ \vdots \\ \dr_{x_d} f \end{pmatrix}, \qquad 
\begin{pmatrix} \dr_{x_1} f \\ \vdots \\ \dr_{x_d} f \\ \dr_r f \end{pmatrix}, 
\quad  \text{ and } \quad 
\begin{pmatrix} \dr_{x_1} f \\ \vdots \\ \dr_{x_d} f \\ \dr_{t_{d+1}} f \\ \vdots \\ \dr_{t_n} f  \end{pmatrix}
\end{equation}
respectively. Note that the latter quantities are matrices when $f$ is vector valued. 
In addition, $|\nabla_{x} f|$, $|\nabla_{x,r} f|$ and $|\nabla_{x,t} f|$ are respectively the quantities
\begin{equation} \label{defnormgradient}
\left(\sum_{i=1}^d |\dr_{x_i} f|^2 \right)^\frac12, \quad 
\left(\sum_{i=1}^d |\dr_{x_i} f|^2 + |\dr_r f|^2 \right)^\frac12, \quad  \text{and} \quad \left(\sum_{i=1}^d |\dr_{x_i} f|^2 + \sum_{j=d+1}^n |\dr_{t_j} f|^2 \right)^\frac12.
\end{equation}
Finally, the set of 
second derivatives of $f$ with respect to the $x$-variables is 
written $\nabla^2_{x} f$. 
If $f$ takes values in $\R$, it corresponds to the $d\times d$ matrix
$\nabla_x (\nabla_x f)^T$; if $f = (f_1,\dots,f_k)$ is vector valued, 
we see $\nabla^2_{x} f$ as the collection 
of matrices $\{\nabla^2_{x} f_j\}_{1\leq j \leq k}$. 
Besides, the norm $|\nabla^2_x f|$ denotes the quantity
\begin{equation} \label{defnormgradient2}
\left(\sum_{j=1}^k |\nabla_x (\nabla_x f_j)^T|^2 \right)^\frac12,
\end{equation}
which correspond to the $\ell^2$-norm $(\sum_{i,\ell,j} |\dr_{x_i} \dr_{x_\ell} f_j|^2)^\frac{1}2$. The definition above is modified accordingly to give 
sense to $\nabla_{x,r}^2 f$, $\nabla_{x,t}^2 f$ or $\nabla_{x,r}\nabla_x f$.

\medskip

Recall that $\rho$ will have the form \eqref{defrhointro}, and $R_{x,r}$ will be defined as the 
linear isometry that maps the canonical orthonormal basis $\mathcal B_e$ of $\R^n$
to a new orthonormal basis $\mathcal B_{vw}$ that we construct now.

\begin{definition}[Coordinate basis]
For any $i\in \{1,\dots,n\}$, we denote by $e^i$ the unit vector in $\R^n$ that 
has $1$ in the $i^{th}$ coordinate and $0$ in the other coordinates.
The family $(e^1,\dots,e^n)$ forms an orthonormal basis which we call $\mathcal B_e $.

By notation abuse, when $i\in \{1,\dots,d\}$, we also use $e^i$ for the unit vector in $\R^d$ 
that has $1$ on the $i^{th}$ coordinate and $0$ on the other coordinates, 
and when $j\in \{d+1,\dots,n\}$, we use $e^j$ for the unit vector in $\R^{n-d}$ that has $1$ on the $(j-d)^{th}$ coordinate and $0$ on the other ones.
\end{definition}

Fix $x\in \R^d$ and $r > 0$, and denote by $P(x,r)$ the tangent $d$-plane to $\Gamma_r$ 
at $\Phi_r(x)$. Also call $P'(x,r)$ the vector $d$-plane parallel to $P(x,r)$, and 
$P'(x,r)^\perp$ its orthogonal complement. 

We start with a basis of $P'(x,r)$, which is given by the $d$ vectors $\hat v^i$,
$1 \leq i \leq d$ given by
\begin{equation} \label{defbarv}
\hat v^i(x,r) = \dr_{x_i}\Phi_r(x) = (e^i,\dr_{x_i} \varphi_r(x)).
\end{equation}
We also have a basis of $P'(x,r)^\perp$, which is composed of the normal vectors
$\hat w^j$, $d+1 \leq j \leq n$, defined as
\begin{equation} \label{defbarw}
\hat w^j(x,r) = ((-\nabla_x \varphi_r^j)^T,e^j),
\end{equation}
where $\varphi_r^j$ is the $j^{th}$ coordinate of $\varphi_r$. Thus the $d$ first
coordinates of $\hat w^j(x,r)$ are the partial derivatives $-\dr_{x_i} \varphi_r^j$.
These vectors are clearly independent (because of their $e^j$ part), and they are orthogonal
to $P'(x,r)$ because 
\begin{equation} \label{barviwjorth}
 \left< \hat v^i,\hat w^j \right> 
 = - \dr_{x_i}\varphi_r^j \left< e^i,e^i\right> +  \left<\dr_{x_i} \varphi_r(x), e^j \right> = 0.
\end{equation}
It will be useful to know that 
\begin{equation} \label{a3.8}
|\hat v^i| \leq \sqrt{1+C_0^2} \quad \text{ and }  \quad |\hat w^j| \leq \sqrt{1+C_0^2}
\end{equation}
for $1 \leq i \leq d$ and $d+1 \leq j \leq n$, just because $|\nabla_x\varphi| \leq C_0$.

Next we apply the Gram-Schmidt construction to replace the $\hat v^i$, $1 \leq i \leq d$, 
by an orthonormal basis $v^i$, $1 \leq i \leq d$, of $P'(x,r)$. Since we want to have estimates
on the various coefficients that arise, we recall how it goes.

We start with $v^1 = |\hat v^1|^{-1} \hat v^1$. Then, assuming that the $v^\ell$,
$\ell \leq i$ have already been chosen, we first replace $\hat v^{i+1}$ by
\begin{equation} \label{a3.9}
\wt v^{i+1}=\hat v^{i+1} - \sum_{\ell=1}^i \left< \hat v^{i+1},v^\ell \right> v^\ell
\end{equation}
and finally
\begin{equation} \label{a3.10}
v^{i+1} = |\wt v^{i+1}|^{-1} \ \wt v^{i+1}.
\end{equation}
It is well known that the procedure gives a new (and orthonormal) basis of $P'(x,r)$,
which we call $\mathcal B_v$. Notice that $\left< v^\ell, e^i \right> = 0$ for $\ell < i$ 
(because $v^\ell$ lies in the span of the $\hat v^m$, $m \leq \ell$), hence
\begin{equation} \label{a3.11}
\left< \wt v^{i+1},e^i \right> = \left< \hat v^{i+1},e^i \right>
- \sum_{\ell=1}^i \left< \hat v^{i+1},v^\ell \right> \left< v^\ell, e^i \right>
= \left< \hat v^{i+1},e^i \right> = 1,
\end{equation}
which in turn implies that $|\wt v^{i+1}| \geq 1$ and 
\begin{equation} \label{a3.12}
|\wt v^{i+1}| \geq 1.
\end{equation}
When $i=1$, we simply set $\wt v^1 = \hat v^1$ and get that $|\wt v^1| \geq 1$
as well. This is reassuring: we get confirmation that we never have to divide by $0$,
but we knew that because the $v^\ell$, $1 \leq \ell \leq i$, span the same space
as the $|\hat v^{l}|$, $1 \leq \ell \leq i$.

We now do the same thing with the $\hat w^j$: we apply the Gram-Schmidt process to 
construct an orthonormal basis $\mathcal B_w = (w^{d+1}, \ldots, w^n)$ of $P'(x,r)^\perp$,
with formulas like 
\begin{equation} \label{a3.13}
\wt w^{j+1}=\hat w^{j+1} 
- \sum_{\ell=d+1}^j \left< \hat w^{j+1},w^\ell \right> w^\ell
\end{equation}
for $d + 1 \leq  j < n$ and 
\begin{equation} \label{a3.14}
w^{j} = |\wt w^{j}|^{-1} \ \wt w^{j}
\end{equation}
for $d+1 \leq j \leq n$. The construction and estimates are exactly the same, except that
we exchange the first and last sets of coordinates.

Finally we put $\mathcal B_v$ and $\mathcal B_w$ together to get an orthonormal basis
$\mathcal B_{vw} = (v^1, \ldots v^d, w^{d+1}, \ldots, w^n)$ of $\R^n$. 
Finally, $R_{x,r}$ is the linear mapping that sends $\mathcal B_e$ to $\mathcal B_{vw}$.
That is,
\begin{equation} \label{a3.15}
R_{x,r}(y,u) = \sum_{i=1}^d y_i v^i(x,r) + \sum_{j= d+1}^n t_i w^j(x,r),
\end{equation}
where we now write the dependence on $x$ and $r$ because we shall start worrying
on the variations of all these functions. Notice that $R_{x,r}$ is a linear isometry because
it maps an orthonormal basis to another one, and by construction and with a slight abuse of notation
\begin{equation} \label{a3.16}
R_{x,r}(\R^d) = P'(x,r) \ \text{ and } \  R_{x,r}(\R^{n-d}) = P'(x,r)^\perp.
\end{equation}

Now we worry about the smoothness of the vectors $v^i$ and $w^j$.
We start with the easy soft result.

\begin{lemma} \label{viCinfty}
The vector fields $v^i$, $1 \leq i \leq d$, and $w^j$, $d+1 \leq j \leq n$, are $C^\infty$
functions on $\R^d \times (0,+\infty)$.
\end{lemma}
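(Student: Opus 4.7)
The plan is to trace smoothness through the two ingredients that define the vectors $v^i$ and $w^j$: the mollified graph data $\varphi_r$, and the Gram--Schmidt algebraic procedure applied to $\hat v^i$ and $\hat w^j$.

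First, I would show that $\varphi_r = \varphi * \eta_r$ is $C^\infty$ on $\R^d \times (0,+\infty)$. Writing
\[
\varphi_r(x) = \int_{\R^d} \varphi(y)\, \eta_r(x-y)\, dy,
\]
the kernel $\eta_r(z) = r^{-d} \eta(z/r)$ is smooth and compactly supported in $z$ for each fixed $r>0$, and depends smoothly on $r>0$ (we avoid $r=0$, which is exactly why the statement excludes that). Differentiation under the integral sign, justified by the local boundedness of $\varphi$ (which is Lipschitz) and the smooth compactly supported dependence of $\eta_r$ on both $x$ and $r$ on any compact subset of $\R^d \times (0,+\infty)$, then yields that $\varphi_r$ and all its partial derivatives in $(x,r)$ are continuous. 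Consequently the vectors $\hat v^i(x,r) = (e^i, \partial_{x_i}\varphi_r(x))$ and $\hat w^j(x,r) = ((-\nabla_x \varphi_r^j)^T, e^j)$ from \eqref{defbarv} and \eqref{defbarw} are $C^\infty$ functions of $(x,r) \in \R^d \times (0,+\infty)$.

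Second, I would observe that the Gram--Schmidt recursion \eqref{a3.9}--\eqref{a3.10}, and its counterpart \eqref{a3.13}--\eqref{a3.14}, builds $v^i$ and $w^j$ from the $\hat v^i$ and $\hat w^j$ using only finite sums, products, inner products, and one division by $|\wt v^i|$ (resp. $|\wt w^j|$) at each step. Smoothness is preserved by these operations \emph{as long as the denominators never vanish}. But \eqref{a3.11} and the analogous computation for $\wt w^j$ (exchanging the roles of the first $d$ and last $n-d$ coordinates) give the uniform lower bounds $|\wt v^i(x,r)| \geq 1$ and $|\wt w^j(x,r)| \geq 1$ everywhere on $\R^d \times (0,+\infty)$, so the divisions are smooth operations. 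An induction on $i$ (and on $j$) then propagates $C^\infty$ regularity from $\hat v^i$ and $\hat w^j$ to $v^i$ and $w^j$.

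There is no real obstacle here; the only subtlety is the smoothness of $\varphi_r$ in the variable $r$, which is immediate once one writes $\eta_r$ explicitly and differentiates under the integral on compact subsets of $(0,+\infty)$. The nonvanishing of the normalizing denominators, which is usually the delicate point in applications of Gram--Schmidt, has already been arranged by \eqref{a3.11}--\eqref{a3.12}.
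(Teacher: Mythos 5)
Your proposal is correct and follows essentially the same route as the paper: establish smoothness of $(x,r)\mapsto\varphi_r(x)$ (the paper simply invokes standard results where you spell out differentiation under the integral sign), then propagate smoothness through the Gram--Schmidt recursion by induction, using the lower bound $|\wt v^i|, |\wt w^j| \geq 1$ from \eqref{a3.12} to ensure the normalizing divisions are smooth operations. No gaps.
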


\begin{proof}
We start with the function $(x,r) \to \varphi_r(x) = \varphi \ast \eta_r(x)$,
which is $C^\infty$ on $\R^d \times (0,+\infty)$ by standard results (and we shall
have ample opportunities to compute some of its derivatives). 
Then all the $\hat v^i$, $\wt v^i$, $v^i$, and their $w$-counterparts are smooth too,
by \eqref{a3.9}, \eqref{a3.10}, \eqref{a3.13}, \eqref{a3.14}, and an easy induction argument.
The fact that by \eqref{a3.12} and its analogue for $\wt w^j$ we never divide by $0$ helps here.
\end{proof}

In the last result of the section, we want to prove that the derivatives of $v^i$ and $w^j$ are controlled by the second derivatives of $\varphi_r$. These bounds will be important in 
later sections, to prove size estimates on $\rho$ (that lead to bi-Lipschitz estimates), 
and then Carleson measure estimates on some of its derivatives.

\begin{lemma} \label{vrbyphirr}
For any $i\in \{1,\dots,d\}$, there holds
\begin{equation} \label{a3.19}
|\nabla_{x,r} v^i| \leq C |\nabla_{x,r} \nabla_x \varphi_r |,
\end{equation}
and for any $j\in \{d+1,\dots,d\}$, one has
\begin{equation} \label{a3.20}
|\nabla_{x,r} w^j| \leq C |\nabla_{x,r} \nabla_x \varphi_r |,
\end{equation}
where in both cases the constant $C>0$ depends only on
$n$ and the Lipschitz constant $C_0$. If we assume that $C_0 \leq 1$,
$C$ depends only on $n$.
\end{lemma}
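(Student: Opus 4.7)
The plan is to run a direct induction through the Gram--Schmidt construction, exploiting two features: first, each ``hat'' vector $\hat v^i$ or $\hat w^j$ is an affine function of first-order derivatives of $\varphi_r$, so differentiating it once in $x$ or $r$ produces exactly a quantity bounded by $|\nabla_{x,r}\nabla_x \varphi_r|$; second, every denominator appearing in the Gram--Schmidt algorithm is bounded below by $1$ (by \eqref{a3.12}), so the normalization step is harmless.

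First I would observe that from \eqref{defbarv}, $\hat v^i(x,r) = (e^i,\partial_{x_i}\varphi_r(x))$ depends on $(x,r)$ only through $\partial_{x_i}\varphi_r$, so
\[
|\nabla_{x,r}\hat v^i| \leq |\nabla_{x,r}\partial_{x_i}\varphi_r| \leq |\nabla_{x,r}\nabla_x \varphi_r|,
\]
and by \eqref{defbarw} the analogous estimate holds for $\hat w^j$. Combined with \eqref{a3.8}, these give both uniform size and uniform derivative bounds on the starting vectors of each Gram--Schmidt chain.

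Next I would prove by induction on $i$ that $|\nabla_{x,r} v^i| \leq C|\nabla_{x,r}\nabla_x \varphi_r|$. The base case $v^1 = |\hat v^1|^{-1}\hat v^1$ uses that $|\hat v^1| \geq 1$ (its first coordinate is $1$) and $|\hat v^1| \leq \sqrt{1+C_0^2}$, so the map $y \mapsto y/|y|$ is smooth near $\hat v^1$ with derivative bounded in terms of $C_0$. For the inductive step, assuming the bound for all $\ell \leq i$, differentiate \eqref{a3.9}:
\[
\partial \wt v^{i+1} = \partial\hat v^{i+1} - \sum_{\ell=1}^{i}\Bigl[\langle\partial\hat v^{i+1},v^\ell\rangle v^\ell + \langle\hat v^{i+1},\partial v^\ell\rangle v^\ell + \langle\hat v^{i+1},v^\ell\rangle\partial v^\ell\Bigr],
\]
where $\partial$ stands for any of $\partial_{x_k}$ or $\partial_r$. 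Since $|v^\ell|=1$, $|\hat v^{i+1}| \leq \sqrt{1+C_0^2}$, and the derivatives of $\hat v^{i+1}$ and of the $v^\ell$ are already controlled, the right-hand side is bounded by $C|\nabla_{x,r}\nabla_x \varphi_r|$. Then \eqref{a3.12} gives $|\wt v^{i+1}|\geq 1$, while $|\wt v^{i+1}|\leq (i+1)\sqrt{1+C_0^2}$ from the triangle inequality, so applying $y\mapsto y/|y|$ as in the base case yields the desired estimate for $v^{i+1}$. The argument for $w^j$ is strictly symmetric, using \eqref{a3.13}--\eqref{a3.14} and the analogue of \eqref{a3.12} for $\wt w^j$.

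There is no genuine obstacle here: the only thing to watch is that the constants accumulate multiplicatively through the $d$ (resp.\ $n-d$) inductive steps, and therefore depend on $n$ and $C_0$ but not on $r$ or $x$; when $C_0 \leq 1$ the factors $\sqrt{1+C_0^2}$ are bounded by $\sqrt 2$ and the final $C$ depends only on $n$, as claimed.
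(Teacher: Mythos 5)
Your proposal is correct and takes essentially the same approach the paper indicates: the paper's proof observes that the $v^i$ and $w^j$ are algebraic expressions in $\nabla_x\varphi_r$ with denominators bounded below by $1$ via \eqref{a3.12}, and explicitly remarks that ``a more detailed proof would work by induction on $i$ or $j-d$'' without carrying it out. You have supplied precisely that inductive argument, correctly differentiating \eqref{a3.9} and using the lower bound $|\wt v^{i+1}|\geq 1$ to control the normalization step.
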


\begin{proof}
We start with the vectors 
$\hat v^i = (e^i, \dr_{x_i} \varphi_r)$ and $\hat w^j = ((-\nabla_x \varphi_r^j)^T, e^j)$,
for which the desired result holds because their coordinates are directly written in terms of 
$\nabla_x \varphi_r$. 

Then we follow the Gram-Schmidt algorithm and observe that the coefficients of the $v^i$
and $w^j$ are obtained from those of the $\hat v^i$ and $\hat w^j$ by a bounded number
of algebraic computations involving taking sums, products, and inverses of functions that are 
never smaller than $1$ (see \eqref{a3.12}). That is, each $v^i$ or $w^j$ has a simple
algebraic expression in terms of $\nabla_x \varphi_r$. We compute the derivatives of 
these expressions and get \eqref{a3.19} and \eqref{a3.20}. A more detailed proof would
work by induction on $i$ or $j-d$ and give precise bounds, but we decided not to do these
computations.
\end{proof}

\section{The change of variables $\rho$}
\label{Srho}

The goal of this section is to prove that the map $\rho$ defined by \eqref{defrhointro}
is a bi-Lipschitz change of variable, and at the same time prove some estimates on its 
derivative $\Jac(x,t)$.

Recall that in addition to $R_{x,r}$, there is an auxiliary function $h$ that appears in the
definition of $\rho$. In this section, we only need to assume that 
there exist constants $C_{h1} \geq 1$ and $C_{h0} \geq 0$ such that 
\begin{equation} \label{H1}
C_{h1}^{-1} \leq h(x,t) \leq C_{h1}\ \text{ for }  (x,t) \in \Omega_0, 
\end{equation}
$h$ is continuously differentiable on $\Omega_0$, 
and  
\begin{equation} \label{H2}
r|\nabla_{x,t} 
h(x,t)| \leq C_{h0} \ \text{ for } (x,t) \in \Omega_0.
\end{equation}
Here we allow $h$ to be a function of $(x,t)$, but for our final choice, $h$ will depend only on
$x$ and $r=|t|$. In later sections, some additional Carleson bounds on $h$ will be required, 
and then specific choices of $h$ will be taken, but not yet. 

For this section to work all the way to the injectivity of $\rho$, we will need to assume that 
$C_0$ and $C_{h0}$ are small enough, depending on $n$, $d$, our choice of bump function $\eta$,
and $C_{h1}$. We will denote by $C$ any constant that depends only on 
$n$, $d$, $\eta$, $C_{h1}$, and an upper bound for $C_0$ and $C_{h0}$.
This last dependance is not important, because anyway we shall rapidly assume that
$C_0 + C_{h0} \leq 1$, say.

Recall from \eqref{defrhointro} and \eqref{a3.15} that $\rho$ is given by
\begin{equation} \label{defrhobis}
\rho(x,t) = \Phi_r(x) +  h(x,t) R_{x,r}(0,t)
= (x,\varphi_r(x)) + h(x,t) \sum_{j=d+1}^n t_j w^j(x,r)
\end{equation}
for $(x,t) \in \Omega_0$, and where we systematically let $r = |t| > 0$
and $t = (t_{d+1},\dots,t_{n})$.
We will worry about the definition of $\rho$ on $\Gamma_0 = \R^d$
later; for the moment let us work on $\Omega_0$.

Because of Lemma \ref{viCinfty}, $\rho$ is smooth on $\Omega_0$, and \eqref{defrhobis}
gives the following formulas for the derivatives of $\rho$. 
For $i\in \{1,\dots,d\}$ and $(x,t) \in \Omega_0$,
\begin{equation} \label{a4.4}
\dr_{x_i} \rho(x,t) = \hat v^i(x,r) + \dr_{x_i}h(x,t) \sum_{j=d+1}^n t_j w^j(x,r) 
+  h(x,t) \sum_{j=d+1}^n t_j \dr_{x_i} w^j(x,r),
\end{equation}
(recall \eqref{defbarv}) and for $j\in \{d+1,\dots,n\}$ and $(x,t) \in \Omega_0$,
\begin{equation} \label{a4.5}
\begin{split}
\dr_{t_j} \rho = \frac{t_j}{r} \dr_r \Phi_r(x) 
& + h(x,t) w^j(x,r) \\
& +  \dr_{t_j} h(x,t)
\sum_{k=d+1}^n t_k w^k(x,r) 
+  \frac{t_j}{r} h(x,t) \sum_{k=d+1}^n t_k \dr_r w^k(x,r).
\end{split}
\end{equation}

The Jacobian matrix of $\rho$ is written $\Jac$. 
Note that $\Jac$ depends on $(x,t)\in \Omega_0$, but we shall not always write the argument.
With the notation of the beginning of Section \ref{Sorthbasis}, the coefficients of $\Jac$ are
given by 
\begin{equation} \label{a4.6}
\Jac_{k\ell} = \left<\dr_{x_k}\rho,e^\ell \right> 
\ \text{ for } 1\leq k \leq d \text{ and } 1\leq \ell \leq n,
\end{equation}
\begin{equation} \label{a4.7}
\Jac_{k\ell} = \left<\dr_{t_k}\rho,e^\ell \right> 
\ \text{ for } d+1\leq k \leq n \text{ and } 1\leq \ell \leq n.
\end{equation}

For the computations that follow, it will be useful to transform functions defined 
on $\R^d\times (0,+\infty)$ into functions defined in $\Omega_0$, and still give the 
same name to the new functions. That is, if $f$ is defined on $\R^d\times (0,+\infty)$,
we define $\wt f$ on $\Omega_0$ by $\wt f(x,t) = f(x,|t|)$, and then simply write
$f$ instead of $\wt f$, like physicists.

We shall use, as a first approximation of $\Jac$, the ``simpler'' matrix $J = J(x,t)$ where
in \eqref{a4.6} and \eqref{a4.7} we replace $e^\ell$ by $v^\ell$ when $\ell \leq d$,
and by $w^\ell$ when $\ell > d$. This gives the following matrix.

\begin{definition}[The matrix $J$] \label{defJ}
The matrix $J = (J_{k\ell})_{1\leq k,\ell \leq n}$ is defined on $\Omega_0$ as
\[J_{k\ell} := \left<\dr_{x_k}\rho,v^\ell\right> = \left< \hat v^k,v^\ell\right> + h \sum_{j=d+1}^n t_j \left<\dr_{x_k} w^j, v^\ell \right>\]
when $1\leq k \leq d$ and $1\leq \ell \leq d$, 
\[J_{k\ell} := \left<\dr_{x_k}\rho,w^\ell\right> = t_\ell \dr_{x_k} h + h \sum_{j=d+1}^n t_j \left<\dr_{x_k} w^j, w^\ell \right>\]
when $1\leq k \leq d$ and $d+1\leq \ell \leq n$, 
\[J_{k\ell} := \left<\dr_{t_k}\rho,v^\ell\right> =\frac{t_k}{r} \left<\dr_r \Phi_r 
,v^\ell\right> +   \frac{t_k}{r} h \sum_{j=d+1}^n t_j \left<\dr_r w^j,v^\ell\right>\]
when $d+1\leq k \leq n$ and $1\leq \ell \leq d$, and, setting $\delta_{k\ell} = 1$ if $k=\ell$ 
and $I_{k\ell} = 0$ otherwise,
\[J_{k\ell} := \left<\dr_{t_k}\rho,w^\ell\right> = h \, \delta_{k\ell} 
+ \frac{t_k}{r} \left<\dr_r \Phi_r ,w^\ell\right> + t_\ell \, \dr_{t_k} h 
+ \frac{t_k}{r} h \sum_{j=d+1}^n t_j \left<\dr_r w^j,w^\ell\right>,\]
when $d+1\leq k \leq n$ and $d+1\leq \ell \leq n$.

As previously, in the above expressions, $t=(t_{d+1},\dots,t_n) \in \Rt$ is the value of the second variable where $J$ is evaluated, and $r = |t|$.
\end{definition}

Let $Q = Q(x,t)$ denote the matrix of the change of basis from $\mathcal B_{vw}$ to 
$\mathcal B_e$; this is also the matrix of our isometry $R_{x,r}$ - in the sense that $R_{x,t}(y,u) = (y,u)Q(x,t)$ (recall that $(y,u)$ is an horizontal vector in $\R^n$) - and its coefficients are given by 
\begin{equation} \label{a4.9}
Q_{k\ell} 
= \left<e^k,v^\ell\right> 
\ \text{ for } 1\leq k \leq n \text{ and } 1\leq \ell \leq d,
\end{equation}
\begin{equation} \label{a4.10}
Q_{k\ell} 
= \left<e^k,w^\ell\right>
\ \text{ for } 1\leq k \leq n \text{ and } d+1\leq \ell \leq n.
\end{equation}
Observe that (just from this and the initial definition of the $J_{k\ell}$ as scalar products),
\begin{equation} \label{JisJacQ}
J = \Jac Q
\end{equation}

\medskip
We now decompose $J$ into a
sum of three matrices $J = J'+ H+ M$ where $J'$, $H$ and $M$ are defined 
as follows. All these matrices depend on $(x,t) \in \Omega_0$. 

\begin{definition}[The matrices $J'$, $H$ and $M$] \label{defJprimeandM}
Let $J'_1$ be 
the $d\times d$ matrix
with coefficients 
$(J'_1)_{k\ell } = \left<\hat v^k, v^\ell \right>$.
Then define the $n\times n$ matrix $J'$ by
\begin{equation} \label{a4.13}
J' = \begin{pmatrix} J'_1 & 0 \\ 0 & h\, I_{n-d}
\end{pmatrix}.
\end{equation}
Let $H$ be
the $n\times n$ matrix
with coefficients 
\renewcommand{\arraystretch}{1.5}
\begin{equation} \label{a4.14}
\begin{array}{l@{\hspace{2pt}}l@{\hspace{3pt}}l} 
H_{k\ell} &= 0  &\text{ for } \ell \leq d, \\
H_{k\ell} &= t_\ell \, \dr_{x_k} h \ &\text{ for } 1 \leq k \leq d < \ell \leq n, \\
H_{k\ell} &= t_\ell \, \dr_{t_k} h 
\ &\text{ for } k,\ell \in \llbracket d+1, n\rrbracket.
\end{array}
\end{equation}
\renewcommand{\arraystretch}{1}
Finally set $M= J-J'-H$, that is
\[
\begin{aligned}
M_{k\ell} &:= h \sum_{j=d+1}^n t_j \left<\dr_{x_k} w^j, v^\ell \right>
\qquad\text{ for $1\leq k \leq d$ and } 1\leq \ell \leq d, \\
M_{k\ell} &:= h \sum_{j=d+1}^n t_j \left<\dr_{x_k} w^j, w^\ell \right>
\qquad\text{ for $1\leq k \leq d$ and } d+1\leq \ell \leq n, \\
M_{k\ell} &:=\frac{t_k}{r} \left<\dr_r \Phi_r
,v^\ell\right> +   \frac{t_k}{r} h \sum_{j=d+1}^n t_j \left<\dr_r w^j,v^\ell\right>
\ \text{ for $d+1\leq k \leq n$ and } 1\leq \ell \leq d \\
M_{k\ell} &:= \frac{t_k}{r} \left<\dr_r \Phi_r 
,w^\ell\right>  + \frac{t_k}{r} h \sum_{j=d+1}^n t_j \left<\dr_r w^j,w^\ell\right>
\ \text{ for $d+1\leq k \leq n$ and } d+1\leq \ell \leq n.
\end{aligned}
\]
\end{definition}

In the decomposition $J = J'+H+M$, we will see that the main term is
$J'$, which is close to the identity, and $H+M$ is a small perturbation.
We start with the size of $M$ and recall our convention that in the estimates
that follow, $C$ is allowed to depend on $n$, $d$, $\eta$, $C_{h1}$,
and not $C_{h0}$ or $C_0$ as long as they stay bounded.

\begin{lemma} \label{defM}
There exists $C>0$ 
such that for 
$1\leq k,\ell\leq n$
\begin{equation} \label{a4.16}
|M_{k\ell}| \leq C \left( |\dr_r \varphi_r| + r|\nabla_{x,r}\nabla_x \varphi_r| \right).
\end{equation}
As previously, $r$ denotes the norm of the second variable where $M_{kl}$ is evaluated.
\end{lemma}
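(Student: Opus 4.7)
The plan is straightforward: estimate each of the four block cases in Definition \ref{defJprimeandM} by using Cauchy–Schwarz against the unit vectors $v^\ell, w^\ell$, the bound $|t_j|\le r$, the $L^\infty$ bound $h\le C_{h1}$ from \eqref{H1}, and the derivative estimate for $w^j$ coming from Lemma \ref{vrbyphirr}.

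First I would record the elementary facts that will be used uniformly: since $(v^1,\dots,v^d,w^{d+1},\dots,w^n)$ is an orthonormal basis, one has $|\langle u,v^\ell\rangle|\le|u|$ and $|\langle u,w^\ell\rangle|\le|u|$ for every $u\in\R^n$; the components satisfy $|t_j|\le r$ and $|t_k|/r\le 1$; from $\Phi_r(x)=(x,\varphi_r(x))$ we get $\partial_r\Phi_r(x)=(0,\partial_r\varphi_r(x))$, hence $|\partial_r\Phi_r|=|\partial_r\varphi_r|$; and by Lemma \ref{vrbyphirr}, $|\partial_{x_k}w^j|+|\partial_r w^j|\le C|\nabla_{x,r}\nabla_x\varphi_r|$ for every relevant $k,j$.

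Next I would treat the two cases with $1\le k\le d$. Using Cauchy–Schwarz, $|t_j|\le r$, $h\le C_{h1}$, and the bound on $|\partial_{x_k}w^j|$, both expressions $M_{k\ell}=h\sum_{j=d+1}^n t_j\langle \partial_{x_k}w^j,v^\ell\rangle$ and $M_{k\ell}=h\sum_{j=d+1}^n t_j\langle \partial_{x_k}w^j,w^\ell\rangle$ are bounded by
\[
C_{h1}\,(n-d)\,r\,\max_{j}|\partial_{x_k}w^j|\le C\,r\,|\nabla_{x,r}\nabla_x\varphi_r|,
\]
which is dominated by the right-hand side of \eqref{a4.16}. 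For the two remaining cases with $d+1\le k\le n$, I would split each $M_{k\ell}$ into its two summands. The summand involving $\partial_r\Phi_r$ is bounded via $|t_k|/r\le 1$ and Cauchy–Schwarz by $|\partial_r\Phi_r|=|\partial_r\varphi_r|$. The summand $\frac{t_k}{r}h\sum_j t_j\langle \partial_r w^j,\cdot\rangle$ is bounded, exactly as in the previous paragraph, by $C\,r\,|\nabla_{x,r}\nabla_x\varphi_r|$ using $|t_k|/r\le 1$, $|t_j|\le r$, $h\le C_{h1}$, and the estimate on $|\partial_r w^j|$ from Lemma \ref{vrbyphirr}. Summing the two contributions gives the desired bound.

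There is essentially no obstacle here; the lemma is a direct book-keeping computation from the explicit formulas in Definition \ref{defJprimeandM}. The only substantive input is Lemma \ref{vrbyphirr}, which was proved precisely to package the Gram–Schmidt derivatives in terms of $\nabla_{x,r}\nabla_x\varphi_r$. Note also that the derivatives $\partial_{x_k}h$ and $\partial_{t_k}h$ never appear in $M$ (they were collected into $H$ in Definition \ref{defJprimeandM}), which is why the smallness hypothesis \eqref{H2} plays no role in this particular estimate.
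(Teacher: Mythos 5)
Your proof is correct and takes essentially the same route as the paper: bound scalar products against unit vectors, use $|t_j|\le r$, $|t_k|/r\le 1$, and $h\le C_{h1}$, note $|\partial_r\Phi_r|=|\partial_r\varphi_r|$, and invoke Lemma \ref{vrbyphirr} to control the derivatives of the $w^j$ by $|\nabla_{x,r}\nabla_x\varphi_r|$. The paper's argument is the same, just stated more tersely in one inequality rather than case by case.
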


\begin{proof}
Recall that the vectors $v^i$, $1\leq i\leq d$, and the vectors $w^j$, $d+1\leq j\leq n$, 
are unit vectors. So with the definition of $M_{k\ell}$ given in Definition \ref{defJprimeandM}, 
one immediately gets that
\[|M_{k\ell}| \leq C \left( |\dr_r \varphi_r| + \sup_{d<j\leq n} rh |\nabla_{x,r} w^j| \right).\]
We now use the assumption \eqref{H1} on $h$ and Lemma \ref{vrbyphirr} to conclude.
\end{proof}

The coefficients of $M$ in Lemma \ref{defM} can be controlled by the Lipschitz constant $C_0$ 
with the help of the following result. 

\begin{lemma} \label{rgradientphilem}
There exists $C>0$ 
such that for 
$(x,r)\in \R^d \times (0,+\infty)$,
\begin{equation} \label{rgradientvestimate}
|\dr_r \varphi_r(x)| + r|\nabla_{x,r} \nabla_x \varphi_r(x)| \leq C C_0.
\end{equation}
\end{lemma}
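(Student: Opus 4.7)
The plan is an elementary calculation directly from the convolution formula $\varphi_r = \varphi \ast \eta_r$. The only substantive input is the Lipschitz bound $|\varphi(x-y) - \varphi(x)| \leq C_0 |y|$, together with the mean-value cancellation $\int_{\R^d} \dr_r^j \nabla_y^k \eta_r(y)\, dy = 0$ whenever $j+k \geq 1$, which follows by differentiating the normalization $\int \eta_r = 1$ in $r$ or applying the divergence theorem in $y$. After shifting a derivative from $\varphi$ onto the mollifier, this cancellation lets me replace $\varphi(x-y)$ by $\varphi(x-y) - \varphi(x)$ inside the integral, picking up the factor $|y|$ needed to pair with $|\dr_r^j \nabla_y^k \eta_r(y)|$ and produce an integral that scales correctly in $r$.

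First, I would treat $\dr_r \varphi_r(x)$. A direct computation gives $\dr_r \eta_r(y) = r^{-d-1} \chi(y/r)$ with $\chi(z) := -d\,\eta(z) - z\cdot \nabla\eta(z)$, a fixed bounded function supported in the unit ball and having zero mean. Hence
\begin{equation*}
\dr_r \varphi_r(x) = \int_{\R^d} [\varphi(x-y) - \varphi(x)]\, \dr_r \eta_r(y)\, dy,
\end{equation*}
and the Lipschitz bound yields
\begin{equation*}
|\dr_r \varphi_r(x)| \leq C_0 \|\chi\|_\infty\, r^{-d-1} \int_{|y|\leq r} |y|\, dy \leq C C_0.
\end{equation*}

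For the second-order term, the same strategy applied to $\dr_{x_i}\dr_{x_j}\varphi_r = \varphi \ast \dr_{x_i}\dr_{x_j} \eta_r$ and to $\dr_r \dr_{x_j}\varphi_r = \varphi \ast \dr_r \dr_{x_j} \eta_r$ produces kernels of the form $r^{-d-2} \Psi(y/r)$, with $\Psi$ a fixed compactly supported function of zero mean. Subtracting $\varphi(x)$ exactly as above and invoking the Lipschitz bound yields $|\nabla_{x,r}\nabla_x \varphi_r(x)| \leq C C_0 \, r^{-d-2} \int_{|y|\leq r} |y|\, dy \leq C C_0 \, r^{-1}$, which multiplied by $r$ gives the second half of \eqref{rgradientvestimate}. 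I do not foresee an obstacle: the lemma is really a scaling statement about bump functions, and the constants depend only on $n$, $d$, and finitely many seminorms of $\eta$.
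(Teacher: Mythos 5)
Your proof is correct. The mechanism differs slightly from the paper's: the paper systematically transfers one derivative onto $\varphi$, writing for instance $r\,\dr_r\dr_{x_k}\varphi_r = (r\dr_r\eta_r)\ast(\dr_{x_k}\varphi)$ and bounding this by $\|\nabla_x\varphi\|_\infty$ times the $L^1$-norm of the rescaled kernel (and, for the pure radial derivative $\dr_r\varphi_r$, it writes $\dr_r\eta_r=\diver_x\wt\eta_r$ and integrates by parts to land a derivative on $\varphi$). You instead keep all derivatives on the mollifier and exploit the zero-mean property of $\dr_r^j\nabla_y^k\eta_r$ to subtract $\varphi(x)$, then use the Lipschitz increment bound $|\varphi(x-y)-\varphi(x)|\le C_0|y|$ to recover the missing power of $r$. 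The two devices are dual to one another via integration by parts, and both yield constants depending only on $n$, $d$, and finitely many norms of $\eta$. Your version has the minor advantage of treating all the terms by one uniform cancellation argument and of using only the Lipschitz seminorm rather than the pointwise bound on $\nabla_x\varphi$; the paper's version for the mixed derivatives is marginally more direct since no cancellation is needed there. All the kernel identities you assert (e.g.\ $\dr_r\eta_r(y)=r^{-d-1}\chi(y/r)$ with $\chi(z)=-d\,\eta(z)-z\cdot\nabla\eta(z)$ of zero mean, and the analogous $r^{-d-2}\Psi(y/r)$ forms for the second-order kernels) check out, so there is no gap.
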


\ms
\begin{proof}
The lemma will be proven as soon as we establish the bounds
\begin{equation} \label{rphirxbyC0}
r|\dr_{r}\dr_{x_k} \varphi_r(x)| \leq CC_0
\end{equation}
for $k \in \{1,\dots,d\}$, 
\begin{equation} \label{rphixxbyC0}
r|\dr_{x_\ell}\dr_{x_k} \varphi_r(x)| \leq CC_0
\end{equation}
for $\ell,k \in \{1,\dots,d\}$, and 
\begin{equation} \label{phirbyC0}
|\dr_{r} \varphi_r(x)| \leq CC_0.
\end{equation}

Let $k\in \{1,\dots,d\}$. One has the relation 
\begin{equation} \label{rphirx=cv}
r\dr_{r}\dr_{x_k} \varphi_r(x) = (r\dr_r  \eta_r) * (\dr_{x_k} \varphi)(x),
\end{equation}
 and $r\dr_r  \eta_r$ can be rewritten as
\begin{equation} \label{retar=eta2}
r\dr_r \eta_r(x) = r\dr_r \left[\frac1{r^d} \eta\left( \frac xr\right)\right] 
= - \frac{1}{r^d} \left[ d \eta\left( \frac xr\right) 
+ \frac{x}r \cdot \nabla \eta\left( \frac xr\right) \right] = \hat \eta_r(x)
\end{equation}
where $\hat \eta(x) = -d\eta(x) - x\cdot \nabla \eta(x)$ and 
$\hat \eta_r(x) = r^{-d} \hat\eta(\frac xr)$. Hence
\[\begin{split}
|r\dr_{r}\dr_{x_k} \varphi_r(x)| & = |\hat \eta_r * (\dr_{x_k} \varphi)(x)| \leq \sup_{y\in \R^d} |\dr_{x_k} \varphi(y)| \int_{\R^d} |\hat \eta_r(y)| dy \\
& \leq C_0 \int_{\R^d} |\hat \eta(y)| dy = C_\eta C_0.
\end{split}\]
The inequality \eqref{rphirxbyC0} follows.
Next, let $1\leq i,k\leq d$. In the same way as before,
\[|r\dr_{x_i}\dr_{x_k} \varphi_r(x)| 
\leq  |(r\dr_{x_i} \eta_r)*(\dr_{x_k} \varphi)(x)| \leq C_0 \int_{\R^n} |r\dr_{x_i} \eta_r|.\]
Yet, for any $r>0$, by a simple change of variable, 
\[\int_{\R^d} |r\dr_{x_i} \eta_r| = \int_{\R^d} |(\dr_{x_i} \eta)_r| 
= \int_{\R^d} |\dr_{x_i} \eta| 
= C_\eta.\]
The inequality \eqref{rphixxbyC0} follows.
Now we prove
\eqref{phirbyC0}. We have
\[\dr_r \varphi_r(x) = (\dr_r \eta_r) * \varphi(x),\]
and, with the help of \eqref{retar=eta2}, $\dr_r \eta_r$ can be rewritten as
\[\dr_r \eta_r(x) = - \frac{1}{r^{d+1}} \left[ d \eta\left( \frac xr\right) 
+ \frac{x}r \cdot \nabla \eta\left( \frac xr\right) \right] = \diver_x \wt \eta_r(x)\]
where $\wt \eta(x) = -x \eta\left( x \right)$ and $\wt\eta_r(x) = r^{-d} \wt \eta(\frac xr)$. 
As a consequence, if $\varphi = (\varphi^{d+1},\dots,\varphi^n)$, 
we obtain that for $j\in \{d+1,n\}$,
\[\begin{split}
\dr_r \varphi_r^j(x) & = (\diver_x \wt \eta_r) * \varphi^j(x) 
= \int_{\R^d} \varphi^j(x-y)  \diver_x \wt \eta_r(y) dy \\
& = \int_{\R^d}  \left< \wt \eta_r(y), \nabla_x \varphi^j(x-y)\right> dy 
\end{split}\]
and then
\[\begin{split}
|\dr_r \varphi_r(x)| & \leq \sup_{y\in \R^d} |\nabla_x \varphi(y)| \int_{\R^d} |\wt \eta_r (y)| dy 
= C_0 \int_{\R^d} |\wt \eta (y)| dy  = C_\eta C_0.
\end{split}\]
Lemma \ref{rgradientphilem} follows.
\end{proof}

The combination of Lemma \ref{defM} and Lemma \ref{rgradientphilem} gives that 
\begin{equation} \label{MbyC0}
|M_{k\ell}(x,t)| \leq C C_0
\end{equation}
whenever $1\leq k,\ell \leq n$ and $(x,t) \in \Omega_0$. 
Here and in the next estimates, the constant $C$ in \eqref{MbyC0} depends on $C_0$, 
but this is all right because $C$ stays bounded when $C_0 \leq 1$.

The coefficients for $H$ are easier to control, since \eqref{a4.14} and \eqref{H2} yield
\begin{equation} \label{a4.25}
|H_{k\ell}| \leq C_{h0}
\ \text{ for } 1 \leq k, \ell \leq n.
\end{equation}

\medskip

We shall now estimate the determinant of $\Jac$ to prove the local invertibility of $\rho$.
We start with $J$ and $J'$.

\begin{lemma} \label{Jacobianneq0}
Recall that $J'$ is defined in Definition \ref{defJprimeandM}.  Then
\begin{equation} \label{detJ-detJp}
|\det(J) - \det(J')| 
\leq C  \left( |\dr_r \varphi_r| + r|\nabla_{x,r} \nabla_x \varphi_r| 
+ r|\nabla_{x,t} h| \right)
\end{equation}
and 
\begin{equation} \label{detJp-1}
\sup_{1\leq i \leq d} \left| \left< \hat v^i,v^i \right> -1 \right| + |\det(J')- h^{n-d}| \leq CC_0^2.
\end{equation}

As a consequence, for any $\epsilon>0$, there exists $c_\epsilon>0$, depending only on 
$C_{h1}$, $\eta$ and $\epsilon$, such that if  $C_0+C_{h0}\leq c_\epsilon$, then
\begin{equation} \label{detJsim1}
(1-\epsilon) h^{n-d} \leq \det(J) \leq (1+\epsilon) h^{n-d}
\end{equation}
and
\begin{equation} \label{barvvsim1}
\sup_{1\leq i \leq d} \left| \left< \hat v^i,v^i \right> -1 \right| \leq \epsilon.
\end{equation}
\end{lemma}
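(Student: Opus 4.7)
The plan is to exploit the decomposition $J = J' + H + M$ together with multilinearity of the determinant, and to use the fact that $J'$ has a simple block structure whose upper-left block is triangular thanks to the Gram-Schmidt construction.

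\textbf{First estimate: $|\det(J) - \det(J')|$.} I would set $\delta := |\dr_r \varphi_r| + r|\nabla_{x,r}\nabla_x \varphi_r| + r|\nabla_{x,t} h|$. By Lemma~\ref{defM} the entries of $M$ are bounded by $C$ times the first two terms of $\delta$, and from the definition of $H$ in \eqref{a4.14} together with $|t_\ell| \leq r$ the entries of $H$ are bounded by $r|\nabla_{x,t} h|$, so every entry of $H + M$ is bounded by $C\delta$. The entries of $J'$ are bounded, since $|(J'_1)_{k\ell}| = |\langle \hat v^k, v^\ell\rangle| \leq |\hat v^k| \leq \sqrt{1+C_0^2}$ by \eqref{a3.8}, and the remaining entries are either $0$ or $h$, with $h \leq C_{h1}$. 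Expanding $\det(J) = \det(J' + H + M)$ by multilinearity in the columns yields $2^n$ terms; the single term with all columns taken from $J'$ is $\det(J')$, and each of the $2^n - 1$ remaining terms has at least one column from $H + M$ and is therefore bounded (by Hadamard's inequality or direct expansion) by $C\delta^j$ for some $j\geq 1$. Since our standing assumption $C_0 + C_{h0} \leq 1$ together with Lemma~\ref{rgradientphilem} and \eqref{H2} gives $\delta \leq C$, summing yields $|\det(J) - \det(J')| \leq C\delta$, which is exactly \eqref{detJ-detJp}.

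\textbf{Second estimate: $|\det(J') - h^{n-d}|$ and $|\langle \hat v^i, v^i\rangle - 1|$.} The block structure of $J'$ gives $\det(J') = \det(J'_1)\, h^{n-d}$. The key observation is that $v^\ell$ lies in the span of $\hat v^1,\ldots,\hat v^\ell$ and is orthogonal to the span of $\hat v^1,\ldots,\hat v^{\ell-1}$ by the Gram-Schmidt construction, so $\langle \hat v^k, v^\ell\rangle = 0$ whenever $k < \ell$; the matrix $J'_1$ is thus lower triangular and $\det(J'_1) = \prod_{i=1}^{d} \langle \hat v^i, v^i\rangle$. Writing $v^i = \wt v^i/|\wt v^i|$ and using the orthogonality of $\wt v^i$ to $v^1,\ldots,v^{i-1}$ one checks that $\langle \hat v^i, \wt v^i\rangle = |\wt v^i|^2$, so $\langle \hat v^i, v^i\rangle = |\wt v^i|$. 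From \eqref{a3.12} we have $|\wt v^i| \geq 1$, while the Pythagorean identity $|\wt v^i|^2 = |\hat v^i|^2 - \sum_{\ell<i}\langle \hat v^i, v^\ell\rangle^2 \leq |\hat v^i|^2 = 1 + |\dr_{x_i}\varphi_r|^2 \leq 1 + C_0^2$ gives the upper bound, so
\[
0 \leq \langle \hat v^i, v^i\rangle - 1 = |\wt v^i| - 1 \leq \sqrt{1+C_0^2} - 1 \leq C C_0^2.
\]
This yields \eqref{barvvsim1}, and multiplying these estimates across $i = 1,\dots,d$ gives $|\det(J'_1) - 1| \leq CC_0^2$, hence $|\det(J') - h^{n-d}| = h^{n-d}|\det(J'_1) - 1| \leq C C_0^2$ using \eqref{H1}.

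\textbf{Consequence.} Combining the two estimates,
\[
\bigl|\det(J) - h^{n-d}\bigr| \leq C C_0^2 + C\delta \leq C (C_0 + C_{h0}),
\]
where the last inequality uses Lemma~\ref{rgradientphilem} to bound $|\dr_r\varphi_r| + r|\nabla_{x,r}\nabla_x\varphi_r| \leq CC_0$ and \eqref{H2} to bound $r|\nabla_{x,t}h| \leq C_{h0}$. Since $h^{n-d} \geq C_{h1}^{-(n-d)}$ is bounded below by a positive constant depending only on $C_{h1}$, choosing $C_0 + C_{h0} \leq c_\epsilon$ sufficiently small (depending on $\epsilon$, $C_{h1}$, and $\eta$) forces $|\det(J)/h^{n-d} - 1| \leq \epsilon$, which is \eqref{detJsim1}; the bound \eqref{barvvsim1} follows directly from the $CC_0^2$ estimate above. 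The main obstacle is keeping track of the interplay between the bounds on $H+M$ and those on $J'$ so that the multilinear expansion truly produces a linear, rather than merely polynomial, dependence on $\delta$; this is where the standing assumption $C_0 + C_{h0} \leq 1$ is essential.
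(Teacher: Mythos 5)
Your proof is correct and follows essentially the same route as the paper's: the multilinear expansion of $\det(J'+H+M)$ with the entry bounds $|H_{k\ell}|+|M_{k\ell}|\lesssim \delta$ and $|J'_{k\ell}|\leq C$, followed by the lower-triangularity of $J'_1$ from Gram--Schmidt giving $\det(J'_1)=\prod_i\langle\hat v^i,v^i\rangle$. Your identity $\langle\hat v^i,v^i\rangle=|\wt v^i|$ is a slightly cleaner packaging of the paper's separate upper and lower bounds (it even yields the one-sided estimate $0\leq\langle\hat v^i,v^i\rangle-1\leq CC_0^2$), but the overall argument is the same.
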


\begin{proof}
We will use the matrices $J'$, $H$ and $M$ introduced in Definition \ref{defJprimeandM}. We have 
\[\det(J) = \det(J'+ H + M) = \sum_{\sigma \in \mathfrak S_n} \sgn(\sigma) \prod_{i=1}^n (J'_{i,\sigma(i)} + H_{i,\sigma(i)} + M_{i,\sigma(i)}).\]
We develop the above formula and we decompose the sum into two parts: the terms that are products of coefficients of $J'$ and the terms that contain at least one coefficient of $M$ or $H$ and we have

\begin{eqnarray} 
|\det(J)-\det(J')| & = & \left|\sum_{\sigma \in \mathfrak S_n} \sgn(\sigma) 
\prod_{i=1}^n (J'_{i,\sigma(i)} + H_{i,\sigma(i)} + M_{i,\sigma(i)}) 
- \sum_{\sigma \in \mathfrak S_n} \sgn(\sigma) \prod_{i=1}^n J'_{i,\sigma(i)} \right| 
\nn\\
&\leq& C \left(\sup_{1\leq k,\ell\leq n} |M_{k\ell}| + |H_{k\ell}| \right) \left(\sup_{1\leq k,\ell\leq n}  |J'_{k\ell}| + |H_{k\ell}|+ |M_{k\ell}| \right)^{n-1}.
\end{eqnarray}
Recall from \eqref{MbyC0} and \eqref{a4.25} that $|M_{k\ell}| + |H_{k \ell}|\leq CC_0+ C_{h0} \leq C$.
Moreover,  
\[  \sup_{1\leq k,l\leq n} |J'_{kl}| 
\leq  \sup_{1\leq i\leq d} |\hat v^i| \leq \sqrt{1+C_0^2} \leq C\]
by Definition \ref{defJprimeandM} and \eqref{a3.8}.
These two latter facts prove that 
\[\Big(\sup_{1\leq k,\ell\leq n}  |J'_{k\ell}| + |H_{k\ell}|+ |M_{k\ell}| \Big)^{n-1} \leq C, \]
and hence
\[|\det(J)-\det(J')| \leq C \sup_{1\leq k,\ell\leq n} |M_{k\ell}| + |H_{k\ell}|. \]
Lemma \ref{defM} and the definition of $H$ allows us to conclude
\[|\det(J)-\det(J')| \leq C \left( |\dr_r \varphi_r| + r|\nabla_{x,r} \nabla_x \varphi_r| 
+ r|\nabla_{x,t} h| \right), \] 
which is exactly \eqref{detJ-detJp}.

\medskip
Let us turn to the proof of \eqref{detJp-1}.
Let $i\in \{1,\dots,d\}$. Since $|v^i|= 1$, \eqref{a3.8} yields
\begin{equation} \label{UBforbarvivi} \left|\left<\hat v^i,v^i\right>\right| \leq |\hat v^i| \leq \sqrt{1+C_0^2}.\end{equation}
We also want a a lower bound on $\left<\hat v^i,v^i\right>$. Since 
$v^{i} = |\wt v^{i}|^{-1} \ \wt v^{i}$ (by \eqref{a3.10} for $i > 1$ and 
by convention for $i=1$),
\begin{equation} \label{LBforbarvivi}
\left<\hat v^i,v^i\right> = |\wt v^i|^{-1} \, \left<\wt v^i,\hat v^i \right>
\end{equation}
so we want  
a lower bound for $\left<\wt v^i,\hat v^i \right>$ and an upper bound for $|\wt v^i|$. 
The latter is
\[|\wt v^i|^2 = |\hat v^i|^2 - \sum_{\ell=1}^{i-1} |\left< \hat v^i,v^\ell \right>|^2 \leq |\hat v^i|^2 \leq 1+C_0^2,\]
which follows from \eqref{a3.9} because
$\{v^\ell\}_{1\leq \ell \leq i-1}$ is an orthonormal family of vectors. 
Moreover, 
$\{v^1,\dots,v^{i-1},e^i\}$ is also an orthonormal family 
(because all the $\hat v^\ell$, $\ell < i$, are orthogonal to $e_i$), and so 
\[|\hat v^i|^2 \geq \left< \hat v^i,e^i\right>^2 + \sum_{k=1}^{i-1} \left< \hat v^i,v^k\right>^2\]
and thus by \eqref{a3.9} again
\[\begin{split}
\left<\wt v^i,\hat v^i \right> & = |\hat v^i|^2 - \sum_{k=1}^{i-1} \left< \hat v^i,v^k\right>^2 \geq \left< \hat v^i,e^i\right>^2 + \sum_{k=1}^{i-1} \left< \hat v^i,v^k\right>^2 - \sum_{k=1}^{i-1} \left< \hat v^i,v^k\right>^2 = 1.
\end{split}\]
The previous inequalities prove that 
\[\frac{1}{\sqrt{1+C_0^2}}\leq \left<\hat v^i,v^i\right> \leq \sqrt{1+C_0^2}\]
and hence, subtracting $1$, 
\begin{equation} \label{barvv-1}
 -\frac{C_0^2}{2} \leq \frac{1}{\sqrt{1+C_0^2}}-1 \leq \left<\hat v^i,v^i\right> - 1 \leq \sqrt{1+C_0^2} - 1 \leq \frac{C_0^2}{2}.\end{equation}
This proves the first half of \eqref{detJp-1}, which will now
be proved fully as soon as we show that $|\det(J')-h^{n-d}|
\leq CC_0^2$. 

By \eqref{a4.13}, $\det(J') =h^{n-d} \det(J'_1)$. Then recall that
$(J'_1)_{k\ell} = \left<\hat v^k,v^\ell\right>$ by Definition \ref{defJprimeandM}.
By construction of $v^\ell$ (see \eqref{a3.9}), this vanishes when $1\leq k<\ell\leq d$.
Therefore, $J'_1$ is a lower triangular matrix and 
\begin{equation} \label{a4.35}
\det(J') =h^{n-d} \det(J'_1)= h^{n-d} \prod_{i=1}^d \left<\hat v^i,v^i\right>.
\end{equation}
Then by \eqref{barvv-1}
\begin{eqnarray} \label{a4.36}
|\det(J')-h^{n-d}| &=& h^{n-d}\left|1 - \prod_{i=1}^d \left<\hat v^i,v^i\right>\right| 
\nn\\
&\leq& C h^{n-d} \left(\sup_{1\leq i \leq d} |\left<\hat v^i,v^i\right> - 1|\right) 
\left(\sup_{1\leq i \leq d} 1+ |\left<\hat v^i,v^i\right>|\right)^{d-1} \leq CC_0^2.
\end{eqnarray}
This completes the proof of \eqref{detJp-1}.
It remains to prove the second part of the lemma. 
The estimate \eqref{barvvsim1} is immediate from \eqref{detJp-1}.
Next 
\[|\det(J) - \det(J')| 
\leq C  \left( |\dr_r \varphi_r| + r|\nabla_{x,r} \nabla_x \varphi_r| + r|\nabla_{x,t} 
h| \right)
\leq C(C_0+C_{h0})\]
by \eqref{detJ-detJp}, Lemma~\ref{rgradientphilem}, and \eqref{H2}; and
where $C>0$ is independent of $C_0$ and $C_{h_0}$ provided that, for instance, 
$C_0 + C_{h0}\leq 1$. Together with\eqref{detJp-1}, this yields
\[|\det(J) - h^{n-d}| \leq C(C_0+C_{h0}) \leq Ch^{n-d} (C_0+C_{h0}),\]
whenever $C_0+C_{h0}\leq 1$. The bound \eqref{detJsim1} follows.
 \end{proof}
 
 \ms
 From now on we assume that $C_0+C_{h0}$ is small enough (depending on 
 $n$, $d$, $\eta$, and $C_{h1}$), so that 
\eqref{barvvsim1} and \eqref{detJsim1} hold for some $\epsilon <1$.
 Similar additional conditions will be given soon.
 
Already with these conditions, \eqref{detJsim1} says that $\det(J(x,t)) \neq 0$
on $\Omega_0$. Then, since \eqref{JisJacQ} says that $J = \Jac Q$ for some orthogonal matrix $Q$,
\begin{equation} \label{Jacneq0}
\det(\Jac(x,t)) = \det(J(x,t)) \neq 0.
\end{equation}
That is, $\Jac(x,t)$ is invertible. We also know from Lemma \ref{viCinfty} and the continuous differentiability of $h$ that
\begin{equation} \label{a4.38}
\rho \text{ is continuously differentiable in } \Omega_0, 
\end{equation}
so the inverse function theorem says that there is a small neighborhood of $(x,t)$
where $\rho$ is a diffeomorphism. In particular, $\rho$ is an open mapping.
In the next result, we prove - if $C_0+C_{h0}$ is small - that 
\begin{equation} \label{a4.39}
\rho(\Omega_0) \subset \Omega = \R^n \sm \Gamma.
\end{equation}
Recall that $\Phi : \R^d \to \Gamma$ is the Lipschitz function defined by $\Phi(x) := (x,\varphi(x))$.

\begin{lemma} \label{controldistprop}
Let $\epsilon \in (0,1)$ be given.
If $C_0+C_{h0}$ is small enough (depending on $\epsilon$, $n$, $d$, $\eta$ and $C_{h1}$),
then for any $(x,t)\in \Omega_0$, 
\begin{equation} \label{controlofdist}
(1-\epsilon) r  h(x,t)  \leq |\rho(x,t)-\Phi(x)| \leq (1+\epsilon)r  h(x,t) 
\end{equation}
and
\begin{equation} \label{controlofdist2}
\dist(\rho(x,t),\Gamma) \geq (1-\epsilon) r  h(x,t) > 0,
\end{equation}
where as usual $r=|t|$.
\end{lemma}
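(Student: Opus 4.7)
The plan is to establish the three assertions in the order (i) upper bound for $|\rho(x,t)-\Phi(x)|$, (ii) lower bound for $\dist(\rho(x,t),\Gamma)$, and then (iii) deduce the lower bound for $|\rho(x,t)-\Phi(x)|$ for free from (ii). Throughout I would use the decomposition
\[
\rho(x,t)-\Phi(x) = \bigl[\Phi_r(x)-\Phi(x)\bigr] + h(x,t)\,R_{x,r}(0,t),
\]
together with the two basic facts that $R_{x,r}$ is an isometry (so $|R_{x,r}(0,t)|=r$) and that $|\Phi_r(x)-\Phi(x)|=|\varphi_r(x)-\varphi(x)|\le C\,C_0\,r$. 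The latter follows from $\varphi_r=\varphi\ast\eta_r$ and the fact that $\varphi$ is $C_0$-Lipschitz: $|\varphi_r(x)-\varphi(x)|\le C_0\int|y|\,\eta_r(y)\,dy = C_0 r\int|z|\eta(z)dz$.

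For (i), the triangle inequality applied to the decomposition above immediately gives
\[
\bigl|\,|\rho(x,t)-\Phi(x)| - h(x,t)r\,\bigr| \le |\varphi_r(x)-\varphi(x)| \le C\,C_0\, r.
\]
Since $h(x,t)\ge C_{h1}^{-1}$ by \eqref{H1}, requiring $CC_0 C_{h1}\le\epsilon$ absorbs the error into $\epsilon h(x,t)r$, which yields \eqref{controlofdist}.

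The heart of the argument is (ii). I would pick an arbitrary $y\in\R^d$ and bound $|\rho(x,t)-\Phi(y)|$ from below by splitting on the size of $\delta=|y-x|$. Let $n:=R_{x,r}(0,t)/r$, which is a unit vector orthogonal to $P'(x,r)$ thanks to \eqref{a3.16}. When $\delta$ is large, say $\delta\ge Kr$ for a large constant $K$ to be chosen (depending on $C_{h1}$ and $\epsilon$), a direct triangle-inequality estimate combined with (i) yields $|\rho(x,t)-\Phi(y)|\ge\delta-|\rho(x,t)-\Phi(x)|-|\Phi(x)-\Phi(y)|\ge Kr - (1+\epsilon)C_{h1}r - 2\delta$-type bound; after fixing $K$ large enough this is $\ge(1-\epsilon)h(x,t)r$. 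When $\delta\le Kr$, I would project onto $n$:
\[
|\rho(x,t)-\Phi(y)|\ge \bigl\langle \rho(x,t)-\Phi(y),\,n\bigr\rangle = h(x,t)r + \langle \Phi_r(x)-\Phi(y),\,n\rangle,
\]
and estimate the last inner product by writing $\Phi_r(x)-\Phi(y)=[\Phi_r(x)-\Phi_r(y)]+[\Phi_r(y)-\Phi(y)]$. The second piece contributes at most $CC_0 r$. For the first piece I would use the integral formula
\[
\Phi_r(x)-\Phi_r(y)=\int_0^1 \sum_{i=1}^d (x_i-y_i)\,\hat v^i\bigl(sx+(1-s)y,r\bigr)\,ds
\]
together with the key orthogonality $\langle \hat v^i(x,r),n\rangle=0$ (since $\hat v^i(x,r)\in P'(x,r)$), to rewrite each integrand as $\langle \hat v^i(z,r)-\hat v^i(x,r),n\rangle$ with $z=sx+(1-s)y$. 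By the mean value theorem and Lemma~\ref{rgradientphilem} (which gives $|\nabla^2_x\varphi_r|\le CC_0/r$), this is bounded by $CC_0|z-x|/r$, so integration yields $|\langle \Phi_r(x)-\Phi_r(y),n\rangle|\le CC_0\delta^2/r\le CC_0 K^2 r$. Taking $C_0$ small enough (depending on $\epsilon$, $K$, $C_{h1}$) makes the total error strictly less than $\epsilon h(x,t)r$, giving the desired lower bound.

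Finally, (iii) is automatic since $\Phi(x)\in\Gamma$ implies $|\rho(x,t)-\Phi(x)|\ge \dist(\rho(x,t),\Gamma)\ge(1-\epsilon)h(x,t)r$, and the positivity in \eqref{controlofdist2} is then clear from $h\ge C_{h1}^{-1}>0$ and $r>0$. The main obstacle I anticipate is the quadratic bound on the normal component of $\Phi_r(x)-\Phi_r(y)$: it is crucial that the leading linear term cancels because $\hat v^i(x,r)\perp n$, and this cancellation is exactly what lets one control an $O(\delta)$-sized displacement by an $O(\delta^2/r)$ error; without it, the argument would fail for $\delta\gg r$.
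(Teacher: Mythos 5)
Your proof is correct, but takes a genuinely different route in the core estimate (the lower bound on $\dist(\rho(x,t),\Gamma)$). The paper projects onto the \emph{fixed} vertical space $(\R^d)^\perp$: it first shows that $R_{x,r}(0,t)$ is within $C_0 r$ of the vertical vector $e(t)=\sum_{j>d}t_je^j$ (because $|w^j-e^j|\lesssim C_0$, see \eqref{wt-et}), and then uses the raw Lipschitz bound $|\varphi(y)-\varphi(x)|\le C_0|y-x|$ to control the vertical component of $\Phi(y)-\Phi(x)$, yielding a linear-in-$|y-x|$ error. You instead project onto the \emph{moving} unit normal $n=R_{x,r}(0,t)/r\in P'(x,r)^\perp$, exploiting the cancellation $\langle\hat v^i(x,r),n\rangle=0$ together with the $\nabla^2_x\varphi_r$ bound from Lemma~\ref{rgradientphilem} to obtain a quadratic error $\lesssim C_0|y-x|^2/r$. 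Both routes produce the same $O(C_0 r)$ bound when $|y-x|\lesssim r$, so the quadratic gain isn't actually essential here: your closing remark that the argument would otherwise fail for $\delta\gg r$ overstates the point, since that regime is handled in both proofs by the crude triangle inequality $|\rho(x,t)-\Phi(y)|\ge|\Phi(x)-\Phi(y)|-|\rho(x,t)-\Phi(x)|\ge\delta-(1+\epsilon)C_{h1}r$. (That display also corrects a small slip in your written large-$\delta$ chain; the intent there is nonetheless clear and right.) Your route is arguably cleaner conceptually, since it works entirely with the approximating plane $P(x,r)$ and never needs to compare to the vertical direction, while the paper's is more elementary in that it uses only the first derivative of $\varphi$ and avoids the mean-value step for $\hat v^i$.
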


\begin{proof}
Notice before we start that \eqref{controlofdist2} implies \eqref{a4.39}.
Now let $(x,t)\in \Omega_0$ be given, and write $t=(t_{d+1},\dots,t_n)$ as usual.
By \eqref{defrhobis},
\begin{equation} \label{defrho2}
\rho(x,t) - \Phi(x) = \Phi_r(x) - \Phi(x) 
+ h(x,t) R_{x,r}(0,t).  
\end{equation}
Notice that since $\Phi_r(x) - \Phi(x) = (0, \varphi_r(x)-\varphi(x))$, 
\begin{equation} \label{calculusCeta}\begin{split}
|\Phi_r(x) - \Phi(x)| & =
|\varphi_r(x) - \varphi(x)|  \leq  \int_{\R^d} \eta_r(y) |\varphi(x-y)-\varphi(x)| dy \\
& \leq C_0 \int_{\R^d} |y|\eta_r(y) dy \leq C_0 r \int_{\R^d} \eta_r (y) dy = C_0 r
\end{split}\end{equation}
because $\eta_r$ is supported in $B(0,r)$ and $\int \eta_r (y) dy = 1$.

Recall that $R_{x,r}$ is an isometry, so $|h(x,t) R_{x,r}(0,t)| = r h(x,t)$, and by \eqref{defrho2}
\begin{equation} \label{a4.45}
\big| |\rho(x,t) - \Phi(x)|- r h(x,t) \big| \leq |\Phi_r(x) - \Phi(x)| \leq C_0 r.
\end{equation}
Thus \eqref{controlofdist} as soon as $C_0 < C_{h1}^{-1} \epsilon$ in \eqref{a4.45}.

It remains to prove  
\eqref{controlofdist2}, which is stronger than the lower bound 
in \eqref{controlofdist}. We take any point $\Phi(y)$, $y\in \R^d$, of $\Gamma$, and we want
to show that 
\begin{equation} \label{a4.46}
|\rho(x,t) - \Phi(y)| \geq (1-\epsilon) r  h(x,t) > 0.
\end{equation}
The idea is roughly as follows.
We know that $h(x,t) R_{x,r}(0,t)$ lies in $P'(x,r)^\perp$ and that $P'(x,r)$ is almost parallel to 
$\R^d$, so $h(x,t) R_{x,r}(0,t)$ should be almost orthogonal to $\R^d$. On the other
hand, $\Phi(x)-\Phi(y)$ is almost parallel to $\R^d$, so the sum
$h(x,t) R_{x,r}(t) + \Phi(x)-\Phi(y)$ should be larger or almost as large as $h(x,t) R_{x,r}(t)$.
Finally, we add $\Phi_r(x) - \Phi(x)$ which we know is much smaller, and we should get \eqref{a4.46}.

We first estimate the direction of the $w^j$, $j \geq d+1$. Observe that 
$\left< \wt w^j,e^j\right> = \left< \hat w^j,e^j\right> = 1$ by the proof of \eqref{a3.11}
(but with the $w^j$). Besides, $|\wt w^j| \leq |\hat w^j|$ because $\wt w^j$
is an orthogonal projection of $\hat w^j$ (see \eqref{a3.13}), and 
$|\hat w^j| \leq (1+C_0^2)^{1/2}$ by \eqref{a3.8}. Hence
\begin{equation} \label{wjejbfb}
\left< w^j,e^j\right> = |\wt w^j|^{-1} \left< \wt w^j,e^j\right> = |\wt w^j|^{-1}
\geq (1+C_0^2)^{-1/2}
\end{equation}
by \eqref{a3.14}. Hence, since $w^j$ and $e^j$ are both unit vectors,
\[|w^j-e^j|^2 = 2\left(1- \left< w^j,e^j\right>\right) 
\leq 2\left(1- (1+C_0^2)^{-1/2} \right) \leq C_0^2.\]
Since $R_{x,r}(0,t) = \sum_{j=d+1}^n t_j w^j$, 
\begin{equation} \label{wt-et}
\left| R_{x,r}(0,t) - \sum_{j=d+1}^n t_j e^j \right| 
\leq \left( \sum_{j=d+1}^n |t_j|^2 \right)^\frac12 
\left( \sum_{j=d+1}^n |w^j-e^j|^2 \right)^\frac12 \leq C_0 r.
\end{equation}
Set $e(t) = \sum_{j=d+1}^n t^j e^j$. The definition \eqref{defrho2} gives 
\begin{equation} \label{controlofdist3} \begin{split}
|\rho(x,t)-\Phi(x) - h(x,t) e(t)| 
&\leq |\Phi_r(x)-\Phi(x)| + h(x,t) |R_{x,r}(0,t) - e(t)|  \\
&\leq  C_0 r + C_0 r h(x,t) \leq (1+ C_{h1}) C_0r
\end{split} \end{equation}
by \eqref{calculusCeta}, \eqref{wt-et}, and \eqref{H1}.

Now we control the vertical part of $\Phi(y)-\Phi(x)$, i.e., $\varphi(y)-\varphi(x)$. 
Denote by $\pi^\perp$ the orthogonal projection on $(\R^d)^\perp$. Then 
$|\pi^\perp(\Phi(y)-\Phi(x))| \leq C_0 |y-x|$, just because $\varphi$ is $C_0$-Lipschitz.
If $|y-x| \leq 2C_{h1}r$, then 
\begin{eqnarray} \label{a4.50}
|\rho(x,t) - \Phi(y)| &\geq& |\pi^\perp(\rho(x,t) - \Phi(y))|
\nn\\
&\geq& |\pi^\perp(h(x,t) e(t))| - |\pi^\perp(\rho(x,t) - h(x,t) e(t) - \Phi(y))|
\nn\\
&=& h(x,t) - |\pi^\perp(\rho(x,t) - h(x,t) e(t) - \Phi(y))|
\nn\\
&\geq& h(x,t) - |\rho(x,t)-\Phi(x) - h(x,t) e(t)| - |\pi^\perp(\Phi(y)-\Phi(x))|
\\
&\geq& h(x,t) - (1+ C_{h1}) C_0 r - C_0 |y-x| \geq h(x,t) - CC_0 r,
\nn
\end{eqnarray}
which implies \eqref{a4.46} when $C_0$ is small enough.
If instead $|y-x| \geq 2C_{h1}r$, then
\begin{eqnarray} \label{a4.51}
|\rho(x,t) - \Phi(y)| &\geq& |\Phi(x) - \Phi(y)| - |\rho(x,t) - \Phi(x)|
\nn\\
&\geq& |x-y| - (1+\epsilon) r h(x,t)
\geq 2C_{h1} r - (1+\epsilon) r h(x,t)  
\nn\\
&\geq& (2C_{h1} - (1+\epsilon)h(x,t)) r 
\geq (1-\epsilon)C_{h1}r \geq (1-\epsilon)h(x,t) r
\end{eqnarray}
by \eqref{controlofdist} and \eqref{H1}.
This establishes \eqref{a4.46} in our second case;
\eqref{controlofdist2} and Lemma \ref{controldistprop} follow.
\end{proof}

\ms 
Lemma \ref{controldistprop} is useful, in particular because it allows us to control the inverse 
images of sets. Suppose from now on that $C_0$ is chosen so small that the conclusion of
the lemma holds with $\epsilon = 1/2$. Then for $(x,t) \in \Omega_0$,
\begin{equation} \label{a4.52}
\dist(\rho(x,t), \Gamma) \leq 
|\rho(x,t) - \Phi(x)| \leq 2 r h(x,t) \leq 2 C_{h1} r \end{equation}
and 
\begin{equation} \label{a4.53}
\dist(\rho(x,t), \Gamma) \geq \frac{1}{2} \, r h(x,t) \geq \frac{r}{2 C_{h1}}.
\end{equation}
Thus, if $Z = \rho(x,t)$ 
for some $(x,t) \in \Omega_0$, we get that 
\begin{equation} \label{a4.54}
(2 C_{h1})^{-1} \dist(Z,\Gamma) \leq |t| \leq 2 C_{h1} \dist(Z,\Gamma)
\end{equation}
and then, writing $Z = (y,s) \in \R^{d} \times \R^{n-d}$, 
\begin{eqnarray} \label{a4.55}
|x-y| &\leq& |(x,\varphi(x)) - (y,s)| = |\Phi(x)-\rho(x,t)|
\leq 2 C_{h1} |t| 
\end{eqnarray}
by \eqref{a4.52}.

Thus, given $Z = \rho(x,t)$ we get a good idea of where $(x,t)$ lies. In fancy terms,
$\rho$ is proper, which means that for every compact set $K \subset \R^n \sm \Gamma$,
\begin{equation} \label{a4.56}
\rho^{-1}(K) = \big\{ (x,t) \in \Omega_0 \, ; \, \rho(x,t) \in K \big\}
\ \text{ is a compact subset of } \Omega_0.
\end{equation}
Indeed, the estimates above imply that $\rho^{-1}(K)$ lies in a bounded subset of 
$\R^n$, and at distance at least $(2 C_{h1})^{-1} \dist(K,\Gamma) > 0$
from $\R^d$. Thus $\rho^{-1}(K)$ is relatively compact in $\Omega_0$,
and the fact that it is compact (or closed) follows because $\rho$ is continuous.
We are ready for the fun part of the argument.

\begin{theorem} \label{Tbijective}
Assume that $C_0$ and $C_{h0}$ are small enough, as before. Then
$\rho$ is a bijection from $\Omega_0$ to $\Omega = \R^n \sm \Gamma$.
Since it is also continuously differentiable and its Jacobian is invertible at every point
(by \eqref{Jacneq0}) of $\Omega$ it is also a diffeomorphism.
\end{theorem}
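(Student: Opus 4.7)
The strategy is to upgrade the local properties of $\rho$ already established in this section to a global bijection via a short covering-space argument followed by a homotopy.

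By \eqref{a4.38}, $\rho$ is $C^1$ on $\Omega_0$; by \eqref{Jacneq0} its differential is everywhere invertible, so the inverse function theorem makes $\rho$ a local $C^1$-diffeomorphism. Lemma~\ref{controldistprop} gives $\rho(\Omega_0)\subset \Omega$, and \eqref{a4.56} says the corestriction $\rho\colon \Omega_0\to \Omega$ is proper. Any proper local diffeomorphism between manifolds is both open and closed and restricts to a covering map onto its image, so $\rho(\Omega_0)$ is a nonempty clopen subset of $\Omega$, and $\rho$ is a finitely-sheeted cover of that image.

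For surjectivity, I would split according to the codimension. In the main regime $d<n-1$ the codimension is at least two, so $\Omega_0=\R^n\setminus\R^d$ and $\Omega=\R^n\setminus\Gamma$ are both connected (complements of closed sets of topological dimension at most $n-2$), and the nonempty clopen image must be all of $\Omega$. In the borderline case $d=n-1$, the smallness of $C_0$ together with \eqref{defrhobis} and Lemma~\ref{controldistprop} shows that the sign of the last $n-d$ coordinates of $\rho(x,t)-\Phi(x)$ matches that of $t$, so each of the two half-components of $\Omega_0$ surjects onto the corresponding component of $\Omega$.

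For injectivity, my plan is a homotopy on the Lipschitz graph. For $\lambda\in[0,1]$ set $\varphi^{(\lambda)}=\lambda\varphi$ and rerun the constructions of Sections \ref{Sorthbasis}--\ref{Srho} using $\varphi^{(\lambda)}$ (and the same $h$), producing a family $\rho_\lambda$ with $\rho_1=\rho$. Because $\mathrm{Lip}(\varphi^{(\lambda)})\leq C_0$, Lemmas~\ref{Jacobianneq0} and~\ref{controldistprop} apply with constants uniform in $\lambda$, so each $\rho_\lambda$ is a proper local diffeomorphism from $\Omega_0$ onto $\R^n\setminus\lambda\Gamma$, and $(\lambda,X)\mapsto \rho_\lambda(X)$ is jointly continuous. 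At $\lambda=0$ the construction degenerates: the Gram--Schmidt basis $\mathcal B_{vw}$ equals $\mathcal B_e$, and $\rho_0(x,t)=(x,h(x,t)\,t)$ is a bijection of $\Omega_0$ since for each fixed $x$ and unit $\theta\in S^{n-d-1}$ the scalar map $r\mapsto h(x,r\theta)\,r$ has derivative bounded below by $C_{h1}^{-1}-C_{h0}>0$ (using \eqref{H1}--\eqref{H2}), hence is a strictly increasing bijection of $(0,\infty)$. Pick a reference point $Z_0\in\R^n$ far above every $\lambda\Gamma$; the uniform bound \eqref{controlofdist2} confines $\rho_\lambda^{-1}(Z_0)$ to a common compact subset of $\Omega_0$, so the integer-valued function $\lambda\mapsto|\rho_\lambda^{-1}(Z_0)|$ is locally constant by the implicit function theorem, hence constant and equal to its value $1$ at $\lambda=0$. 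Since $\Omega$ is connected and $\rho=\rho_1$ is a covering, this forces $|\rho^{-1}(Z)|=1$ for every $Z\in\Omega$, which is injectivity.

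The most delicate point is the uniform properness of the family $\rho_\lambda$, needed so that preimages of $Z_0$ cannot escape to infinity or to $\R^d$ as $\lambda$ varies. This is exactly what Lemma~\ref{controldistprop} provides, because its constants depend only on the uniform upper bound $C_0$ and on $C_{h0},C_{h1}$, not on the particular $\varphi$. Once bijectivity is in hand, the inverse function theorem upgrades $\rho$ to a global $C^1$-diffeomorphism, as stated.
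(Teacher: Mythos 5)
Your proof is correct, and while the first half (proper local diffeomorphism $\Rightarrow$ covering map onto a clopen, hence all of the connected set $\Omega$, with constant fiber cardinality) is the covering-space packaging of the same clopen argument the paper runs for its counting function $N$, your computation of the degree is genuinely different from the paper's. The paper exploits the \emph{locality} of $\rho$ (the value of $\rho(x,t)$ depends only on $\varphi$ and $h$ near $(x,t)$) to surgically replace $\varphi$ by a constant on a faraway ball (via a Lipschitz extension), then replace $h$ by $1$ far from the graph, and finally count preimages of a very distant point where $\rho$ is explicitly a translation. You instead run the continuous homotopy $\varphi^{(\lambda)}=\lambda\varphi$, use the uniformity in $\lambda$ of Lemmas \ref{Jacobianneq0} and \ref{controldistprop} (which indeed depend only on the bounds $C_0$, $C_{h0}$, $C_{h1}$, not on the particular $\varphi$) to keep the fibers of a fixed far point $Z_0$ in a common compact set, and evaluate at $\lambda=0$, where the Gram--Schmidt frame collapses to the standard basis and $\rho_0(x,t)=(x,h(x,t)t)$ is checked to be bijective by the radial monotonicity $h+r\partial_r h\geq C_{h1}^{-1}-C_{h0}>0$. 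Both routes are sound; yours avoids the Kirszbraun/cut-off extension step but requires verifying joint $C^1$ dependence on $(\lambda,X)$ (which holds, since every ingredient is an algebraic expression in $\lambda\nabla_x\varphi_r$ with denominators bounded below) and, at the step ``locally constant by the implicit function theorem,'' one must combine the IFT (persistence and local uniqueness of each preimage) with the uniform properness (to rule out new preimages materializing away from the old ones); you cite both ingredients, so this is fine. Two minor remarks: your $d=n-1$ discussion is unnecessary, since the paper works in codimension at least $2$ (this is exactly where the paper uses connectedness of $\Omega$); and the set you call $\lambda\Gamma$ should be the graph of $\lambda\varphi$, not the dilate of $\Gamma$, though this does not affect the argument.
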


\noindent{\it Proof.} 
We will use a little bit of topology but, even though we are thinking about the degree
of our function $\rho$, we shall not need more than connectedness and the inverse function
theorem. 
Also, to make the argument more pleasant to read, elements of $\Omega$ or $\Omega_0$
will be called $z$ or $y$, rather than $Z$ or $Y$ with our earlier convention.

Set $\rho^{-1}(z) = \big\{ y\in \Omega_0 \, ; \, \rho_{\varphi,h}(y) = z \big\}$ 
for $z \in \Omega$. 
We introduce the function $N$ defined on $\Omega$ by
\begin{equation} \label{defNphi}
N(z) = \left\{\begin{array}{ll} 
0 & \text{ if $\rho^{-1}(z)$ is empty,} \\
1 & \text{ if $\rho^{-1}(z)$ contains exactly one point,} \\
2 & \text{ if $\rho^{-1}(z)$ contains at least two points.}
\end{array}\right.
\end{equation}
The quantity $N$ is clearly inspired by the notion of degree of a map, but
we shall not need to know more about that notion.
We aim to prove that $N$ is constant equal to 1, which is equivalent to the fact that 
$\rho$ is bijective, and we shall proceed by (similar) steps.

The general idea is to show that $N$ is constant, and later on we will compute its value. 
Consider the set 
\begin{equation} \label{a4.59}
R_2 : = \{z \in \Omega, \, N(z) =2\};
\end{equation}
we want to show that $R_2$ is both open and closed in $\Omega$. Since $\Omega$
is connected (we are in codimension $>1$), we will deduce that 
\begin{equation} \label{a4.60}
\text{$R_2 = \emptyset$ or $R_2 = \Omega$.} 
\end{equation}

First, $R_2$ is open. Indeed choose $z\in R_2$ and take $y_1,y_2\in \Omega_0$, 
with $y_1 \neq y_2$, such that $\rho(y_1) = \rho(y_2) = z$. 
Define $B_1 = B(y_1,|y_1-y_2|/3)$ and $B_2 = B(y_2,|y_1-y_2|/3)$. 
Since both $\Jac(y_1)$ and $\Jac(y_2)$ are invertible (see \eqref{Jacneq0}), 
the inverse function theorem proves that there exist neighborhoods 
$U_1 \subset B_1 \cap \Omega_0$, $U_2 \subset B_2 \cap \Omega_0$ and 
$V_1, V_2 \subset \Omega$ of  $y_1$, $y_2$, and $z$ respectively, 
such that $\rho$  is a bijection
from $U_1$ to $V_1$ and from $U_2$ to $V_2$. Since 
$U_1 \cap U_2 \subset B_1 \cap B_2 = \emptyset$ by construction, 
we deduce $N(w) = 2$ for all $w\in V_1 \cap V_2$, hence 
$V_1 \cap V_2 \subset R_2$. 

The set $R_2$ is closed. Indeed, choose a sequence $(z^j)_{j\in \bN}$ of values in $R_2$ 
that converges to some $z\in \Omega$. Take
two sequences $(y^j_1)_{j\in \bN}$ and $(y^j_2)_{j\in \bN}$ 
in $\Omega_0$, such that $y^j_1 \neq y^j_2$ and 
$\rho(y^j_1) = \rho(y^j_2) = z^j$ for each $j$.
Set $K = \{z^j\}_{j\in \bN} \cup \{z\}$; this is compact set in $\Omega$,
so by \eqref{a4.56} $K_0 = \rho^{-1}(K)$ is a compact subset of $\Omega_0$. 
The $y^j_1$ lie in $K_0$, so there is a subsequence of $(y^j_1)$ 
(that we still will denote by $(y^j_1)$) that converges to some limit $y_1 \in K_0 \subset \Omega_0$.
Since $\rho$ is continuous and the $\rho(y^j_1) = z^j$ converge to $z$, $\rho(y_1) = z$. 
Let us extract a further subsequence (still denoted the same way), so that
the $y^j_2$ converge to some $y_2 \in K_0$. Observe that $\rho(y_2) = z$
too.

If $y_1 \neq y_2$, $z \in R_2$ and we are happy. Otherwise, observe that since
$\Jac(y_1)$ is invertible, the inverse function theorem shows that there is a small ball
centered on $y_1$ where $\rho$ is injective. This contradicts the fact
that $y^j_1 \neq y^j_2$ and both sequences converge to $y_1 = y_2$.
So $R_2$ is both open and closed in $\Omega$ and \eqref{a4.60} holds.

\medskip
Next we want to show that 
\begin{equation} \label{a4.61}
R_1 : = \{z \in \Omega, \, N(z) =1 \}
\ \text{ is either empty or equal to } \Omega.
\end{equation}
Since this is trivial if $R_2 = \Omega_0$, we may assume that $R_2 = \emptyset$.

We proceed as for $R_2$. First observe that $R_1$ is open, because if
$z\in R_1$ and $y \in \rho^{-1}(z)$, then $\Jac(y)$ is invertible,
so we can apply the inverse function theorem near $y$ and find solutions
of $\rho^{-1}(x) = z'$ for $z'$ near $z$. Since none of these $z'$
lies in $R_2$ (which is empty), they lie in $R_1$, as needed.

The set $R_2$ is closed. Indeed, let $z^j = \rho(y^j)$ be a sequence 
in $R_1$ that converges to some $z \in \Omega$. As before, 
$K = \{z^j\}_{j\in \bN} \cup \{z\}$ is compact set in $\Omega$,
$K_0 = \rho^{-1}(K)$ is a compact subset of $\Omega_0$, 
there is a subsequence of $(y^j_1)$ that converges to some limit $y_1 \in K_0 \subset \Omega_0$,
and  since $\rho$ is continuous, $\rho(y_1) = z$. 
Thus $z \in \rho(\Omega_0)$, hence $z\in R_1$ (because $R_2 = \emptyset$),
$R_1$ is closed, and since $\Omega$ is connected, \eqref{a4.61} holds.

Of course, if both $R_1$ and $R_2$ are empty, then 
$R_0: = \{z\in \Omega, \, N(z) = 0\} = \Omega$. 
So we proved that
\begin{equation} \label{a4.62}
N \text{ is constant on } \Omega. 
\end{equation}
We still need to compute its value; obviously Theorem \ref{Tbijective} will follow
as soon as we prove the next lemma.

\begin{lemma} \label{onetooneprop}
Let $\varphi$, $h$, and $\rho$ be as in Theorem \ref{Tbijective}.
Then $N_{\varphi,h}$ is constant equal to 1.
\end{lemma}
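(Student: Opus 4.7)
My strategy is a homotopy argument: since $N$ has been shown to be constant on the connected open set $\Omega$, it suffices to compute its value at a single point, which I do by continuously deforming $(\varphi,h)$ to the trivial pair $(0,1)$. For $s\in[0,1]$ set $\varphi_s := s\varphi$ and $h_s := 1+s(h-1)$, and let $\rho_s$, $\Gamma_s$, $\Omega_s$, $N_s$ denote the corresponding objects. The Lipschitz constant of $\varphi_s$ is at most $C_0$, and $h_s$ satisfies \eqref{H1}--\eqref{H2} with the same $C_{h1}$ and with $C_{h0}$ replaced by $sC_{h0}\le C_{h0}$; hence every estimate and conclusion of the preceding sections -- in particular \eqref{detJsim1}, \eqref{a4.54}--\eqref{a4.55}, and the fact that each $\rho_s$ is a continuously differentiable proper local diffeomorphism from $\Omega_0$ into $\Omega_s$ with $N_s$ constant on $\Omega_s$ -- applies uniformly in $s$. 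Moreover $(s,x,t)\mapsto\rho_s(x,t)$ is jointly smooth, since the Gram--Schmidt construction of $v^i,w^j$ depends smoothly on $\varphi_s$, which in turn depends linearly on $s$.

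At $s=0$, $\varphi_0\equiv 0$ and $h_0\equiv 1$, so $\Phi_{0,r}(x)=(x,0)$, the approximate tangent plane is $\R^d$ everywhere, and $R_{x,r}^{0}$ is the identity on $\R^n$; consequently $\rho_0(x,t)=(x,t)$ and $N_0\equiv 1$. Fix a base point $y_0:=(0,e^{d+1})\in\Omega_0$ and set $z_s:=\rho_s(y_0)$, which is continuous in $s$ and by Lemma~\ref{controldistprop} satisfies $\dist(z_s,\Gamma_s)\ge (2C_{h1})^{-1}$, so $z_s\in\Omega_s$ throughout. The plan is to prove that the number of preimages $k(s):=\#\rho_s^{-1}(z_s)$ is locally constant, hence constant on $[0,1]$, equal to $k(0)=1$; this will force $N_1(z_1)=1$, and the constancy of $N_1$ on $\Omega=\Omega_1$ will yield $N_1\equiv 1$.

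The local constancy argument is standard but requires care. First, for each $s_*\in[0,1]$, $\rho_{s_*}^{-1}(z_{s_*})$ is finite: uniform properness via \eqref{a4.54}--\eqref{a4.55} applied to the family $\rho_s$ (using the continuity of $z_s$) confines all preimages of $z_s$, for $s$ in a neighborhood of $s_*$, to a fixed compact $K\subset\Omega_0$, on which the preimages of $z_{s_*}$ are discrete by the inverse function theorem. Writing $\rho_{s_*}^{-1}(z_{s_*})=\{y_1,\dots,y_k\}$, the inverse function theorem applied uniformly in $s$ near each $y_i$ -- valid because $\Jac_s(y_i)$ varies smoothly in $s$ and remains invertible by \eqref{detJsim1} -- produces pairwise disjoint neighborhoods $U_i\ni y_i$ on which $\rho_s$ is a diffeomorphism for $s$ close to $s_*$, hence $k(s)\ge k$ nearby. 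Conversely, if there were a sequence $s_j\to s_*$ with preimages $\tilde y_j\in K\setminus\bigcup_i U_i$, a subsequential limit $\tilde y\in K\setminus\bigcup_i U_i$ would satisfy $\rho_{s_*}(\tilde y)=z_{s_*}$ by continuity, contradicting the enumeration of $\rho_{s_*}^{-1}(z_{s_*})$. Hence $k(s)=k$ near $s_*$. The main technical obstacle is the bookkeeping to make every estimate uniform in $s$, and in particular to ensure that the compact set $K$ can be chosen independently of $s$ in a neighborhood of $s_*$; this uses the linearity of $s\mapsto\varphi_s$ and $s\mapsto h_s$ to force constants that do not degenerate along the deformation.
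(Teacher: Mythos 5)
Your proof is correct, but it takes a genuinely different route from the paper's. You run a continuation argument in an auxiliary homotopy parameter: the linear deformation $\varphi_s=s\varphi$, $h_s=1+s(h-1)$ keeps the pair admissible with the \emph{same} $C_{h1}$ and \emph{smaller} $C_0$, $C_{h0}$, so all the estimates of Section \ref{Srho} (in particular \eqref{detJsim1} and the properness bounds \eqref{a4.54}--\eqref{a4.55}) hold uniformly in $s$, and you track the preimage count of the moving point $z_s=\rho_s(y_0)$ via a parametrized inverse function theorem plus a compactness argument; this is essentially homotopy invariance of the degree, proved by hand. The paper instead argues by localization and surgery: it first shows that $N(\varphi,h)$ depends only on the restriction of $(\varphi,h)$ to a fixed ball (because all preimages of a well-chosen test point are confined there), then replaces $\varphi$ by a constant and $h$ by $1$ in a faraway region (using a Lipschitz extension), and finally computes $N$ at a distant point where $\rho$ is literally a translation. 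The trade-off: the paper's argument never needs uniform-in-parameter estimates or a parametrized IFT --- it only ever invokes the already-proved single-pair facts --- at the price of the extension/cut-off constructions (and of halving $C_0$ and $C_{h0}$ in the statement); your argument is more systematic and would adapt to settings where far-field surgery is awkward, but its burden is precisely the uniformity bookkeeping you flag, which goes through here because the deformation is linear and does not degrade any of the constants.
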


\begin{proof}
We decided to put this as a separate statement to cut the proof, 
and because the argument is of a different nature. We are now going to play
with the way $N$ depends on the functions $\varphi$ and $h$, so
we now write $\rho_{\varphi, h}$ for the mapping $\rho$ that was constructed above,
and similarly denote by $N_{\varphi,h}(z)$ the counting function $N$ above.
Also denote by $N(\varphi,h)$ denote the constant value of $N_{\varphi,h}(z)$ on $\Omega$.
With all this notation, we want to prove that $N(\varphi,h)=1$.

\ms
\noindent {\bf Step 1.}
We claim that if the pairs $(\varphi_1,h_1)$ and $(\varphi_2, h_2)$ both satisfy the assumptions of the theorem 
(with the same $C_{h1}$ in particular) and if $\varphi_1 = \varphi_2$ on 
$B_0 = B(0, 20 C_{h1}^2) \subset \R^d$ and  $h_1 = h_2$ on $B'_0 = B_0 \times B(0,20C_{h1}^2) \subset \R^n$,  then $N(\varphi_1,h_1) = N(\varphi_2,h_2)$.

This will be convenient, to replace $(\varphi,h)$ by a pair for which we can compute
the degree $N$ more easily, but let us first prove our claim.
Let $(\varphi_1,h_1)$ and $(\varphi_2, h_2)$ be as in the claim. For $i=1,2$, denote
by $\Gamma_i$ the graph of $\varphi_i$, and set $\rho_i = \rho_{\varphi_i, h_i}$.
Consider the point $z = (0,s)$, where $s \in \R^{n-d}$ is chosen such that $|s-\varphi_1(0)| = 1$.
By assumption, both functions $\varphi_i$ are $1$-Lipschitz, so 
$\frac12 \leq \dist(z,\Gamma_i) \leq 1$.

Next let $(x_i,t_i) \in \Omega_0$ be any solution of $\rho_i(x_i,t_i) = z$. 
We know from \eqref{a4.54} that $|t_i| \leq 2C_{h1} \dist(z,\Gamma_i) \leq 2C_{h1}$,
and then \eqref{a4.55} says that $|x_i| \leq  2  C_{h1} |t_i| \leq 4 C_{h1}^2$. 
Now the value of $\rho_i(x_i,t_i)$ depends only on the values of $\varphi_i$ and $h_i$
in $B(x_i,t_i)$ (check with \eqref{defrhobis}, and recall that $\eta$ is supported in the unit ball).
Since these functions coincide on $B_0$ and $B'_0$ respectively, 
 $\rho_1(x_i,t_i) = \rho_2(x_i,t_i) = z$.
In other words, the equations $\rho_i(x,t) = z$ have the same solutions, 
$N(\varphi_1,h_1) = N_{\varphi_1,h_1}(z) = N_{\varphi_2,h_2}(z) = N(\varphi_2,h_2)$,
and our claim follows.

In fact, by translation invariance (or with the same proof), the claim is still valid if
$B_0$ is replaced with any ball of radius $20 C_{h1}^2$.

\ms
\noindent {\bf Step 2.}
We modify $\varphi$ in a faraway ball to make it simpler.
Let $\varphi$ and $h$ be as in the theorem. We can find $\varphi_1$, which 
is also $C_0$-Lipschitz, coincides with $\varphi$ in $B_0$, while 
$\varphi_1 = \varphi(0)$ in some other ball $B_1 = B(x_1,20 C_{h1}^2)$. 

Indeed, choose $x_1\in \R^d$ at distance $100 C_{h1}^2$ from the origin,
and decide that $\varphi_1 = \varphi$ on $B_0$ and $\varphi_1 = \varphi(0)$
on $B_1$; it is easy to check that $\varphi_1$ is $C_0$-Lipschitz on $B_0 \cup B_1$,
because 
$$|\varphi_1(y)-\varphi_1(z)| = |\varphi(y)-\varphi(0)| \leq 20 C_{h1}^2C_0 \leq |z-y| C_0$$
whenever  $y \in B_0$ and $z\in B_1$; then we can use
the Kirszbraun extension theorem (see \cite[Theorem 2.10.43]{Federer}) to define
$\varphi_1$ on the rest of $\R^d$. Of course, we do not need to be that fancy; a
Whitney-type extension theorem, or just setting 
$\varphi_1(x) = \psi(x) \varphi(x) + (1-\psi(x)) \varphi(x_0)$, with a smooth radial bump
function $\psi$ such that $\psi = 1$ on a neighborhood of $B_0$ and $\psi = 0$
on $\R^d \sm 3B_0$ would work as well, except that maybe $\varphi_1$ is
$2C_0$-Lipschitz. We can fix this problem by requiring in advance that $C_0$ to be 
twice smaller in the statement of the theorem. 
By Step 1, $N(\varphi,h) = N(\varphi_1,h)$. In addition, let $\varphi_2$ denote
the constant function $\varphi(0)$; then $\varphi_2 = \varphi_1$ on $B_1$,
so $N(\varphi_1,h) = N(\varphi_2,h)$ by the small extension of Case 1.
Thus it is enough to check that $N(\varphi_2,h) = 1$ when our function $\varphi_2$ is constant.

\ms
\noindent {\bf Step 3.}
At this point we could actually compute because \eqref{defrhobis} becomes much simpler,
but let us cheat again and modify $h$ now. Set $R = 100 C_{h1}$ and let $\tau > 0$
be small, to be chosen soon. Then find a smooth compactly supported function $\psi$ on $\R$,
such that $0 \leq \psi \leq 1$ everywhere, $\psi(r) = 1$ for $r \leq R$, and 
$\psi'(r) \leq \tau r^{-1}$. This is easy to find, because $\int_R^{+\infty} \frac{1}{r}$ diverges. 
Then set $h_1(x,t) = \psi(r) h(x,t) + (1-\psi(r))$. Observe that $h_1$ satisfies \eqref{H1} 
trivially, is continuously differentiable, and satisfies \eqref{H2} because 
\begin{equation} \label{a4.65}
r |\nabla_{x,t} h_1| \leq r |\nabla_{x,t} h| \psi +  r |h-1| |\dr_r \psi|
\leq r|\nabla_{x,t} h| + C_{h1} \|r \psi'\|_\infty \leq C_{h0} + \tau C_{h1} \leq 2 C_{h0}
\end{equation}
if $\tau$ is chosen small enough. The pair $(\varphi_2,h_1)$ satisfies the assumptions
of the theorem (with the constant $2C_{h0}$, but this is not a worry, we just need to 
require $C_{h0}$ to be twice smaller in that theorem), so we can use Step 1 again
and we get that $N(\varphi_2,h) = N(\varphi_2, h_1)$.

Finally we can compute $N(\varphi_2, h_1)$ by computing $N_{\varphi_2, h_1}(z)$
for points that are very far. Let $R_1$ be such that $\psi(r) = 0$ for $r \geq R_1$.
Since $\varphi_2(x) = \varphi(0)$ everywhere, we get that for $(x,t) \in \Omega_0$,
$\varphi_r(x) = \varphi(0)$, $\hat v^i = v^i = e^i$ for $1 \leq i \leq d$,
$\hat w^j = w^j = e^j$ for $d+1 \leq j \leq n$, $R_{x,r}$ is the identity, 
and \eqref{defrhobis} yields
\begin{equation} \label{a4.66}
\rho_{\varphi_2,h}(x,t) = \Phi_r(x) + h(x,t) R_{x,r}(t) = (x,\varphi(0)) + h(x,t) t.
\end{equation}
If in addition $r = |t| \geq R_1$, we get 
$\rho_{\varphi_2,h}(x,t) = (x,\varphi(0)) + t = (x,t) + (0,\varphi(0))$.

Now we compute $N(\varphi_2, h_1) = N_{\varphi_2, h_1}(z)$ at a point
$z$ which we take at distance larger than $2C_{h_1}R_1$ from the graph $\Gamma'$ of
$\varphi_2$.  If $(x,t) \in \Omega_0$ is any solution of 
$\rho_{\varphi_2, h_1}(x,t) =z$, then by \eqref{a4.52},
$\dist(z,\Gamma')  \leq 2 C_{h_1} |t|$, which forces $|t| \geq R_1$.
Thus $\rho_{\varphi_2, h_1}(x,t) = (x,t) + (0,\varphi(0))$, and it is easy to see
that there is at most one solution $(x,t)$.

This proves that $N(\varphi_2, h_1) = N_{\varphi_2, h_1} \leq 1$. 
But $N(\varphi,h)=0$ is impossible, because $\rho_{\varphi_2,h_1}$ has to take 
{\it some} values somewhere. Altogether $N(\varphi, h) = N(\varphi_2, h_1) = 1$,
and this competes the proof of Lemma \ref{onetooneprop}, and at the same time
Theorem \ref{Tbijective}.
\end{proof}

We end this section with a remark which may not be useful for the rest of the paper, but is 
nonetheless reassuring. We complete our definition of $\rho$ by taking
\begin{equation} \label{a2.48bis}
\rho(x,0) = \Phi(x) = (x,\varphi(x)) \ \text{ for } x\in \R^d,
\end{equation}
as announced in \eqref{a2.48}. Because of \eqref{controlofdist}, $\rho$ is continuous across
$\R^d$ too. Since we have an upper bound on $\Jac$, $\rho$ is now globally
Lipschitz on $\R^n$. Since $\rho$ defines a bijection from $\R^d$ to $\Gamma$,
we get that $\rho$ is a Lipschitz bijection on $\R^n$. Finally, since the inverse of $\rho$
is continuous (including across $\Gamma$, by \eqref{a4.52} and \eqref{a4.53})
and Lipschitz on $\Omega$ (because we have bounds on $\Jac^{-1}$), we see that $\rho$
is bi-Lipschitz on $\R^n$, as promised in the introduction.

\section{The Carleson measure condition for the Jacobian} 
\label{SCMforJac}

We continue with the conventions and assumptions of the previous section, 
where we assume that $C_0 + C_{h0}$ is small enough, depending on $n$, $d$, $\eta$,
and $C_{h1}$, and $C$ denotes any constant that depend on these parameters.

Recall from \eqref{defAc} that (we plan to show that) the normalized matrix $\A$ 
that is associated to our conjugate operator $L_0$ is given by  
\begin{equation} \label{a5.1}
\begin{aligned}
\A(x,t) &= \left(\frac{|t|}{D(\rho(x,t))}\right)^{n-d-1} 
|\det(\Jac(x,t))| (\Jac(x,t)^{-1})^T \Jac(x,t)^{-1} \\
& = \left(\frac{|t|}{D(\rho(x,t))}\right)^{n-d-1} 
|\det(J(x,t))| (J(x,t)^{-1})^T J(x,t)^{-1}
\end{aligned}
\end{equation}
where $(\Jac(x,t)^{-1})^T$ denotes the transpose $\Jac(x,t)^{-1}$,
$J(x,t)$ is defined in Definition \ref{defJ},
and the second line comes from \eqref{JisJacQ}, which says that
$J(x,t) = \Jac(x,t) Q(x,t)$ for some orthogonal matrix $Q(x,t)$.

In this section we take care of 
\begin{equation} \label{a5.2}
\Ab(x,t) = |\det(J(x,t))| (J(x,t)^{-1})^T J(x,t)^{-1}
\end{equation}
which we try to put in a nice form (a nice block matrix, plus a perturbation that satisfies Carleson
estimates). The additional multiplicative term $\left(\frac{|t|}{D(\rho(x,t))}\right)^{n-d-1}$ 
will only  be treated in the next section, because this will involve estimates on the regularity of 
$\Gamma$ and $\sigma$. 

We do not reveal yet what is our choice of function $h$ in the definition of $\rho$,
but in addition to \eqref{H1} and \eqref{H2}, we now assume that 
$r\nabla_{x,t} h$ satisfies the Carleson measure condition, as in Definition \ref{defCMI},
with some constant $C_{h2}$. In short, 
\begin{equation} \label{H3}
r\nabla_{x,t} h \in CM(C_{h2}).
\end{equation}

The main result of this section is the following description of $\Ab$.

\begin{lemma} \label{main1.3} 
There exists $c >0$, that depends only on $n$, $d$, $\eta$, and $C_{h1}$,
such that if $C_0+C_{h0}\leq c$, then
there is a decomposition 
$\Ab = \Ab^1+\Ab^2$, where
\begin{enumerate}
\item both $\Ab$ and $\Ab^1$ are uniformly elliptic and bounded, 
\item both $r\nabla_{x,t} \Ab^1$ and $\Ab^2$ satisfy the Carleson measure condition, 
\item 
$\Ab^1 = \begin{pmatrix} \Ab^1_1 & 0 \\ 0 & \bb I_{n-d} \end{pmatrix}$,
where $\Ab^1_1$ is a $d\times d$ (elliptic) matrix and $\bb$ is a positive function, with 
\begin{equation} \label{a5.5}
C^{-1} \leq \bb (x,t) \leq C \ \text{ for } (x,t) \in \Omega_0.
\end{equation}
\end{enumerate}
\end{lemma}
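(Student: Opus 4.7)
I would take $\Ab^1 := |\det J'|\,((J')^{-1})^T (J')^{-1}$ and $\Ab^2 := \Ab - \Ab^1$, exploiting the decomposition $J = J' + H + M$ from Definition \ref{defJprimeandM}. Since $J'$ is block diagonal with blocks $J'_1$ and $h\, I_{n-d}$, a direct computation gives
\[
\Ab^1 = \begin{pmatrix} h^{n-d}|\det J'_1|\,((J'_1)^{-1})^T (J'_1)^{-1} & 0 \\ 0 & h^{n-d-2}|\det J'_1|\,I_{n-d}\end{pmatrix},
\]
which already has the form demanded in item (3) with $\bb := h^{n-d-2}|\det J'_1|$. Since \eqref{detJp-1} gives $|\det J'_1 - 1|\leq CC_0^2$ and \eqref{H1} gives $C_{h1}^{-1}\leq h\leq C_{h1}$, the bounds \eqref{a5.5} on $\bb$ follow; the same inputs (plus the fact that $J'_1$ is lower triangular with diagonal entries close to $1$) show that $\Ab^1_1$ is elliptic and bounded, while \eqref{detJsim1} together with the pointwise bounds on the entries of $J$ gives the ellipticity and boundedness of $\Ab$.

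\textbf{Carleson estimate on $r\nabla_{x,t}\Ab^1$.} I would expand each entry of $\Ab^1$ as a rational function of the entries of $\hat v^k$, $v^\ell$, $|\det J'_1|$ and $h$, with bounded numerators and denominators. Applying the product rule, every term in $r\nabla_{x,t}\Ab^1$ is a uniformly bounded factor multiplied by one of $r\nabla_{x,t} h$, $r\nabla_{x,t} v^\ell$, or $r\nabla_{x,t}(\nabla_x \varphi_r)$. The first is in $CM$ by assumption \eqref{H3}; Lemma \ref{vrbyphirr} reduces the second to the third; and the third is the classical Littlewood--Paley estimate for a Lipschitz function $\varphi$ (the measure $r|\nabla_{x,r}^2\varphi_r|^2\,dx\,dr$ is Carleson on $\R^d\times(0,\infty)$ with norm $\lesssim C_0^2$), converted to the $(x,t)$-variables via the fact that $\nabla_x\varphi_r$ depends on $t$ only through $r=|t|$. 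Since the class $CM$ is stable under multiplication by bounded functions, $r\nabla_{x,t}\Ab^1 \in CM$ follows.

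\textbf{Carleson estimate on $\Ab^2$.} Using the resolvent identity $J^{-1} - (J')^{-1} = -(J')^{-1}(H+M)J^{-1}$ and the determinant expansion \eqref{detJ-detJp}, together with the uniform upper bounds on $J^{-1}$ and $(J')^{-1}$ (guaranteed by \eqref{detJsim1} and \eqref{detJp-1}), one obtains the entrywise estimate
\[
|\Ab^2(x,t)| \leq C\bigl(|H(x,t)| + |M(x,t)|\bigr).
\]
It therefore suffices to show that the entries of $H$ and $M$ are in $CM$. The entries of $H$ are bounded pointwise by $r|\nabla_{x,t}h|$, and $(r|\nabla_{x,t}h|)^2\,dx\,dt/|t|^{n-d}$ is Carleson by \eqref{H3}; the entries of $M$ are bounded by $|\partial_r\varphi_r| + r|\nabla_{x,r}\nabla_x\varphi_r|$ via Lemma \ref{defM} and Lemma \ref{vrbyphirr}, and both of these are in $CM$ by the same Littlewood--Paley input as above.

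\textbf{Main obstacle.} The only genuinely analytic step is the Carleson estimate for $r\nabla_{x,r}\nabla_x\varphi_r$ and $\partial_r\varphi_r$, which is the standard Lipschitz square function estimate; the rest is algebra (product rule, resolvent identity) and bookkeeping, relying critically on the uniform bounds established in Section \ref{Srho} under the assumption $C_0+C_{h0}\leq c$. The main care needed is to make sure that every ``denominator'' factor arising from inverting $J'_1$ or dividing by $\det J'_1$ remains bounded above and below, which is precisely what the smallness of $C_0$ provides through \eqref{detJp-1} and \eqref{barvvsim1}.
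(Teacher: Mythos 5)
Your proposal is correct and follows essentially the same route as the paper: the same choice $\Ab^1=\det(J')\,(J')^{-T}(J')^{-1}$, the same $\bb=h^{n-d-2}\det(J'_1)$, and the same reduction of both Carleson conditions to the bounds on $H$, $M$, $r\nabla_{x,t}h$ and the Littlewood--Paley estimates for $\partial_r\varphi_r$ and $r\nabla_{x,r}\nabla_x\varphi_r$ (the paper's Lemmas \ref{lemCM1}--\ref{lemCM3b}). The only cosmetic difference is that you control $J^{-1}-(J')^{-1}$ via the resolvent identity where the paper expands both inverses by Cramer's rule; both yield the same entrywise bound $C(|H|+|M|)$.
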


In this context, bounded elliptic means that there exists $C >0$ such that 
$|\Ab_{k,\ell}| \leq C$ for $1 \leq k, \ell \leq n$ and
\begin{equation} \label{a5.6}
\Ab(x,t)\xi \cdot \xi \geq C^{-1}  |\xi|^2
\ \text{ for } (x,t)\in \Omega_0 \text{ and } \xi \in \R^n,
\end{equation}
and similarly for $\Ab^1$ (in $\R^n$).

The constant $C$ above depends only on $n$, $d$, $\eta$, and $C_{h1}$,
and the Carleson constants for $\Ab$ and $\Ab^1$ depend on $C_{h2}$ too.
Let us also observe that we will take
\begin{equation} \label{a5.7}
\bb = h^{-2} \det(J') = h^{n-d-2} \det(J'_1),
\end{equation} 
where $J'$ and $J'_1$ are defined in Definition \ref{defJprimeandM}.
We put the second formula here because $\det(J'_1)$ does not depend on $h$. 

\ms
The proof will make us busy for the rest of this section. 
We start with simple remarks on the Carleson condition.
Recall from Definition \ref{defCMI} that we say that the function $a(x,t)$, defined on $\Omega_0$, 
satisfies a Carleson measure condition when $|a(x,t)|^2 \frac{dx dt}{|t|^{n-d}}$ is a Carleson measure
on $\Omega_0$. 

When $a$ is defined on $\R^d \times (0,+\infty)$, we say that it satisfies a Carleson measure 
condition when $|a(x,|t|)|^2 \frac{dx dt}{|t|^{n-d}}$ is a Carleson measure on $\Omega_0$;
it is easy to check that this happens if and only if $|a(x,r)|^2 \frac{dx dr}{r}$ is a Carleson measure,
in the usual sense, on $\R^d \times (0,+\infty)$. That is, if there is a constant 
$C \geq 0$ such that
\begin{equation} \label{CMdef2}
\int_{0}^r \int_{B(x,r)} |a(y,s)|^2 dy \frac{ds}{s} \leq C r^d
\ \text{ for $x\in \R^d$ and $r > 0$.} 
\end{equation}

In both cases, when $a$ is matrix-valued, let us simply say that it satisfies a Carleson measure condition
if each of its entries $a_{k,\ell}$ satisfies a Carleson measure condition; we do not care about which norm
we would take on the spaces of matrices.

A very useful way to obtain Carleson measures is via the following result.

\begin{lemma} \label{lemLPL2}
Let $m=1$ or $m=d$. Let $\theta$ be in the Schwartz space $\mathcal S(\R^d,\R^m)$ and 
such that $\int_{\R^d} \theta = 0$. Define $$\theta_r (x) : = \frac1{r^d} \theta\left(\frac x r\right).$$
Then there exists $C>0$ such that for any $f\in L^2(\R^d,\R^m)$,
$$\int_0^\infty \|\theta_r * f\|_{L^2}^2 \frac{dr}{r} \leq C\|f\|_{L^2}^2,$$
where $\theta_r * f(x) = \int_{\R} \theta_r(y) f(x-y) dy$ if $m=1$ and $\theta_r * f(x) = \int_{\R^d} \left<\theta_r(y), f(x-y)\right> dy$ if $m=d$.

Moreover, if $\theta$ is supported in $B(0,2)$ and $f\in L^\infty(\R^d,\R^m)$, 
then $(x,r) \mapsto \theta_r * f(x)$ satisfies the Carleson measure condition.
\end{lemma}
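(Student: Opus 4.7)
The plan is to prove Lemma~\ref{lemLPL2} via the classical Plancherel argument for Littlewood--Paley $g$-functions, and then deduce the Carleson measure statement by a standard truncation trick.

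For the $L^2$ bound, I would pass to the Fourier side. Since $\theta \in \mathcal{S}(\R^d,\R^m)$, $\widehat{\theta_r}(\xi) = \widehat{\theta}(r\xi)$, and Plancherel gives
\[
\int_0^\infty \|\theta_r * f\|_{L^2}^2 \frac{dr}{r}
= \int_{\R^d} |\widehat f(\xi)|^2 \left( \int_0^\infty |\widehat \theta(r\xi)|^2 \frac{dr}{r}\right) d\xi
\]
(treating the inner product in $\R^m$ as a dot product on the Fourier side when $m=d$). The inner integral is scale invariant: the substitution $u = r|\xi|$ reduces it to $\int_0^\infty |\widehat\theta(u\xi/|\xi|)|^2\, du/u$, which is uniformly bounded in $\xi$. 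Indeed, $\widehat\theta$ is Schwartz so it decays at infinity, and the hypothesis $\int \theta = 0$ gives $\widehat\theta(0)=0$, hence $|\widehat\theta(\zeta)| \lesssim |\zeta|$ near the origin; both ends of the integral therefore converge. Another Plancherel application then yields the desired bound $\int_0^\infty \|\theta_r*f\|_{L^2}^2\,dr/r \leq C \|f\|_{L^2}^2$.

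For the Carleson measure statement, suppose now $\theta$ is supported in $B(0,2)$ and $f \in L^\infty$. Fix $x_0 \in \R^d$ and $r_0 > 0$; I need to estimate
\[
\int_0^{r_0} \int_{B(x_0,r_0)} |\theta_s*f(y)|^2\, dy\, \frac{ds}{s}.
\]
The key observation is locality: for $s \le r_0$ and $y \in B(x_0,r_0)$, the support condition on $\theta$ forces $\theta_s*f(y)$ to depend only on the restriction of $f$ to $B(y,2s) \subset B(x_0,3r_0)$. Write $f_1 = f \mathbf{1}_{B(x_0,3r_0)}$ so that $\theta_s * f(y) = \theta_s * f_1(y)$ for such $(y,s)$. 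Applying the first part of the lemma to $f_1 \in L^2$ gives
\[
\int_0^{r_0}\int_{B(x_0,r_0)} |\theta_s*f(y)|^2\,dy\,\frac{ds}{s}
\le \int_0^\infty \|\theta_s*f_1\|_{L^2}^2\,\frac{ds}{s}
\le C\|f_1\|_{L^2}^2 \le C\|f\|_\infty^2 |B(x_0,3r_0)| \lesssim \|f\|_\infty^2 r_0^d,
\]
which is exactly the Carleson measure condition \eqref{CMdef2} (and hence \eqref{CM2} after the identification explained near \eqref{CMdef2}).

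There is no real obstacle here; the two potential nuisances are minor. First, one must handle $m = d$ by noting that componentwise Plancherel still works and the scalar estimate $|\widehat\theta(r\xi)|^2 \lesssim \min(r^2|\xi|^2, r^{-N}|\xi|^{-N})$ is insensitive to the fact that $\widehat\theta$ is $\R^d$-valued, since $|\widehat\theta(r\xi)|^2 = \sum_k |\widehat{\theta^k}(r\xi)|^2$ and each component is Schwartz with $\widehat{\theta^k}(0) = 0$. Second, one must be a little careful that $\int_0^\infty |\widehat\theta(u\xi/|\xi|)|^2 \,du/u$ is bounded \emph{uniformly} in the direction $\xi/|\xi|$; this follows because the Schwartz bounds on $\widehat\theta$ and on its differential at $0$ are uniform over the compact unit sphere.
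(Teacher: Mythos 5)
Your proof is correct and follows essentially the same route as the paper: for the first ($L^2$) part the paper simply cites \cite[Section I.6.3]{Stein93}, whose standard proof is exactly the Plancherel/Littlewood--Paley computation you spell out, and your localization argument for the Carleson statement is identical to the paper's. The only cosmetic point is that for $m=d$ the first display should be an inequality (Cauchy--Schwarz applied to $\langle \widehat{\theta}(r\xi),\widehat f(\xi)\rangle$) rather than an equality, which you implicitly acknowledge and which does not affect the argument.
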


\begin{proof}
The first part of the lemma (on the $L^2$-boundedness of $\theta_r * f$) can be found 
in \cite[Section I.6.3]{Stein93}. 

Let us turn to the proof of the second part. Let $f\in L^\infty(\R^d,\R^m)$. 
Let $x\in \R^d$ and $r>0$. The function $y\mapsto f(y)\1_{B(x,3r)}(y)$ lies
in $L^2(\R^d,\R^m)$ and thus the first part
implies that
\begin{equation} \label{LPL21}
\int_0^\infty \|\theta_s * [f\1_{B(x,3r)}]\|_{L^2}^2 \frac{ds}{s} 
\leq C \|f\|_{L^2(B(x,3r))}^2 \leq C\|f\|_\infty^2 r^d.
\end{equation}
Observe now that $\theta$ is supported in $B(0,2)$, 
thus $\theta_r$ is supported in $B(0,2r)$, 
and then $\theta_s * f(y) = \theta_s * [f\1_{B(x,3r)}](y)$ for any $(y,s) \in B(x,r)$. 
As a consequence,
\[\begin{split}
\int_0^r \int_{B(x,r)} |\theta_s * f(y)|^2 dy \frac{ds}{s}  & \leq \int_0^r \int_{B(x,r)} |\theta_s * [f\1_{B(x,3r)}](y)|^2 dy \frac{ds}{s} \\
& \leq \int_0^\infty \|\theta_s * [f\1_{B(x,3r)}]\|_{L^2}^2 \frac{ds}{s}  \leq C \|f\|_\infty^2 r^d,
\end{split}\]
where the last inequality is due to \eqref{LPL21}. Thus \eqref{CMdef2} holds for 
$\theta_s * f(y)$; the lemma follows.
\end{proof}

\ms
We are now ready to prove various Carleson measure estimates for $\rho$ and its components.

\begin{lemma} \label{lemCM1}
The quantities $|\dr_r \varphi_r|$ and  $r|\nabla_{x,r}\nabla_x \varphi_r|$ satisfy the Carleson measure condition, with a constant
$C>0$ that depends 
only upon $n$, $d$, and $\eta$. 
\end{lemma}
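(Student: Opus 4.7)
The plan is to reduce each of the three quantities $|\dr_r\varphi_r|$, $r|\dr_{x_i}\dr_{x_j}\varphi_r|$ ($1 \le i,j \le d$), and $r|\dr_r\dr_{x_i}\varphi_r|$ ($1 \le i \le d$) to a convolution of the form $\theta_r \ast f$, where $\theta \in \mathcal{S}(\R^d)$ is compactly supported with $\int \theta = 0$ and $f := \nabla_x\varphi \in L^\infty(\R^d, \R^{(n-d)\times d})$ with $\|f\|_\infty \le C_0$. Lemma~\ref{lemLPL2} then immediately yields the Carleson measure condition.

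The convolution identities are already essentially present in the proof of Lemma~\ref{rgradientphilem}. For $\dr_r \varphi_r = (\dr_r \eta_r) \ast \varphi$, I would use $\dr_r\eta_r = \diver_x \wt\eta_r$ with $\wt\eta(x)=-x\eta(x)$, and integrate by parts to write
\[
\dr_r\varphi_r(x) = \int_{\R^d} \wt\eta_r(y) \cdot \nabla_x\varphi(x-y)\,dy.
\]
Since $\eta$ is radial and smooth with compact support, $\wt\eta$ is a compactly supported Schwartz function and $\int \wt\eta = -\int y\,\eta(y)\,dy = 0$ by symmetry. For the second-order terms, $r\dr_{x_i}\dr_{x_j}\varphi_r = (r\dr_{x_i}\eta_r)\ast \dr_{x_j}\varphi = (\dr_{x_i}\eta)_r \ast \dr_{x_j}\varphi$, and $\dr_{x_i}\eta$ is a compactly supported Schwartz function with mean zero (being a total derivative of a compactly supported function). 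For the mixed term, $r\dr_r\dr_{x_i}\varphi_r = \hat\eta_r \ast \dr_{x_i}\varphi$ with $\hat\eta(x) = -d\eta(x) - x\cdot\nabla\eta(x)$, and a one-line integration by parts gives $\int \hat\eta = -d + d = 0$.

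In each case the kernel satisfies the hypotheses of Lemma~\ref{lemLPL2}, and $f = \nabla_x\varphi \in L^\infty$ with $\|f\|_\infty \le C_0$. Applying the second part of Lemma~\ref{lemLPL2} yields the Carleson measure condition for each of the three quantities, with Carleson norm bounded by $C\|\nabla_x\varphi\|_\infty^2$ where $C$ depends only on $n$, $d$, and $\eta$ (the latter dependence through $\wt\eta$, $\dr_{x_i}\eta$, and $\hat\eta$). Summing the finitely many scalar estimates gives the lemma.

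There is no real obstacle here: everything is a direct application of the Littlewood--Paley bound of Lemma~\ref{lemLPL2}, and the only nontrivial observation is the mean-zero property of the three kernels $\wt\eta$, $\dr_{x_i}\eta$, and $\hat\eta$, which we record in passing. The crucial structural feature is that derivatives of $\varphi_r$ that cost a factor of $r^{-1}$ can always be traded, via the scaling of $\eta_r$, for derivatives that land on $\eta$ itself, producing mean-zero kernels.
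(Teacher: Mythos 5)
Your proposal is correct and follows essentially the same route as the paper: the paper's proof also reduces each quantity to a convolution $\theta_r \ast \nabla_x\varphi$ with the same three kernels (your $\wt\eta$, $\hat\eta = \diver\wt\eta$, and $\dr_{x_i}\eta$, all compactly supported with vanishing integral) and then invokes Lemma~\ref{lemLPL2}. No gaps.
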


\begin{proof} According to Lemma \ref{lemLPL2}, it suffices to prove that  $\dr_r \varphi_r$,  $r\dr_{r}\dr_{x_k} \varphi_r$ and  $r\dr_{x_i}\dr_{x_k} \varphi_r$ can be written as $\theta_r*\nabla \varphi$ with $\theta \in \mathcal S(\R^d)$, $\supp \, \theta \subset B(0,1)$ and $\int_{\R^d} \theta = 0$.

\medskip
First, we have that $\dr_r \varphi_r = (\dr_r \eta_r)*\varphi$ where
\[\begin{split}
 \dr_r \eta_r(x) & = -\frac d{r^{d+1}} \eta\left(\frac xr \right) - \frac x{r^{d+2}} \cdot  \nabla \eta \left(\frac xr \right)  = \diver_x \cdot \left[ - \frac{x}{r^{d+1}} \eta\left(\frac xr \right) \right] \\
& = \diver_x \theta^1_r(x)
\end{split}\]
if $\theta^1(x) := x\eta(x)$. Thus 
Since $\eta$ is in  $\mathcal S(\R)$ and supported in $B(0,1)$, so is $\theta^1$. Besides, $\eta$ is even implies that $\theta^1$ is odd and thus $\int \theta^1 = 0$.

Second, $r\dr_{r}\dr_{x_k} \varphi_r = (r\dr_r \eta_r)*\dr_{x_k} \varphi = (r\dr_r \eta_r)*(\dr_{x_k} \varphi)$. In the same way, $r\dr_r \eta_r = \theta^2_r$ with $\theta^2 := \diver_x \theta^1$. Since $\theta^1$ is in  $\mathcal S(\R)$ and supported in $B(0,1)$, so is $\theta^2$. Moreover, $\theta^2$ is a derivative so $\int_{\R^d} \theta^2 = 0$.

Third, $r\dr_{x_i} \dr_{x_k} \varphi_r = (r\dr_{x_i} \eta_r)*\dr_{x_k} \varphi = \theta^3_r* \dr_{x_k} \varphi$ with $\theta^3(x) := \dr_{x_i} \eta (x)$. Since $\eta$ is in  $\mathcal S(\R)$ and supported in $B(0,1)$, so is $\theta^3$. Moreover, $\theta^3$ is a derivative so $\int_{\R^d} \theta^3 = 0$.
\end{proof}

\begin{lemma} \label{lemCM2}
Let $\epsilon\in (0,1)$. There exists $c_\epsilon > 0$, that depends on 
$\epsilon$, $n$, $d$, $\eta$ and $C_{h1}$, such that if $C_0+C_{h0} \leq c_\epsilon$, then
\begin{enumerate}[(i)]
\item the matrices $J'_1-I$, $H$ and $M$ have coefficients bounded by $\epsilon$,
\item $H \in CM(CC_{h2})$ and $M \in CM(CC_0^2)$,
\item 
$r \nabla_{x,t} 
J'_1 \in CM(CC_0^2)$.
\end{enumerate}
\end{lemma}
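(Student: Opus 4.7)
The plan is to handle the three parts separately, using the decomposition $J = J' + H + M$ from Definition~\ref{defJprimeandM} together with the derivative estimates already established in Lemmas~\ref{vrbyphirr}, \ref{defM}, \ref{rgradientphilem}, and \ref{lemCM1}.

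For part (i), the bound on $H$ is immediate from \eqref{H2}: since the nonzero entries of $H$ are of the form $t_\ell \partial_{x_k} h$ or $t_\ell \partial_{t_k} h$ with $|t_\ell| \leq r$, we get $|H_{k\ell}| \leq C_{h0}$. The bound on $M$ follows directly from Lemma~\ref{defM} combined with Lemma~\ref{rgradientphilem}, which yields $|M_{k\ell}| \leq CC_0$. The bound on $J'_1 - I$ requires slightly more care: the upper triangular entries vanish by construction of Gram--Schmidt (as already used in \eqref{a4.35}), the diagonal entries satisfy $|\langle \hat v^i, v^i\rangle - 1| \leq CC_0^2$ by \eqref{barvvsim1}, and for $k > \ell$ I would expand $v^\ell = \sum_{m\leq \ell} a_{\ell m} \hat v^m$ (with $|a_{\ell m}| \leq C$ by the Gram--Schmidt recursion and \eqref{a3.12}), so that
\[
\langle \hat v^k, v^\ell\rangle = \sum_{m \leq \ell} a_{\ell m} \langle \hat v^k, \hat v^m\rangle = \sum_{m\leq \ell} a_{\ell m} \bigl(\delta_{km} + \langle \partial_{x_k}\varphi_r, \partial_{x_m}\varphi_r\rangle\bigr) = O(C_0^2),
\]
since $k > \ell \geq m$ rules out $\delta_{km} = 1$. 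All three matrices can then be made $\leq \epsilon$ in sup norm by choosing $c_\epsilon$ small.

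For part (ii), the Carleson bound on $H$ is essentially \eqref{H3} repackaged: each entry $H_{k\ell}$ is bounded pointwise by $r|\nabla_{x,t} h|$, which lies in $CM(C_{h2})$ by assumption, so $H \in CM(CC_{h2})$. For $M$, Lemma~\ref{defM} reduces the question to showing that $|\partial_r \varphi_r|$ and $r|\nabla_{x,r}\nabla_x\varphi_r|$ are each in $CM(CC_0^2)$. This is exactly the content of Lemma~\ref{lemCM1} once one tracks the constant through its proof: each of these quantities is written as $\theta_r \ast \nabla\varphi$ with $\theta \in \mathcal{S}(\R^d)$ supported in $B(0,1)$ with $\int \theta = 0$, and the Carleson norm from Lemma~\ref{lemLPL2} scales as $\|\nabla\varphi\|_\infty^2 \leq C_0^2$.

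For part (iii), since $J'_1$ depends on $(x,t)$ only through $(x,|t|)$, the chain rule gives $|\partial_{t_j} J'_1| \leq |\partial_r J'_1|$, so it suffices to bound $r|\nabla_{x,r} J'_1|$. Differentiating $(J'_1)_{k\ell} = \langle \hat v^k, v^\ell\rangle$ by the product rule and using $|\hat v^k| \leq \sqrt{1+C_0^2}$, $|v^\ell| = 1$, $|\partial_{x_i}\hat v^k| \leq |\partial_{x_i}\partial_{x_k}\varphi_r|$, together with Lemma~\ref{vrbyphirr} for the derivatives of $v^\ell$, yields the pointwise bound $|\nabla_{x,r}(J'_1)_{k\ell}| \leq C|\nabla_{x,r}\nabla_x \varphi_r|$. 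Multiplying by $r$ and applying the Carleson estimate for $r|\nabla_{x,r}\nabla_x \varphi_r|$ from part (ii) gives $r\nabla_{x,t} J'_1 \in CM(CC_0^2)$. The only real obstacle is being careful about the $C_0^2$ (rather than $C_0$) quadratic scaling in the Carleson constants in (ii) and (iii), which requires using the $L^\infty$-bound $\|\nabla\varphi\|_\infty \leq C_0$ inside Lemma~\ref{lemLPL2} rather than simply invoking Lemma~\ref{lemCM1} as a black box.
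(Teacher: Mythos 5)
Your proof is correct and follows the same overall structure as the paper's. Parts (ii) and (iii) match the paper's argument, and you correctly note that the $CC_0^2$ Carleson constant must be extracted from the \emph{proof} of Lemma~\ref{lemCM1} (where each quantity is realized as $\theta_r\ast\partial_{x_k}\varphi$ with $\|\partial_{x_k}\varphi\|_\infty\leq C_0$, so Lemma~\ref{lemLPL2} gives Carleson norm $\lesssim C_0^2$) rather than from its statement, which only says the constant depends on $n$, $d$, $\eta$. For part (i), your treatment of the lower-triangular entries of $J'_1-I$ is a genuine but minor variation: the paper uses a Pythagorean argument, writing $\sum_{\ell}\langle\hat v^i,v^\ell\rangle^2=|\hat v^i|^2\leq 1+C_0^2$ by orthonormality of $\{v^\ell\}$ and \eqref{a3.8}, then subtracting $\langle\hat v^i,v^i\rangle^2\geq 1-C_0^2$ (from \eqref{barvv-1}) to bound each off-diagonal square by $2C_0^2$; you instead expand $v^\ell=\sum_{m\leq\ell}a_{\ell m}\hat v^m$ and exploit $\langle\hat v^k,\hat v^m\rangle=\delta_{km}+\langle\partial_{x_k}\varphi_r,\partial_{x_m}\varphi_r\rangle$, with $k>\ell\geq m$ killing the Kronecker delta. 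Both give $O(C_0^2)$; the paper's version avoids having to verify the boundedness of the Gram--Schmidt coefficients $a_{\ell m}$ (which you assert by an induction you do not spell out, using \eqref{a3.12}), while yours makes the origin of the quadratic cancellation structurally more transparent.
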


\begin{proof}
Let us start with (i). 
By \eqref{a4.25}, $H_{k\ell} \leq C C_{h0} \leq \epsilon$ if $C_{h0}$ is small enough.
Similarly, $M_{k\ell} \leq CC_0 \leq \epsilon$ by \eqref{MbyC0} if $C_{0}$ is small enough.
Recall from Definition \ref{defJprimeandM} that the coefficients of $J'_1$ are 
$(J'_1)_{k\ell} = \left< \hat v^k, v^\ell \right>$. So the fact that 
$|(J'_1)_{kk}-1| \leq \epsilon$ for $1\leq k\leq d$ follows from \eqref{barvvsim1}.
It remains to check that $|(J'_1)_{k\ell}| \leq \epsilon$ for $k \neq \ell$.
Observe that for any $k\in \{1,\dots,d\}$, 
\[0 \leq \sum_{\ell=1}^d \left<\hat v^k,v^\ell \right>^2 = |\hat v_k|^2 \leq 1+ C_0^2\]
since $\{v^\ell\}_{1\leq \ell\leq d}$ is an orthonormal basis and by \eqref{a3.8}.
As a consequence, if $\ell \neq i$, we have
\[ 
|\left<\hat v^i,v^\ell \right>|^2  \leq \sum_{\begin{subarray}{c} 1 \leq k \leq d  \\ k \neq \ell \end{subarray}} \left<\hat v^k,v^\ell \right>^2 \leq 1+C_0^2 - \left<\hat v^\ell,v^\ell \right>^2 \\
 \leq 2C_0^2, 
\]
where we used \eqref{barvv-1} for the last inequality. Point (i) follows.

\smallskip

The proof of (ii) is immediate from \eqref{H3}, Lemma \ref{defM} and Lemma \ref{lemCM1}.

\smallskip

To prove the last point, recall that $(J'_1)_{k\ell} = \left< \hat v^k,v^\ell \right>$.
Notice that 
\[r|\nabla_{x,r} \left< \hat v^k,v^\ell \right>| \leq r|\nabla_{x,r} \hat v^k| + r\sqrt{1+C_0^2} |\nabla_{x,r} {v^\ell}|\]
by \eqref{a3.8}.
Thus $r \nabla (J'_1)_{k\ell}$ satisfies the Carleson measure condition if $r|\nabla_{x,r} \hat v^k| = r|\nabla_{x,r} \dr_{x_k} \varphi_r|$ and $r|\nabla_{x,r} {v^\ell}|$ do. Yet, the latter fact is a consequence of Lemmas \ref{vrbyphirr} and \ref{lemCM1}. The lemma follows.
\end{proof}

\begin{lemma} \label{lemCM2b}
For any $\epsilon\in (0,1)$, there exists $c_\epsilon >0$ 
(depending on $n$, $d$, $\epsilon$, $\eta$ and $C_{h1}$) such that if $C_0+C_{h0}\leq c_\epsilon$, then
\begin{enumerate}[(i)]
\item 
$|\det(J)-\det(J')| \leq \epsilon$ and $|\det(J')-h^{n-d}| \leq \epsilon$,
\item $\det(J)-\det(J') \in CM(CC_0^2+CC_{h2})$,
\item $r\nabla_{x,t} \det(J'_1) \in CM(CC_0^2)$ and $r\nabla_{x,t} \det(J') \in CM(CC_0^2 + CC_{h2})$.
\end{enumerate}
\end{lemma}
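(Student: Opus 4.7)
The plan is to derive all three points from the pointwise estimate~\eqref{detJ-detJp} of Lemma~\ref{Jacobianneq0} combined with Lemmas~\ref{rgradientphilem} and~\ref{lemCM1}, the hypothesis~\eqref{H3} on $h$, and the factorization $\det(J') = h^{n-d}\det(J'_1) = h^{n-d}\prod_{i=1}^d \langle \hat v^i, v^i\rangle$ recorded in~\eqref{a4.35}. The implicit point to exploit in Lemma~\ref{lemCM1} is that its proof applies Lemma~\ref{lemLPL2} with $f$ built from $\nabla\varphi$, whose $L^\infty$-norm is at most $C_0$, so the resulting Carleson norms of $|\dr_r\varphi_r|$ and $r|\nabla_{x,r}\nabla_x\varphi_r|$ can be taken to scale like $CC_0^2$ rather than a mere constant $C$ (exactly as was already used in Lemma~\ref{lemCM2}(ii)).

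For (i), I will plug the pointwise bounds of Lemma~\ref{rgradientphilem} and~\eqref{H2} into~\eqref{detJ-detJp} to get $|\det(J)-\det(J')| \leq C(C_0+C_{h0})$, and choose $c_\epsilon$ so that $Cc_\epsilon\leq\epsilon/2$. The second inequality, $|\det(J')-h^{n-d}| \leq \epsilon$, is immediate from the bound $|\det(J')-h^{n-d}|\leq CC_0^2$ already established in~\eqref{detJp-1}, again for $c_\epsilon$ small enough.

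For (ii), I will simply square~\eqref{detJ-detJp} pointwise to obtain
\[|\det(J)-\det(J')|^2 \leq C|\dr_r\varphi_r|^2 + Cr^2|\nabla_{x,r}\nabla_x\varphi_r|^2 + Cr^2|\nabla_{x,t}h|^2,\]
and integrate against $|t|^{-(n-d)}\,dx\,dt$ on any Carleson window $B(x,r)\cap\Omega_0$. The first two terms contribute at most $CC_0^2 r^d$ by Lemma~\ref{lemCM1} (with the $C_0^2$ scaling noted above), and the third contributes at most $CC_{h2}r^d$ by~\eqref{H3}. This yields $\det(J)-\det(J') \in CM(CC_0^2+CC_{h2})$ as claimed.

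For (iii), I will differentiate the product $\det(J'_1)=\prod_{i=1}^d \langle\hat v^i, v^i\rangle$ and use $|\langle\hat v^i,v^i\rangle| \leq \sqrt{1+C_0^2}$ from~\eqref{a3.8} to reduce matters to controlling each factor $r\nabla_{x,t}\langle\hat v^i, v^i\rangle$. Applying the inner-product version of the product rule together with the explicit form $\hat v^i=(e^i,\dr_{x_i}\varphi_r)$ and Lemma~\ref{vrbyphirr}, I obtain the pointwise bound
\[r\bigl|\nabla_{x,t}\langle\hat v^i,v^i\rangle\bigr| \leq Cr\bigl|\nabla_{x,r}\nabla_x\varphi_r\bigr|,\]
which lies in $CM(CC_0^2)$ by Lemma~\ref{lemCM1}, proving the first claim of (iii). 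For the second claim, I will expand
\[\nabla_{x,t}\det(J') = (n-d)\, h^{n-d-1}\det(J'_1)\,\nabla_{x,t}h + h^{n-d}\,\nabla_{x,t}\det(J'_1),\]
note that $h$ and $\det(J'_1)$ are both bounded (by~\eqref{H1} and part (i) respectively), and combine the Carleson bound~\eqref{H3} on $r\nabla_{x,t}h$ with the first claim of (iii) to conclude $r\nabla_{x,t}\det(J') \in CM(CC_0^2+CC_{h2})$. I do not expect any genuine obstacle in this lemma: it is essentially a package of routine consequences of earlier pointwise estimates and the product rule, the only mildly delicate point being to keep track of the $C_0^2$-scaling inside Lemma~\ref{lemCM1}.
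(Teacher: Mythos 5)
Your proposal is correct, and for parts (i) and (ii) it follows exactly the paper's argument: plug Lemma~\ref{rgradientphilem} and~\eqref{H2} into~\eqref{detJ-detJp} for the pointwise bound, use~\eqref{detJp-1} for the second bound, and combine~\eqref{detJ-detJp} with Lemma~\ref{lemCM1} and~\eqref{H3} for the Carleson estimate. For part (iii) you take a slightly more direct route than the paper: rather than differentiating the general Leibniz expansion $\det(J'_1)=\sum_{\sigma\in\mathfrak S_d}\sgn(\sigma)\prod_i(J'_1)_{i,\sigma(i)}$ and invoking Lemma~\ref{lemCM2}(iii) term by term, you exploit the lower-triangular structure recorded in~\eqref{a4.35}, writing $\det(J'_1)=\prod_{i=1}^d\langle\hat v^i,v^i\rangle$ and bounding each factor's derivative directly via Lemma~\ref{vrbyphirr}; this is valid (the pointwise bound $r|\nabla_{x,t}\langle\hat v^i,v^i\rangle|\leq Cr|\nabla_{x,r}\nabla_x\varphi_r|$ holds because $|\hat v^i|$ and $|v^i|$ are bounded while $|\nabla_{x,r}\hat v^i|\leq|\nabla_{x,r}\nabla_x\varphi_r|$ and $|\nabla_{x,r}v^i|\lesssim|\nabla_{x,r}\nabla_x\varphi_r|$), and it shortens the argument by bypassing the entry-by-entry Carleson estimate on $J'_1$. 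Your reading of Lemma~\ref{lemCM1} as giving Carleson norm $CC_0^2$ rather than a mere constant is also the one the paper implicitly relies on (cf.\ Lemma~\ref{lemCM2}(ii)), since the relevant quantities are convolutions against $\nabla\varphi$ with $\|\nabla\varphi\|_\infty\leq C_0$.
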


\begin{proof}
The fact that $|\det(J)-\det(J')| \leq \epsilon$ if $C_0+C_{h0}$ is small comes from 
\eqref{detJ-detJp}, Lemma~\ref{rgradientphilem} and \eqref{H2}.
The fact that $\det(J)-\det(J')$ satisfies the Carleson measure condition is immediate 
from \eqref{detJ-detJp}, Lemma \ref{lemCM1} and \eqref{H3}.
Similarly $|\det(J')-h^{n-d}| \leq \epsilon$ when $C_0$ is small, 
by \eqref{detJp-1}. It remains to prove (iii). 

Recall that $\displaystyle\det(J'_1) = \sum_{\sigma \in \mathfrak S_d} \sgn(\sigma) 
\prod_{i=1}^d (J'_1)_{i,\sigma(i)}$. Then
$$
r\nabla_{x,t}
\det(J'_1) = \sum_{\sigma \in \mathfrak S_d} \sgn(\sigma) 
\sum_{j=1}^d (r\nabla_{x,t}
(J'_1)_{j,\sigma(j)}) \prod_{\begin{subarray}{c}i=1\\ i\neq j \end{subarray}}^d (J'_1)_{i,\sigma(i)}.
$$
Since the coefficients of $J'_1$ are bounded (see (i) in Lemma \ref{lemCM2}) and 
$r \nabla_{x,t} (J'_1)_{kl}$ satisfies the Carleson measure condition for any $1\leq k,l\leq d$ 
(see (iii) in Lemma \ref{lemCM2}), $r\nabla_{x,t} 
\det(J'_1)$ satisfies the Carleson measure condition.

Finally observe that $\det(J') = h^{n-d}\det(J'_1)$, hence 
\[
r\nabla_{x,t} \det(J') = (n-d)h^{n-d-1} r\nabla_{x,t} h \, \det(J'_1)
+ h^{n-d}  r \, \nabla_{x,t} \det(J'_1).
\]
Now $r\nabla_{x,t} \det(J') \in CM(CC_0^2+CC_{h2})$ because $r\nabla_{x,t} \det(J'_1) \in CM(CC_0^2)$
and \eqref{H1}, \eqref{H3}.
\end{proof}

In the sequel, we denote by $J''(x,t)$, $(x,t) \in \Omega_0$,  the diagonal matrix defined by
\begin{equation} \label{defJpp} (J'')_{k\ell}(x,t) 
: = \left\{\begin{array}{ll}
0 & \text{ if } k\neq \ell \\
1 & \text{ if } 1 \leq k=\ell \leq d \\
h(x,t) & \text{ if } d+1 \leq k=\ell \leq n.
\end{array}\right.
\end{equation}
Thus $\det(J'') = h^{n-d}$.

\begin{lemma} \label{lemCM3}
For any $\epsilon\in (0,1)$, there exists $c_\epsilon >0$ 
(depending on $n$, $d$, $\epsilon$, $\eta$ and $C_{h1}$) such that if 
$C_0\leq c_\epsilon$, then the matrices $J$ and $J'$ are invertible and
\begin{enumerate}[(i)]
\item $J^{-1}-(J')^{-1}$ and $(J')^{-1}-(J'')^{-1}$ have coefficients bounded by $\epsilon$,
\item $J^{-1}-(J')^{-1}\in CM(CC_0^2 + CC_{h2})$,
\item $r \nabla_{x,t} (J')^{-1}\in CM(CC_0^2 + CC_{h2})$. 
\end{enumerate}
\end{lemma}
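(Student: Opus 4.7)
The plan is to proceed in three steps, treating invertibility, then perturbation estimates for $J^{-1}$ versus $(J')^{-1}$, and finally the derivative estimate for $(J')^{-1}$. The key observation is the block structure
\[
(J')^{-1} = \begin{pmatrix} (J'_1)^{-1} & 0 \\ 0 & h^{-1} I_{n-d} \end{pmatrix},
\qquad (J'')^{-1} = \begin{pmatrix} I_d & 0 \\ 0 & h^{-1} I_{n-d} \end{pmatrix},
\]
together with the fact, proved in Lemma~\ref{lemCM2}, that $J'_1$ is lower triangular with diagonal entries within $\epsilon$ of $1$ and off-diagonal entries bounded by $\epsilon$, while $H+M$ has all coefficients bounded by $\epsilon$.

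First I would establish invertibility: since $\det(J) \geq \tfrac12 h^{n-d}$ and $\det(J') \geq \tfrac12 h^{n-d}$ by Lemma~\ref{Jacobianneq0} (applied with $\epsilon=1/2$), both matrices are invertible and, more importantly, $\|(J')^{-1}\|$ is bounded by a constant depending only on $n$, $d$ and $C_{h1}$. For the second half of (i), I would write
\[
(J'_1)^{-1} - I_d = -(J'_1)^{-1}(J'_1 - I_d),
\]
so the entries of $(J'_1)^{-1} - I_d$ are bounded by $C \epsilon$ when $\epsilon$ is small (use Neumann/self-bootstrap to get the constant under control); combined with the block structure above, this gives that $(J')^{-1} - (J'')^{-1}$ has coefficients bounded by $C\epsilon$, which is $\leq \epsilon$ after shrinking $c_\epsilon$.

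Next, for (i)--(ii), I would write $J = J' + (H+M)$, multiply by $(J')^{-1}$ on the right, and set $E := (H+M)(J')^{-1}$, which is small in sup norm. The identity
\[
J^{-1} - (J')^{-1} = -(J')^{-1}(H+M)(J')^{-1}(I+E)^{-1}
\]
then bounds the entries of $J^{-1} - (J')^{-1}$ by a constant times those of $H+M$. Since $|H| + |M| \leq C(C_{h0} + C_0)$ by Lemma~\ref{lemCM2}(i), the entries are bounded by $\epsilon$ when $c_\epsilon$ is small enough, which gives the first half of (i). Moreover, because products of bounded functions with Carleson functions remain Carleson with the same Carleson norm (up to a multiplicative constant), the Carleson estimates $H \in CM(CC_{h2})$ and $M \in CM(CC_0^2)$ of Lemma~\ref{lemCM2}(ii) pass through the formula above and yield (ii).

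Finally, for (iii), the block structure reduces matters to $r\nabla_{x,t}(J'_1)^{-1}$ and $r\nabla_{x,t}(h^{-1})$. For the scalar part, $r\nabla_{x,t}(h^{-1}) = -h^{-2}\, r\nabla_{x,t} h$, which lies in $CM(CC_{h2})$ by \eqref{H1} and \eqref{H3}. For the matrix part, differentiating the identity $(J'_1)(J'_1)^{-1}=I_d$ gives
\[
r\nabla_{x,t}(J'_1)^{-1} = -(J'_1)^{-1}\bigl(r\nabla_{x,t} J'_1\bigr)(J'_1)^{-1},
\]
so by the boundedness of $(J'_1)^{-1}$ and Lemma~\ref{lemCM2}(iii) this entry is in $CM(CC_0^2)$. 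Assembling the two blocks yields $r\nabla_{x,t}(J')^{-1} \in CM(CC_0^2 + CC_{h2})$, completing (iii). No single step is genuinely hard; the only place one must be careful is to verify that the Neumann-type argument for $(J'_1)^{-1}$ and for $(I+E)^{-1}$ produces constants depending only on $n$, $d$, $\eta$ and $C_{h1}$ (not on $C_0$ or $C_{h0}$), so that the smallness of $\epsilon$ is controlled purely by $c_\epsilon$.
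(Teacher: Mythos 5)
Your proof is correct and reaches the same conclusion, but the algebraic route is genuinely different from the paper's. The paper expands $(J^{-1})_{k\ell}$ and $(J'^{-1})_{k\ell}$ via Cramer's rule as a ratio of a cofactor (a sum over permutations) to the determinant, subtracts term by term, and bounds the difference by brute force; for (iii) it again uses the explicit cofactor formula for $(J'_1)^{-1}$ and differentiates it. You instead use the resolvent identity $J^{-1}-(J')^{-1}=-(J')^{-1}(H+M)(J')^{-1}(I+E)^{-1}$ with $E=(H+M)(J')^{-1}$, and for (iii) you differentiate the relation $J'_1(J'_1)^{-1}=I_d$ to get $r\nabla_{x,t}(J'_1)^{-1}=-(J'_1)^{-1}(r\nabla_{x,t}J'_1)(J'_1)^{-1}$. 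Both approaches rely on exactly the same inputs (invertibility from Lemma~\ref{Jacobianneq0}, the size and Carleson bounds of Lemma~\ref{lemCM2}, the block structure of $J'$, and the fact that a bounded function times a Carleson function is Carleson), but your resolvent/Neumann route avoids the combinatorics of permutation expansions and makes the uniformity of the constants in $n,d,\eta,C_{h1}$ more transparent. One small point worth being aware of: as stated, the lemma hypothesis is ``$C_0\le c_\epsilon$,'' but—as you correctly use, and as the paper's own proof also does—one really needs $C_0+C_{h0}$ small (to make $H$ small via \eqref{a4.25}), so the hypothesis should read $C_0+C_{h0}\le c_\epsilon$, consistent with the surrounding Lemmas~\ref{lemCM2}, \ref{lemCM2b}, and~\ref{lemCM3b}.
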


\begin{proof}
First notice that by Proposition \ref{Jacobianneq0} 
(see in particular \eqref{detJsim1}), there exists $c_\epsilon > 0$
(depending on $n$, $d$, $\eta$ and $C_{h1}$) such that when $C_0+C_{h0} < c_\epsilon$,
\begin{equation} \label{detJ>0}
\text{$\det(J)$ and $\det(J')$ are both greater than $2^{-1} C_{h1}^{d-n}$.}
\end{equation}
In the sequel of the proof, we systematically assume that $C_0+C_{h0}$
is small enough for all the conditions above to be satisfied;  
in particular, the matrices $J$ and $J'$ are invertible.

\smallskip
Let prove (i). Cramer's rule yields for $k,\ell \in \{1,\dots,n\}$
\begin{equation} \label{invJ'formula} 
(J'^{-1})_{k\ell}  = \det(J')^{-1}
\Big(
\sum_{\begin{subarray}{c} \sigma \in \mathfrak S_n \\ \sigma(k) = \ell \end{subarray}} \sgn(\sigma) \prod_{\begin{subarray}{c} i=1 \\ i\neq k\end{subarray}}^{n} J'_{\sigma(i),i} \Big)
\end{equation}
and
\begin{eqnarray} \label{invJformula}
(J^{-1})_{k\ell} & = &\det(J')^{-1} 
\Big(\sum_{\begin{subarray}{c} \sigma \in \mathfrak S_n \\ \sigma(k) = \ell \end{subarray}} \sgn(\sigma) \prod_{\begin{subarray}{c} i=1 \\ i\neq k\end{subarray}}^{n} J_{\sigma(i),i} \Big)
\nn \\
& = & \big(\det(J') + (\det(J) - \det(J'))\big)^{-1}
\Big( \sum_{\begin{subarray}{c} \sigma \in \mathfrak S_n \\ \sigma(k) = \ell \end{subarray}} \sgn(\sigma) \prod_{\begin{subarray}{c} i=1 \\ i\neq k\end{subarray}}^{n} (J'_{\sigma(i),i} + H_{\sigma(i),i} + M_{\sigma(i),i}) \Big).
\end{eqnarray}
So the difference is bounded by (recall that \eqref{detJ>0} holds)
\begin{eqnarray}\label{invJminusinvJprime}
|(J^{-1})_{k\ell} - (J'^{-1})_{k\ell}| & \leq & C |\det(J')-\det(J)|  \left(\sup_{1\leq i,j\leq n} |J_{ij}| \right)^n 
\nn \\
&\,&  + C \left(\sup_{1\leq i,j\leq n} |H_{ij}| + |M_{ij}| \right)  \left(\sup_{1\leq i,j\leq n} |H_{ij}| + |M_{ij}| + |J'_{ij}| \right)^{n-1} ,
\end{eqnarray}
where $C>0$ depends only on $C_{h1}$.
Thanks to (i) of Lemma \ref{lemCM2} and (i) of Lemma \ref{lemCM2b}, for all $\epsilon\in (0,1)$, there exists $c_\epsilon$ such that if $C_0 + C_{h0} \leq c_\epsilon$,
\[|(J^{-1})_{k\ell} - (J'^{-1})_{k\ell}| \leq \epsilon.\]

The same argument can be repeated to prove that for all $\epsilon\in (0,1)$, there exists $c_\epsilon$ such that if $C_0 + C_{h0} \leq c_\epsilon$,
\[|(J'^{-1})_{k\ell} - (J''^{-1})_{k\ell}| \leq \epsilon.\]

The bound \eqref{invJminusinvJprime}, together with (ii) of Lemma \ref{lemCM2} 
and (ii) of Lemma \ref{lemCM2b}, implies in the same way that $J^{-1}-(J')^{-1}$ 
satisfies the Carleson measure condition.

In order to prove (iii), observe first that $J'$ is diagonal by blocs, 
hence $(J')^{-1}$ is also diagonal by blocs. 
In particular, we have for $k,\ell \in \{1,\dots,d\}$,
\[\begin{split}
(J'^{-1})_{k\ell} &  = \det(J'_1)^{-1} \Big(
 \sum_{\begin{subarray}{c} \sigma \in \mathfrak S_d \\ \sigma(k) = \ell \end{subarray}} \sgn(\sigma) \prod_{\begin{subarray}{c} i=1 \\ i\neq k\end{subarray}}^{d} (J'_1)_{\sigma(i),i} \Big).
\end{split}\]
It follows that for $k,\ell \in \{1,\dots,d\}$
\[\begin{split}
\nabla_{x,t} 
(J'^{-1})_{k\ell} & = \det(J'_1)^{-1}
\Big(\sum_{\begin{subarray}{c} \sigma \in \mathfrak S_d \\ \sigma(k) = \ell \end{subarray}} \sgn(\sigma) \sum_{j=1}^d \nabla (J'_1)_{\sigma(j),j} \prod_{\begin{subarray}{c} i=1 \\ i\neq j,k\end{subarray}}^{d} (J'_1)_{\sigma(i),i}  
\\ &\qquad\qquad\qquad
- \dfrac{\nabla_{x,t} 
\det(J'_1)}{\det(J'_1)^2} \Bigg(\sum_{\begin{subarray}{c} \sigma \in \mathfrak S_d \\ \sigma(k) = \ell \end{subarray}} \sgn(\sigma) \prod_{\begin{subarray}{c} i=1 \\ i\neq k\end{subarray}}^{d} J'_{\sigma(i),i}\Bigg).
\end{split}\]
The property (iii) of Lemmata \ref{lemCM2} and \ref{lemCM2b} implies that $\nabla (J'^{-1})_{k\ell}$ satisfies the Carleson measure condition for any $k,\ell \in \{1,\dots,d\}$. The Carleson measure condition for the gradient of the other coefficients of $J'^{-1}$ is either trivial or given by \eqref{H3}.
\end{proof}

In the following, if $A$ is a matrix, we use the notation $A^{-T}$ for the transpose inverse matrix of A,
that is, $(A^{-1})^T$ or equivalently $(A^{T})^{-1}$.

\begin{lemma} \label{lemCM3b}
For any $\epsilon\in (0,1)$, there exists $c_\epsilon >0$ 
(that depends on $\epsilon$, $\eta$ and $C_{h1}$) 
such that for $C_0 + C_{h0} \leq c_\epsilon$,
\begin{enumerate}[(i)]
\item the matrices $J^{-T}J^{-1} -(J')^{-T}(J')^{-1}$ and $(J')^{-T}(J')^{-1}-(J'')^{-2}$ have coefficients bounded by $\epsilon$,
\item $J^{-T}J^{-1} -(J')^{-T}(J')^{-1} \in CM(CC_0^2 + CC_{h2})$,
\item $r \nabla_{x,t} [(J')^{-T}(J')^{-1}] \in CM(CC_0^2 + CC_{h2})$.
\end{enumerate}
\end{lemma}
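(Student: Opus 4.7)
My plan is to deduce Lemma~\ref{lemCM3b} from Lemma~\ref{lemCM3} by pure algebraic manipulation, using the fact that products and transposes of matrices with bounded coefficients behave well both pointwise and under the Carleson measure condition. The key ingredient is the Leibniz-type identity
\[
 A^T B - C^T D = (A-C)^T B + C^T (B - D),
\]
which allows us to reduce differences of quadratic expressions $J^{-T}J^{-1} - (J')^{-T}(J')^{-1}$ to differences $J^{-1} - (J')^{-1}$ already controlled by Lemma~\ref{lemCM3}, provided we also have uniform bounds on the matrices $J^{-1}$, $(J')^{-1}$, and $(J'')^{-1}$. These bounds are immediate: by \eqref{detJsim1} we have $|\det J|, |\det J'| \geq c(C_{h1}) > 0$, and since the coefficients of $J$, $J'$, $J''$ are uniformly bounded (by Lemma~\ref{lemCM2}(i), \eqref{H1}, \eqref{a3.8}), Cramer's rule gives that the inverses and their transposes have uniformly bounded entries.

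For (i), I apply the identity above with $(A,B) = (J^{-1}, J^{-1})$ and $(C,D) = ((J')^{-1}, (J')^{-1})$, so that
\[
 J^{-T}J^{-1} - (J')^{-T}(J')^{-1} = (J^{-1}-(J')^{-1})^T J^{-1} + (J')^{-T}(J^{-1}-(J')^{-1}).
\]
By Lemma~\ref{lemCM3}(i), the entries of $J^{-1}-(J')^{-1}$ are bounded by a given $\epsilon'$ as soon as $C_0+C_{h0}$ is small, and multiplying by the uniformly bounded $J^{-1}$ or $(J')^{-T}$ yields entries bounded by $C\epsilon'$, which is $\leq \epsilon$ for $\epsilon'$ small. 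The analogous computation with $(J')^{-1}$ and $(J'')^{-1}$ gives the second part of (i).

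For (ii), the same algebraic identity expresses $J^{-T}J^{-1} - (J')^{-T}(J')^{-1}$ as a sum of products in which one factor is $J^{-1} - (J')^{-1} \in CM(CC_0^2 + CC_{h2})$ by Lemma~\ref{lemCM3}(ii), and the other factor is uniformly bounded. Since pointwise multiplication by a bounded function preserves the Carleson measure condition (with a constant multiplied by $\|\cdot\|_\infty^2$), this gives $J^{-T}J^{-1} - (J')^{-T}(J')^{-1} \in CM(CC_0^2 + CC_{h2})$. For (iii), the Leibniz rule gives
\[
 r\nabla_{x,t}\bigl[(J')^{-T}(J')^{-1}\bigr] = (r\nabla_{x,t}(J')^{-T}) (J')^{-1} + (J')^{-T}(r\nabla_{x,t}(J')^{-1}),
\]
and each term is a product of a Carleson function (from Lemma~\ref{lemCM3}(iii)) with a bounded factor, giving $r\nabla_{x,t}[(J')^{-T}(J')^{-1}] \in CM(CC_0^2+CC_{h2})$.

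There is no real obstacle here, this being a routine extraction from Lemma~\ref{lemCM3}; the only care to take is the verification that all the inverse matrices involved have uniformly bounded coefficients (so that the Leibniz-type identities genuinely reduce quadratic quantities to linear differences), which is why I chose to dispatch that bound first as a preliminary. The proof is otherwise a direct transcription of the three points of Lemma~\ref{lemCM3}.
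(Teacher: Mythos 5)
Your proof is correct and follows essentially the same route as the paper: the paper also expands $(A^TA)_{k\ell}-(A'^TA')_{k\ell}=\sum_i A_{ik}(A_{i\ell}-A'_{i\ell})+(A_{ik}-A'_{ik})A'_{i\ell}$ (your Leibniz-type identity, written entrywise) and then quotes the three points of Lemma~\ref{lemCM3} together with the uniform boundedness of the inverse matrices. Your explicit preliminary verification of those uniform bounds via Cramer's rule is a harmless (and slightly more careful) elaboration of what the paper leaves implicit.
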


\begin{proof}
Just for this proof, let us write $A$ for $J^{-1}$, $A'$ for $J'^{-1}$ and $A''$ for $J''^{-1}$. One has for $1\leq k,\ell\leq n$
\begin{equation} \label{productmatrixATA}\begin{split}
(A^T A)_{k\ell} - (A'^T A')_{k\ell} & = \sum_{i=1}^n A_{ik} A_{i\ell} - \sum_{i=1}^n A'_{ik} A'_{i\ell} = \sum_{i=1}^n (A_{ik} A_{i\ell} - A'_{ik} A'_{i\ell}) \\
& = \sum_{i=1}^n A_{ik} (A_{i\ell} - A'_{i\ell}) + (A_{ik} - A'_{ik})A'_{i\ell}.
\end{split}\end{equation}
Then, thanks to (i) of Lemma \ref{lemCM3} (and assuming as usual that $C_0 + C_{h0}$ is small),
$(A^T A)_{k\ell} - (A'^T A')_{k\ell} \leq \epsilon$. 
In the same way, we can prove that $(A'^T A')_{k\ell} - (A''^T A'')_{k\ell} \leq \epsilon$ if 
$C_0 + C_{h0}$ is small enough.

The identity \eqref{productmatrixATA} and (ii) of Lemma \ref{lemCM3} entail the Carleson measure condition for $(A^T A)_{k\ell} - (A'^T A')_{k\ell}$.
Finally, 
\[\begin{split}
\nabla (A'^T A')_{k\ell} = \sum_{i=1}^n A'_{i\ell} \nabla A'_{ik} + A'_{ik} \nabla A'_{i\ell},
\end{split}\]
which implies, by (iii) of Lemma \ref{lemCM3}, that $\nabla (A'^T A')$ satisfies the Carleson measure condition.
\end{proof}

We are now ready to complete the proof of Lemma \ref{main1.3}.
Assume $C_0 + C_{h0}$ is small enough, so that we can apply the previous lemmas for
values of $\epsilon$ that we can decide along the way.

Thanks to Lemma \ref{lemCM2b}, $J'$ and $J$ are both invertible,
and furthermore the coefficients of our main matrix $\Ab =  |\det(J)| J^{-T}J^{-1}$ 
of \eqref{a5.2} are bounded. Observe also that $\det(J)$ and $\det(J')$ are positive 
by Lemma \ref{Jacobianneq0} and so $\Ab =  \det(J) J^{-T}J^{-1}$. 
Set
\[\Ab^1 = \det(J') \, J'^{-T}J'^{-1},\]
 \[\begin{split} 
 \Ab^2 & =  \det(J) J^{-T}J^{-1} -  \det(J') J'^{-T}J'^{-1} \\
 & = (\det(J)-\det(J')) J^{-T}J^{-1} + \det(J') (J^{-T}J^{-1} - J'^{-T}J'^{-1}),
 \end{split} \]
 and finally
 \[\begin{split} 
 \Ab^3 & =  \det(J') J'^{-T}J'^{-1} -  \det(J'') J''^{-2} \\
 & = (\det(J')-\det(J'')) J'^{-T}J'^{-1} + \det(J'') (J'^{-T}J'^{-1} - J''^{-2}).
 \end{split}\]
 By Point (i) in Lemmas \ref{lemCM2b} and \ref{lemCM3b} (plus the fact that 
 $\det(J'') = h^{n-d}$), the coefficients of $\Ab^2$ and $\Ab^3$ are as small as we want
 in $L^\infty$-norm. Since $\Ab^1 = |\det(J'')| J''^{-2} - \Ab^3$
 and $\Ab = |\det(J'')| J''^{-2} - \Ab^2 - \Ab^3$, $\Ab^1$ and $\Ab$ are both small
 perturbations of $|\det(J'')| J''^{-2}$. But the diagonal matrix $|\det(J'')| J''^{-2}$,
 where $J''$ comes from \eqref{defJpp}, is clearly bounded and elliptic
 (which means that \eqref{a5.6} holds), with an ellipticity constant $C$ in \eqref{a5.6}
 bounded by $C_{h1}^{n-d-2}$.
 It easily follows that, if the perturbations are small enough, $\Ab^1$ and $\Ab$ are 
 (uniformly) bounded and elliptic. This takes care of Point (1) of Lemma \ref{main1.3}.
 
It is clear that $\Ab = \Ab^1 + \Ab^2$. Concerning Point (2), notice that
$$\nabla_{x,t} \Ab^1 = \det(J') \nabla_{x,t} (J'^{-T}J'^{-1}) + (J'^{-T}J'^{-1}) \nabla_{x,t} \det(J')$$ 
and 
$$\Ab^2 = \det(J) (J^{-T}J^{-1} -  J'^{-T}J'^{-1}) + (\det(J)-\det(J'))  J'^{-T}J'^{-1}.$$ 
Lemmata \ref{lemCM2b} and \ref{lemCM3b} allow us to conclude that both $r\nabla_{x,t} \Ab^1$ and $\Ab^2$ satisfy the Carleson measure condition.

Now we check (3).
Recall from Definition \ref{defJprimeandM} that $J'$ has the form 
$\begin{pmatrix} J'_1 & 0 \\ 0 & hI_{n-d} \end{pmatrix}$. Thus 
\begin{equation} \label{defAb1}
\Ab^1 = \det(J') \begin{pmatrix} (J'_1)^{-T}(J'_1)^{-1} & 0 \\ 0 & h^{-2}I_{n-d} \end{pmatrix}.
\end{equation} 
Let $\bb = h^{-2} \det(J')$; then \eqref{defAb1} is the same as the representation of (3),
and \eqref{a5.7} holds too. 

Lemma \ref{lemCM2b}, point (i) entails that $|\det(J') - h^{n-d}|\leq \epsilon$, with 
$\epsilon$ as small as we want. Since $h$ is bounded and bounded from below, we get 
\eqref{a5.5}. Lemma \ref{main1.3} follows.
\qed

\section{The $\alpha$-numbers and the regularity of the soft distance $D$}

\label{SCMforD} 

In this section we use the $\alpha$-numbers introduced by X. Tolsa 
to give some control on the geometry of our graph $\Gamma$, 
an Ahlfors-regular measure $\sigma$ supported on $\Gamma$,
and eventually the distance function $D = D_\alpha$ defined by \eqref{IdefD}.

The main estimate is Lemma \ref{L6D}, which gives some control on $D$
in terms of an average density of the measure $\sigma$, and with errors controlled by
a Carleson measure. This will lead to the estimate \eqref{DrhoisCM}, for a suitable choice of
auxiliary function $h$. 

Here $\Gamma$ is our Lipschitz graph, and for $\sigma$ we start with any positive 
measure on $\Gamma$ that is Ahlfors-regular, i.e., satisfies \eqref{defADR} for some 
constant $C_\sigma \geq 1$. For this section we need some known geometric estimates 
on $\Gamma$ and $\sigma$, which happen to hold for any uniformly rectifiable set, 
but which we may write slightly differently in the context of small Lipschitz graphs 
to simplify the exposition.

Later on, we shall need to restrict to measures $\sigma$ that lie close enough to
the surface measure, because otherwise the function $h$ that corresponds to $D$
may not satisfy the slow variation condition \eqref{H2} with a small enough constant.
But we shall only worry about this in the next section.

The generic constant $C$ in this section is allowed to depend on $n$, $d$, 
$C_\sigma$, $\eta$, and another bump function $\theta$ that will be chosen later,
and (later in the section) the exponent $\alpha$ in the definition of $D$.

We start with the proof of \eqref{equivD-bis} promised in the Introduction.

\begin{lemma}\label{lequivD}
If $\Gamma$ is Ahlfors-regular and $\sigma$ satisfies \eqref{defADR},
\begin{equation} \label{equivD}
C^{-1} \dist(X,\Gamma) \leq D_\alpha(X) \leq C\dist(X,\Gamma)
\end{equation}
where $C>0$ is a constant that depends only on $n$, $d$, $\alpha$, and $C_\sigma$. 
\end{lemma}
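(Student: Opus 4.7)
The plan is to prove the equivalent statement that $\int_\Gamma |X-y|^{-d-\alpha}\,d\sigma(y) \approx \dist(X,\Gamma)^{-\alpha}$, since $D_\alpha(X)$ is the $(-1/\alpha)$-power of this integral. Let $\delta := \dist(X,\Gamma)$ and fix a point $x_0 \in \Gamma$ with $|X-x_0| = \delta$.

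For the upper bound on the integral, I would decompose $\Gamma$ into the dyadic annuli $A_0 = \Gamma \cap B(x_0, 2\delta)$ and $A_k = \Gamma \cap \bigl(B(x_0, 2^{k+1}\delta) \setminus B(x_0, 2^k\delta)\bigr)$ for $k \geq 1$. On $A_0$ one has the trivial bound $|X-y| \geq \delta$ and $\sigma(A_0) \leq C_\sigma (2\delta)^d$ by Ahlfors regularity, contributing $\lesssim \delta^{-\alpha}$. On $A_k$ with $k \geq 1$, the triangle inequality gives $|X-y| \geq |y-x_0| - \delta \geq 2^{k-1}\delta$, while $\sigma(A_k) \leq C_\sigma(2^{k+1}\delta)^d$. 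Summing yields
\[
\int_\Gamma |X-y|^{-d-\alpha}\,d\sigma(y) \lesssim \delta^{-\alpha}\sum_{k=0}^\infty 2^{-k\alpha} \lesssim \delta^{-\alpha},
\]
with constant depending only on $n,d,\alpha,C_\sigma$.

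For the lower bound, I would integrate only over $A_0 = \Gamma \cap B(x_0,2\delta)$. There every $y \in A_0$ satisfies $|X-y| \leq |X-x_0| + |x_0-y| \leq 3\delta$, and by the left half of \eqref{defADR} applied at $x_0$ with radius $2\delta$, $\sigma(A_0) \geq C_\sigma^{-1}(2\delta)^d$. Hence
\[
\int_\Gamma |X-y|^{-d-\alpha}\,d\sigma(y) \geq (3\delta)^{-d-\alpha}\sigma(A_0) \gtrsim \delta^{-\alpha}.
\]
Taking $(-1/\alpha)$-powers of both bounds gives \eqref{equivD}. There is no real obstacle here: the statement is a routine consequence of Ahlfors regularity plus a dyadic tail estimate, and the convergence of the series $\sum 2^{-k\alpha}$ is exactly where the positivity of $\alpha$ is used.
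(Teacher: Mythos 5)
Your proof is correct and follows essentially the same route as the paper: work with $D_\alpha(X)^{-\alpha}$, pick a nearest boundary point, decompose $\Gamma$ into dyadic annuli centered there, and use Ahlfors regularity to bound each annulus's contribution (with a geometric series for the upper bound and the innermost ball for the lower bound). The only differences are cosmetic — you use the lower bound $|X-y|\geq 2^{k-1}\delta$ where the paper uses $(2^k-1)\delta$, and you take the ball of radius $2\delta$ rather than $\delta$ for the lower bound — and they have no bearing on the argument.
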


\bp Proving \eqref{equivD} is the same as proving that $D(X)^{-\alpha}$ is equivalent to 
$\dist(X,\Gamma)^{-\alpha}$. Let us prove the latter fact. 
Let $X\in \Omega$ be given, set $r = \dist(X,\Gamma)$, 
pick $x\in \Gamma$ such that $|X-x| = r$, and decompose then $D_\alpha(X)^{-\alpha}$
into contributions of annuli as
\[\begin{split}
D_\alpha(X)^{-\alpha} 
& = \int_{|y-x| < 2r} |X-y|^{-d-\alpha} d\sigma(y) 
+ \sum_{k\geq 1} \int_{2^k r\leq |y-x| <2^{k+1}r} |X-y|^{-d-\alpha} d\sigma(y) \\
& \leq r^{-d-\alpha} \sigma(B(x,2r)) + \sum_{k\geq 1} ((2^k-1)r)^{-d-\alpha} \sigma(B(x,2^{k+1}r)) \\
&\leq C_\sigma r^{-d-\alpha}(2r)^d + C_\sigma \sum_{k=1}^\infty (2^{k}-1)^{-d-\alpha} 
r^{-d-\alpha} (2^{k+1}r)^d \leq C^{-1} r^{-\alpha}
\end{split}\]
by \eqref{defADR}. The reverse inequality is easy too, since
\[D_\alpha(X)^{-\alpha} \geq \int_{|y-x| < r} |X-y|^{-d-\alpha} d\sigma(y) \geq (2r)^{-d-\alpha} \sigma(B(x,r)) \geq C^{-1} r^{-\alpha},\]
where the last inequality uses the lower bound in \eqref{defADR}.
\ep

\subsection{Wasserstein distances}
Most of our estimates will be based on a result of X. Tolsa, 
\cite[Theorem 1.1]{Tolsa09}, which gives a good control on sums of squares of local Wasserstein 
distances to flat measures, for every Ahlfors-regular measure on a uniformly rectifiable set.
Here our set is a small Lipschitz graph, which makes the verification of Tolsa's theorem
easier, but we will not really need the Lipschitz character of $\Gamma$.

We first define flat measures and local Wasserstein distances. 
Denote by $\Xi$ the set of affine $d$-planes in $\R^n$, and for each
plane $P \in \Xi$, denote by $\mu_P$ the restriction of $\H^d$ to $P$
(in other words, the Lebesgue measure on $P$). By {\bf flat measure}, we shall 
mean simply mean a measure $\mu = c \mu_P$, with $c > 0$ and $P\in \Xi$.
We shall denote by $\cF$ the set of flat measures.

Next we measure the distance between our measure $\sigma$ and flat measures,
locally in a ball $B(z,r)$, which we shall often take centered on $\Gamma$ because 
this way we know that $\sigma(B(z,r))$ is fairly large.

\begin{definition} \label{D6.1}
For $z\in \R^n$ and $r > 0$, denote by $Lip(z,r)$ the set of Lipschitz functions 
$f : \R^n \to \R$ such that $f(y)=0$ for $y\in \R^n \sm B(z,r)$ and $|f(y)-f(w)|\leq |y-w|$
for $y,w\in \R^n$. Then define the normalized Wasserstein distance between two measures
$\sigma$ and $\mu$ by
\begin{equation} \label{a6.2}
\dist_{z,r}(\mu,\sigma) = r^{-d-1} \sup_{f\in Lip(z,r)} \Big|\int f d\sigma - \int f d\mu\Big|.
\end{equation}
Then define the distance to flat measures by
\begin{equation} \label{a6.3}
\wt\alpha_\sigma(z,r) = \inf_{\mu \in \cF}\dist_{z,r}(\mu,\sigma).
\end{equation}
\end{definition}

\ms
We normalized $\dist_{z,r}(\mu,\sigma)$ with $r^{-d-1}$ because this way,
if $\mu(B(z,r)) \leq C r^d$ and $\sigma(B(z,r)) \leq C r^d$, then 
$\dist_{z,r}(\mu,\sigma) \leq 2C$ because
\begin{equation} \label{a6.4}
\|f\|_\infty \leq r \ \text{ for } f\in Lip(z,r).
\end{equation}
Also observe that if $B(y,s) \subset B(z,r)$, then $Lip(y,s) \subset Lip(z,r)$; it follows that
$\dist_{y,s}(\mu,\sigma) \leq (r/s)^{d+1} \dist_{z,r}(\mu,\sigma)$, and hence
\begin{equation} \label{a6.5}
\wt\alpha_\sigma(y,s) \leq (r/s)^{d+1} \wt\alpha_\sigma(z,r) 
\ \text{ when } B(y,s) \subset B(z,r).
\end{equation}

In Theorem 1.1 of \cite{Tolsa09}, the author uses slightly different numbers, defined as follows.
For every dyadic cube $Q$ in $\R^d$, denote by $z_Q$ the center of $Q$, $d(Q)$ its
diameter, and set
\[
\wt \alpha_\sigma^d(Q) = \wt\alpha_\sigma(z_Q,3d(Q)).
\]
Then he proves the following Carleson measure estimate, valid when $\Gamma$ is a Lipschitz
graph (maybe with a large constant $C_0$) and $\sigma = g \H^{d}_{\vert \Gamma}$
for some bounded function $g$: for every dyadic cube $R \subset \R^n$ that meets the support
of $\sigma$, 
\begin{equation} \label{a6.6}
\sum_{Q \in {\mathcal D}(R)} \wt \alpha_\sigma^d(Q)^2 \sigma(Q) \leq C d(R)^d,
\end{equation}
where ${\mathcal D}(R)$ denotes the collection of dyadic cubes contained in $R$, and
the constant $C$ depends on $n$, $d$, $C_0$, and $\|g\|_\infty$.

The same statement, modulo cosmetic changes, stays true when $\Gamma$ is
a uniformly rectifiable set of dimension $d$ and $\sigma$ an Ahlfors-regular measure
on $\Gamma$. This is even a characterization of uniform rectifiability. See Theorem 1.2 in 
\cite{Tolsa09}. We shall not need this fact here.

We want to turn \eqref{a6.6} into a Carleson estimate of the usual type, with a function
of $x\in \R^d$ and $r>0$.

\begin{lemma} \label{L6.7} For every Ahlfors-regular measure $\sigma$ with support $\Gamma$,
the function $(x,r) \to \wt\alpha_\sigma(\Phi(x),r)$ (defined on $\R^d \times (0,+\infty)$)
satisfies the Carleson measure condition.
\end{lemma}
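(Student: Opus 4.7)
The plan is to deduce the continuous Carleson estimate for $\wt\alpha_\sigma(\Phi(\cdot),\cdot)$ from Tolsa's dyadic bound \eqref{a6.6}, using the monotonicity property \eqref{a6.5} as a bridge. Fix $x_0\in\R^d$ and $r_0>0$; our goal is to prove
\[
\int_0^{r_0}\int_{B(x_0,r_0)}\wt\alpha_\sigma(\Phi(y),s)^2\,dy\,\frac{ds}{s}\leq C\,r_0^d.
\]
The key step is to assign, to each pair $(y,s)$ with $s>0$, a dyadic cube $Q=Q(y,s)\subset\R^n$ of a standard dyadic grid whose side length $2^{-k(s)}$ lies in, say, $[10s,20s]$, and which contains $\Phi(y)$. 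Since $\Phi(y)\in\Gamma$, such a cube meets $\supp\sigma$. Moreover, $|z_Q-\Phi(y)|\leq d(Q)/2$, so $B(\Phi(y),s)\subset B(z_Q,3d(Q))$, and \eqref{a6.5} yields
\[
\wt\alpha_\sigma(\Phi(y),s)\leq (3d(Q)/s)^{d+1}\wt\alpha_\sigma(z_Q,3d(Q))\leq C\,\wt\alpha_\sigma^d(Q),
\]
with a constant $C$ depending only on $n$ and $d$ (because $d(Q)/s$ is bounded above and below by absolute constants).

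It then remains to convert the integral into a sum over dyadic cubes. For a fixed dyadic cube $Q$ in the chosen grid meeting $\Gamma$, the set of $(y,s)$ assigned to $Q$ is contained in $\Phi^{-1}(Q)\times I_Q$, where $I_Q$ is an interval of $s$-values of length at most $d(Q)$ (determined by the scale $k$). Because $\Phi$ is Lipschitz (with constant $\sqrt{1+C_0^2}$), $\Phi^{-1}(Q)\subset \R^d$ has Lebesgue measure $\leq C\,d(Q)^d$. Therefore, after dividing by $s\approx d(Q)$, the contribution of $Q$ to the integral is bounded by
\[
C\,\wt\alpha_\sigma^d(Q)^2\,d(Q)^d\leq C'\,\wt\alpha_\sigma^d(Q)^2\,\sigma(Q),
\]
where in the last step we used the Ahlfors regularity \eqref{defADR} together with the fact that $Q$ meets $\Gamma$. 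All cubes $Q$ that can be assigned by our procedure have $d(Q)\leq Cr_0$ and, because $\Phi(B(x_0,r_0))\subset B(\Phi(x_0),2r_0)$, are contained in one (or a bounded number, according to how the point $\Phi(x_0)$ sits with respect to the grid) of fixed dyadic cubes $R^*\subset\R^n$ of diameter $\lesssim r_0$. Applying Tolsa's inequality \eqref{a6.6} to each such $R^*$ and summing gives the desired bound by $C\,r_0^d$.

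The main (and essentially only) technical obstacle is the bookkeeping: one must verify that the scale choice $2^{-k(s)}\in[10s,20s]$ indeed produces a well-defined cube containing $\Phi(y)$, that the resulting family of cubes fits inside finitely many top cubes $R^*$, and that a standard shift of the dyadic grid (or a union of finitely many translated grids) handles the points $(y,s)$ whose associated cube straddles the boundary of $R^*$. All of this is routine once the assignment $(y,s)\mapsto Q(y,s)$ is made precise, so no genuinely delicate estimate is required beyond \eqref{a6.5} and \eqref{a6.6}.
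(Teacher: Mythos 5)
Your plan is essentially the same as the paper's proof: decompose the range of $(y,s)$ into pieces parametrized by dyadic cubes $Q$ in $\R^n$ whose scale is comparable to $s$, use the monotonicity \eqref{a6.5} to pass from $\wt\alpha_\sigma(\Phi(y),s)$ to $\wt\alpha_\sigma^d(Q)$, convert the integral into a dyadic sum, and conclude from Tolsa's estimate \eqref{a6.6}. The scale convention is slightly different (you take side length $\approx 10s$, the paper effectively uses $d(Q)<s\leq 2d(Q)$), and you say slightly more about the top cubes $R^*$, but the skeleton is the same.

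There is, however, one real gap in your write-up. You bound $|\Phi^{-1}(Q)|\leq C\,d(Q)^d$ (true, by projection onto $\R^d$) and then claim $d(Q)^d\leq C\,\sigma(Q)$ ``by Ahlfors regularity and the fact that $Q$ meets $\Gamma$.'' That inequality is false in general: if the graph $\Gamma$ only grazes $Q$ near a corner, then $\sigma(Q)$ can be far smaller than $d(Q)^d$ (it can even vanish when $Q\cap\Gamma$ has $\H^d$-measure zero). Ahlfors regularity gives $\sigma(B(z,\rho))\geq C_\sigma^{-1}\rho^d$ for $z\in\Gamma$, but there is no $\rho\approx d(Q)$ for which $B(z,\rho)\subset Q$ unless $z$ is well inside $Q$. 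The correct step --- and the one the paper uses --- is the single inequality $|\Phi^{-1}(Q)|\leq C\,\sigma(Q)$, obtained by observing that $\Phi_*(\text{Leb}_{\R^d})$ and $\sigma$ are both $d$-Ahlfors regular measures with support $\Gamma$, hence comparable (in particular $\Phi_*\text{Leb}\leq C\,\sigma$). Your intermediate bound by $d(Q)^d$ throws away exactly the information you then try to recover, and the attempted recovery does not go through; replacing the two-step estimate by the direct comparison closes the gap, and the rest of your argument is sound.
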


\begin{proof}
See \eqref{CMdef2} (or just \eqref{a6.11} below) for the definition of the Carleson condition.
Let $B(x,r) \subset \R^d$ be given, let $k_0 \in \mathbb Z$ be such that
$2r \leq 2^{k_0} < 4r$, and the cover $B(\Phi(x),2r)$ by less than $C$ 
disjoint dyadic cubes $R_j$ of sidelength $2^{k_0}$. For each $j$ and each dyadic
subcube $Q \in {\mathcal D}(R_j)$, denote by $H(Q)$ the set of pairs $(y,s)$
such that $\Phi(y)\in Q$ and $d(Q) < s \leq 2 d(Q)$. 
Notice that each pair $(y,s)$, with $y\in B(x,r)$ and $0 < s \leq r$, lies in one of 
these $H(Q)$ (because $\Phi(y) \in B(\Phi(x),2r) \subset \cup_j R_j$). In addition, 
if $(y,r) \in H(Q)$, $B(y,s) \leq B(z_Q,3d(Q))$, and hence by \eqref{a6.5}
\begin{equation} \label{a6.8}
\wt\alpha_\sigma(y,s) \leq (3d(Q)/s)^{d+1}\wt\alpha_\sigma(z_Q,3d(Q))
= (3d(Q)/s)^{d+1} \wt\alpha_\sigma^d(Q).
\end{equation}
Hence
\begin{eqnarray} \label{a6.9}
\int_0^r \int_{\R^d \cap B(x,r)} \wt\alpha_\sigma(y,s)^2 \,\frac{dxds}{s}
&\leq& \sum_j \sum_{Q \in {\mathcal D}(R_j)} \int_{(y,s)\in H(Q)} 
\wt\alpha_\sigma(y,s)^2 \,\frac{dxds}{s}
\nn\\
&\leq& C \sum_j \sum_{Q \in {\mathcal D}(R_j)} \wt\alpha_\sigma^d(Q)^2
\int_{(y,s)\in H(Q)} \frac{dxds}{s}
\end{eqnarray}
by \eqref{a6.8}. Next 
\begin{equation} \label{a6.10}
\int_{(y,s)\in H(Q)} \frac{dxds}{s}
= |\{ y \in \R^d \, ; \, \Phi(y) \in Q \}| \int_{d(Q) < s \leq 2 d(Q)} \frac{ds}{s}
\leq C \ln(2)  \sigma(Q)
\end{equation}
because the pushforward by $\Phi$ of the Lebesgue measure on $\R^d$ is less than 
$C \sigma$.
Altogether
\begin{eqnarray} \label{a6.11}
\int_0^r \int_{\R^d \cap B(x,r)} \wt\alpha_\sigma(y,s)^2 \frac{dxds}{s}
&\leq& C \sum_j \sum_{Q \in {\mathcal D}(R_j)} \wt\alpha_\sigma^d(Q)^2 \sigma(Q)
\leq C \sum_j d(R_j)^d \leq C r^d
\end{eqnarray}
by \eqref{a6.6}. This is the desired Carleson measure estimate (compare with
\eqref{CMdef2}).
\end{proof}

Here we shall work with a fixed measure $\sigma$ on $\Gamma$, which we may drop 
from the notation, and it will be simpler to work with the function $\alpha$ defined by
\begin{equation} \label{a6.12}
\alpha(x,r) = \wt\alpha_\sigma(\Phi(x),4r)
\ \text{ for $x\in \R^d$ and $0 < r < +\infty$.}
\end{equation}
It easily follows from Lemma \ref{L6.7} that 
\begin{equation} \label{a6.13}
(x,r) \to \alpha(x,r)\text{ satisfies the Carleson measure condition.} 
\end{equation}

\subsection{The ensuing geometric control}
Next we want to show that the numbers $\alpha(x,t)$ control the geometry
of $\Gamma$ and $\sigma$. In particular, the $d$-planes $P(x,r)$ essentially
minimize $\alpha(x,r)$ (see Lemma \ref{Lreplace}), and the normalized average density
$\lambda(x,r)$ defined by \eqref{a6.18} is reasonably smooth (see Lemma \ref{CMfornablah}).
 
For $x\in \R^d$ and $r >0$, choose a flat measure $\wt\mu_{x,r}$ such that 
\begin{equation} \label{a6.14}
\dist_{\Phi(x),4r}(\wt\mu_{x,r},\sigma) \leq 2 \wt\alpha_\sigma(\Phi(x),4r) = 2\alpha(x,r).
\end{equation}
Recall from \eqref{a6.2} that this means that 
\begin{equation} \label{a6.15}
\Big|\int f d\wt\mu_{x,r}-\int f d\sigma \Big| \leq 2 \alpha(x,r) (4r)^{d+1}
\ \text{ for } f\in Lip(\Phi(x),4r).
\end{equation}
Since $\wt\mu_{x,r}$ is a flat measure, 
\begin{equation} \label{a6.16}
\wt\mu_{x,r} = c(x,r) \mu_{\Lambda(x,r)}
\end{equation}
for some choice of $c > 0$ and affine $d$-plane $\Lambda(x,r)$, but
we would prefer to use constants and $d$-planes that we control better.

So our next task will be to replace $\Lambda(x,r)$ with the approximating
$d$-plane $P(x,r)$ of Section \ref{Sorthbasis}, and $c(x,r)$ with a number
$\lambda(x,r)$ that we define now.

Fix $\theta \in C^\infty(\R^n)$ such that $0\leq \theta \leq 1$, 
$\supp \, \theta \subset B(0,1)$ and $\theta=1$ on $B(0,\frac12)$. 
Then set
\begin{equation} \label{a6.17}
\theta_{x,r}(y) = \theta\Big(\frac{\Phi_r(x)-y}{r} \Big),
\end{equation}
where $\Phi_r(x) = (x,\varphi_r(x))$ is as in Section \ref{Sorthbasis},
and finally
\begin{equation} \label{a6.18}
\lambda(x,r) = \dfrac{\int_\Gamma \theta_{x,r}(y) d\sigma(y)}
{\int_{P(x,r)} \theta_{x,r}(y) d\mu_{P(x,r)} }
= \dfrac{\int_\Gamma \theta_{x,r}(y) d\sigma(y)}
{r^d\int_{\R^d} \theta(y) d\H^{d}(y) },
\end{equation}
where the second identity uses the fact that we centered $\theta_{x,r}(y)$
at $\Phi_r(x) \in P(x,r)$. In other words, we normalize the measure
\begin{equation} \label{a6.19}
\mu_{x,r} = \lambda(x,r) \mu_{P(x,r)}
\end{equation}
by the fact that it has the same effect on a normalized bump $\theta_{x,r}$
as $\sigma$. Here is our replacement lemma.

\begin{lemma} \label{Lreplace}
With the definitions above (and if $C_0$ is small enough),
\begin{equation} \label{a6.21}
\dist_{\Phi(x),r}(\mu_{x,r},\sigma) \leq C\alpha(x,r).
\end{equation}
\end{lemma}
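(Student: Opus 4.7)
The plan is to use $\wt\mu_{x,r}$ as an intermediate flat measure and apply the triangle inequality. By monotonicity of the Wasserstein distance \eqref{a6.5},
\[
\dist_{\Phi(x),r}(\wt\mu_{x,r},\sigma) \leq 4^{d+1}\,\dist_{\Phi(x),4r}(\wt\mu_{x,r},\sigma) \leq 2\cdot 4^{d+1}\,\alpha(x,r),
\]
so the lemma reduces to the flat-measure comparison $\dist_{\Phi(x),r}(\mu_{x,r},\wt\mu_{x,r}) \leq C\alpha(x,r)$ between the explicit $\mu_{x,r} = \lambda(x,r)\mu_{P(x,r)}$ and the near-optimizer $\wt\mu_{x,r} = c(x,r)\mu_{\Lambda(x,r)}$. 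I would establish separately that $|c(x,r)-\lambda(x,r)| \leq C\alpha(x,r)$ and that $\Lambda(x,r)$ lies within Hausdorff distance $C\alpha(x,r)\,r$ of $P(x,r)$ inside $B(\Phi(x),r)$; granted these, an explicit coupling (for example nearest-point projection) between the two $d$-planes yields the required bound, because the two flat measures then match in density up to $O(\alpha(x,r))$ and in affine position up to $O(\alpha(x,r)\,r)$ at scale $r$.

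The key inputs are Tolsa's estimate \eqref{a6.15} tested against carefully chosen Lipschitz functions supported near $\Phi_r(x)$. Because $C_0$ is small, the centering bound $|\Phi_r(x) - \Phi(x)| \leq CC_0 r$ from \eqref{calculusCeta} puts $\supp \theta_{x,r} \subset B(\Phi(x), 2r)$, so suitable constant multiples of $r\,\theta_{x,r}$ and of the first-moment tests $\theta_{x,r}(y)\,\langle u, y-\Phi_r(x)\rangle$ (for $u \in \R^n$) belong to $\operatorname{Lip}(\Phi(x), 4r)$. Testing \eqref{a6.15} against $\theta_{x,r}$ and combining with the defining identity $\int \theta_{x,r}\,d\mu_{x,r} = \int \theta_{x,r}\,d\sigma$ from \eqref{a6.18} yields
\[
\Big|\int \theta_{x,r}\,d\wt\mu_{x,r} - \int \theta_{x,r}\,d\mu_{x,r}\Big| \leq C\,\alpha(x,r)\,r^d,
\]
from which the density comparison follows, given the lower bounds $c(x,r),\lambda(x,r) \gtrsim 1$ coming from Ahlfors regularity of $\sigma$ and a geometric check that $\Lambda(x,r)$ passes within distance $\lesssim r$ of $\Phi(x)$.

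The main obstacle will be the plane comparison. The first-moment test against $\mu_{x,r}$ vanishes identically, by radial symmetry of $\theta$ and the fact that $\mu_{x,r}$ is supported on the affine $d$-plane $P(x,r)$ through $\Phi_r(x)$. Tolsa's estimate will therefore transfer plane information to $\Lambda(x,r)$ only to the extent that the moment $\int \theta_{x,r}(y)\,\langle u, y-\Phi_r(x)\rangle\,d\sigma(y)$ is itself controlled by $C\alpha(x,r)\,r^{d+1}$. A crude Lipschitz bound gives only $CC_0\,r^{d+1}$; the upgrade to the $\alpha$-level bound will use in an essential way that $\Phi_r(x) = (\Phi\ast\eta_r)(x)$ with $\eta$ radial, so that the natural first moment of $\Gamma$ under $\eta_r$-averaging is centered at $\Phi_r(x)$ by construction, leaving only an $\alpha$-level remainder from a second application of Tolsa's estimate at scale $4r$. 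This cancellation, together with the analogous second-moment estimates needed to pin down the plane direction (not merely its position), forms the technical heart of the argument.
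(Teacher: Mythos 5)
Your overall architecture (triangle inequality through the near-optimal flat measure $\wt\mu_{x,r}=c(x,r)\mu_{\Lambda(x,r)}$, then separate comparisons of densities and of planes) is the same as the paper's, and your density comparison via the test function $\theta_{x,r}$ is essentially the paper's argument. But the step you yourself flag as the technical heart --- pinning down the position and especially the direction of $P(x,r)$ relative to $\Lambda(x,r)$ up to $O(\alpha(x,r))$ --- is a genuine gap, and the moment-based mechanism you sketch does not close it. The missing ingredient is the $L^1$ average-distance bound
\[
\int_{\Gamma \cap B(\Phi(x),2r)} \dist(z,\Lambda(x,r))\, d\sigma(z) \leq C r^{d+1}\alpha(x,r),
\]
which the paper obtains by testing Tolsa's estimate against (a cut-off of) the function $z\mapsto \dist(z,\Lambda(x,r))$ itself: this function vanishes on $\Lambda(x,r)$, so its integral against $\wt\mu_{x,r}$ is exactly zero and the whole Wasserstein discrepancy lands on the $\sigma$-integral. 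Your first-moment tests $\theta_{x,r}(y)\langle u, y-\Phi_r(x)\rangle$ secretly require this bound anyway: for $u$ perpendicular to $\Lambda(x,r)$ one has $\langle u, y-\Phi_r(x)\rangle = -\langle u,w\rangle + O(\dist(y,\Lambda(x,r)))$ with $w$ the offset vector, so extracting $|w|\lesssim \alpha(x,r)\,r$ from the moment identity needs the error term $\int \dist(y,\Lambda(x,r))\,d\sigma$ to be $O(\alpha(x,r) r^{d+1})$ --- crude Lipschitz bounds only give $O(C_0 r^{d+1})$. Your claimed cancellation ``the first moment of $\Gamma$ under $\eta_r$-averaging is centered at $\Phi_r(x)$'' conflates two different averages: $\Phi_r(x)=\int\eta_r(x-y)\Phi(y)\,dy$ is a weighted average over the \emph{parameter} space, not the $\theta_{x,r}$-weighted centroid of $\sigma$, and a priori these differ by $O(C_0 r)$, not $O(\alpha(x,r)r)$.

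The direction estimate is where the gap is most serious. Second moments of $\sigma$ control (via Tolsa) the $L^2$-best-fit plane of $\sigma$ in the ball, but $P(x,r)$ is defined differently, as $\Phi_r(x)+\Span\{\dr_{x_i}\Phi_r(x)\}$; relating the covariance structure of $\sigma$ to this mollified tangent plane is exactly what needs to be proved, so the second-moment route is circular unless supplemented. The paper bypasses moments entirely: since $\dr_{x_i}\Phi_r = r^{-1}\Phi\ast\psi^i_r$ with $\psi^i=\dr_{x_i}\eta$ of mean zero, one can subtract the constant value of the projection $\Pi^\perp$ onto $\Lambda(x,r)^\perp$ and bound $|\Pi^\perp(\dr_{x_i}\Phi_r(x))|$ directly by $r^{-d-1}\int_{B}\dist(\Phi(y),\Lambda(x,r))\,dy \lesssim \alpha(x,r)$, again using the $L^1$ distance bound above. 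I would recommend establishing that bound first and then running the paper's mean-zero-kernel argument for both the base point and the direction; with those in hand your density comparison and the final coupling go through as you describe.
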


Once this is proved, we will be able to forget about $\Lambda(x,r)$ and $c(x,r)$,
because we have more explicit and convenient choice that does nearly as well.
\begin{proof}
Let $x\in \R^d$ and $r > 0$ be given. We may as well assume that
\begin{equation} \label{a6.22}
\alpha(x,r) \leq c_1,
\end{equation}
where $c_1 > 0$ will be chosen soon (depending on $n$, $d$, and $C_\sigma$)
will be chosen later. This information will be used from time to time, to eliminate strange cases.

\ms
We first control the average distances from points of $\Gamma$ to $\Lambda(x,r)$.
We claim that 
\begin{equation} \label{a6.23}
\int_{\Gamma \cap B(\Phi(x),2r)}\dist(z,\Lambda(x,r)) d\sigma(z)
\leq C r^{d+1}\alpha(x,r).
\end{equation}
Let us get rid of a minor potential problem first.
It could happen \em a priori \em 
that the $d$-plane $\Lambda(x,r)$ 
does not meet $B(\Phi(x),4r)$. In fact this is impossible when \eqref{a6.22}
holds with a small $c_1$, but let us not bother to prove this.
When this happens, notice that $\int f d\wt \mu_{x,r} = 0$ 
 for every $f\in Lip(\Phi(x),4r)$ 
(because $f=0$ on $\Lambda(x,r)$), so $\dist_{\Phi(x),4r}(\wt\mu_{x,r},\sigma)$ 
does not change if we replace $\Lambda(x,r)$ with any $d$-plane $\wt\Lambda(x,r)$
that does not meet $B(\Phi(x),r)$ and $\wt\mu(x,r)$ with any flat measure on 
$\wt\Lambda(x,r)$. For instance, we may choose $\wt\Lambda(x,r)$ so that
it touches $\dr B(\Phi(x), 4r)$.
Thus we may assume that $\Lambda(x,r)$ was always chosen so that
\begin{equation} \label{a6.24}
\dist(\Phi(x),\Lambda(x,r)) \leq 4r.
\end{equation}
In order to prove \eqref{a6.23}, we shall construct a function $f\in Lip(\Phi(x),4r)$ and 
test it against \eqref{a6.15}. First define a Lipschitz function $\psi$ by 
$\psi(z) = \frac18$ for $z\in B(\Phi(x),2r)$ and 
\begin{equation} \label{a6.25}
\psi(z) = \frac1{16r} (4r - |z-\Phi(x)|)_+ = \frac1{16r} \max(0, 4r - |z-\Phi(x)|)
\end{equation}
otherwise.  Then set  $f(z) = \psi(z) \dist(z,\Lambda(x,r))$. Notice that
$$
|\nabla f(z)|  \leq \psi(z) + \dist(z,\Lambda(x,r)) |\nabla \psi(z)|
\leq \frac14 + 8r |\nabla \psi(z)| \leq 1,
$$ 
by \eqref{a6.24}, and since $\psi(x) = 0$ on $\R^n \sm B(\Phi(x),4r)$, we get that $f\in Lip(\Phi(x),4r)$ and \eqref{a6.15} applies. This yields
\begin{eqnarray} \label{a6.26}
\int_{\Gamma \cap B(\Phi(z),2r)}\dist(z,\Lambda(x,r)) d\sigma 
&\leq& 8\int_{\Gamma \cap B(\Phi(z),2r)} \psi(z) \dist(z,\Lambda(x,r)) d\sigma
\nn \\
&=& 8 \int f(z) (d\sigma - d\mu_{x,r}) \leq C r^{d+1} \alpha(x,r)
\end{eqnarray}
since $\int f d\mu_{x,r}= 0$ because $f$ vanishes on its support $\Lambda(x,r)$.
This proves our claim \eqref{a6.23}.

\ms
Next we estimate the distance from $P(x,r)$ to $\Lambda(x,r)$. 
We start with the position of the base point 
$\Phi_r(x) = \eta_r \ast \Phi(x) = \int_{y\in B(x,r)} \eta_r(x-y) \Phi(y) dy$. 
Simply observe that
\begin{eqnarray} \label{a6.27}
\dist(\Phi_r(x),\Lambda(x,r))
&\leq& \int_{y\in B(x,r)} \eta_r(x-y) \dist(\Phi(y),\Lambda(x,r)) dy
\nn\\
&\leq& r^{-d} \|\eta\|_\infty \int_{y\in B(x,r)} \dist(\Phi(y),\Lambda(x,r)) dy
\nn\\
&\leq& C r^{-d} \|\eta\|_\infty \int_{z\in \Gamma \cap B(\Phi(x),2r)} 
\dist(z,\Lambda(x,r)) d\sigma(z)
\leq C  r \alpha(x,r)
\end{eqnarray}
because the pushforward by $\Phi$ of the Lebesgue measure $dy$ is less than 
$C d\sigma$, 
 and by \eqref{a6.23}.

Then we control the direction $P'(x,r)$ of $P(x,r)$.
Recall from the discussion near \eqref{defbarv}
that $P'(x,r)$ is the $d$-dimensional vector space spanned by the 
$\hat v^i(x,r) = \dr_{x_i} \Phi_r(x)$, $1 \leq i \leq d$. 
Denote by $\Pi^\perp = \Pi^\perp_{x,r}$ the orthogonal projection 
on the orthogonal complement of (the direction of) $\Lambda(x,r)$. 
Thus $\Pi^\perp(\Lambda(x,r))$ is a single point $\xi$. For $1 \leq i \leq d$, 
$\dr_{x_i} \Phi_r = \Phi \ast \dr_{x_i}(\eta_r) = r^{-1} \Phi \ast \psi^i_r$,
where we set $\psi^i = \dr_{x_i} \eta$ and as usual $\psi^i_r(y) = r^{-d}\psi^i(y/r)$.
Then, by the same sort of computation as before,
\begin{eqnarray} \label{a6.28}
|\Pi^\perp(\hat v^i(x,r))|
&=& |\Pi^\perp(\dr_{x_i} \Phi_r(x))| 
=  r^{-1} \big| \Pi^\perp [\Phi \ast \psi^i_r(x)] \big|
=  r^{-1}\Big|\int_{y\in B(x,r)} \psi^i_r(x-y) \Pi^\perp (\Phi_r(y)) dy \Big|
\nn\\
&=&   r^{-1}\Big|\int_{y\in B(x,r)} \psi^i_r(x-y) [\Pi^\perp ( \Phi_r(y)) - \xi] dy \Big|
\nn\\
&\leq& r^{-d-1} \|\psi^i\|_\infty \int_{y\in B(x,r)} \dist(\Phi_r(y),\Lambda(x,r)) dy
\\
&\leq& C r^{-d-1} \|\psi^i\|_\infty \int_{z\in \Gamma \cap B(\Phi(x),2r)} 
\dist(z,\Lambda(x,r)) d\sigma(z) 
\leq C \alpha(x,r)
\nn
\end{eqnarray}
where the second line comes from the fact that $\int_{\R^d} \psi^i = 0$
because $\psi^i = \dr_{x_i} \eta$, the third line from the fact 
$|\Pi^\perp(z)-\xi| = \dist(z, \Lambda(x,r))$, and then we continue as before.
Now the $\hat v^i(x,r)$ are a nearly orthonormal basis of $P'(x,r)$ 
(because they are as close as we want to the $e^i$
), so, if $C_0$ is small enough,
\eqref{a6.28} actually implies that
\begin{equation} \label{a6.29}
|\Pi^\perp(v)| \leq C \alpha(x,r) |v| \ \text{ for } v\in P'(x,r).
\end{equation}

Denote by $\Lambda'(x,r)$ the vector $d$-space parallel to $\Lambda(x,r)$;
we just showed that
\begin{equation} \label{a6.30}
\dist(v,\Lambda'(x,r)) \leq  C \alpha(x,r) |v| \ \text{ for } v\in P'(x,r).
\end{equation}
It now follows from \eqref{a6.30} and \eqref{a6.27} that 
\begin{equation} \label{a6.32}
\dist(y,\Lambda(x,r)) \leq C \alpha(x,r) [r+|y-\Phi_r(x)|] 
\ \text{ for } y \in P(x,r).
\end{equation} 

\ms
Now we worry about the constants $c(x,r)$ and $\lambda(x,r)$.
Notice that for $C_0$ small
\begin{equation} \label{a6.31}
|\Phi_r(x)-\Phi(x)| = |\varphi_r(x)-\varphi(x)| \leq C_0 r \leq \frac r{10},
\end{equation}
because $\varphi_r(x)$ is an average of $\varphi(y)$, $y\in B(x,r)$. 
In particular, $\Phi_r(x) \in B(\Phi(x),r)$ and hence $B(\Phi_r(x),r)\leq B(\Phi(x),2r)$.
Let us apply now \eqref{a6.15} to the function $\theta_{x,r}$ of \eqref{a6.17}.
The support is right, because of \eqref{a6.31}, but the Lipschitz norm is $r^{-1} \|\theta\|_{lip}$, 
so we divide
$\theta_{x,r}$ by that number, apply \eqref{a6.15}, and get that 
\begin{equation} \label{a6.34}
\Big|\int \theta_{x,r} d\wt\mu_{x,r} - \int \theta_{x,r} d\sigma] \Big| \leq C r^d \alpha(x,r),
\end{equation}
where $C$ depends on $\theta$ but this is all right. Set
\begin{equation} \label{a6.35}
a_0 = \int_{\R^d} \theta(y) d\H^{d}(y);
\end{equation}
One of the two terms of \eqref{a6.34} is
\begin{equation} \label{a6.36}
A_\sigma = \int \theta_{x,r} d\sigma = a_0 r^d \lambda(x,r).
\end{equation}
The other term is
\begin{equation} \label{a6.37}
A_\mu = \int\theta_{x,r} d\wt\mu_{x,r} = c(x,r) \int\theta_{x,r} d\mu_{\Lambda(x,r)},
\end{equation}
which we shall write as an integral on $P(x,r)$.
Denote by $\pi$ the orthogonal projection from $P(x,r)$ to $\Lambda(x,r)$; 
by \eqref{a6.29} this is an affine bijection, and a very brutal estimate shows that its constant
Jacobian determinant $J$ is such that $|J-1| \leq C \alpha(x,r)$ (we could even get a square). 
We do the change of variables $z = \pi(y)$ and find that
\begin{equation} \label{a6.38}
A_\mu =  c(x,r)\int_{\Lambda(x,r)}\theta_{x,r}(z) d\mu_{\Lambda(x,r)}(z)
= c(x,r) J \int_{P(x,r)} \theta_{x,r}(\pi(y)) d\mu_{P(x,r)}(y)
\end{equation}
By \eqref{a6.32}, $|\theta_{x,r}(\pi(y)) - \theta_{x,r}(y)| 
\leq r^{-1} \|\theta\|_{lip} |\pi(y)-y|  \1_{B(\Phi_r(x),2r)}(y) \leq C \alpha(x,r)$; 
it is useful to observe that since $c_1$ is small, then  by \eqref{a6.32} again, 
$\theta(\pi(y)) \neq 0$ implies that $y\in B(\Phi_r(x),2r)$. 
But $\int_{P(x,r)} \theta_{x,r}(y) d\mu_{P(x,r)}(y) = a_0 r^d$
(because $\theta_{x,r}$ is centered on $P(x,r)$; we already did this computation in \eqref{a6.18}), 
so 
\begin{equation} \label{a6.39}
|A_\mu - c(x,r) J a_0 r^d| \leq C c(x,r) J \int_{P(x,r) \cap B(\Phi(x),2r)} \alpha(x,r) d\mu_{P(x,r)}
\leq C c(x,r) \alpha(x,r) r^d.
\end{equation}
Since $J$ is so close to $1$, we also get that
$|A_\mu - c(x,r) a_0 r^d| \leq C c(x,r) \alpha(x,r) r^d$. We now compare to $A_\sigma$,
use \eqref{a6.34}, and get that
\begin{equation} \label{a6.40}
|a_0 r^d \lambda(x,r) - c(x,r) a_0 r^d|
\leq |A_\sigma - A_\mu| + |A_\mu - c(x,r) a_0 r^d|
\leq C (1+c(x,r)) \alpha(x,r) r^d.
\end{equation}
Notice that if $c_1$ in \eqref{a6.22} is small enough,
\begin{equation} \label{a6.40b}
|\lambda(x,r) - c(x,r)| \leq \frac12 (1+c(x,r)).
\end{equation}
Observe also that since $\1_{B(\Phi(x),r/2)} \leq \theta_{x,r} \leq \1_{B(\Phi(x),r)}$,
\eqref{defADR} implies that $(r/2)^d C_\sigma^{-1} r^d \leq \int \theta_{x,r} d\sigma
\leq C_\sigma r^d$, and hence 
\begin{equation} \label{a6.41}
C^{-1} \leq \lambda(x,r) \leq C 
\ \text{ for $x\in \R^d$ and } r > 0.
\end{equation}
Thus \eqref{a6.40b} and \eqref{a6.41} forbid $c(x,r)$ to be to large, 
we may simplify \eqref{a6.40}, and we get that
\begin{equation} \label{a6.42}
|\lambda(x,r) - c(x,r)| \leq C \alpha(x,r).
\end{equation}
In particular, if $c_1$ is small enough (our second and last
condition on $c_1$), we also get that $C^{-1} \leq c(x,r) \leq C$.

We are now ready to prove \eqref{a6.21}. Let $f \in Lip(\Phi(x,r)$ be given.
By \eqref{a6.16} and the same change of variables as for \eqref{a6.38},
\begin{equation} \label{a6.43}
\int f(z) d\wt\mu_{x,r}(z)
= c(x,r) \int _{\Lambda(x,r)} f(z) d\mu_{\Lambda(x,r)}(z)
= c(x,r) J \int_{P(x,r)} f(\pi(y)) d\mu_{P(x,r)}(y),
\end{equation}
and since $|f(\pi(y))-f(y)| \leq |\pi(y)-y| \leq C r \alpha(x,r)$ by \eqref{a6.32}
and $|J-1| \leq C \alpha(x,r)$, we get that
\begin{equation} \label{a6.44}
\Big|\int f(z) d\wt\mu_{x,r}(z)
- c(x,r) \int_{P(x,r)} f(y) d\mu_{P(x,r)}(y)\Big|
\leq C r^{d+1} \alpha(x,r).
\end{equation}
Then \eqref{a6.42} allows us to replace $c(x,r)$ with $\lambda(x,r)$,
and since $\mu_{x,r} = \lambda(x,r) \mu_{P(x,r)}$ by \eqref{a6.19}, we are left with 
\begin{equation} \label{a6.45}
\Big|\int f d\wt\mu_{x,r} - \int_{P(x,r)} f d\mu_{x,r}\Big| \leq C r^{d+1} \alpha(x,r).
\end{equation}
We add this to \eqref{a6.15} and get that 
\begin{equation} \label{a6.46}
\Big|\int f d\sigma - \int_{P(x,r)} f d\mu_{x,r}\Big| \leq C r^{d+1} \alpha(x,r).
\end{equation}
Finally we take the supremum over $f \in Lip(\Phi(x),r)$ and get \eqref{a6.21}.
Lemma \ref{Lreplace} follows.
\end{proof}

So we decided to work with the measures $\mu_{x,r} = \lambda(x,r) \mu_{P(x,r)}$.
Some regularity of the coefficients $\lambda(x,r)$ will be helpful.

\begin{lemma} \label{CMfornablah}
There is a constant $C \geq 0$ such that
$|r\nabla_{x,r} \lambda(x,r)| \leq C \alpha(x,r)$ for $x\in \R^d$ and $r > 0$.
Hence $r\nabla_{x,r} \lambda(x,r)$ is uniformly bounded and satisfies the Carleson 
measure condition. 
\end{lemma}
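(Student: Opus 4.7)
The plan is the following. I would differentiate the explicit formula \eqref{a6.18} for $\lambda(x,r)$ under the integral sign in the $x_i$ and $r$ variables. Both $r\partial_{x_i}\lambda$ and $r\partial_r\lambda$ then take the form $\frac{1}{a_0 r^d}\int_\Gamma f\,d\sigma$, where $f$ is a test function supported in $B(\Phi_r(x),r)\subset B(\Phi(x),2r)$, uniformly bounded, and with Lipschitz constant at most $C/r$. Explicitly, the integrand involves $\theta_{x,r}$, $\nabla\theta$, $\hat v^i(x,r)$ (for the $x_i$-derivative), and $\partial_r\Phi_r(x)$ together with the vector $y-\Phi_r(x)$ (for the $r$-derivative).

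The key step is to show that the same integrals computed against the model flat measure $\mu_{x,r}=\lambda(x,r)\mu_{P(x,r)}$ all vanish. For the $x_i$-derivative this is immediate: parametrizing $P(x,r)$ by $u\in P'(x,r)$ through $y=\Phi_r(x)+u$, the integrand reduces to the directional derivative of $u\mapsto\theta(-u/r)$ along the tangential vector $\hat v^i\in P'(x,r)$, which integrates to zero by the divergence theorem on $P'(x,r)$ (the function being compactly supported). For the $r$-derivative the same argument kills the tangential part of $\partial_r\Phi_r(x)$, while the remaining scalar pieces proportional to $\theta(-u/r)$ and to $u\cdot\nabla\theta(-u/r)$ cancel via the classical identity $\int_{\R^d}u\cdot\nabla\theta(u)\,du=-d\int\theta\,d\H^d=-d\,a_0$. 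The component of $\partial_r\Phi_r(x)$ normal to $P'(x,r)$ also produces zero, provided $\theta$ is chosen radial, since then $\nabla\theta(-u/r)$ points along $u$ and hence stays in $P'(x,r)$ for $u\in P'(x,r)$.

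Once $\int f\,d\mu_{x,r}=0$ is secured, a mild extension of Lemma \ref{Lreplace} enlarging the class of allowed supports from $B(\Phi(x),r)$ to $B(\Phi(x),2r)$ (at no cost, since the existing proof already invokes \eqref{a6.15} at scale $4r$) gives
\[
\Big|\int f\,(d\sigma-d\mu_{x,r})\Big|\le C r^{d+1}\alpha(x,r)\,\|\nabla f\|_\infty\le C r^d\alpha(x,r),
\]
whence $|r\nabla_{x,r}\lambda(x,r)|\le C\alpha(x,r)$. The uniform boundedness of $r\nabla_{x,r}\lambda$ and the Carleson measure condition are then immediate from \eqref{a6.41} (bounding $\alpha$ in the pointwise estimate) and from \eqref{a6.13}. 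The main obstacle I anticipate is exactly the $\mu_{x,r}$-vanishing for the $r$-derivative: without a radiality assumption, the decomposition leaves a remainder proportional to the normal component of $\partial_r\Phi_r(x)$, which is only $O(C_0)$ rather than $O(\alpha)$. The remedy is simply to fix $\theta$ radial from the outset, which is in any case what makes the normalization $\int_{P(x,r)}\theta_{x,r}\,d\mu_{P(x,r)}=a_0 r^d$ in \eqref{a6.18} plane-independent.
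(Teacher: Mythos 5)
Your proposal is correct and follows essentially the same route as the paper: differentiate \eqref{a6.18}, observe that the resulting integrals vanish when $d\sigma$ is replaced by the flat measure $\mu_{x,r}$, and then bound $\int f\,(d\sigma-d\mu_{x,r})$ via Lemma \ref{Lreplace} to get the $\alpha(x,r)$ control, concluding with \eqref{a6.13}. The only difference is cosmetic: the paper deduces the $\mu_{x,r}$-vanishing from the translation/dilation invariance of the auxiliary quantity $\wt\lambda(x,r)$ in \eqref{a6.50}, whereas you verify it by direct computation (divergence theorem on $P'(x,r)$ plus the scaling identity), and your observation that $\theta$ should be taken radial is exactly the implicit assumption already needed for the normalization in \eqref{a6.18}.
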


\begin{proof}
Recall from \eqref{a6.18}, \eqref{a6.35}, and \eqref{a6.17} that
\begin{equation} \label{a6.48}
\lambda(x,r) = (a_0 r^d)^{-1} \int_\Gamma \theta_{x,r}(y) d\sigma(y)
= a_0^{-1} r^{-d} \int_\Gamma \theta\Big(\frac{\Phi_r(x)-y}{r} \Big) d\sigma(y).
\end{equation}
We start with the derivatives in the $x_i$ variables, which are easier to treat.
Denote by $\hat v^i_j$ the $j^{th}$ coordinate of $\hat v^i = \dr_{x_i} \Phi_r(x)$, and 
set $\theta_j = \frac{\dr \theta}{\dr j}$; then
\begin{equation} \label{a6.49} 
r \dr_{x_i} \lambda(x,r) = a_0^{-1} r^{-d} 
\int_\Gamma \sum_{j=1}^n \hat v^i_j \theta_j \Big(\frac{\Phi_r(x)-y}{r} \Big) d\sigma(y).
\end{equation}
It is rather easy to see that this is uniformly bounded, but what we really want
is the Carleson measure estimate.
Let us not stare at this formula for too long. If we started with 
\begin{equation} \label{a6.50}
\wt\lambda(x,r) = (a_0 r^d)^{-1} \int_{P(x,r)} \theta_{x,r}(y) d\mu_{x,r}(y)
= a_0^{-1} r^{-d} \int_{P(x,r)} \theta\Big(\frac{\Phi_r(x)-y}{r} \Big) d\mu_{x,r}(y),
\end{equation}
with just a change of measure, the same computation would yield
\begin{equation} \label{a6.51}
r \dr_{x_i} \wt\lambda(x,r) = a_0^{-1} r^{-d} 
\int_{P(x,r)} \sum_{j=1}^n \hat v^i_j \theta_j \Big(\frac{\Phi_r(x)-y}{r} \Big) d\mu_{x,r}(y).
\end{equation}
But $\wt\lambda(x,r)$ is a constant (by the usual dilation invariance computation, using
the fact that $\Phi_r(x) \in P(x,r)$), so the expression in \eqref{a6.51} vanishes,
and we can replace $d\sigma(y)$ with $d\sigma(y) - d\mu_{x,r}(y)$ in \eqref{a6.49}.

Set $\wt f(y) = \sum_{j=1}^n \hat v^i_j \theta_j \Big(\frac{\Phi_r(x)-y}{r} \Big)$, and 
notice that $\wt f$ is $Cr^{-1}$-Lipschitz and supported in $B(\Phi(x),2r)$ (by \eqref{a6.31}). 
Thus we may apply \eqref{a6.21} (or directly \eqref{a6.44}) to $f = C^{-1} r \wt f$,
and get that
\begin{equation} \label{a6.52}
\Big|\int f [d\sigma - d\mu_{x,r}]\Big| \leq C r^{d+1} \alpha(x,r).
\end{equation}
We multiply this by $Cr^{-1}$, replace in \eqref{a6.49}, and get that 
\begin{equation} \label{a6.53}
|r \dr_{x_i} \lambda(x,r)|  = a_0^{-1} r^{-d} 
\Big|\int \wt f [d\sigma-d\mu_{x,r}](y)\Big| \leq C  \alpha(x,r).
\end{equation}

\ms

The radial derivative is treated in the same way, except that the formula is more complicated.
That is,
\[
\begin{split} 
r \, &\frac{\d \lambda(x,r)}{\d r} 
= - d a_0^{-1} r^{-d} \int \theta\Big(\frac{\Phi_r(x)-y}{r} \Big) d\sigma(y)
\\
&\ \  + a_0^{-1} r^{-d} \int \Big\{\sum_j (\d_{r}\Phi_r(x))_j \theta_j \Big(\frac{\Phi_r(x)-y}{r} \Big)
- \frac{r}{r^2} \sum_j \theta_j \Big(\frac{\Phi_r(x)-y}{r} \Big) (\Phi_r(x)-y)_j
\Big\} d\sigma(y).
\end{split}
\]
This looks ugly, but all we need to know is that we can write this as
\begin{equation} \label{a6.54}
r \,\frac{\d \lambda(x,r)}{\d r}  = a_0^{-1} r^{-d} \int F\Big(\frac{\Phi_r(x)-y}{r} \Big) d\sigma(y),
\end{equation}
where here it happens that 
$F(w) = -d\theta(w) + \sum_j (\d_{r}\Phi_r(x))_j \theta_j(w) - \sum_j \theta_j(w) w_j$,
but the main point is that $F$ is $C$-Lipschitz 
(use Lemma \ref{rgradientphilem} to bound $\d_{r}\Phi_r(x)$) 
and supported in the unit ball.

As before, $r \, \frac{\d \lambda(x,r)}{\d r}$ is bounded by inspection,
and since $\wt \lambda(x,r)$ is a constant and we can do the same computation for it,
with $\sigma$ replaced by $\mu_{x,r}$, this allows us to replace $d\sigma$ by
$[d\sigma - d\mu_{x,r}]$ in the big integral, use Lemma \ref{Lreplace}, and get the 
same estimate as before.

Thus  $|r\nabla_{x,r} \lambda(x,r)| \leq C \alpha(x,r)$, and the desired Carleson measure estimate follows from \eqref{a6.13}.
\end{proof}

\subsection{The soft distance function $D$} 

Fix $\alpha > 0$ and let $D$ be the distance function defined by \eqref{IdefD}; 
we want to use the $\alpha(x,r)$ to control the variations of $D$.
Of course it will be simpler to study 
\begin{equation} \label{a6.55}
D(z)^{-\alpha} = \int_\Gamma |z-y|^{-d-\alpha} d\sigma(y),
\end{equation}
and we shall need the normalizing constant 
\begin{equation} \label{defcalpha}
c_\alpha = \int_{\R^d} (1+|x|^2)^{-\frac{d+\alpha}{2}} dx.
\end{equation}

\begin{lemma} \label{L6D}
 Let $C_0$ be small. 
 For each constant $C_2 \geq 1$, we can find $C_D \geq 1$, that depends on
$n$, $d$, $\eta$, $\theta$, $C_\sigma$, $\alpha$, and $C_2$, such that if
$x\in \R^d$, $r > 0$, and $z \in \R^n$ is such that 
\begin{equation} \label{a6.58}
|z-\Phi(x)| \leq C_2 r, \ \text{ and} \  \dist(z, \Gamma \cup P(x,r)) \geq C_2^{-1} r,
\end{equation}
then 
\begin{equation} \label{a6.59}
\big| D(z)^{-\alpha} - c_{\alpha} \lambda(x,r) \dist(z,P(x,r))^{-\alpha} \big|
\leq C_D  r^{-\alpha} a(x,r), 
\end{equation}
where we set $a(x,r) = \sum_{k \geq 0}  
2^{-\alpha k} \alpha(x,2^k r)$.
\end{lemma}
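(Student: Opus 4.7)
The plan is to reduce \eqref{a6.59} to a Wasserstein-type comparison between $\sigma$ and $\mu_{x,r}=\lambda(x,r)\H^d_{|P(x,r)}$ applied to the explicit test function $f_z(y):=|z-y|^{-d-\alpha}$, decompose $f_z$ annularly around $\Phi(x)$ at dyadic scales $2^k r$, and telescope through the flat measures $\mu_j:=\mu_{x,2^j r}$. First, a direct substitution $w=y-\pi_{P(x,r)}(z)$ followed by $w=\dist(z,P(x,r))\,u$ on $P(x,r)$ gives
\[
\int_{P(x,r)} |z-y|^{-d-\alpha}\,d\H^d(y) = c_\alpha\,\dist(z,P(x,r))^{-\alpha},
\]
with $c_\alpha$ as in \eqref{defcalpha}, so the target \eqref{a6.59} becomes $\big|\int f_z\,(d\sigma-d\mu_{x,r})\big|\leq C r^{-\alpha}a(x,r)$.

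Next, introduce a smooth partition of unity $\{\chi_k\}_{k\geq 0}$ around $\Phi(x)$, with $\chi_0$ supported in $B(\Phi(x),2r)$, $\chi_k$ supported in $B(\Phi(x),2^{k+2}r)\setminus B(\Phi(x),2^{k-1}r)$ for $k\geq 1$, and $|\nabla\chi_k|\leq C(2^k r)^{-1}$. The hypothesis \eqref{a6.58} together with the triangle inequality forces $|z-y|\geq c(C_2)(r+|y-\Phi(x)|)$ for every $y\in\Gamma\cup P(x,r)$, and after a harmless smooth truncation of $f_z$ near the (possibly interior) singularity $y=z$ which does not change either integral, the function $f_k:=f_z\chi_k$ satisfies $\|f_k\|_\infty\leq C(2^k r)^{-d-\alpha}$ and $\|\nabla f_k\|_\infty\leq C(2^k r)^{-d-\alpha-1}$ globally. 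For each $k$ I telescope through the dyadic flat approximations,
\[
E_k := \int f_k\,(d\sigma-d\mu_{x,r}) = \int f_k\,(d\sigma-d\mu_{x,R_k}) + \sum_{j=0}^{k+1}\int f_k\,(d\mu_{j+1}-d\mu_j),
\]
where $R_k:=2^{k+2}r$. The first bracket is handled directly by Lemma~\ref{Lreplace} at scale $R_k$: rescaling $f_k$ by $\|\nabla f_k\|_\infty$ to land in $Lip(\Phi(x),R_k)$ gives $|\int f_k\,(d\sigma-d\mu_{x,R_k})|\leq C\,2^{-k\alpha}r^{-\alpha}\alpha(x,2^{k+2}r)$.

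For each summand in the telescope I split
\[
d\mu_{j+1}-d\mu_j = (\lambda_{j+1}-\lambda_j)\,d\H^d_{|P_{j+1}} + \lambda_j\,(d\H^d_{|P_{j+1}}-d\H^d_{|P_j}),
\]
with $\lambda_j:=\lambda(x,2^j r)$ and $P_j:=P(x,2^j r)$. The density-difference piece is bounded by $C\,2^{-k\alpha}r^{-\alpha}\alpha(x,2^j r)$ using $|\lambda_{j+1}-\lambda_j|\leq C\alpha(x,2^j r)$ (obtained by integrating Lemma~\ref{CMfornablah} over $s\in[2^j r,2^{j+1}r]$) together with $\int|f_k|\,d\H^d_{|P_{j+1}}\leq C(2^k r)^{-\alpha}$. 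For the geometric piece I parameterize $P_j,P_{j+1}$ as affine graphs $\psi_j,\psi_{j+1}:\R^d\to\R^{n-d}$ and bound the difference of surface integrals by $\|\nabla f_k\|_\infty\cdot(2^k r)^d\cdot\sup_{|x'-x|\leq 2^k r}|\psi_{j+1}(x')-\psi_j(x')|$ plus a lower-order Jacobian correction involving $|\nabla\psi_{j+1}-\nabla\psi_j|$. The key geometric input is
\[
|\psi_{j+1}(x)-\psi_j(x)|\leq C\cdot 2^j r\,\alpha(x,2^{j+1}r),\qquad |\nabla\psi_{j+1}-\nabla\psi_j|\leq C\,\alpha(x,2^{j+1}r),
\]
which I will extract by re-running \eqref{a6.27}--\eqref{a6.29} at scale $2^{j+1}r$: both base points $\Phi_{2^j r}(x)$, $\Phi_{2^{j+1}r}(x)$ and both orthonormal tangent frames $\hat v^i$ at these two scales are Lipschitz averages of points on $\Gamma\cap B(\Phi(x),C\cdot 2^{j+1}r)$, so all four lie within $C\cdot 2^j r\,\alpha(x,2^{j+1}r)$ of the common optimal plane $\Lambda(x,2^{j+1}r)$ via \eqref{a6.23}. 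Consequently the geometric piece is bounded by $C\,2^{-k\alpha}r^{-\alpha}\alpha(x,2^{j+1}r)$.

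Combining all estimates yields $|E_k|\leq C\,2^{-k\alpha}r^{-\alpha}\sum_{j=0}^{k+2}\alpha(x,2^j r)$, and summing over $k\geq 0$ and swapping the order of summation gives
\[
\Big|\int f_z\,(d\sigma-d\mu_{x,r})\Big| \leq \sum_k |E_k| \leq C r^{-\alpha}\sum_{j\geq 0}\alpha(x,2^j r)\sum_{k\geq j-2}2^{-k\alpha}\leq C r^{-\alpha}\sum_{j\geq 0} 2^{-j\alpha}\alpha(x,2^j r) = C r^{-\alpha}a(x,r).
\]
The main obstacle is the quantitative comparison of adjacent approximating planes $P_j$ and $P_{j+1}$ in terms of $\alpha(x,2^{j+1}r)$, which requires bookkeeping how the Lipschitz averages defining both $\Phi_{2^j r}(x)$ and $\Phi_{2^{j+1}r}(x)$ (and their orthonormalized tangent frames) both approximate the common optimal plane $\Lambda(x,2^{j+1}r)$; once this is in hand the combinatorics of the telescoping sum yields the Carleson-type tail $a(x,r)$ without further difficulty.
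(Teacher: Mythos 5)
Your proposal is correct and follows essentially the same route as the paper: the same annular decomposition of $f_z(y)=|z-y|^{-d-\alpha}$ with Lipschitz truncation justified by the lower bound $|z-y|\gtrsim 2^k r$ on the supports, the same exact computation identifying $\int_{P(x,r)}f_z\,d\mu_{x,r}$ with $c_\alpha\lambda(x,r)\dist(z,P(x,r))^{-\alpha}$, the same use of Lemma~\ref{Lreplace} at scale $2^kr$ for the single-scale comparison, the same telescoping through the $\mu_j$ split into a density increment (Lemma~\ref{CMfornablah}) and a plane-to-plane comparison via the common optimal plane, and the same Fubini summation producing the tail $a(x,r)$. The only differences are cosmetic (centering the annuli at $\Phi(x)$ rather than $\Phi_r(x)$, telescoping upward rather than downward, and parameterizing the planes as graphs instead of projecting one onto the other).
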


This is a good enough control, since we shall see in Lemma \ref{L6DC} that 
$a(x,r)$ satisfies the Carleson condition.
We will try not to create too much confusion between the exponent $\alpha$ and the
Tolsa numbers $\alpha(x,2^k r)$. 

\begin{proof}
Let $x$, $r$, and $z$ be given. We intend to cut the integral of \eqref{a6.55}
into pieces that live in annuli, so we introduce cut-off functions. We start
with $\theta_0$, which is defined on $\R^n$, radial, smooth, supported in 
$B(0,r/2)$, such that $0 \leq \theta_0$ everywhere and $\theta_0 = 1$ on
$B(0,r/4)$, and finally is $3r^{-1}$-Lipschitz. Then we set
\begin{equation} \label{a6.60}
\theta_k(y) = \theta_0(2^{-k}y) - \theta_0(2^{-k+1}y)
\end{equation}
for $k \geq 1$ and $y\in \R^n$, and translate all these functions by setting
$\wt \theta_k(y) = \theta_k(y-\Phi_r(x))$. Notice that
\begin{equation} \label{a6.61}
\sum_{k \geq 0} \wt \theta_k = 1
\end{equation}
(it is a telescopic sum), 
\begin{equation} \label{a6.62}
\wt\theta_k \text{ is supported in } B_k = B(\Phi_r(x),2^{k-1}r),
\end{equation}
and for $k \geq 1$
\begin{equation} \label{a6.63}
\wt\theta_k = 0 \text{ on } B_{k-2}.
\end{equation}
We use \eqref{a6.61} to write $D(z)^{-\alpha} = \sum_{k\geq0} I_k$, with 
\begin{equation} \label{a6.64}
I_k = \int_\Gamma |z-y|^{-d-\alpha} \wt\theta_k(y) d\sigma(y)
= \int_\Gamma \wt f_k(y) d\sigma(y),
\end{equation}
with 
\begin{equation} \label{a6.65}
\wt f_k(y) = |z-y|^{-d-\alpha} \wt\theta_k(y).
\end{equation}
Our intention is to approximate $I_k$ by
\begin{equation} \label{a6.66}
J_k = \int_{P(x,r)} \wt f_k(y) d\mu_0(y),
\end{equation}
where for convenience we put
\begin{equation} \label{a6.67}
\mu_k = \mu_{x,2^kr} \ \text{ for } k \geq 0.
\end{equation}
There is no problem with the definition, because $\dist(z,\Gamma \cup P(x,r)) > 0$.
It turns out that the sum is easy to compute. Indeed
\begin{equation} \label{a6.68}
\sum_{k\geq0} J_k = \int_{P(x,r)} |z-y|^{-d-\alpha} \big(\sum_k \wt\theta_k(y)\big) d\mu_0(y)
= \int_{P(x,r)} |z-y|^{-d-\alpha} d\mu_0(y);
\end{equation}
we can send $P(x,r)$ to $\R^d$ by an isometry $F$, so that $F(z) = (0,t)$ for some
$t\in \R^{n-d}$ such that $|t| = \dist(z,P(x,r))$; the image of $\mu_0$ is $\lambda(x,r)$ 
times the Lebesgue measure, so
\begin{eqnarray} \label{a6.69}
\sum_{k\geq0} J_k 
&=& \lambda(x,r)\int_{\R^d} |F(z)-y|^{-d-\alpha} dy
=  \lambda(x,r)\int_{\R^d} (|t|^2 + |y|^2)^{-(d+\alpha)/2} dy
\nn\\
&=& \lambda(x,r) |t|^{-\alpha} \int_{\R^d} (1 + |u|^2)^{-(d+\alpha)/2} du
= c_{\alpha} \lambda(x,r)  \dist(z,P(x,r))^{-\alpha}
\end{eqnarray}
by the change of variables $y = |t| u$. We recognize the same expression as in \eqref{a6.59};
thus we will just need to estimate $|I_k-J_k|$.

\ms
We intend to use Lemma \ref{Lreplace} to play with measures, so we are interested in
the Lipschitz properties of $\wt f_k$, and thus want lower bounds on $|y-z|$ when
$\wt\theta_k(y) \neq 0$. We claim that
\begin{equation} \label{a6.70}
|y-z| \geq c_3 2^{k} r
\ \text{ when $y\in \Gamma \cup P(x,r)$ is such that } \wt\theta_k(y) \neq 0,
\end{equation}
with $c_3 = (16(C_2+1)C_2)^{-1}$.

We start with the case $2^{k-4} \geq C_2+1$.
If $\wt\theta_k(y) \neq 0$, then \eqref{a6.63} says that $y\notin B_{k-2}$,
hence
\begin{eqnarray} \label{a6.71}
|y-z| &\geq& |y-\Phi_r(x)| -|\Phi_r(x)-\Phi(x)| - |\Phi(x)-z|
\nn\\
&\geq& 2^{k-3} r - C_0 r - C_2 r \geq 2^{k-3} r - (1+C_0) r \geq 2^{k-4} r
\end{eqnarray}
by \eqref{a6.31}, \eqref{a6.58}, and our assumption on $k$.

Now assume that $2^{k-4} < C_2+1$, and use the lower bound in \eqref{a6.58}
to get that
\begin{equation} \label{a6.72}
|y-z| \geq \dist(y,\Gamma \cup P(x,r)) \geq C_2^{-1} r 
\geq (16(C_2+1)C_2)^{-1} 2^k r,
\end{equation}
as needed for \eqref{a6.70}. Now set
\begin{equation} \label{a6.73}
f_k(y) = \max(|z-y|, c_3 2^k r)^{-d-\alpha} \wt\theta_k(y).
\end{equation}
We just proved that $f_k(y) = \wt f_k(y)$ for every $y\in \Gamma \cup P(x,r)$.
Since $\sigma$ and $\mu_0$ are supported in $\Gamma$ and $P(x,r)$ respectively,
we can replace $\wt f_k$ with $f_k$ in the definition of $I_k$ and $J_k$, and this
will be convenient because $f_k$ is Lipschitz. Indeed the gradient of 
$\max(|z-y|, c_3 2^k r)^{-d-\alpha}$ vanishes, unless $|z-y| \geq c_3 2^k r$
and then the gradient is the same as for $|z-y| ^{-d-\alpha}$. Thus
\begin{equation} \label{a6.74}
|\nabla f_k(y)| \leq (c_3 2^k r)^{-d-\alpha} |\nabla \wt\theta_k(y)|
+ (d+\alpha) (c_3 2^k r)^{-d-\alpha-1} |\wt\theta_k(y)|
\leq C (2^k r)^{-d-\alpha-1}
\end{equation}
because $|\nabla \wt\theta_k(y)| \leq 6 (2^k r)^{-1}$ by \eqref{a6.60}.
Notice also that
\begin{equation} \label{a6.75}
\|f_k\|_\infty \leq C (2^k r)^{-d-\alpha}
\end{equation}
directly by \eqref{a6.73}. Set
\begin{equation} \label{a6.76}
J'_k = \int_{P(x,2^kr)} f_k(y) d\mu_k(y).
\end{equation}
Notice that $f_k$ is supported in $B_k \subset B(\Phi(x),2^kr)$ (by \eqref{a6.31}), 
so $[C (2^k r)^{-d-\alpha-1}]^{-1} f_k \in Lip(\Phi(x),2^kr)$, and 
Lemma \ref{Lreplace} yields
\begin{eqnarray} \label{a6.77}
|I_k-J'_k| &=& \Big| \int f_k(y) [d\sigma - d\mu_k](y) \Big|
\leq [C (2^k r)^{-d-\alpha-1}] (2^kr)^{d+1} \dist_{\Phi(x),2^kr}(\sigma,\mu_k)
\nn\\
&=& C 2^{-k\alpha} r^{-\alpha} \dist_{\Phi(x),2^kr}(\sigma,\mu_k)
\leq C 2^{-k\alpha} r^{-\alpha} \alpha(x,2^kr)
\end{eqnarray}
by \eqref{a6.2}, Lemma \ref{Lreplace}, and \eqref{a6.67}. This is still compatible
with the right-hand side of \eqref{a6.59}, so we are left with 
$J'_k-J_k = \int f_k [d\mu_k - d\mu_0]$ to estimate, obviously only for $k \geq 1$ 
We write $J'_k-J_k = \sum_{1 \leq j \leq k} \delta_{j,k}$, with
\begin{equation} \label{a6.78}
\delta_{j,k} = \int f_k [d\mu_j - d\mu_{j-1}],
\end{equation}
and recall that $\mu_j = \mu_{x,2^j r} = \lambda(x,2^jr) \H_j^d$,
where we denote by $\H_j^d$ the restriction of $\H^d$ to the plane
$P_j = P(x,2^jr)$ to simplify the notation. We want to know that $P_{j-1}$
lies close to $P_j$, so we return to the distance estimate \eqref{a6.32}
in the proof of Lemma \ref{Lreplace}. We claim that the same proof also shows that
\begin{equation} \label{a6.79}
\dist(y,P_j) \leq C \alpha(x,2^jr) [2^jr+|y-\Phi_{2^jr}(x)|] 
\ \text{ for } y \in P_{j-1},
\end{equation}
at least if $\alpha(x,2^jr)$ is small enough, as in \eqref{a6.22}.
The logical way to see this is to observe that the proof of \eqref{a6.32}
for the position of $P_j$ also gives the same estimates for the position
of $P_{j-1}$, and then we compare the two. Another way is to observe that by 
Lemma \ref{Lreplace}, we can actually replace $\Lambda(x,2^jr)$ with $P_j$
in the whole proof of Lemma \ref{Lreplace}, and in particular of of \eqref{a6.32}.
This is formally more easy, except for the fact that it exactly works for $P_{j-2}$
rather than $P_{j-1}$ (because we lose a factor $4$ in the scales when we prove 
Lemma \ref{Lreplace}). This would be easy to fix, for instance by setting
$\alpha(x,r) = \wt\alpha_\sigma(\Phi(x),8r)$ in \eqref{a6.12} and proving a 
Lemma \ref{Lreplace} with $\dist(\Phi(x),2r)$. We leave the details and consider 
\eqref{a6.79} established. 

Let us continue the estimate when $\alpha(x,2^jr)$ is small enough, so that
$P_{j-1}$ and $P_j$ make a small angle, and we estimate
$\int f_k [d\H^d_j - d\H^d_{j-1}]$ with the same projection trick as 
near \eqref{a6.37}. Denote by $\pi$ the orthogonal projection from
$P_{j-1}$ to $P_j$; if $\alpha(x,2^jr)$ is small enough, this is an affine bijection,
and the determinant $J$ of its jacobian is such that $|J-1| \leq C \alpha(x,2^jr)$.
The change of variables $z = \pi(y)$ yields
\begin{equation} \label{a6.80}
\int_{P_j} f_k(z) d\H^d_j(z) = J \int_{P_{j-1}} f_k(\pi(y)) d\H^d_{j-1}(y)
\end{equation}
and hence, since $f_k$ is supported in $B_k$ by \eqref{a6.62},
\begin{eqnarray} \label{a6.81}
\Big| \int f_k [d\H^d_j - d\H^d_{j-1}]\Big|
&\leq& |J-1| \Big| \int_{P_{j-1}} f_k(\pi(y)) d\H^d_{j-1}(y)\Big|
+ \int_{P_{j-1}} |f_k(\pi(y))-f_k(y)| d\H^d_{j-1}(y)
\nn\\
&\leq& C \|f_k\|_\infty \H^d_{j-1}(B_k) \alpha(x,2^jr) + \|f_k\|_{Lip} 
\int_{P_{j-1} \cap B_{k+1}} |\pi(y)-y| d\H^d_{j-1}(y) 
\nn\\
&\leq& C \|f_k\|_\infty (2^kr)^d \alpha(x,2^jr) + C \|f_k\|_{Lip} (2^kr)^{d+1} \alpha(x,2^jr) 
\\
&\leq& C(2^kr)^{-\alpha} \alpha(x,2^jr)
\nn
\end{eqnarray}
by \eqref{a6.79}, \eqref{a6.74}, and \eqref{a6.75}.

When $\alpha(x,2^jr)$ is not small, simply observe that 
\begin{equation} \label{a6.82}
\Big| \int f_k [d\H^d_j - d\H^d_{j-1}]\Big| 
\leq \|f_k\|_\infty (\H^d_{j-1}(B_k) + \H^d_{j}(B_k))
\leq C (2^kr)^{-\alpha}
\end{equation}
by \eqref{a6.75} and with no gain, but \eqref{a6.81}
also holds in this case because $\alpha(x,2^jr) \geq C^{-1}$.

We also need to show that
\begin{equation} \label{a6.83}
|\lambda(x,2^jr) - \lambda(x,2^{j-1}r)| \leq C \alpha(x,2^jr),
\end{equation}
and the simplest at this point is to observe that for $2^{j-1}r \leq \rho \leq 2^jr$, 
$\rho |\frac{\d \lambda(x,\rho)}{\d \rho}| \leq C \alpha(x,\rho) \leq C 2^{d+1} \alpha(x,2^jr)$
by Lemma \ref{CMfornablah}, 
\eqref{a6.12}, and \eqref{a6.5}. We integrate and get \eqref{a6.83}. 
We are now ready for our last estimate. Indeed
\begin{eqnarray} \label{a6.84}
\delta_{j,k} &=& \Big|\int f_k [d\mu_j - d\mu_{j-1}] \Big|
= \Big|\int f_k [\lambda(x,2^jr) d\H_j^d-\lambda(x,2^{j-1}r) d\H_{j-1}^d] \Big|
\nn\\
&\leq& \lambda(x,2^jr) \Big|\int f_k [\H_j^d- \H_{j-1}^d]\Big|
+ |\lambda(x,2^jr)-\lambda(x,2^{j-1}r)| \Big|\int f_k d\H_{j-1}^d\Big|
\nn\\
&\leq& C (2^kr)^{-\alpha} \alpha(x,2^jr) +  C \alpha(x,2^jr) \|f_k\|_\infty \H_{j-1}(B_k)
\leq C (2^kr)^{-\alpha} \alpha(x,2^jr)
\end{eqnarray}
by \eqref{a6.82}, \eqref{a6.83}, \eqref{a6.41}, and \eqref{a6.75}.
We now sum over $j$ and get that 
\begin{equation} \label{a6.85}
|J'_k-J_k| \leq \sum_{1 \leq j \leq k} |\delta_{j,k}| 
\leq C \sum_{1 \leq j \leq k} (2^kr)^{-\alpha} \alpha(x,2^jr).
\end{equation}
Then we sum over $k$ and by Fubini's lemma,
\begin{eqnarray} \label{a6.86}
\sum_k |J'_k-J_k| &\leq& C \sum_k\sum_{j\leq k} (2^kr)^{-\alpha} \alpha(x,2^jr) \nn\\
&\leq& C r^{-\alpha} \sum_{j} \alpha(x,2^jr) \sum_{k\geq j} 2^{-\alpha k}  \leq C r^{-\alpha} \sum_{j} 2^{-\alpha j} \alpha(x,2^jr) ,
\end{eqnarray} 
which is compatible with \eqref{a6.59}. This last estimate completes our proof of 
Lemma \ref{L6D}.
\end{proof}

We complete Lemma \ref{L6D} with a Carleson control on the right-hand side.

\begin{lemma} \label{L6DC}
Let $a(x,r) = \sum_{k \geq 0} 2^{-\alpha k} \alpha(x,2^k r)$ 
be the same function as Lemma \ref{L6D}.
Then $a$ satisfies the Carleson condition.
\end{lemma}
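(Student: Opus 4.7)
The plan is to verify the Carleson measure condition \eqref{CMdef2} for $a$ by a direct computation. Fix $x_0 \in \R^d$ and $R > 0$; I want to show that
$$I := \int_0^R \int_{B(x_0,R)} a(y,s)^2 \, dy\,\frac{ds}{s} \leq C R^d.$$
The starting point is a pointwise estimate obtained by Cauchy--Schwarz in the summation index, splitting the weight $2^{-\alpha k}$ as $2^{-\alpha k/2}\cdot 2^{-\alpha k/2}$:
$$a(y,s)^2 = \Big(\sum_{k\geq 0} 2^{-\alpha k/2}\cdot 2^{-\alpha k/2}\,\alpha(y, 2^k s)\Big)^2 \leq \Big(\sum_{k\geq 0} 2^{-\alpha k}\Big) \sum_{k\geq 0} 2^{-\alpha k}\,\alpha(y, 2^k s)^2.$$
Plugging this in and performing the change of variables $u = 2^k s$ (which preserves $ds/s$), the problem reduces to showing, for each $k \geq 0$,
$$I_k := \int_0^{2^k R} \int_{B(x_0,R)} \alpha(y,u)^2 \, dy\,\frac{du}{u} \leq C(1+k) R^d,$$
since then $I \lesssim \sum_k 2^{-\alpha k}(1+k)\,R^d \lesssim R^d$ because that series converges for every $\alpha > 0$.

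To prove the bound on $I_k$, I would split the $u$-integral at the scale $u = R$. For $u \leq R$, the Carleson hypothesis \eqref{a6.13} on $\alpha$ (that is, Lemma~\ref{L6.7}) applied to the ball $B(x_0,R)$ at scale $R$ gives directly a contribution of size at most $C R^d$. For $u \in [R, 2^k R]$, the ball $B(x_0,R)$ is much smaller than the scale $u$, and iterating \eqref{a6.13} dyadically in $u$ (covering $B(x_0,R) \subset B(x_0,u)$ and applying the Carleson bound on each dyadic annulus) would only yield $C\,2^{kd} R^d$, which is useless if $\alpha \leq d$ when inserted into the geometric sum. Instead, I would use a uniform pointwise bound $\alpha(y,u) \leq C_\sigma$, which is obtained from Definition~\ref{D6.1} by letting $c \to 0^+$ in the flat measures $\mu = c\mu_P$ and using $|\int f\,d\sigma| \leq u\,\sigma(B(\Phi(y),4u)) \leq C_\sigma\, u^{d+1}$ together with the normalization $u^{-d-1}$. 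This yields
$$\int_R^{2^k R}\int_{B(x_0,R)} \alpha(y,u)^2\,dy\,\frac{du}{u} \leq C_\sigma^2\,|B(x_0,R)|\,\log(2^k) \leq C\,k\,R^d,$$
and combining the two regimes proves the desired bound on $I_k$.

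The main obstacle, or really the only subtle point, is recognizing that \eqref{a6.13} by itself is not enough to close the geometric sum. The $k$-th summand reaches scales $u$ as large as $2^k R$, far beyond the radius $R$ of the test ball, so a pure Carleson bookkeeping at each scale leaves an unavoidable factor $2^{kd}$ that is not defeated by $2^{-\alpha k}$ when $\alpha$ is small. The cure is to combine the Carleson condition on small scales with the $L^\infty$ boundedness of $\alpha$ on the large scales; this reduces the loss from polynomial to logarithmic in $k$ and preserves convergence of the final sum for every $\alpha > 0$.
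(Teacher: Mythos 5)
Your proof is correct and follows essentially the same route as the paper: apply Cauchy--Schwarz to the weighted sum, then for each $k$ split the $s$-integral at the scale where $2^k s$ exceeds the radius of the test ball, handling the small scales with the Carleson bound \eqref{a6.13} after a change of variables and the large scales with the uniform $L^\infty$ bound on $\alpha$. The only difference is cosmetic (you change variables $u = 2^k s$ before splitting, the paper splits on $s$ first), and your identification of the $L^\infty$ bound as the essential ingredient for the large scales is exactly the point the paper's argument turns on.
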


\begin{proof}
We first apply Cauchy-Schwarz to estimate
\[
a(x,r)^2 \leq 
\Big\{\sum_{k \geq 0}  2^{-\alpha k} \alpha^2(x,2^k r) \Big\}
\Big\{\sum_{k \geq 0}   2^{-\alpha k} \Big\}
\leq C \sum_{k \geq 0}  2^{-\alpha k} \alpha^2(x,2^k r).
\]
We need to compute 
\begin{eqnarray} \label{a6.88}
\int_{0}^r \int_{B(x,r)} a(y,s)^2 \,\frac{dyds}{s}
&\leq& C \int_{0}^r \int_{B(x,r)} \sum_{k \geq 0}  2^{-\alpha k} 
\alpha^2(y,2^k s) \frac{dyds}{s}
\nn\\
&=& C  \sum_{k \geq 0}  2^{-\alpha k} \int_{0}^r\int_{B(x,r)} \alpha^2(y,2^k s) \frac{dyds}{s}.
\end{eqnarray}
Write
\[
\int_{0}^r\int_{B(x,r)} \alpha^2(y,2^k s) \frac{dyds}{s} = I_1 + I_2,
\]
where $I_1$ is the part of the integral where $0 < s < 2^{-k}r$, and $I_2$
is the rest. Notice that $\alpha^2(y,\rho) \leq C$ for $y\in \R^d$ and $\rho>0$,
by the definition \eqref{a6.12} and the remark above \eqref{a6.4}. Thus
\[
I_2 = \int_{2^{-k}r}^r\int_{B(x,r)} \alpha^2(y,2^k s) \frac{dyds}{s}
\leq C |B(x,r)| \int_{2^{-k}r}^r \frac{ds}{s} \leq C k r^d.
\]
We are left with 
\[
I_2 = \int_0^{2^{-k}r}\int_{B(x,r)} \alpha^2(y,2^k s) \frac{dyds}{s}
=  \int_{u=0}^{r}\int_{B(x,r)} \alpha^2(y,u) \frac{dydu}{u}
\leq C r^d
\]
after setting $u = 2^k s$ in the integral, and because $\alpha$
satisfies the Carleson condition. We return to \eqref{a6.88} and get that
\[
\int_{0}^r \int_{B(x,r)} a(y,s)^2 \,\frac{dyds}{s}
\leq C  \sum_{k \geq 0}  2^{-\alpha k} (k+1) r^d \leq C r^d,
\]
and Lemma \ref{L6DC} follows.
\end{proof}

\section{We collect estimates and conclude}
\label{Scollect}

The goal of this section is to put ourselves in the framework of Theorem~\ref{Itsf1},
at least for our soft distance $D_\alpha$. Then it will 
remain only to prove the two results on degenerate elliptic operators on
$\Omega_0 = \R^n \sm \R^d$ (namely, Theorems~\ref{Itsf1} and \ref{Itai1}) 
in order to establish Theorem~\ref{Main}.

That is, we choose the missing function $h$, use it to construct a change of variables $\rho$,
compute the matrix of the conjugated operator $L_0$ on $\Omega_0$, check that it satisfies
the assumptions of Theorems~\ref{Itsf1} and \ref{Itai1}, and conclude.

After this, we discuss the case of the usual distance in dimension $d=1$,
and rapidly discuss other choices of distance functions.

\subsection{Our choice of $h(x,t)$}

Recall that we still need to choose the function $h$ that comes in the definition
of $\rho$, and we should try to make $D(\rho(x,t))$ close to $|t|$. 
The discussion that follows shows that taking
\begin{equation} \label{a7.1}
h(x,t) = (c_\alpha \lambda(x,|t|))^{1/\alpha}
\end{equation}
is the most reasonable option. Thus we shall take $h$ radial. 
Now we have to be a little bit careful, because we need to check that $h$ satisfies 
the assumptions needed for the construction of $\rho$, 
and this is why we shall put an additional requirement, compared to the previous section. 

\begin{lemma}\label{L7.1}
There exist small constants $c_0(n,d)$, that depends only on $n$ and $d$,
and $c_1(n,d,\alpha)$, that depends only on $n$, $d$, and the exponent $\alpha$ 
in the definition of $D = D_\alpha$, with the following properties.

Let $\Gamma$ be a Lipschitz graph (as in \eqref{defGamma}), and let $C_0$
be as in \eqref{defC0}. Assume that $C_0 \leq c_0(n,d)$, and let $\sigma$
be an Ahlfors regular measure on $\Gamma$ (as in \eqref{defADR}) such that
\begin{equation} \label{a7.3}
\wt\alpha_\sigma(\Phi(x),r) \leq c_1(n,d,\alpha)
\ \text{ for $x\in \R^d$ and } r >0,
\end{equation}
where $\Phi(x) = (0,\varphi(x))$, and the numbers $\wt\alpha_\sigma(\Phi(x),r)$
are defined by \eqref{a6.3}. Choose $h$ as in \eqref{a7.1}. Then the assumptions
of the previous sections are satisfied, we can construct the bi-Lipschitz mapping $\rho$,
and the Carleson measure condition \eqref{DrhoisCM} holds.
\end{lemma}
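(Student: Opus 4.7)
The plan breaks into three verifications: that the choice \eqref{a7.1} meets the hypotheses (H1)--(H3) imposed on $h$ in Sections~\ref{Srho}--\ref{SCMforJac}, that the map $\rho$ defined via this $h$ is bi-Lipschitz by the already proved Theorem~\ref{Tbijective}, and finally that the soft-distance estimate \eqref{DrhoisCM} follows from Lemma~\ref{L6D}.

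First I would verify (H1)--(H3). The measure bound \eqref{a6.41} from Section~\ref{SCMforD} gives $C^{-1}\leq \lambda(x,r)\leq C$, so $h=(c_\alpha\lambda)^{1/\alpha}$ is pinched between two positive constants $C_{h1}^{\pm 1}$, which handles (H1). For the gradient, since $h$ depends on $t$ only through $r=|t|$, a direct computation gives
\[
r|\nabla_{x,t}h(x,t)| \;=\; \frac{c_\alpha^{1/\alpha}}{\alpha}\,\lambda(x,r)^{\frac{1}{\alpha}-1}\,r\,|\nabla_{x,r}\lambda(x,r)|.
\]
Lemma~\ref{CMfornablah} bounds the last factor by $C\alpha(x,r)\leq C\,c_1(n,d,\alpha)$ under assumption \eqref{a7.3}, and the $\lambda$-factor is bounded by \eqref{a6.41}. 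Choosing $c_1(n,d,\alpha)$ small enough therefore makes $C_{h0}$ as small as we wish, so in particular (H2) holds with $C_0+C_{h0}$ small enough to invoke all the results of Sections~\ref{Srho}--\ref{SCMforJac}. For (H3), the same computation plus the Carleson measure bound for $r\nabla_{x,r}\lambda$ (again Lemma~\ref{CMfornablah}, which is itself derived from \eqref{a6.13}) shows that $r\nabla_{x,t}h$ is in $CM(C_{h2})$ for some $C_{h2}$ depending on $n,d,\eta,\theta,C_\sigma,\alpha$. Bi-Lipschitzness of $\rho$ then follows from Theorem~\ref{Tbijective}.

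Next I would derive \eqref{DrhoisCM}. Fix $(x,t)\in\Omega_0$ and set $r=|t|$, $z=\rho(x,t)$. The key geometric observation is that
\[
\dist(z,P(x,r)) \;=\; h(x,t)\,r,
\]
as an \emph{equality}: indeed $\rho(x,t)=\Phi_r(x)+h(x,t)R_{x,r}(0,t)$ with $\Phi_r(x)\in P(x,r)$, with $R_{x,r}(0,t)\in P'(x,r)^\perp$ by \eqref{a3.16}, and with $R_{x,r}$ an isometry. Together with Lemma~\ref{controldistprop} (which gives $|z-\Phi(x)|\leq 2r h(x,t)$ and $\dist(z,\Gamma)\geq \tfrac12 r h(x,t)$), the pair $(x,r,z)$ satisfies the hypotheses \eqref{a6.58} of Lemma~\ref{L6D} with a constant $C_2$ depending only on $C_{h1}$. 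Applying that lemma and substituting our choice of $h$ yields
\[
c_\alpha\lambda(x,r)\dist(z,P(x,r))^{-\alpha} \;=\; c_\alpha\lambda(x,r)\bigl(c_\alpha\lambda(x,r)\bigr)^{-1} r^{-\alpha} \;=\; r^{-\alpha},
\]
so Lemma~\ref{L6D} collapses to
\[
\left| \frac{r^\alpha}{D(z)^\alpha} - 1 \right| \;\leq\; C_D\, a(x,r).
\]
Under \eqref{a7.3}, $a(x,r)\leq C c_1(n,d,\alpha)$ is uniformly small, so we may linearize: $r/D(z)-1$ is comparable, pointwise, to $r^\alpha/D(z)^\alpha-1$, hence to a quantity bounded by $Ca(x,r)$. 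Since $a$ satisfies the Carleson measure condition by Lemma~\ref{L6DC}, so does $r/D(\rho(x,t))-1$, which is exactly \eqref{DrhoisCM}.

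The main subtlety is the interplay of smallness constants: both the bi-Lipschitz construction of $\rho$ in Section~\ref{Srho} and the transition from $a(x,r)$ being merely Carleson to $r/D(\rho(x,t))-1$ being Carleson require $a$ to be uniformly small, which is why we need the quantitative flatness hypothesis \eqref{a7.3} rather than mere Ahlfors regularity. The constants $c_0(n,d)$ and $c_1(n,d,\alpha)$ are then chosen so that the various lemmas invoked from Sections~\ref{Srho}--\ref{SCMforD} simultaneously apply; no genuinely new estimate is needed, only the bookkeeping of compatibility thresholds.
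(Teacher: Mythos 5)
Your proposal is correct and follows essentially the same route as the paper's proof: verify (H1)--(H3) from \eqref{a6.41} and Lemma~\ref{CMfornablah}, invoke Theorem~\ref{Tbijective}, then use the exact cancellation $c_\alpha\lambda(x,r)\dist(z,P(x,r))^{-\alpha}=r^{-\alpha}$ together with Lemmas~\ref{L6D} and~\ref{L6DC}. The only micro-difference is in the final linearization: you justify passing from $|r^\alpha D(z)^{-\alpha}-1|\leq Ca(x,r)$ to $|r/D(z)-1|\lesssim a(x,r)$ via the uniform smallness of $a$ under \eqref{a7.3}, whereas the paper instead invokes the two-sided comparison $C^{-1}|t|\leq D(\rho(x,t))\leq C|t|$ (from \eqref{equivD} and \eqref{a7.10}) to argue that $s\mapsto s^{1/\alpha}$ is Lipschitz on the relevant bounded range; both are valid and amount to the same thing.
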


Let us rapidly comment the statement before we prove it.
We do not mention $\eta$ or $\theta$ because we can choose them, once and for all.
Our assumption that $C_0$ be small enough is not new, and we use it very often in the
construction. As we shall see soon, the additional assumption \eqref{a7.3} is only used
to check \eqref{H2} for our choice of $h$; this makes sense because if $h$ varies too wildly,
$\rho$ is unlikely to be injective.

We claim that if ($C_0$ is small enough and) we choose $\sigma$
sufficiently close to the surface measure on $\Gamma$, i.e., if 
\begin{equation} \label{a7.4}
(1-c_2(n,d,\alpha)) \H^d_{\vert \Gamma} \leq \sigma
\leq (1+c_2(n,d,\alpha)) \H^d_{\vert \Gamma}
\end{equation}
for a small enough $c_2(n,d,\alpha)$, then $\sigma$ is Ahlfors regular and
\eqref{a7.3} holds. But of course \eqref{a7.3} may also hold for different reasons. 

To prove the claim, we test $\wt\alpha_\sigma(\Phi(x),r)$ on the flat measure 
$\mu = \H^d_{\vert P_x}$, where $P_x = \R^d + (0,\varphi(x))$. 
Denote by $\pi$ the orthogonal projection on $\R^d$, by $\wt \sigma$ the pushforward of 
$\sigma$ on $P'_x=\R^d$, and by $\wt\mu$ the pushforward of $\mu$ on $P'_x$. 
With our assumptions, it is easy to check that
\begin{equation} \label{a7.5}
(1-c_2(n,d,\alpha)) \wt\mu \leq \wt\sigma
\leq (1+c_2(n,d,\alpha)+c_0(n,d)) \wt\mu,
\end{equation}
and then, for any $f\in Lip(\Phi(x),r)$, 
\begin{equation} \label{a7.6}
\int_\Gamma f d\sigma - \int_{P_x} f d\mu
= \int_{\R^d} f(y,\varphi(y)) d\wt\sigma - \int_{\R^d} f(y,\varphi(0)) d\wt\mu = I+II,
\end{equation}
where
\begin{equation} \label{a7.7}
|I| = \Big|\int_{\R^d} [f(y,\varphi(y))-f(y,\varphi(0))] d\wt\sigma(y) \Big|
\leq \int_{\R^d \cap B(x,r)} |\varphi(y)-\varphi(x)| d\wt\sigma(y)
\leq C C_0 r^{d+1},
\end{equation}
and 
\begin{eqnarray} \label{a7.8}
|II| &=& \Big| \int_{\R^d} f(y,\varphi(0)) [d\wt\sigma(y)-d\wt\mu(y)] \Big|
\leq  \|f\|_\infty (c_2(n,d,\alpha)+c_0(n,d)) \int_{\R^d \cap B(x,r)} d\wt\mu(y)
\nn\\
&\leq& C (c_2(n,d,\alpha)+c_0(n,d)) r^{d+1}
\end{eqnarray}
by \eqref{a7.5} and \eqref{a6.4}. We compare with Definition \ref{D6.1},
get \eqref{a7.3}, and prove the claim.

\begin{proof}
Let us now prove the lemma. With our assumptions, we can define the 
planes $P(x,r)$, and prove the results of Section \ref{SCMforD}; 
in particular $C^{-1} \leq \lambda(x,r) \leq C$ (by \eqref{a6.41})
and Lemma~\ref{CMfornablah} says that 
$r |\nabla_{x,r}| \lambda(x,r) \leq C \alpha(x,r) = C \wt\alpha_\sigma(\Phi(x),4r)$
(by \eqref{a6.12}) and satisfies the Carleson measure condition.

The first information gives \eqref{H1}, and the second one yields \eqref{H2}
and \eqref{H3}. The smallness condition in \eqref{H2} really requires something
like \eqref{a7.3}, while the other conditions would follow from Section \ref{SCMforD}.

At this point we can construct $\rho$, and we aim for \eqref{DrhoisCM}.
Set $r=|t|$ as usual, and recall from \eqref{defrhobis} that
$\rho(x,t) = \Phi_r(x) + h(x,t) R_{x,r}(0,t)$, where $\Phi_r(x) \in P(x,t)$ and 
$R_{x,r}(0,t)$ is orthogonal to $P'(x,t)$. Hence by \eqref{a7.1}
\begin{equation} \label{a7.9}
\dist(\rho(x,t),P(x,r)) = h(x,t) r = (c_\alpha \lambda(x,r))^{1/\alpha} r.
\end{equation}
We also know from \eqref{controlofdist} and \eqref{controlofdist2} that 
\begin{equation} \label{a7.10}
|\rho(x,t)-\Phi(x)| \leq 2r h(x,t)r \text{ and } \dist(\rho(x,t),\Gamma) \geq h(x,t)r/2.
\end{equation}
So, if we take the constant $C_2$ in Lemma \ref{L6D} larger than twice the constant $C$
in  \eqref{H1}, $z=\rho(x,t)$ satisfies the assumption \eqref{a6.58} of that lemma,
and \eqref{a6.59} says that 
\[
\big| D(\rho(x,t))^{-\alpha} - c_{\alpha} \lambda(x,r) \dist(z,P(x,r))^{-\alpha} \big|
\leq C r^{-\alpha}\sum_{k \geq 0}  
2^{-\alpha k} \alpha(x,2^k r) = C r^{-\alpha} a(x,r),
\]
where $a$ is the Carleson function of Lemma \ref{L6DC}.
We use \eqref{a7.9} to replace $\dist(z,P(x,r))$ and get that
$|D(\rho(x,t))^{-\alpha} - r^{-\alpha}| \leq C r^{-\alpha} a(x,r)$,
or equivalently 
\begin{equation} \label{a7.11}
\Big| r^\alpha D(\rho(x,t))^{-\alpha} - 1| \leq C a(x,r).
\end{equation}
Recall from \eqref{equivD} that $C^{-1}\dist(z,\Gamma) \leq D(z) \leq C \dist(z,\Gamma)$.
Since $\dist(\rho(x,t),\Gamma)$ is also equivalent to $|t| = r$ by \eqref{a7.10}, we see that
\begin{equation} \label{a7.12}
C^{-1}|t| \leq D(\rho(x,t)) \leq C \dist(z,\Gamma) \leq C|t|.
\end{equation}
This allows us to apply the reciprocal of $s \to s^{\alpha}$, which is Lipschitz
in the range where $r^\alpha D(\rho(x,t))^{-\alpha}$ lives, to deduce from \eqref{a7.11}
that
\begin{equation} \label{a7.13}
\Big| \frac{r} {D(\rho(x,t))} - 1| \leq C a(x,r).
\end{equation}
The Carleson measure condition \eqref{DrhoisCM} now follows from Lemma \ref{L6DC}.
\end{proof} 

\subsection{Computations for the conjugated operator $L_0$}
\label{SS7.2}

Recall that we started with an operator $L$ on $\Omega = \R^n \sm \Gamma$,
formally defined as $L = - \div D^{d+1-n} \nabla$ (see \eqref{defL}), and where 
$D$ is either the soft distance given by \eqref{IdefD} or (in dimension $d=1$)
the usual Euclidean distance (see \eqref{a2.12}).

We have defined a change of variables $\rho : \Omega \to \Omega_0$,
and in this subsection we compute the conjugated operator of $L$ by $\rho$,
and then show that it satisfies the assumptions of theorems of the introduction.

For the rigorous definition of $L$, we refer to \cite{DFMprelim},
where definitions and solutions were given in terms of the weight $w$ defined
on $\Omega$ by
\begin{equation} \label{weightw}
w(z) = \dist(z,\Gamma)^{d+1-n},
\end{equation}
and an accretive bilinear form on the Sobolev space  
\begin{equation}
W(\Omega) = \left\{ u \in L^1_{loc}(\Omega), \, \int_\Omega |\nabla u|^2 w < +\infty \right\}.
\end{equation}
Recall that we checked in \eqref{equivD} that 
\begin{equation} \label{equivw}
C^{-1} D(z)^{d+1-n} \leq w(z) \leq C^{-1} D(z)^{d+1-n},
\end{equation}
where $C>0$ depends only on the dimensions $d$ and $n$ and the parameter $\alpha$.
In the next lemma we compute the effect of $\rho$ on that bilinear form.

\begin{lemma} \label{newLaplacianprop}
Let $\Gamma$, $L$, and $\rho$ be as above; in particular assume that $\rho$ is smooth
and bi-Lipschitz. Then, for $u,v\in W(\Omega)$,
\begin{equation} \label{a7.18}
\int_{X\in \Omega} \nabla u(X)\cdot \nabla v(X) \frac{dX}{D(X)^{n-d-1}} 
= \int_{(x,t) \in \Omega_0}  A_\rho(x,t) \nabla [u\circ  \rho](x,t)
\cdot \nabla [v\circ \rho](x,t) \, dx \, dt,
\end{equation} 
where $\Omega_0 = \R^d \times [\R^{n-d}\setminus \{0\}]$, 
\begin{equation} \label{a7.19}
A_\rho(x,t) = \left(\frac{1}{D(\rho(x,t))}\right)^{n-d-1}
|\det(J(x,t))| (J(x,t)^{-1})^T J(x,t)^{-1}
\end{equation}
for $(x,t) \in \Omega_0$, and $J$ is defined in Definition \ref{defJ}.
\end{lemma}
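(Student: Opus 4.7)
The plan is to recognize this as a classical change-of-variables computation for a divergence-form bilinear form under a bi-Lipschitz diffeomorphism, with the only twist being (i) the weight $D(X)^{d+1-n}$ absorbed into the matrix, and (ii) the cosmetic replacement of $\mathrm{Jac}$ by $J$ at the end. So I would first work formally, pretending $u$ and $v$ are $C^\infty_c(\Omega)$, carry out the calculation cleanly, and then close the argument by a density/approximation step since $\rho$ is a bi-Lipschitz diffeomorphism from $\Omega_0$ onto $\Omega$ (Theorem~\ref{Tbijective} and the discussion around \eqref{a4.38}--\eqref{Jacneq0}).

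First, using the paper's convention $\mathrm{Jac}_{k\ell}(x,t) = \langle \partial_{x_k}\rho, e^\ell\rangle = \partial_{x_k}\rho_\ell$, the chain rule gives, for $\tilde u := u\circ\rho$,
\[
\nabla_{(x,t)} \tilde u(x,t) = \mathrm{Jac}(x,t)\,(\nabla u)(\rho(x,t)),
\]
so $(\nabla u)\circ\rho = \mathrm{Jac}^{-1}\nabla \tilde u$. Consequently,
\[
\bigl((\nabla u)\cdot(\nabla v)\bigr)\circ\rho
= (\mathrm{Jac}^{-1}\nabla \tilde u)^T(\mathrm{Jac}^{-1}\nabla \tilde v)
= \bigl(\mathrm{Jac}^{-T}\mathrm{Jac}^{-1}\,\nabla \tilde u\bigr)\cdot \nabla \tilde v.
\]
Next, since $\rho$ is a bi-Lipschitz bijection from $\Omega_0$ onto $\Omega$ with invertible Jacobian a.e., the standard change-of-variables formula for Lipschitz maps (Rademacher plus the area formula) yields $dX = |\det\mathrm{Jac}(x,t)|\,dx\,dt$, and hence
\[
\int_\Omega \nabla u\cdot\nabla v \,\frac{dX}{D(X)^{n-d-1}}
= \int_{\Omega_0} \bigl(\mathrm{Jac}^{-T}\mathrm{Jac}^{-1}\nabla\tilde u\bigr)\cdot \nabla\tilde v\,\frac{|\det\mathrm{Jac}(x,t)|}{D(\rho(x,t))^{n-d-1}}\,dx\,dt.
\]
This is exactly \eqref{a7.18} with the matrix $A_\rho$ expressed in terms of $\mathrm{Jac}$. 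To pass from $\mathrm{Jac}$ to $J$ it suffices to recall \eqref{JisJacQ}, which says $J=\mathrm{Jac}\,Q$ for the orthogonal change-of-basis matrix $Q = Q(x,t)$; since $Q$ is orthogonal, $|\det J|=|\det\mathrm{Jac}|$ and $J^{-T}J^{-1} = \mathrm{Jac}^{-T}QQ^T\mathrm{Jac}^{-1} = \mathrm{Jac}^{-T}\mathrm{Jac}^{-1}$, giving \eqref{a7.19}.

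The one point that requires care is that the statement concerns $u,v\in W(\Omega)$, not smooth functions. I would handle this by density: smooth, compactly supported (in $\Omega$) functions are dense in $W(\Omega)$ for the weighted Sobolev norm (this is standard once $w\approx \mathrm{dist}(\cdot,\Gamma)^{d+1-n}$ is in the appropriate Muckenhoupt class on $\R^n$ relative to $\Gamma$, which follows for our codimension $>1$ setup from \eqref{equivw} and \cite{DFMprelim}). The bi-Lipschitz nature of $\rho$ together with $\tfrac12 |v|\le |\mathrm{Jac}(x,t)\cdot v|\le 2|v|$ (the estimate \eqref{a2.43}, available from the invertibility estimates of Section~\ref{Srho}) shows that $u\mapsto u\circ\rho$ is a bounded isomorphism from $W(\Omega)$ onto its counterpart on $\Omega_0$, and in particular commutes with smooth approximation. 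Once both sides of \eqref{a7.18} are continuous in $(u,v)$ for these norms, one reduces to $u,v\in C^\infty_c(\Omega)$, where the calculation above applies verbatim.

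The only real obstacle is bookkeeping the transpose/convention: the paper's $\mathrm{Jac}_{k\ell}$ is indexed with the input coordinate first and the output second, so the chain rule reads $\nabla\tilde u = \mathrm{Jac}\,\nabla u$ (without a transpose), and the Gram matrix of the cotangent metric is $\mathrm{Jac}^{-T}\mathrm{Jac}^{-1}$ (not $\mathrm{Jac}^{-1}\mathrm{Jac}^{-T}$). Getting this correctly aligned with the statement \eqref{a7.19}, and confirming that the orthogonal $Q$ from \eqref{JisJacQ} drops out cleanly in the quadratic form, is the substantive step. Everything else is routine.
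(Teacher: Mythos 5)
Your core computation is exactly the paper's: the chain rule $\nabla(u\circ\rho)=\Jac\,([\nabla u]\circ\rho)$ with the paper's indexing convention, the substitution rule producing the factor $|\det \Jac|$ and the Gram matrix $\Jac^{-T}\Jac^{-1}$, and the observation that the orthogonal factor $Q$ from \eqref{JisJacQ} drops out, so that $J$ may replace $\Jac$ in \eqref{a7.19}. The only place you deviate is the reduction to $u,v\in C^\infty_c(\Omega)$ by density, and that step is both unnecessary and incorrectly justified: $C^\infty_c(\Omega)$ is \emph{not} dense in $W(\Omega)$ for the weighted Dirichlet seminorm (that would characterize the subspace of zero-trace functions, whereas $W(\Omega)$ contains, e.g., solutions with nontrivial boundary data). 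The paper avoids this entirely by noting that $\rho$ is a smooth diffeomorphism of $\Omega_0$ onto $\Omega$, so the chain rule \eqref{a7.20} and the change of variables hold directly for $W^{1,2}_{loc}$ functions; since the identity of the integrands is pointwise a.e.\ and both sides are finite for $u,v\in W(\Omega)$ (as $A_\rho\approx|t|^{d+1-n}$ times a bounded elliptic matrix), no approximation is needed. If you drop the density paragraph and instead invoke the a.e.\ chain rule for compositions with smooth (or bi-Lipschitz) diffeomorphisms, your argument coincides with the paper's and is complete.
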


\begin{proof}
First of all, since $\rho$ is (smooth and) bi-Lipschitz,
$\wt u = u\circ \rho$ and $\wt v = v\circ \rho$ lie
in the Sobolev space $W(\Omega_0)$ associated to the domain $\Omega_0$
and the weight $|t|^{d+1-n}$. Thus the right-hand side makes sense because
$A_\rho(x,t)$ is bounded by $|t|^{d+1-n}$ (by \eqref{a7.12} 
and directly Definition \ref{defJ}).
We claim that
\begin{equation} \label{a7.20}
\nabla \wt u = \Jac \ ([\nabla u]\circ \rho)
\end{equation}
(the product of two matrices). Let us for a moment forget about the decomposition 
$\R^n = \R^d \times \R^{n-d}$, write $X = (x_1, \ldots x_n)$ for the generic point 
of $\R^n$, and denote by $\rho_\ell$ the $\ell^{th}$ component of $\rho$. 
Thus $\Jac_{k,\ell} = \dr_{x_k}\rho_\ell$
(which is coherent with our definition \eqref{a4.6}-\eqref{a4.7}). The $k^{th}$
component (line) of $\nabla \wt u$ is
\[
\dr_{x_k} [u \circ \rho] = \sum_{\ell} \dr_{x_k} \rho_\ell [\dr_{x_\ell} u]\circ \rho
= \sum_{\ell} \Jac_{k,\ell} ([\nabla u]\circ\rho)_\ell,
\]
which proves \eqref{a7.20}. The substitution rule yields
\begin{eqnarray} \label{a7.21}
\int_{Z\in \Omega}  \nabla u(Z)\cdot  \nabla v(Z)  \frac{dZ}{D(Z)^{n-d-1}}  
&=&  \int_{(x,t) \in \Omega_0}  
\left< [\nabla u]\circ \rho, [\nabla v]\circ \rho\right> |\det(\Jac)|  
\frac{dxdt}{(D\circ\rho)^{n-d-1}} 
\nn\\
&=& \int_{\Omega_0} \left< \Jac^{-1 }\nabla \wt u, \Jac^{-1 }\nabla \wt v\right> |\det(\Jac)| 
\frac{dx dt}{D(\rho(x,t))^{n-d-1}} .
\end{eqnarray}
But by \eqref{JisJacQ}, there exists an orthogonal matrix $Q$ such that $\Jac = JQ^{-1}$.
Thus $|\det(\Jac)| = |\det(J)|$, and (setting $U = \nabla \wt u$ and $V = \nabla \wt u$)
\[
\left< \Jac^{-1} U, \Jac^{-1} V\right> = \left< QJ^{-1} U, QJ^{-1} V\right>
= \left< J^{-1}U, J^{-1}V\right> = \left< (J^{-1})^TJ^{-1}U, V\right>,
\]
so that \eqref{a7.21} is the same as 
\[
\int_{Z\in \Omega}  \nabla u(Z)\cdot  \nabla v(Z)  \frac{dZ}{D(Z)^{n-d-1}}  
= \int_{\Omega_0} \left< (J^{-1})^T J^{-1 }\nabla \wt u, \nabla \wt v\right> |\det(J)| 
\frac{dx dt}{D(\rho(x,t))^{n-d-1}};
\]
the lemma follows.
\end{proof}

We now check that the matrix $A_\rho(x,t)$ given by \eqref{a7.19} has an appropriate
decomposition, in fact a little stronger than the one required for
Theorems \ref{Itsf1} and \ref{Itai1}.

\begin{lemma} \label{MainS1}
Let $\Gamma$, $L$, and $\rho$ satisfy the assumptions above, let $A_\rho(x,t)$ be as in
\eqref{a7.19}, and let 
\begin{equation} \label{a7.23}
\A(x,t) = |t|^{n-d-1} A_\rho(x,t) 
=  \left(\frac{|t|}{D(\rho(x,t))}\right)^{n-d-1}
|\det(J(x,t))| (J(x,t)^{-1})^T J(x,t)^{-1}
\end{equation}
be the corresponding normalized $n \times n$ matrix. Then 
$\A$ is uniformly bounded and elliptic (or equivalently, $A_\rho$ satisfies 
\eqref{1.2.1a} and \eqref{1.2.2a}), and we can write
\begin{equation} \label{a7.24}
\A(X)= \left( \begin{array}{cc}
\A^1(X) + \C^1 & \C^2(X) \\ \C^3(X) & b(X)I_{n-d}+\C^4(X)  \end{array} \right), 
\end{equation}
where $\A^1(X), \C^1(X) \in M_{d\times d}(\R)$ are $d \times d$ matrices, 
$\C^2(X) \in M_{d\times (n-d)}(\R)$, $\C^3(X) \in M_{(n-d)\times d}(\R)$,
$b$ is a function on $\Omega_0$, $I_{n-d}$ is the identity matrix on $\R^{n-d}$, 
$\C^4(X) \in M_{d\times d}(\R)$, 
\begin{equation} \label{a7.25}
\C^1, \C^2, \C^3, \C^4, \text{ and } |t|\nabla_{x,t} \A^1 
\text{ satisfy the Carleson measure condition,}
\end{equation}
\begin{equation} \label{a7.26}
C^{-1} \leq b \leq C \ \text{ on } \Omega_0,
\end{equation}
and
\begin{equation} \label{a7.27}
|t|\nabla b \text{ satisfies the Carleson measure condition.}
\end{equation}
\end{lemma}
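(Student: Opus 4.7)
The plan is to factor the normalized matrix as $\A = \kappa\,\Ab$, where
$$\kappa(x,t) := \left(\frac{|t|}{D(\rho(x,t))}\right)^{n-d-1}, \qquad \Ab(x,t) := |\det J(x,t)|\,(J(x,t)^{-1})^T J(x,t)^{-1},$$
and then combine Lemma \ref{main1.3} (the algebraic/Littlewood--Paley side) with Lemma \ref{L7.1} (the Wasserstein side) to read off the desired structure. This factorization cleanly separates the geometric input, carried by $\kappa$ and controlled by the $\wt\alpha$-numbers, from the linear-algebraic input, carried by $\Ab$ and controlled by bumps acting on $\nabla_x\varphi$.

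First I would establish that $\kappa$ is bounded above and below by positive constants and that $\kappa-1$ satisfies the Carleson measure condition. The two-sided bound follows from \eqref{a7.12}. For the Carleson estimate, set $\epsilon(x,t):=|t|/D(\rho(x,t))-1$, which is bounded and belongs to $CM$ by \eqref{a7.13}. Since $\kappa = (1+\epsilon)^{n-d-1}$ and $\epsilon$ is uniformly bounded, the binomial identity $\kappa - 1 = \sum_{k=1}^{n-d-1}\binom{n-d-1}{k}\epsilon^k$ gives $|\kappa-1|\leq C|\epsilon|$, and hence $\kappa-1\in CM$.

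Next, I would apply Lemma \ref{main1.3} to write $\Ab = \Ab^1 + \Ab^2$ with
$$\Ab^1 = \begin{pmatrix} \Ab^1_1 & 0 \\ 0 & \bb\,I_{n-d}\end{pmatrix}, \qquad C^{-1}\leq\bb\leq C, \qquad |t|\nabla_{x,t}\Ab^1\in CM, \qquad \Ab^2\in CM.$$
(The hypothesis that $C_0+C_{h0}$ is small is provided by the smallness assumption on $C_0$ in Lemma \ref{L7.1}, together with the verification, carried out there, that our choice $h=(c_\alpha\lambda)^{1/\alpha}$ yields a small $C_{h0}$ thanks to Lemma \ref{CMfornablah}.) Expanding
$$\A \;=\; \kappa(\Ab^1+\Ab^2) \;=\; \Ab^1 + \bigl[(\kappa-1)\Ab^1 + \kappa\,\Ab^2\bigr],$$
I then set $\A^1:=\Ab^1_1$, $b:=\bb$, and define $\C^1,\C^2,\C^3,\C^4$ as the four blocks of the error term $(\kappa-1)\Ab^1+\kappa\,\Ab^2$. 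The block form \eqref{a7.24} is built into the construction, the bound \eqref{a7.26} on $b$ is \eqref{a5.5}, and the ellipticity and boundedness of $\A$ follow from those of $\Ab$ (Lemma \ref{main1.3}(1)) combined with the positive two-sided bound on $\kappa$.

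Finally, all remaining Carleson conditions reduce to a short ``bounded $\times$ Carleson is Carleson'' observation: each $\C^j$ is a sum of a component of $\Ab^1$ (bounded) multiplied by $\kappa-1\in CM$, plus $\kappa$ (bounded) times a component of $\Ab^2\in CM$; similarly, $|t|\nabla_{x,t}\A^1$ and $|t|\nabla_{x,t}b$ are entries of $|t|\nabla_{x,t}\Ab^1$, which lies in $CM$ by Lemma \ref{main1.3}(2). I do not foresee any substantive obstacle here: the real difficulty has already been absorbed upstream, in Lemma \ref{main1.3} (which required the linear-algebra decomposition of $\Ab$ together with Littlewood--Paley bounds on $\nabla_{x,r}\varphi_r$) and Lemma \ref{L7.1} (which used Tolsa's $\alpha$-number theorem to force $\kappa\approx 1$ in a Carleson sense). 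The present lemma is then the book-keeping step that packages these two ingredients into the hypotheses of Theorem \ref{Itsf1}.
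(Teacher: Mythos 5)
Your proposal is correct and takes essentially the same route as the paper: factor $\A = \kappa\,\Ab$ with $\kappa = (|t|/D(\rho(x,t)))^{n-d-1}$, use \eqref{a7.12} and \eqref{DrhoisCM}/\eqref{a7.13} to show $\kappa$ is bounded and $\kappa-1\in CM$, then combine with Lemma \ref{main1.3}. Your observation that $|t|\nabla b\in CM$ follows directly because $b=\bb$ is a diagonal entry of $\Ab^1$ and $|t|\nabla_{x,t}\Ab^1\in CM$ by Lemma \ref{main1.3}(2) is a legitimate small shortcut — the paper instead re-derives this from the explicit formula $\bb=h^{n-d-2}\det(J'_1)$, but both arguments are valid.
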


\ms
These are the same conditions as for Theorem \ref{Itsf1}, 
except that we also give a decomposition of the upper left block into a Carleson piece 
and a smooth piece, and that the upper right block satisfies a Carleson measure condition.
The conditions for Theorem \ref{Itai1} (regarding $L_0$ and its matrix) are even weaker.

Thus $\A^1+\C^1$ is uniformly bounded and elliptic (because $\A$ is),
and of course the near-diagonal form of the lower right block, which comes from 
the fact that $\rho$ is nearly isometric
the $t$-variables, is important. The constant $C$ (in \eqref{a7.26} and implicit in 
the Carleson conditions) depends only on $n$ and $d$ (given that we already 
forced $C_0$ and $c_1(n,d,\alpha)$ to be small).

\begin{proof}
First observe that $\A(x,t) =  \left(\frac{|t|}{D(\rho(x,t))}\right)^{n-d-1} \Ab(x,t)$,
where $\Ab(x,t)$ is the matrix of \eqref{a5.2}, and for which Lemma \ref{main1.3}
gives a nice description. Set $f(x,t) =  \left(\frac{|t|}{D(\rho(x,t))}\right)^{n-d-1}$
to save space. We know that 
\begin{equation} \label{a7.28}
C^{-1} \leq f \leq C
\end{equation}
(by \eqref{a7.12}), and that 
\begin{equation} \label{a7.29}
\text{$|f-1|$ satisfies the Carleson measure condition,}
\end{equation}
by \eqref{DrhoisCM} (and because \eqref{a7.12} allows us to take the $(n-d-1)$-th power.
We start from the decomposition $\Ab = \Ab^1 + \Ab^2$ given by Lemma \ref{main1.3},
which gives us the expression 
\begin{equation} \label{a7.30}
\A = f \Ab = \Ab^1 + (f-1) \Ab^1+ f\Ab^2.
\end{equation}
By Lemma \ref{main1.3}, $\Ab^2$ satisfies the Carleson measure condition,
hence also $(f-1) \Ab^1+ f\Ab^2$, by \eqref{a7.28} and \eqref{a7.29}. 
And we also have 
$\Ab^1 = \begin{pmatrix} \Ab^1_1 & 0 \\ 0 & \bb I_{n-d} \end{pmatrix}$
for some function $\bb$. 

Let us write $(f-1) \Ab^1 + f\Ab^2 = \begin{pmatrix} \C^1 & \C^2 \\ \C^3 & \C^4 \end{pmatrix}$;
This gives a decomposition of $\A$ as in \eqref{a7.24}, where we just need to take
\begin{equation} \label{a7.31}
b = \bb \ \text{ and } \A^1 = \Ab^1_1,
\end{equation}
where $\Ab^1_1$ is the upper left block of $ \Ab^1$.

The Carleson property \eqref{a7.25} holds, by definition of the $\C^j$ and because 
Lemma \ref{main1.3} says that $|t|\nabla \Ab^1\in CM$.
The uniform bound \eqref{a7.26} follows from \eqref{a5.5}. 
We still need to check \eqref{a7.27}, i.e., that $|t|\nabla b \in CM$. 
Recall that 
\begin{equation} \label{a7.32}
b = \bb = h^{n-d-2} \det(J'_1)
\end{equation}
by \eqref{a7.31} and \eqref{a5.7}. Observe that $h^{n-d-2}$ and $\det(J'_1)$
are both bounded (see \eqref{H1} concerning $h$). It was proved in (iii) of Lemma \ref{lemCM2}
that $|t|\nabla_{x,t} \det(J'_1) \in CM$, and $|t|\nabla h^{n-d-2} \in CM$ because 
$|t|\nabla_{x,r}h \in CM$ (by \eqref{H2}) and $h^{-1}$ is bounded. This proves \eqref{a7.27},
and Lemma \ref{MainS1} follows.
\end{proof}

At this point we completed the proof of Theorem \ref{Main}, with our soft distance 
$D_\alpha$, and modulo the two results on degenerate elliptic operators that will be 
treated in the last sections.

\subsection{Other distance functions, Euclidean distance}
We pulled out in \eqref{IdefD} one formula for a distance function that works for our purpose,
and looks reasonably natural. In this subsection, we give a sufficient condition, on a possibly
different distance function $D$ on $\Omega$, for Theorem \ref{Main} to stay true when
$L$ is defined with this distance. This will include the special case of the Euclidean distance to 
$\Gamma$, but only when $d=1$.

The sufficient condition that we give now is just chosen so that the proof above works.
We are still given a Lipschitz graph $\Gamma$, with small enough constant $C_0$, 
and a function $D : \Omega \to (0,+\infty)$, such that
\begin{equation} \label{a7.33}
C_4^{-1} \dist(z,\Gamma) \leq D(z) \leq C_4 \dist(z,\Gamma)
\ \text{ for } z\in \Omega
\end{equation}
for some constant $C_4 \geq 1$.
We pick a function $\eta$ as in Section \ref{Sorthbasis}, and use it to define
$\Phi_r$ and the approximate tangent $d$-plane $P(x,r)$. We assume that we have 
the following slightly weaker analogue of Lemma \ref{L6D}. We can find a function $a$
defined on $\R^d \times (0,+\infty)$ and such that
\begin{equation} \label{a7.34}
a \in CM(C_5), 
\end{equation}
and a function $\wt\lambda$, defined on $\Omega_0$ such that
\begin{equation} \label{a7.35}
C_4^{-1} \leq \wt\lambda \leq C_4
\end{equation}
\begin{equation} \label{a7.36}
\|\nabla \wt \lambda\|_\infty \leq \epsilon,
\end{equation}
and
\begin{equation} \label{a7.37}
\nabla \wt \lambda \in CM(C_5), 
\end{equation}
with the following property. For $x\in \R^d$, $r > 0$, and $z\in \Omega$ such that
\begin{equation} \label{a7.38}
C_4^{-1} r \leq |z-\Phi_r(x)| \leq C_4 r \ \text{ and } z-\Phi_r(x) \perp P(x,r),
\end{equation}
we have
\begin{equation} \label{a7.39}
\Big| \frac{D(z)}{\dist(z,P(x,r))} - \wt\lambda(x,r)\Big| =
\Big| \frac{D(z)}{|z-\Phi_r(x)|} - \wt\lambda(x,r)\Big|  \leq a(x,r).
\end{equation}

Of course it looks a little unpleasant that the condition depends on our construction 
of $P(x,r)$, but this is not so complicated. The reader should not pay too much attention to
the names of $C_4$ and $C_5$. We put the same constant $C_4$ in \eqref{a7.33} and 
\eqref{a7.35} because we think they may be proved at the same time (and they look very similar),
and gave a different name to $C_5$, mostly for psychological reasons, because in practice they 
will probably depend on $C_4$.

\begin{theorem} \label{T40}
For each choice of $C_4, C_5 \geq 1$, we can find $C_0 > 0$ and $\epsilon > 0$,
depending on $n$, $d$, our choice of $\eta$, $C_4$, and $C_5$, such that if 
$\Gamma$ is a Lipschitz graph with Lipschitz constant less than $C_0$, and $D$
satisfies the assumptions above, then the operator $L$ defined by \eqref{defL} with this function 
$D$ satisfies the conclusion of Theorem \ref{Main}.
\end{theorem}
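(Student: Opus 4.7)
The plan is to follow the proof of Theorem~\ref{Main} given in Section~\ref{Scollect} essentially verbatim, with the soft distance $D_\alpha$ replaced by the abstract $D$ and the explicit density $\lambda$ of Section~\ref{SCMforD} replaced by the hypothesized $\wt\lambda$. All downstream ingredients --- the construction of $\rho$ in Sections~\ref{Sorthbasis}--\ref{Srho}, the block decomposition of $\Ab$ in Lemma~\ref{main1.3}, the conjugation identity of Lemma~\ref{newLaplacianprop}, and the final assembly of Lemma~\ref{MainS1} --- depend on the distance only through the Carleson estimate~\eqref{DrhoisCM} and through the properties of the auxiliary function $h$ in~\eqref{defrhointro}. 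The hypotheses~\eqref{a7.33}--\eqref{a7.39} have been tailored exactly to make these inputs available in the abstract setting.

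I would set
\[
h(x,t) := \wt\lambda(x,|t|)^{-1},
\]
the natural analogue of~\eqref{a7.1}. Hypothesis~\eqref{a7.35} gives $C_4^{-1} \leq h \leq C_4$, so~\eqref{H1} holds with $C_{h1}=C_4$. Hypothesis~\eqref{a7.36}, read in the scale-invariant sense compatible with Lemma~\ref{CMfornablah} (so that $r|\nabla\wt\lambda|\leq C\epsilon$ pointwise), yields~\eqref{H2} with $C_{h0}\lesssim\epsilon$, and~\eqref{a7.37} yields~\eqref{H3} with $C_{h2}\lesssim C_5$. Provided $C_0$ and $\epsilon$ are small enough (depending on $n$, $d$, $\eta$, $C_4$, $C_5$), all smallness hypotheses of Sections~\ref{Srho}--\ref{SCMforJac} are satisfied; in particular, $\rho$ is a bi-Lipschitz diffeomorphism from $\Omega_0$ onto $\Omega$ by Theorem~\ref{Tbijective}, and the matrix $\Ab$ of~\eqref{a5.2} admits the decomposition of Lemma~\ref{main1.3}.

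The one substantive computation is the verification of~\eqref{DrhoisCM}. By~\eqref{defrhobis} and~\eqref{a3.16}, the point $z=\rho(x,t)$ satisfies $z-\Phi_r(x) \perp P(x,r)$ and $|z-\Phi_r(x)| = h(x,t)\,|t|$; combined with Lemma~\ref{controldistprop}, $z$ lies in the regime~\eqref{a7.38} for a constant depending only on $C_4$ and $C_{h1}$. Hypothesis~\eqref{a7.39} then gives
\[
\Bigl|\frac{D(\rho(x,t))}{h(x,t)\,|t|} - \wt\lambda(x,|t|)\Bigr| \leq a(x,|t|),
\]
and since $\wt\lambda(x,|t|)\,h(x,t) = 1$ by construction, multiplying by $h(x,t)$ yields
\[
\Bigl|\frac{D(\rho(x,t))}{|t|} - 1\Bigr| \leq C_4\, a(x,|t|),
\]
which is~\eqref{DrhoisCM} since $a\in CM(C_5)$ by~\eqref{a7.34}. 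Once both \eqref{H1}--\eqref{H3} and \eqref{DrhoisCM} are in place, Lemma~\ref{MainS1} produces a matrix $\A$ of the form required by Theorem~\ref{Itsf1}, and the bi-Lipschitz transfer argument discussed in Section~\ref{Abil} delivers the $A^\infty$ conclusion for $(L,\Gamma)$.

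The main conceptual obstacle has therefore been absorbed into the hypotheses themselves: the Tolsa-type estimate of Section~\ref{SCMforD} and the regularity of the soft distance in Lemma~\ref{L6D}, which together produced the pair $(\lambda,\alpha)$ in the concrete case, are here simply postulated as $(\wt\lambda,a)$. The only verification that requires care is matching the formulation of \eqref{a7.36}--\eqref{a7.37} to the pointwise estimate $r|\nabla_{x,t}h|\lesssim\epsilon$ and the Carleson estimate $r\nabla_{x,t}h\in CM$ needed for the bijectivity argument of Theorem~\ref{Tbijective} and the bookkeeping of Section~\ref{SCMforJac}; by the definitions of $h$ and $\wt\lambda$, this matching is an exercise in the chain rule and matches Lemma~\ref{CMfornablah} in the concrete setting.
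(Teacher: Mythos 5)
Your proposal is correct and follows the paper's own proof essentially verbatim: the same choice $h=\wt\lambda^{-1}$, the same verification of \eqref{H1}--\eqref{H3} from \eqref{a7.35}--\eqref{a7.37}, the same computation using \eqref{a7.41} and \eqref{a7.39} to obtain \eqref{DrhoisCM}, and the same conclusion via Subsection \ref{SS7.2}. The only cosmetic difference is that the paper explicitly invokes \eqref{a7.33} to pass from the bound on $D(\rho(x,t))/|t|-1$ to the reciprocal quantity in \eqref{DrhoisCM}, a step you leave implicit.
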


\begin{proof}
We prove this first, and then comment more.
We shall just need to modify slightly the proof of Lemma \ref{L7.1}. 
This time, we take $h(x,t) = \wt \lambda(x,t)^{-1}$; the assumptions \eqref{H1}, \eqref{H2},
and \eqref{H3} follow from \eqref{a7.35}-\eqref{a7.37}, and by taking $\epsilon$ small
we can ensure that $C_{h0}$ in \eqref{H2} is as small as we want (depending on the other constants).
Since we also assume $C_0$ to be small enough, we can construct the bi-Lipschitz change of variable
$\rho$.

Now we want to show that \eqref{DrhoisCM} holds.
Let $(x,t) \in \Omega_0$ be given, set $r = |t|$, recall that 
$\rho(x,t) = \Phi_r(x) + h(x,t) R_{x,r}(t)$ hence (as in \eqref{a7.9})
\begin{equation} \label{a7.41}
\dist(\rho(x,t),P(x,r)) = h(x,t) r = (\wt\lambda(x,t))^{-1} r.
\end{equation}
Now consider $z = \rho(x,t)$, observe that $z-\Phi_r(x) \perp P'(x,r)$ 
by definition of $R_{x,r}$, and $C_4^{-1} r \leq |z-\Phi_r(x)| \leq C_4 r$
by \eqref{a7.35}. Hence \eqref{a7.38} holds 
and we have \eqref{a7.39}. Then, by \eqref{a7.41},
\begin{equation} \label{a7.42}
\Big| \frac{D(z)}{(\wt\lambda(x,t))^{-1} r} - \wt\lambda(x,r)\Big| =
\Big| \frac{D(z)}{\dist(z,P(x,r))} - \wt\lambda(x,r)\Big|  \leq a(x,r).
\end{equation}
Because of \eqref{a7.35}, this implies that 
$|r^{-1} D(\rho(x,t)) - 1| \leq C a(x,r)$, and then \eqref{DrhoisCM} follows,
because \eqref{a7.33} allows us to take inverses.

This was our analogue of Lemma \ref{L7.1};
we may now continue the argument as in Subsection\ref{SS7.2}, where 
the specific form of $D$ was not used, and conclude as before.
\end{proof}

We proved in Section \ref{SCMforD} that the functions $D_\alpha$ satisfy
the assumptions of Theorem \ref{T40}, and this reflects nice regularity properties 
of our Lipschitz graph $\Gamma$, as well as $D_\alpha$ itself.
We may use this work to prove that some other function $D$ works as well,
by controlling $D_{\alpha}^{-1} D$. That is, if $D$ is equivalent to
$D_\alpha$ (and the Euclidean distance to $\Gamma$) as in \eqref{a7.33},
and if we control $\frac{D}{D_\alpha}$ a little bit like we controlled 
$\frac{D_\alpha}{\dist(z,P(x,r))}$ above, then we may be able to prove more easily that
$D$ satisfies the conditions above.

We could also observe that we could also define $D_\rho$ by the fact that 
$D_\rho(\rho(x,t)) = |t|$. This defines a perfect function $D_\rho$ for the conditions above 
(with $\wt \lambda = 1$ and $a = 0$), but the reader may have thought that this was a 
very special choice, designed for the change of variables to work. Nonetheless,
we could also say that the more natural $D_\alpha$ work because they are close to $D_\rho$.

\ms
We end this section with the case of the Euclidean distance function, 
defined as in \eqref{a2.12} by 
\begin{equation} \label{a7.43}
D_E(z) = \dist(z,\Gamma) \ \text{ for } z\in \Omega.
\end{equation}

\ms
\begin{corollary} \label{C44}
There exists $C_0 > 0$ such that if $\Gamma \subset \R^n$ is a one-dimensional
Lipschitz graph, with Lipschitz constant at most $C_0$, then the operator $L$ 
defined by \eqref{defL} with this function $D_E$ of \eqref{a7.43}
satisfies the conclusion of Theorem \ref{Main}.
\end{corollary}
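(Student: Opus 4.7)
My plan is to apply Theorem \ref{T40} to the Euclidean distance $D_E$ with the constant choice $\wt\lambda(x,r) \equiv 1$. Then \eqref{a7.33} holds trivially (with $C_4 = 1$), and \eqref{a7.35}, \eqref{a7.36}, \eqref{a7.37} are automatic. The entire content of the corollary thus reduces to establishing the flatness estimate \eqref{a7.39}, which in this setting asks for a function $a(x,r)$ satisfying the Carleson measure condition such that
\[
\bigl| D_E(z) - |z - \Phi_r(x)|\bigr| \leq a(x,r)\cdot r
\]
whenever $z - \Phi_r(x) \perp P(x,r)$ with $|z - \Phi_r(x)| \approx r$.

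The natural candidate for $a(x,r)$ is a localized P. Jones $\beta_\infty$-type number measuring the deviation of $\Gamma$ from the approximating line $P(x,r)$. Concretely, I would take
\[
\beta(x,r) := \frac{1}{r} \sup_{|y - x| \leq C_\ast r} \bigl| \varphi(y) - \varphi_r(x) - (y-x)\varphi_r'(x)\bigr|,
\]
for a fixed large enough $C_\ast$, so that $\Gamma \cap B(\Phi_r(x), C_\ast r)$ is contained in a $\beta(x,r) r$-neighborhood of $P(x,r)$. The upper bound $D_E(z) \leq |z - \Phi_r(x)| + C\beta(x,r)r$ follows by choosing a well-placed $y = \Phi(x^*) \in \Gamma$ whose orthogonal projection onto $P(x,r)$ lies at controlled distance from $\Phi_r(x)$, and then applying Pythagoras. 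The lower bound $D_E(z) \geq |z - \Phi_r(x)| - C\beta(x,r)r$ follows from $|z-y| \geq \dist(z,P(x,r)) - \dist(y, P(x,r))$, after observing that the minimizer $y \in \Gamma$ of $|z - y|$ must lie in $B(\Phi_r(x), C_\ast r)$ thanks to the separation $|z - \Phi_r(x)| \approx r$.

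The crucial step is then the Carleson estimate for $\beta$, and this is exactly where the restriction $d = 1$ becomes essential. Splitting
\[
\varphi(y) - \varphi_r(x) - (y-x)\varphi_r'(x) = [\varphi(y) - \varphi_r(y)] + \bigl[\varphi_r(y) - \varphi_r(x) - (y-x)\varphi_r'(x)\bigr],
\]
both summands can be reduced (after short manipulations with the fundamental theorem of calculus and standard convolution identities) to $r$ times a supremum, over scales $s \approx r$ and centers near $x$, of $s|\varphi_s''|$. The key identity in dimension one is
\[
r\,\varphi_r''(x) = \theta_r * \varphi'(x), \qquad \theta := \eta',
\]
where $\theta$ is smooth, compactly supported in $B(0,1)$, and has mean zero (being the derivative of a compactly supported function). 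Since $\varphi' \in L^\infty(\R)$, Lemma \ref{lemLPL2} applied in dimension $d = 1$ yields the Carleson measure condition for $(x,r) \mapsto r\varphi_r''(x)$; a standard maximal-type argument passes from this pointwise bound to the supremum appearing in the definition of $\beta(x,r)$, and produces the desired Carleson measure estimate for $\beta$.

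The main obstacle is precisely this last Carleson bound. In dimension $d \geq 2$, even for Lipschitz graphs with arbitrarily small constant, the $L^\infty$-based $\beta_\infty$-numbers do not form a Carleson measure (the analogue of Dorronsoro's inequality requires $L^p$-based coefficients when $d \geq 2$), which is exactly why the main theorem resorts to the soft distance $D_\alpha$ and Tolsa's $\alpha$-numbers---whose Carleson property relies on uniform rectifiability rather than on pointwise smoothness of $\varphi$---instead of the Euclidean distance and the Jones numbers. Once the Carleson estimate for $\beta$ is established, all hypotheses of Theorem \ref{T40} are verified for $D = D_E$ and $\wt\lambda \equiv 1$, and Corollary \ref{C44} follows.
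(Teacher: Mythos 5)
Your overall structure matches the paper's: the paper notes explicitly that Corollary \ref{C44} ``could be proven as a consequence of Theorem \ref{T40}'' but chooses to unwind the argument directly with $h\equiv 1$, so your choice to invoke Theorem \ref{T40} with $\wt\lambda\equiv 1$ is a cosmetic variant of the same route. Your geometric estimates on $D_E$ (the two-sided comparison of $D_E(\rho(x,t))$ with $|t|$ up to a $\beta_\infty$-type error, matching the paper's \eqref{a7.58}) are also essentially the same as what the paper does.

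The gap is in the Carleson estimate for the $\beta$-number, which is the entire crux of why the corollary is restricted to $d=1$. The paper does not try to derive it from scratch: it defines $\beta(x,r)$ via the \emph{infimum} over affine approximations, cites \emph{Dorronsoro's theorem} \cite{Dorronsoro} to get $\beta \in CM(CC_0^2)$, and then proves separately (Lemma \ref{CMforbetaieta}) that the explicit approximant $\mathfrak a_{x,r}$ is comparable to the optimal one, i.e. $\beta_\eta \leq C\beta$. You propose instead to bound your $\beta(x,r)$ directly by a Whitney supremum of $s|\varphi_s''|$ and then apply Lemma \ref{lemLPL2} plus a ``standard maximal-type argument.'' Both steps are problematic. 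First, the term $\varphi(y)-\varphi_r(y)$ in your splitting does \emph{not} reduce to $r$ times a supremum of $s|\varphi_s''|$ over scales $s\approx r$: writing $\varphi(y)-\varphi_r(y)=-\int_0^r \partial_s\varphi_s(y)\,ds$ shows that it aggregates contributions from \emph{all} scales $0<s<r$, and $\partial_s\varphi_s$ is a mean-zero convolution against $\varphi'$, not a second derivative. Second, and more importantly, the passage from a Carleson measure condition on the pointwise quantity $r\varphi_r''(x)$ to a Carleson condition on the $L^\infty$-based supremum over a Whitney region is precisely the nontrivial content of Dorronsoro's theorem: it is not a ``standard'' corollary of Lemma \ref{lemLPL2}. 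The fact that you yourself note that the analogous statement fails in $d\geq 2$ should make the claim of a ``standard'' argument in $d=1$ feel suspicious and in need of justification. If you wanted to avoid citing Dorronsoro, a cleaner elementary route in $d=1$ is the fundamental-theorem-of-calculus identity $\varphi(y)-\varphi_r(x)-(y-x)\varphi_r'(x)=\int_x^y[\varphi'(z)-\varphi_r'(x)]\,dz$, which trades the supremum over $y$ for an $L^1$ average of $\varphi'-\varphi_r'(x)$ and hence, via Cauchy--Schwarz, an $L^2$-based coefficient that can be estimated by Littlewood--Paley methods; this is essentially the $d=1$ case of Dorronsoro's argument and is what makes the dimension restriction transparent.
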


\begin{proof}
We could prove this as a consequence of Theorem \ref{T40}, which is a little unfair because
some things are actually simpler in dimension $1$, so we recall the main steps anyway.

As in the previous case, the main point is to compare $\dist(z,\Gamma)$ and 
$\dist(z,P(x,r))$ for some $z$ near $x$, and we shall use the function-theoretic analogue
of the P. Jones $\beta$-numbers. The point is to measure how well our small Lipschitz 
function $\varphi$ is approximated by affine functions. 

Denote by $\mathfrak A$ the set of affine functions $\mathfrak a : \R \to \R^{n-1}$, 
and set
\begin{equation} \label{betainfty}
\beta(x,r) = \frac1r \inf_{\mathfrak a \in \mathfrak A} 
\sup_{y\in B(x,r)} |\varphi(y) - \mathfrak a(y)|
\end{equation}
for $x\in \R$ and $r > 0$. We divide by $r$ to get a dimensionless number,
which is clearly bounded by $C_0$ (try $\mathfrak a(y) = \varphi(x)$).
The name comes from a paper of P. Jones \cite{Jones}, where related numbers
were used to quantify the distance from a set (like $\Gamma$) to lines or planes.
A result of Dorronsoro \cite{Dorronsoro} says that
\begin{equation} \label{a7.46}
\beta \in CM(CC_0^2). 
\end{equation}
The reader may view it as a consequence of the so-called geometric lemma of P. Jones
(which is also valid for more general sets), but it is in fact anterior. 
Also, unfortunately for us, it is only valid in dimension $1$.

For our purpose, it is more convenient to use the following variant, where the approximating
function $\mathfrak a$ is computed directly from $\varphi$ as a reasonable guess.
Let $\eta$ and $\eta_r$ be as in Section \ref{Sorthbasis}, define an affine function
$\mathfrak a_{x,r}$ by
\begin{equation} \label{a7.47}
\mathfrak a_{x,r}(y) = \varphi_r(x) + (y-x)\nabla_x\varphi_r(x),
\end{equation}
where in the present case $\nabla_x$ is just a derivative, and then set
\begin{equation} \label{betaetai}
\beta_\eta (x,r) = \frac1r  \sup_{y\in B(x,r)} |\varphi(y) - \mathfrak a_{x,r}(y)|
\end{equation}
for $x\in \R$ and $r > 0$.
Of course $\beta_\eta (x,r) \geq \beta(x,r)$ because $\mathfrak a_{x,r} \in \mathfrak A$, but 
$\beta_\eta (x,r)$ is not much larger in general.

\begin{lemma} \label{CMforbetaieta}
There exists a constant $C>0$, that depends only on $\eta$ and $n$,
such that for $x\in \R^d$ and $r>0$,
\begin{equation} \label{betaetaibybetai}
\beta_\eta (x,r) \leq C \beta(x,r). 
\end{equation}
As a consequence, 
\begin{equation} \label{a7.51}
\beta_\eta (x,r) \leq CC_0 \ \text{ for $x\in \R^d$ and $r>0$,} 
\end{equation}
and (by \eqref{a7.46})
\begin{equation} \label{a7.52}
\beta_\eta \in CM(CC_0^2).
\end{equation}
\end{lemma}

\ms
\begin{proof}
We only need to prove \eqref{betaetaibybetai}. This would also be true in higher dimensions, with
the same proof, but \eqref{a7.46} is not in general.
Let $x\in \R^d$ and $r>0$ be given, and pick an affine function $\mathfrak a$
such that 
\begin{equation} \label{a7.53}
\sup_{y\in B(x,r)} |\varphi(y) - \mathfrak a(y)| \leq 2r\beta(x,r).
\end{equation}
Since $\mathfrak a$ is affine, we may write $\mathfrak a(y) = (y-x)a+b$; then
\begin{eqnarray} \label{a7.54}
\beta_\eta(x,r) &=& r^{-1} \sup_{y\in B(x,r)} |\varphi(y) - \mathfrak a_{x,r}(y)|
\leq 2\beta(x,r) + r^{-1} \sup_{y\in B(x,r)} |\mathfrak a(y) - \mathfrak a_{x,r}(y)|
\nn\\
&\leq& 2\beta(x,r) + |a-\nabla_x\varphi_r(x)| +  r^{-1} |b - \varphi_r(x)|,
\end{eqnarray}
where we simply wrote the two affine functions and subtracted term by term.
We now estimate the two terms. First, we use the fact that $\eta_r$ is even with
integral $1$ to prove that
\begin{eqnarray} \label{a7.55}
|b-\varphi_r(x)| &=&  \Big| b - \int_\R \eta_r(x-y) \varphi(y) dy \Big|
= \Big| \int_{\R} \eta_r(x-y) [\varphi(y)-b-(y-x) a] dy \Big|
\nn\\
&\leq& \sup_{y\in B(x,r)} |\varphi(y)-b-(y-x) a| \, \int_{\R} \eta_r(x-y) dy
\leq 2 r \beta(x,r)  
\end{eqnarray}
because $\eta$ is supported in $B(x,r)$ and by \eqref{a7.53}. 
Similarly, observe that $\int_{\R} \nabla_x \eta_r(x-y)dy =0$ 
(because $\eta_r$ is compactly supported) and $\int_{\R} \nabla\eta_r(x-y) \cdot (y-x)dy$ 
is the identity matrix on $\R^n$  (integrate by parts), so
\begin{eqnarray} \label{a7.56}
|a-\nabla_x\varphi_r(x)| &=& \Big| a - \int_{\R} \nabla_x \eta_r(x-y) \varphi(y) dy \Big|
= \Big| \int_{\R} \nabla_x \eta_r(x-y) (b+(y-x)a-\varphi(y) dy) \Big| 
\nn\\
&\leq&  2r \|\nabla_x \eta_r\|_\infty  \sup_{y\in B(x,r)} |b+(y-x)a-\varphi(y)| 
\leq C _\eta \beta(x,r).
\end{eqnarray}
Lemma \ref{CMforbetaieta} follows from \eqref{a7.54}-\eqref{a7.56}.
\end{proof}

We now proceed as with the preceding results, but with $h = 1$ (so \eqref{H1}, \eqref{H2}, 
and \eqref{H3} hold trivially). Thus $\rho(x,t) = \Phi_r(x) + R_{x,t}(0, t)$ is a bi-Lipschitz mapping
(as usual, if $C_0$ is small enough) and (as in \eqref{a7.9} or \eqref{a7.41}), 
\begin{equation} \label{a7.57}
\dist(\rho(x,t),P(x,r)) = |\rho(x,t)-\Phi_r(x)| = |t| = r.
\end{equation}
We want to compare this with $\dist(\rho(x,t),\Gamma)$, and more precisely show that
\begin{equation} \label{a7.58}
(1-\beta_\eta(x,r)) r \leq \dist(\rho(x,t),\Gamma) \leq (1+\beta_\eta(x,r)) r.
\end{equation}
First observe that
\[\begin{split}
\dist(\rho(x,t),\Gamma) &\leq |\rho(x,t)-\Phi_r(x)| + \dist(\Phi_r(x),\Gamma)
\leq r + |\Phi_r(x)-\Phi(x)| 
\\
&= r + |\varphi_r(x)-\varphi(x)| = r + |\mathfrak a_{x,r}(x)-\varphi(x)|  \leq r + r \beta_\eta(x,r), 
\end{split}\]
which proves the second inequality. Write $\rho(x,r) = (y,s)$, with $y\in \R$ and $s\in \R^{n-1}$. Observe that since $P(x,r)$ is almost horizontal (because $\varphi_r$ too is $C_0$-Lipschitz),
so $|y-x| \leq 2C_0 r$ (recall that $\Phi_r(x) = (x,\varphi_r(x))$, and now the closest point(s)
of $\Gamma$ to $\rho(x,r)$ must be of the form $\xi = (z,\varphi(z))$, with $|z-y| \leq 2C_0 r$
too. Thus $|z-x| \leq 4 C_0 r < r$, and $|\varphi(z)-\mathfrak a_{x,r}(z)| \leq r \beta_\eta(x,r)$.
Hence 
\[\begin{split}
r &= \dist(\rho(x,t),P(x,r)) \leq |\rho(x,t)-(z,\mathfrak a_{x,r}(z))|
\leq |\rho(x,t)-\xi|+|\xi - (z,\mathfrak a_{x,r}(z))| 
\\& = \dist(\rho(x,t),\Gamma) + |\varphi(z)-\mathfrak a_{x,r}(z)| 
\leq \dist(\rho(x,t),\Gamma) + r \beta_\eta(x,r),
\end{split}\]
in particular because $P(x,r)$ is the graph of $\mathfrak a_{x,r}$. 
This completes our proof of \eqref{a7.58}, which itself implies that
\begin{equation} \label{distrhobybetai}
\left|\frac{|t|^{n-d-1}}{\dist(\rho(x,t),\Gamma)^{n-d-1}} - 1 \right| \leq 2\beta_\eta(x,r)
\end{equation}
by \eqref{a7.51}, since $C_0$ is small. 
Since $\beta_\eta \in CM$ by \eqref{a7.52}, this proves the crucial Carleson property
\eqref{DrhoisCM} for $D = D_E$.

At this point, we can follow the same route as above, i.e., use 
Lemma \ref{newLaplacianprop} to compute the matrix of $L_0$, then Lemma \ref{main1.3}
to put this matrix in the appropriate form, and finally observe that the assumptions of
Theorems \ref{Itsf1} and \ref{Itai1} are satisfied. This completes our proof of Corollary \ref{C44}
modulo Theorems \ref{Itsf1} and \ref{Itai1}. 
\end{proof}

\section{Square function estimates}
\label{SSQR}

In this section we prove
Theorem \ref{Itsf1}. Since we want
the two last sections to be independent of the previous ones,
we recall (or slightly modify) some of the notation.

\medskip
Throughout this section, $\Omega_0=\rn\setminus \Gamma_0$ with 
$\Gamma_0=\rd \subset \R^n$ (with a small abuse of notation). 
We write $X=(x, t)\in \RR^d\times \RR^{n-d}$ for points in $\overline {\Omega_0}=\rn$, 
and similarly $Y=(y,s)$.

For $x\in \R^d$, let 
$$
\gamma(x) :=\{Y=(y,s)\in \Omega_0 :\,|y-x|<a |s|\} 
$$
be the non-tangential cone of aperture $a>0$ (it actually looks like a rotated cone, but we will keep referring to it simply as a cone throughout the discussion). 
Unless otherwise stated, the estimates hold for all $a>0$ (fixed throughout a given theorem) 
and constants can depend on $a$. 
Also define the truncated cone $\gamma^h(x)$, $h> 0$, by 
\begin{equation}\label{defcone}
\gamma^h(x) 
:=\{Y=(y,s)\in \Omega_0:\,|y-x|<a |s|, \,0<|s|<h\}. 
\end{equation}
We often write $\gamma^Q$ in place of $\gamma^{l(Q)}$ when $h=l(Q)$, the side-length of some cube $Q$ (see \eqref{a2.27}). 

Going further, we denote the balls in $\rn$
by $B_r(X)=B(X, r) =\{Y\in \rn: \,|X-Y|<r\}$, $X\in \rn$, $r>0$, 
and denote the boundary balls by $\Delta_r(x):=\{y\in \rd:\, |x-y|<r\}$, $x\in \rd$, $r>0$.
Sometimes we write $B(x,t; r)$ in place of $B(X,r)$ for $X=(x,t)$. 
The tent region (which in our case looks like 
a punctured ball) is defined by
$$T(\Delta_r(x)):=B(x,0;r)\cap \Omega_0. $$
Recall the definition of the Carleson measure condition given in 
Definition \ref{defCMI}. 

\begin{definition} \label{defCM8}
We say that a function $u$ defined on 
$\Omega_0$
satisfies the Carleson measure condition (in short, $u\in CM$) if 
\[|u(y,s)|^2 \, \frac{dy \, ds}{|s|^{n-d}}\]
is a Carleson measure, that is, if 
\[\sup_{{\rm balls }\,\Delta\subset \R^d}\frac{1}{|\Delta|}\dint_{T(\Delta)}
|u(y,s)|^2\,\frac{dyds}{|s|^{n-d}} < +\infty.\]
\end{definition}

A matrix-valued function $\A$ satisfies the unweighted elliptic and bounded conditions if 
there exists $C_1>0$ such that
\begin{equation} \label{ABounded}
|\A(X)\xi \cdot \zeta| \leq C_1 |\xi| |\zeta| 
\qquad \text{ for $X \in \Omega_0$ and } \xi,\zeta\in \R^n,
\end{equation} 
and
\begin{equation} \label{AElliptic}
|\A(X)\xi \cdot \xi| \geq C_1^{-1} |\xi|^2 \qquad 
\qquad \text{ for $X \in \Omega_0$ and } \xi \in \R^n.
\end{equation}
Notice that if $\A$ satisfies \eqref{ABounded}--\eqref{AElliptic}, 
then the matrix $\Ak = |t|^{d+1-n}\A$ satisfies  \eqref{1.2.1}--\eqref{1.2.2} for the boundary 
$\Gamma_0$ and the domain $\Omega_0$.
 We are interested in the operator
\begin{equation} \label{a8.5}
L_0 = \diver |t|^{d+1-n} \A \nabla = \diver \Ak \nabla,
\end{equation}
where $\A$ satisfies the unweighted elliptic and bounded conditions (and some more).

We shall state the main result of this section in terms of weak solutions of
$L_0 u =0$, which we define now. Denote by $W^{1,2}_{loc}(\Omega_0)$
the set of functions $u\in L^2_{loc}(\Omega_0)$ whose derivative 
(in the sense of distribution on $\Omega_0$) also lies in $L^2_{loc}(\Omega_0)$.
A function $u \in  W^{1,2}_{loc}(\Omega_0)$ 
is called a {\bf weak solution} of $L_0 u =0$
if for $\varphi \in C^\infty_0(\Omega_0)$,
\begin{equation} \label{weaksol}
\dint_{(x,t) \in \Omega_0} \A \nabla u \cdot \nabla \varphi \, \frac{dx dt}{|t|^{n-d-1}} = 0.
\end{equation}

\ms
The heroes of this section are the following four functions, defined on $\R^d$
(but maybe infinite) for $u \in W_{loc}^{1,2}(\Omega_0)$, 
namely the non-tangential maximal function $Nu$ and its truncated version $N^Qu$, given by 
\begin{equation} \label{a8.7}
Nu(x)=\sup_{Y \in \gamma(x)} 
|u(Y)|
\ \text{ and } \ 
N^Qu(x)=\sup_{Y \in \gamma^Q(x)}  
|u(Y)|
\end{equation}
for $x\in \R^d$, the square function $Su$, defined by 
\begin{equation} \label{a8.8}
Su(x)=\left(\dint_{\gamma(x)} 
|\nabla u(Y)|^2 \frac{dY}{|Y-(x,0)|^{n-2}}\right)^{1/2},
\end{equation}
and its truncated version defined by
\begin{equation} \label{a8.9}
S^Qu(x)=\left(\dint_{\gamma^Q(x)} 
|\nabla u(Y)|^2 \frac{dY}{|Y-(x,0)|^{n-2}}\right)^{1/2}.
\end{equation}

Theorem~\ref{tsf1} below is just a restatement of Theorem~\ref{Itsf1}.

\begin{theorem}\label{tsf1} 
Let $\A$ be an elliptic matrix satisfying \eqref{ABounded}--\eqref{AElliptic}.  Assume that $\A$ has the following structure: 
\begin{equation}\label{eqsf1A}
\A=\left( \begin{array}{cc}
 \A^1 & \A^2 \\ \C^3 & bI_{(n-d)} + \C^4  \end{array} \right), 
\end{equation}
where $\A^1$ and $\A^2$ can be any matrix valued measurable functions (in respectively $M_{d\times d}$ and $M_{d\times (n-d)}$),  $I_{(n-d)}\in M_{(n-d)\times (n-d)}$
denotes the identity matrix,
and  \smallskip
\begin{itemize}
\item
$|t|\nabla b$ satisfies the Carleson measure condition with a constant $M$,  \smallskip
\item $\lambda^{-1} \leq b\leq \lambda$ for some constant $\lambda>0$,  \smallskip
\item both $\C^3$ and $\C^4$ satisfies the Carleson measure condition with a constant $M$.
\end{itemize}
Consider the elliptic operator $L_0= \diver |t|^{d+1-n} \A \nabla$. 
Then there exists $k_0>0$, depending on the aperture $a$ of the involved cones only and $C>0$, depending on the ellipticity parameters of $\A$, $\lambda$, the dimensions, $a$, 
and $M$ only, such that
for every weak solution $u$ of $L_0$ and every cube $Q\subset \rd$,
we have
\begin{equation}\label{eqsf2} 
\|S^Q u \|_{L^2(Q)}^2 \leq C \| N^{2Q} u \|_{L^2(k_0Q)}^2.
\end{equation}
where $k_0Q$, $k_0>0$, stands for the
cube with the same center as $Q$ and sidelength $k_0l(Q)$.
\end{theorem}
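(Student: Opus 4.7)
The plan is to implement a weighted Rellich-type integration-by-parts argument in the spirit of \cite{KKPT, KKiPT, DPP2015}, tuned to the higher-codimension weight $w(s) = |s|^{d+1-n}$. First I would use Fubini (viewing $x\in Q$ as the base of the cone $\gamma^Q(x)$) to rewrite
\[
\|S^Q u\|_{L^2(Q)}^2 \;\approx\; \iint_{T(Q)} |\nabla u(y,s)|^2\, |s|^{d+2-n}\, dy\, ds,
\]
then use ellipticity $|\nabla u|^2 \leq C_1\, \A\nabla u \cdot \nabla u$ together with a smooth cutoff $\Psi$ adapted to $T(Q)$ and supported in $T(2Q)$, so that it suffices to bound
\[
I \;:=\; \iint \phi\, w\, (\A\nabla u \cdot \nabla u)\, dX, \qquad \phi \;:=\; \frac{|s|\Psi}{b(X)}.
\]
The division by $b$ in the test function $\phi$ is the crucial idea: since $b$ is bounded above and below, $I$ is comparable to $\|S^Q u\|_{L^2(Q)}^2$, but dividing by $b$ will later cancel the coefficient $b$ that multiplies $\partial_r u$ in the block structure, eliminating a problematic $\partial_r b$ term that would otherwise appear after integration by parts.

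Applying the weak formulation $\diver(w \A \nabla u) = 0$ yields $I = -\iint u\, w\, \A\nabla u \cdot \nabla \phi\, dX + \text{boundary}$, and expanding $\nabla \phi = \Psi b^{-1} \hat{s} + |s| b^{-1} \nabla \Psi - |s|\Psi b^{-2} \nabla b$ produces three bulk contributions $T_1, T_2, T_3$. The term $T_2$ involving $\nabla \Psi$ lives in a Whitney shell $\{|s|\sim l(Q)\}$ and is controlled by $\|N^{2Q} u\|_{L^2(k_0 Q)}^2$ via standard Caccioppoli estimates. The term $T_3$ involving $\nabla b$ is estimated by Cauchy-Schwarz,
\[
|T_3| \;\leq\; C\,\Bigl(\iint |\nabla u|^2 |s|^{d+2-n} \Psi\, dX\Bigr)^{1/2}\,\Bigl(\iint u^2\, |s|^2 |\nabla b|^2\, |s|^{d-n}\, \Psi\, dX\Bigr)^{1/2} \;\lesssim\; \|S^{2Q} u\|_{L^2}\,\|N^{2Q} u\|_{L^2},
\]
using the Carleson hypothesis $|s|\nabla b \in CM(M)$ and the Carleson embedding theorem on the second factor.

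The main term $T_1 = -\iint u\, w\, (\A\nabla u \cdot \hat{s})\, \Psi / b\, dX$ is dissected via the block structure of $\A$: since $\hat{s}$ has only its last $n-d$ components nonzero,
\[
(\A\nabla u \cdot \hat{s})/b \;=\; \partial_r u + b^{-1}(\C^3 \nabla_x u + \C^4 \nabla_t u)\cdot \hat{s}, \qquad \partial_r := \hat{s}\cdot \nabla_s.
\]
The $\C^3, \C^4$ pieces are treated exactly as $T_3$ via Cauchy-Schwarz and the Carleson hypotheses, giving $\lesssim \|S^{2Q}u\|_{L^2}\,\|N^{2Q}u\|_{L^2}$. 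The remaining clean term $-\iint u\, \partial_r u \cdot w \Psi\, dX = -\tfrac{1}{2}\iint \partial_r(u^2)\cdot w \Psi\, dX$ is the payoff of the choice $\phi = |s|\Psi/b$: integrating by parts radially in spherical coordinates on $\R^{n-d}$, the algebraic identity
\[
r^{n-d-1}\cdot w(r) \;=\; r^{n-d-1}\cdot r^{d+1-n} \;=\; 1
\]
causes the spherical Jacobian to cancel the weight exactly, leaving only $\tfrac12 \iint u^2 \partial_r \Psi \cdot w\, dX$ (Whitney shell, $\lesssim \|N^{2Q}u\|^2$) and a boundary term at $\{s=0\}$ of the form $c\int_{\R^d} \Psi(y,0)\, u(y,0)^2\, dy$, controlled by $\|N^{2Q} u\|_{L^2(k_0Q)}^2$ through the non-tangential trace. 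Combining all estimates gives $I \leq \delta \|S^{2Q} u\|_{L^2}^2 + C_\delta \|N^{2Q} u\|_{L^2(k_0Q)}^2$ via AM-GM, and a dyadic iteration (or standard scaling of cubes) then absorbs the $\delta$-term to produce the claimed inequality.

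The main obstacle will be two-fold. First, the boundary term at $\{s=0\}$ requires that $u$ admit a non-tangential trace on $\R^d$ with $|u(\cdot,0)|\leq N u$ a.e.; this is not automatic for a general weak solution and must be established through a truncation/approximation scheme, working with cones $\gamma^h$ for $h<l(Q)$ and test functions $\Psi_\epsilon$ supported in $\{|s|>\epsilon\}$, and letting $h\uparrow l(Q)$, $\epsilon\downarrow 0$ at the end; this same scheme supplies the a priori finiteness of $\|S^Q u\|_{L^2(Q)}^2$ required for the absorption step. A secondary technical point is the verification that the Carleson embedding $\iint |u|^2\, d\mu \lesssim \|\mu\|_{CM}\,\|N u\|_{L^2}^2$ extends uniformly to our higher-codimension setting with base measure $dy\, ds/|s|^{n-d}$ on $\Omega_0$, but this is a routine generalization of the classical codimension-one version. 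Finally, tracking the nested cubes in the Cauchy-Schwarz estimates so that the absorbed $\|S\|$-term always lives on a strictly larger cube than the original LHS requires a careful choice of $k_0$ and cutoff supports — straightforward but easy to mis-account.
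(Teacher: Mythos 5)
Your proposal is essentially the paper's argument: test the weak formulation against $u\,|t|\,\Psi/b$ (up to the power of $\Psi$), use the block structure so that only the radial derivative survives in the main term, let the division by $b$ cancel the coefficient in front of $\partial_r u$, and exploit the algebraic cancellation $r^{n-d-1}\cdot r^{d+1-n}=1$ so that the radial integration by parts has no weight. The error terms ($\nabla b$, $\C^3$, $\C^4$, $\nabla\Psi$) are handled via Cauchy--Schwarz and the Carleson embedding theorem, exactly as in Section~\ref{SSQR}. The key structural insights match.

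The one step that does not quite hold up as written is your absorption. After Cauchy--Schwarz on the $\nabla b$ (and $\C^3,\C^4$) terms, the first factor $\bigl(\iint|\nabla u|^2\Psi\,|s|^{d+2-n}\bigr)^{1/2}$ is, by ellipticity and $\lambda^{-1}\leq b\leq\lambda$, comparable to $I^{1/2}$ itself; it is \emph{not} $\|S^{2Q}u\|_{L^2}$. Once you see this, the estimate reads $I\leq C(AI)^{1/2}+CA$ with $A=\|N^{2Q}u\|^2_{L^2(k_0Q)}$, and the elementary quadratic inequality gives $I\leq CA$ with no iteration at all. (The paper makes this transparent by putting $\Psi^2$ in the test function, so that the quantity $J$ reappears verbatim in each Cauchy--Schwarz factor and one lands directly on $J\leq C(AJ)^{1/2}+A$.) The dyadic iteration you invoke instead is genuinely fragile here: with $\|S^{2^{j+1}Q}u\|^2$ on the right, the companion nontangential term $\|N^{2^{j+1}Q}u\|^2_{L^2(2^jk_0Q)}$ can grow like $2^{dj}$, so the series only converges for $\delta<2^{-d}$, and one must also supply an a priori sub-exponential bound on $\|S^{2^mQ}u\|^2$ to discard the $\delta^m$-remainder, none of which is addressed. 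Your non-tangential trace discussion is likewise unnecessary: once you build the lower cutoff $\Phi(2\eps/|s|)$ into $\Psi$ from the start (as you suggest), the test function is compactly supported in $\Omega_0$, the radial integration by parts has no boundary term whatsoever — the paper's $I_{211}$ is identically zero for each $\eps>0$ — and the $\eps$-shell simply contributes another Whitney-type piece to the $\nabla\Psi$ error, bounded by $A$ uniformly in $\eps$. One then takes $\liminf_{\eps\to 0}$ of the truncated square-function integral against the fixed right-hand side and concludes; no trace of $u$ on $\R^d$ ever appears.
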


\ms
\bp For simplicity we will take $a=1$ throughout the argument 
($a$ being the aperture of the access cones); the modifications for a general $a$ are straightforward.

Let $\Phi\in C_0^\infty(\RR)$ be such that $0\leq \Phi \leq 1$, $\Phi=1$ on $B(0,1)$, and 
$\Phi$ is supported in $B(0,2)$. Let $u$ be as in the statement, let a cube $Q \subset \R^d$
be given, denote by $\delta(x)$ the usual Euclidean distance from $x\in \rd$ to $Q$, 
and define 
$$\Psi(x,t):=\Phi\left(\frac{\delta(x)}{|t|}\right)\, \Phi\left(\frac{|t|}{l(Q)}\right)\,  
\Phi\left(\frac{ 2\eps}{|t|}\right)  \ \text{ for } (x,t)\in \rn,$$ 
where $\eps\ll l(Q)$ will eventually tend to $0$. 
 Define 
\begin{eqnarray*}
E_1&:=&\{(x,t)\in \Omega_0:\, x\in 10\,Q, \,|t|\leq\delta(x)\leq 2|t|\}, \\
E_2&:=&\{(x,t)\in \Omega_0:\, x\in 10\,Q, \,l(Q)\leq |t| \leq 2l(Q)\},\\
E_3&:=&\{(x,t)\in \Omega_0:\, x\in (1+8\eps)Q, \, |t| \leq 2\eps \leq 2|t|\},
\end{eqnarray*}
and observe that
\begin{eqnarray}\label{eqsf4}
|\nabla \Psi(x,t)| 
&\lesssim& \frac 1{|t|}\, \1_{E_1}(x,t) 
+\frac{1}{l(Q)}\, \1_{E_2}(x,t)+\frac{1}{\eps}\, \1_{E_3}(x,t)
\nn\\
&\lesssim& \frac 1{|t|}\left(\1_{E_1}(x,t)+\1_{E_2}(x,t)+\1_{E_3}(x,t)\right)
\end{eqnarray}
for $(x,t) \in \Omega_0$. Set 
\begin{equation}\label{eqsf6} 
J = J(\varepsilon) :=\dint_{\Omega_0} \frac{1}{|t|^{n-d-2}}\,|\nabla u|^2 \Psi^2\, dx dt.
\end{equation}
This integral is finite, because $\nabla u \in L^2_{loc}(\Omega_0)$ and we integrate 
over a subset of $\{x\in 2Q, \, \eps<|t|<2l(Q)\}$, which is compact in $\Omega_0$. 
In addition, we claim that
\begin{equation} \label{a8.19}
\|S^Q u\|_{L^2(Q)}^2 \leq \liminf_{\varepsilon \to 0} J(\varepsilon).
\end{equation} 
Indeed, the definition \eqref{a8.9} yields
\[ 
\|S^Q u\|_{L^2(Q)}^2 = \int_{x\in Q}  \dint_{\gamma^Q(x)} 
|\nabla u(Y)|^2 \frac{dY}{|Y-(x,0)|^{n-2}} dx.
\]
Let $Y \in \gamma^Q(x)$ for some $x\in Q$. Write $Y = (y,s)$; then 
\[
Y \in H := \big\{ (y,s) \in \Omega_0 \, ; \, \delta(y) \leq |s| \leq l(Q) \big\}.
\]
Then by Fubini,
\[\begin{split}
\|S^Q u\|_{L^2(Q)}^2 &\leq \dint_{(y,s) \in H} |\nabla u(y,s)|^2
\bigg\{ \int_{x\in \Delta(y,|s|)} \frac{dx}{|(y,s)-(x,0)|^{n-2}} \bigg\} dy ds
\\
& \leq C \dint_{(y,s) \in H} |\nabla u(y,s)|^2 \frac{dy \, ds}{|s|^{n-d-2}}.
\end{split}\]
Then observe that with our definitions,
$\Phi\left(\frac{\delta(y)}{|s|}\right)\, \Phi\left(\frac{|s|}{l(Q)}\right) = 1$
when $(y,s) \in H$, which means that $\Psi(y,s) = 1$ for $\varepsilon$ small enough
(depending on $(y,s)$); \eqref{a8.19} follows.

\ms
We shall prove that for every $\varepsilon>0$,
\begin{equation} \label{a8.20}
J \leq C (AJ)^{1/2} + A,
\end{equation}
where 
\begin{equation} \label{a8.21}
A  = \|N^{2Q} (u)\|_{L^2(10Q)}^2.
\end{equation}
This implies, since $J < +\infty$, that $J \leq C A$, i.e.,
\[
J(\varepsilon) \leq C \|N^{2Q}( u)\|_{L^2(10Q)}^2. 
\]
Then \eqref{eqsf2} will follow from \eqref{a8.19}, 
by taking the limit when $\varepsilon \to 0$.
So it is enough to check \eqref{a8.20}.

To this end, since $\A$ is uniformly elliptic and $b$ is bounded from above, we write
\begin{multline}\label{eqsf5}
J\lesssim \dint \frac{\A\nabla u \cdot\nabla u}{|t|^{n-d-1}}\, \Psi^2\, |t|\,\frac{1}{b} dx dt 
\\[4pt]
=\dint \frac{\A}{|t|^{n-d-1}}\,\nabla u\cdot\nabla \left(u \Psi^2\, |t|\,\frac{1}{b}\right) dx dt
+\dint \frac{\A}{|t|^{n-d-1}}\,\nabla u \cdot \frac{\nabla b}{b^2}  u \Psi^2\, |t|\, dx dt\\[4pt]
-\dint \frac{\A}{|t|^{n-d-1}}\,\nabla u\cdot  \nabla (|t|) \, u \Psi^2\, \,\frac{1}{b}\, dx dt -2 \dint \frac{\A}{|t|^{n-d-1}}\,\nabla u\cdot  \nabla \Psi \, u\Psi\, |t|\,\frac{1}{b}\, dx dt\\[4pt]
=: I_0 + I_1 + I_2 + I_3,
\end{multline}
where we just computed the four pieces of $\nabla \left(u \Psi^2\, |t|\,\frac{1}{b}\right)$ 
to get the main equality.
The first integral $I_0$ is zero, because $u$ is a weak solution and $\Psi$ is smooth and compactly supported in $\Omega_0$ (so that $u \Psi^2\, |t|\,\frac{1}{b}$ is a valid test function in \eqref{weaksol} under our assumptions on $u, \Psi, b$; see \cite[Lemma 8.16]{DFMprelim}).

Now, due to the boundedness of $\A$, the fact that $b\geq \lambda^{-1}>0$, 
and the Cauchy-Schwarz inequality, we have
\[\begin{split} 
|I_1| & \lesssim  \dint |\nabla u| |\nabla b| u \Psi^2 \, |t|^2
\frac{dx \, dt} {|t|^{n-d}} 
\leq \left(\dint  |t|^2 |\nabla b|^2 u^2 \Psi^2\, 
\frac{dx \, dt}{|t|^{n-d}} \right)^\frac12 J^\frac12. 
\end{split}\] 
The following estimate will be used a few times in the rest of the argument.
Thanks to
our first assumption on $|t|\nabla b$, $d\mu = \big||t|\nabla b \big|^2 \frac{dx \, dt}{|t|^{n-d}}$
is a Carleson measure on $\Omega_0$ (see Definition~\ref{defCM8}), and the 
Carleson inequality (see for instance \cite{Stein93}; the proof is only written in codimension $1$
but it goes through) 
says that
\[
\dint |t|^2 |\nabla b|^2 u^2 \Psi^2\, \frac{dx\, dt}{|t|^{n-d}} 
= \dint u^2 \Psi^2\, d\mu 
\leq C \|\mu\|_{CM}\|N(u\Psi)\|_{L^2(\R^n)}^2 ,
\]
where $\|\mu\|_{CM} = \sup_{{\rm balls }\,\Delta\subset \R^d}\frac{1}{|\Delta|}\dint_{T(\Delta)}(|s||\nabla b(y,s)|)^2\,\frac{dyds}{|s|^{n-d}}$ is the Carleson norm 
of $\mu$. Hence
\begin{equation}\label{eqsf7}
|I_1|\lesssim \|N(u\Psi)\|_{L^2(\rd)} J^{1/2}\leq \|N^{2Q}(u)\|_{L^2(10\,Q)} J^{1/2}.
\end{equation}

The integral $I_3$ contains $\nabla \Psi$, which we estimate with \eqref{eqsf4},
the boundedness of $\A$ and $b^{-1}$, and Cauchy-Schwarz inequality. This yields
\begin{eqnarray} \label{eqsf8}
|I_3| = 
\dint \frac{\A}{|t|^{n-d-1}}\,\nabla u\cdot  \nabla \Psi \, u\Psi\, |t|\,\frac{1}{b}\, dx dt
&\lesssim& \dint_{E_1 \cup E_2 \cup E_3} |\nabla u| |u|  |\Psi| \, \frac{dx \, dt}{|t|^{n-d-1}}
\nn\\
&\lesssim& J^{1/2}\left(\dint_{E_1\cup E_2 \cup E_3} 
u^2 \,\frac{dx \, dt}{|t|^{n-d}}\right)^{1/2}.
\end{eqnarray}
We will start with $E_2$:
\begin{equation}\label{eqsf9}
\dint_{E_2} u^2 \,\frac{dx \, dt}{|t|^{n-d}}
\leq \frac{1}{|l(Q)|^{n-d}} \dint_{E_2} \left(N^{2Q}u(x)\right)^2\, dxdt 
= C \int_{10\,Q} \left(N^{2Q}u\right)^2\, dx.
\end{equation} 
Similarly, 
\begin{equation}\label{eqsf11}
\dint_{E_3} u^2 \,\frac{dx \, dt}{|t|^{n-d}}
\leq \frac{1}{\eps^{n-d}} \dint_{E_3} \left(N^{2Q}u(x)\right)^2\, dxdt 
= C \int_{10\,Q} \left(N^{2Q}u\right)^2\, dx.
\end{equation} 
Finally, 
\begin{multline}\label{eqsf10}
\dint_{E_1} u^2 \,\frac{dx \, dt}{|t|^{n-d}} 
\leq \dint_{x\in 10\,Q, \, \delta(x)/2\leq |t|\leq \delta(x)} \left(N^{2Q}u(x)\right)^2\, 
\frac{dx\,dt}{|t|^{n-d}}
\\[4pt]
\leq  \int_{10\,Q} \left(N^{2Q}u\right)^2\, \left(\int_{\delta(x)/2
\leq |t|\leq \delta(x)}\frac{dt}{|t|^{n-d}}\right)\, dx 
\leq C  \int_{10\,Q} \left(N^{2Q}u\right)^2\, dx. 
\end{multline}

At this point we are left with the most delicate term, $I_2$. 
If the coordinates of $t\in \R^{n-d}$ are $(t_{d+1},\dots,t_n)$, 
\begin{multline}\label{eqsf12}
I_2 =-\dint \frac{\A}{|t|^{n-d-1}}\,\nabla u \cdot \nabla (|t|)\, u \Psi^2\, \,\frac{1}{b}\, dx dt
\\[4pt]
=-\dint \sum_{i=d+1}^n\sum_{j=1}^n\frac{\A_{ij}}{|t|^{n-d-1}}\,\partial_j u\, u \Psi^2\, \frac{t_i}{|t|}\,\frac{1}{b}\, dx dt.
\end{multline}
At this point
use the special form of $\A$. Notice that the upper part
of $\A$ does not contribute to the sum, and denote by $I_{21}$ the part that comes from
$bI_{n-d}$, and by $I_{22}$ the remaining part, that comes from $\C^3$ and $\C^4$; thus
\[
I_{21} = -\dint \sum_{i=d+1}^n\frac{1}{|t|^{n-d-1}}\,\partial_i u\, u \Psi^2\, \frac{t_i}{|t|}\, dx dt,
\]
where the two terms with $b$ conveniently cancel, and
\[
I_{22} = -\dint \sum_{i=d+1}^n\sum_{j=1}^n\frac{\C_{ij}}{|t|^{n-d-1}}\,\partial_j u\, u \Psi^2\, \frac{t_i}{|t|}\,\frac{1}{b}\, dx dt,
\]
where the $\C_{ij}$ are the coefficients of $\C := \begin{pmatrix} 0 & 0 \\ \C^3 & \C^4 \end{pmatrix}$ and satisfy the Carleson condition.

The term $I_{22}$ is estimated exactly like 
$I_1$, only using the Carleson condition on the $\C_{ij}$,
in place of the Carleson condition for $|t|\nabla b$.
We are left with $I_{21}$, which we write as 
\begin{multline}\label{eqsf13}
I_{21} =
-\frac 12 \dint \sum_{i=d+1}^n\frac{1}{|t|^{n-d-1}}\,\partial_i (u^2 \Psi^2)\, \frac{t_i}{|t|}\, dx dt\\[4pt]+
\frac 12\dint \sum_{i=d+1}^n\frac{1}{|t|^{n-d-1}}\,u^2 \partial_i(\Psi^2)\, \frac{t_i}{|t|}\, dx dt 
=: I_{211}+I_{212}.
\end{multline}
For the first term, observe that $\sum_{i=d+1}^n \partial_i (u^2 \Psi^2) \frac{t_i}{|t|}
= \partial_r(u^2\Psi^2)$ (the derivative in the radial direction).
We switch to polar coordinates, abusing the notation slightly by writing $u$ and $\Psi$ to mean a composition of the corresponding functions with the mapping of the change of the coordinates,
use Fubini and our assumptions on $u$ and $\Psi$, integrate by parts in polar coordinates, 
and get that
\begin{equation}\label{eqsf14}
I_{211}
=C \int_{\rd} \int_{{\mathbb S}^{n-d-1}} \int_\eps^{\infty} 
\partial_r(u^2\Psi^2)\,drd\omega dx=0.
\end{equation}
For the remaining integral $I_{212}$, observe that by \eqref{eqsf4}, and then 
\eqref{eqsf9}-\eqref{eqsf10},
\begin{eqnarray} \label{a8.31}
|I_{212}| &\leq& C \dint \frac{1}{|t|^{n-d-1}}\,u^2 \Psi |\nabla\Psi| \, dx dt
\leq \dint_{E_1 \cup E_2 \cup E_3} \frac{1}{|t|^{n-d}}\,u^2 
\, dx dt
\nn\\
&\leq& C \int_{10\,Q} \left(N^{2Q}u\right)^2\, dx =  CA
\end{eqnarray}
The main estimate \eqref{a8.20}, and then Theorem \ref{tsf1}, follow.
\ep

\begin{remark}\label{rSfCm} As pointed out after the statement of Theorem~\ref{Itsf1}, 
the Carleson measure estimate \eqref{Ieqsf2a} has a version for characteristic functions
of Borel sets,  which says that if $H$ is a Borel subset of $\Gamma_0 =\R^d$,
$u_H$ is the weak solution defined by \eqref{a2.24}, and $Q\subset \R^d$ is
any cube, we have
\begin{equation}\label{Ieqsf2}
\|S^Q u_H\|_{L^2(Q)}^2 \leq C |Q|,
\end{equation}
which is enough for Section \ref{SAinfty}, and follows at once from Theorem \ref{Itsf1},
again because $|u_H| \leq 1$. 

Here we prove, as promised in the Introduction, that the Carleson measure estimate 
\eqref{a2.26} follows from \eqref{Ieqsf2}.
Let $x\in \Gamma_0$ and $r > 0$ be given, and let $Q$ denote the cube centered on 
$x$ and with sidelength $l(Q) = 2r$. Set
\begin{eqnarray} \label{a2.37}
I &=& \|S^Q u_H\|_{L^2(Q)}^2 = \int_{z\in Q} S^Q u_H(z)^2 dz
\nn\\
&=& \int_{z\in Q} 
\int_{(y,s)\in \gamma^Q (z)} |\nabla u_H(y,s)|^2 \, \frac{dyds}{|(y,s)-(z,0)|^{n-2}} \, dz,
\end{eqnarray}
and observe that  by \eqref{a2.27}, $|(y,s)-(z,0)| \leq (1+a)|s|$ when $(y,s)\in \gamma^Q (z)$.
Thus by Fubini
\begin{eqnarray} \label{a2.38}
I &\geq& (1+a)^{2-n} \int_{z\in Q}\int_{(y,s)\in \gamma^Q (z)} |\nabla u_H(y,s)|^2 |s|^{2-n} dydsdz
\nn\\
&=&  (1+a)^{2-n} \int_{y\in \R^d}\int_{s=0}^{l(Q)}  |\nabla u_H(y,s)|^2 \theta(y,s) |s|^{2-n} dyds,
\end{eqnarray}
where (by \eqref{a2.27} again)
\begin{equation} \label{a2.39}
\theta(y,s) = \big|\big\{z\in Q : \, (y,s) \in \gamma^Q(z) \big\}\big| = \big|Q \cap B(y,as)\big|.
\end{equation}
Notice that $\theta(y,s) \geq C^{-1} (as)^d$ for $y\in Q$, so 
\begin{equation} \label{a2.40}
I \geq C^{-1}(1+a)^{2-n} a^d \int_{y\in Q}\int_{s=0}^{l(Q)}  |\nabla u_H(y,s)|^2 |s|^{d+2-n} dyds.
\end{equation}
 For \eqref{a2.26} we need to estimate
\begin{equation} \label{a2.41}
 \int_{(y,s) \in \Omega_0 \cap B(x,r)} \big(|s|\nabla u_H(y,s)\big)^2\,\frac{dyds}{|s|^{n-d}} 
 \leq \int_{y\in Q} \int_{s=0}^{l(Q)} |\nabla u_H(y,s)|^2 \,\frac{dyds}{|s|^{n-d+2}} 
\leq C_a I 
\end{equation}
by \eqref{a2.40}. But $I \leq C |Q| \leq C r^d$ by \eqref{Ieqsf2}, so \eqref{a2.26} really follows 
from \eqref{Ieqsf2}.
\end{remark}

\section{From Carleson measure estimates for solutions to $A^\infty$ property of harmonic measure}
\label{SAinfty}

Throughout this section, like the previous one, $\Gamma_0 = \R^d$ and 
$\Omega_0 = \R^n \setminus \Gamma_0$. 
We consider an operator $L_0 = - \diver |t|^{d+1-n} \A \nabla$, where $\A$ is a matrix-valued 
function defined on $\Omega_0$ satisfying the ellipticity and boundedness conditions \eqref{ABounded}--\eqref{AElliptic}.

For $X\in \Omega_0$, $\omega^X = \omega^X_{\Omega_0,L_0}$ is the harmonic measure 
defined near \eqref{a2.6}.
The existence of the harmonic measure, the fact that the harmonic measure is a probability measure, and the fact that the function $u_H$ defined as $u_H(X) = \omega^X(H)$, with $H \subset \R^d$ a Borel set, is a weak solution to $L_0 u_H = 0$ can be found in \cite[Section 9]{DFMprelim}; see
Lemmata 9.23 and 9.30 in particular.
We need the Harnack inequality and H\"older continuity of $u_H$ at the boundary (see \cite[Section 8]{DFMprelim}, Lemmas 8.42 and 8.106). 

\begin{lemma}[Harnack] \label{HarnackI}
Let $H\subset \R^d$ be a Borel set. Let the function $u_H$ defined as above by 
$u_H(X) = \omega^X(H)$, $X\in \Omega_0$. 
Let $B$ be a ball such that $2B\subset \Omega$, then 
\begin{equation} \label{Harnack1}
\sup_B u_H \leq C \inf_B u_H,
\end{equation}
where $C>0$ depends only on the dimensions $d$, $n$ and the ellipticity constants.
\end{lemma}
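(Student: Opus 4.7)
The plan is to view $u_H$ as a nonnegative weak solution of a degenerate elliptic equation in divergence form and invoke the Harnack inequality for such equations. Rewriting $L_0 = -\diver(w\, \A\, \nabla \cdot)$ with $w(X) = |t|^{d+1-n}$ and $\A$ satisfying the unweighted bounds \eqref{ABounded}--\eqref{AElliptic}, we fall into the natural setting of the Fabes--Kenig--Serapioni (FKS) theory of degenerate elliptic operators whose degeneracy is governed by a Muckenhoupt $A_2$ weight.

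The first step is to verify that $w(X) = |t|^{d+1-n}$ is an $A_2$-weight on $\R^n$. Writing $X = (x,t) \in \R^d \times \R^{n-d}$, both $w$ and $w^{-1} = |t|^{n-d-1}$ are powers of the distance to the $d$-plane $\R^d$. Since $d < n-1$, both exponents $d+1-n$ and $n-d-1$ exceed $-(n-d)$, so the weight and its reciprocal are locally integrable. A direct computation on balls of arbitrary center and radius then yields
\[
\sup_{B \subset \R^n} \Big(\frac{1}{|B|}\int_B w\,dX\Big) \Big(\frac{1}{|B|}\int_B w^{-1}\,dX\Big) < \infty,
\]
with a bound depending only on $n$ and $d$; moreover, because $|t|^{d+1-n}$ is invariant under translations along $\R^d$ and homogeneous under dilations, this $A_2$-constant is uniform across all of $\R^n$.

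The second step is to apply the Harnack inequality for nonnegative weak solutions of $\diver(w\A \nabla u)=0$. The proof in the FKS framework proceeds via (i) a weighted Sobolev--Poincar\'e inequality on balls, which follows from the $A_2$ property of $w$; (ii) Moser iteration in the weighted space $L^2(w\,dX)$ yielding a local boundedness estimate of the form
\[
\sup_{B} u \leq C \Big(\frac{\int_{2B} u^2\, w\,dX}{\int_{2B} w\,dX}\Big)^{1/2};
\]
and (iii) a weak Harnack inequality for nonnegative supersolutions, obtained via a logarithmic test function, John--Nirenberg applied to $\log u$, and a further Moser iteration. Chaining (ii) and (iii) for the nonnegative $u_H$ gives $\sup_B u_H \leq C \inf_B u_H$ with $C$ depending only on the dimensions and the ellipticity parameters. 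This is exactly the content of Lemma~8.42 of \cite{DFMprelim}, applied to the present operator, so the argument reduces to a citation once the weight has been identified.

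The only real obstacle is to be sure all constants are genuinely uniform in $B$, in particular as $B$ approaches the boundary $\Gamma_0 = \R^d$. This is precisely where it matters that $w$ is $A_2$ on all of $\R^n$ with a translation- and scale-invariant constant, rather than merely on compact subsets of $\Omega_0$: no degeneration occurs as the ball drifts toward $\R^d$. Once that uniform $A_2$ bound is in place the Harnack inequality in this degenerate setting is no more delicate than its uniformly elliptic counterpart, and we obtain a constant $C$ depending only on $n$, $d$, and the ellipticity parameters of $\A$, as claimed.
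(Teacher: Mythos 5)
Your proposal is correct and matches the paper's treatment: the lemma is simply recalled from the elliptic theory of \cite{DFMprelim} (Lemma 8.42 there), which is exactly the FKS-type interior Harnack inequality for the $A_2$ weight $|t|^{d+1-n}$ that you describe, applied to the nonnegative solution $u_H$. Your verification of the $A_2$ property and the sketch of the Moser-iteration machinery are accurate but not needed beyond the citation.
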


\begin{lemma}[H\"older at the boundary] \label{HolderB}
Let $H\subset \R^d$ be a Borel set. If the ball $\Delta:= \Delta_r(x) \subset \R^d$ doesn't intersect $H$, then for any $s<r$
\[\sup_{B(x,0;s)} u_H \leq C\left(\frac sr\right)^\alpha,\]
where $C,\alpha$ are two positive constants that depend only on the dimensions $d$ and $n$ and the ellipticity constants of $L_0$.
\end{lemma}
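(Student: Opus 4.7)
The plan is to establish an oscillation–decay lemma at a single scale and then iterate it dyadically to produce H\"older continuity. Since each $\omega^X$ is a probability measure, write $u_H = 1 - u_{\Gamma_0 \setminus H}$; because $\Delta_r(x) \subset \Gamma_0 \setminus H$ by hypothesis, it suffices to bound $\omega^Y(\Delta_r(x))$ from below for $Y$ in a neighborhood of $x$. So the whole problem reduces to a non-degeneracy estimate of Bourgain type for $\omega^Y$, plus an iteration.

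\textbf{Non-degeneracy at one scale.} The key ingredient is to prove the existence of $\eta = \eta(n,d,C_1) > 0$ such that
\begin{equation*}
\omega^Y(\Delta_r(x)) \geq \eta \qquad \text{for all } Y \in B(x,0;r/2) \cap \Omega_0.
\end{equation*}
I would obtain this by constructing an explicit weak subsolution barrier: for a small $\gamma > 0$, consider $v(Y) = |t(Y)|^\gamma$, and check by a direct computation that $L_0 v \leq 0$ pointwise on $\Omega_0$ using only \eqref{ABounded}--\eqref{AElliptic} and the homogeneity of the weight $|t|^{d+1-n}$; the smallness of $\gamma$ absorbs the contribution of the off-diagonal entries. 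Comparing $v$, suitably normalized, with the solution $Y \mapsto \omega^Y(\Delta_r(x))$ via the maximum principle of \cite{DFMprelim} and connecting interior points to boundary points through Whitney-type Harnack chains (Lemma~\ref{HarnackI}) then yields the stated lower bound.

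\textbf{Oscillation decay.} From the non-degeneracy we deduce immediately, for $Y \in B(x,0;r/2) \cap \Omega_0$,
\begin{equation*}
u_H(Y) = 1 - \omega^Y(\Gamma_0 \setminus H) \leq 1 - \omega^Y(\Delta_r(x)) \leq 1 - \eta.
\end{equation*}
More generally, I would promote this to the following: if $u \geq 0$ is a bounded weak solution in $B(x,0;\rho) \cap \Omega_0$ vanishing continuously on $\Delta_\rho(x)$ with $\sup u \leq M$ on that ball, then $w = M-u$ is a non-negative solution equal to $M$ on $\Delta_\rho(x)$, and applying the non-degeneracy estimate to $w$ via its representation through harmonic measure gives $\sup_{B(x,0;\rho/2) \cap \Omega_0} u \leq (1-\eta)M$.

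\textbf{Iteration.} Set $M_k := \sup_{B(x,0;r/2^k) \cap \Omega_0} u_H$ (with $M_0 \leq 1$). The decay applied at the scales $r_k = r/2^k$ yields $M_{k+1} \leq (1-\eta)M_k$, hence $M_k \leq (1-\eta)^k$. Given $s < r$, choose $k$ with $r/2^{k+1} \leq s \leq r/2^k$ and set $\alpha := \log_2 \frac{1}{1-\eta} > 0$; then $\sup_{B(x,0;s)} u_H \leq M_k \leq C(s/r)^\alpha$, proving the lemma. The main obstacle is the single-scale non-degeneracy: in codimension one this is Bourgain's classical lemma, but in our degenerate higher-codimensional setting it requires a barrier argument compatible with the weight $|t|^{d+1-n}$ — this is precisely where the ambient elliptic theory of \cite{DFMprelim} does the heavy lifting, and is why we may simply quote Lemma~8.106 there rather than redo the construction.
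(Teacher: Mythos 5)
The paper itself does not prove this lemma: it is recalled verbatim from the companion work \cite{DFMprelim} (see the sentence immediately preceding the statement, which points to Lemmas 8.42 and 8.106 there). So there is no proof in the present paper against which to compare yours; the expected content is simply the citation, which your last sentence ultimately produces. That said, you attempt to reconstruct the proof, and the sketch has two genuine gaps worth flagging.

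First, the barrier. You propose $v(Y) = |t|^\gamma$ and claim that $L_0 v \le 0$ pointwise ``using only \eqref{ABounded}--\eqref{AElliptic}.'' But $\A$ is merely a measurable matrix; $L_0 v = -\diver\big(|t|^{d+1-n}\A\nabla v\big)$ is a distribution, not a pointwise quantity, so there is no pointwise computation to perform and no small $\gamma$ that ``absorbs the off-diagonal entries.'' Even in the frozen constant-coefficient case the sign is not what you want: taking $\A = I$, a direct calculation gives
\[
-\diver\big(|t|^{d+1-n}\nabla |t|^\gamma\big) = -\gamma(\gamma-1)\,|t|^{d+\gamma-1-n},
\]
which is \emph{positive} for $0<\gamma<1$, i.e. $|t|^\gamma$ is a supersolution, not the subsolution you announce; and since it vanishes on $\Gamma_0$ and blows up at infinity it does not compare with $Y\mapsto\omega^Y(\Delta_r(x))$ in the direction you need without a cutoff whose error terms are not controlled. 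The one-scale non-degeneracy in this degenerate setting is obtained in \cite{DFMprelim} by energy/De\,Giorgi-type arguments, not by explicit barriers, and this is not merely a cosmetic difference — the barrier route genuinely does not go through for rough $\A$.

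Second, the iteration step silently changes harmonic measures. Once you set $w = M_k - u_H$ on $B(x,0;r/2^k)\cap\Omega_0$ and want to conclude $w \ge \eta M_k$ on the smaller ball, you need a representation of $w$ through the \emph{relative} harmonic measure of the bounded domain $B(x,0;r/2^k)\cap\Omega_0$ (with data $M_k$ on $\Delta_{r/2^k}$ and $w$ on the spherical part), and a Bourgain-type bound for that relative measure at a corkscrew point. This is not the same object as the global $\omega^Y$ on $\Gamma_0$ used in the rest of the section, and the phrase ``applying the non-degeneracy estimate to $w$ via its representation through harmonic measure'' glosses over exactly the point where the work happens. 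Both of these issues are, of course, handled in \cite{DFMprelim}; the clean course of action here is precisely the one the paper takes, namely to cite Lemma 8.106 there directly rather than to reprove it.
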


Given some $\Delta=\D_r(x)$, $x\in \rd$, $r>0$, as above, a point $A_{\Delta_r(x)}=(x, t)$ in 
$\Omega_0$ such that $|t|=r$ is referred to as a cork-screw point of $\Delta_r(x)$. 
Here we use the special shape of $\Gamma_0$ to get a cork-screw constant $1$ 
(i.e. we can choose $\tau = 1$ in \eqref{Corkscrew}), 
but this does not matter; $A_{\Delta_r(x)}$ is clearly not uniquely defined for $n-d>1$ 
and whenever we write $A_{\Delta_r(x)}$ we mean that any such point is suitable. 
The following three properties of the harmonic measure, whose proof can be found in 
\cite[Section 11]{DFMprelim}, will also be used repeatedly throughout the section.

\begin{lemma}[Nondegeneracy]\label{lnondeg} For any $x\in\rd$ and $r>0$, 
$$\omega^{A_{\D_r(x)}}(\D_{r}(x)) \geq C,$$
where $C>0$ depends on $n$, $d$, and the ellipticity constants of $L_0$ only.
\end{lemma}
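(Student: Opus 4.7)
The plan is to combine the boundary H\"older estimate (Lemma~\ref{HolderB}) with a rescaling and a Harnack chain (Lemma~\ref{HarnackI}) to pass from a nondegeneracy at an artificially large scale $Mr$ down to the natural scale $r$ at the corkscrew point $A = A_{\Delta_r(x)}$.

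First I would establish the following auxiliary statement: there exists a constant $M=M(n,d,C_1)>2$ such that for every $x\in\rd$, $r>0$, and every $Y\in B(x,0;2r)\cap\Omega_0$,
\[
\omega^Y(\Delta_{Mr}(x)) \geq \tfrac12.
\]
To prove this, set $H := \Gamma_0 \setminus \Delta_{Mr}(x)$ and observe that the ball $\Delta_{Mr}(x)\subset\rd$ does not meet $H$. Applying Lemma~\ref{HolderB} to the solution $u_H(X) = \omega^X(H) = 1 - \omega^X(\Delta_{Mr}(x))$ with $s := 2r < Mr$ yields
\[
\sup_{B(x,0;2r)} u_H \leq C\Bigl(\frac{2r}{Mr}\Bigr)^{\alpha} = C\Bigl(\frac{2}{M}\Bigr)^{\alpha}.
\]
Choosing $M$ so large that the right-hand side is at most $1/2$ gives the claim. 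The choice of $M$ depends only on the dimensions and the ellipticity parameters.

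Next I would rescale: applying the statement just proved with $r$ replaced by $r/M$, I get $\omega^Y(\Delta_r(x)) \geq 1/2$ for every $Y \in B(x,0;2r/M) \cap \Omega_0$. In particular, fixing any unit vector $e\in\R^{n-d}$ and setting $Y_0 := (x,(r/M)e)$, we have $|Y_0-(x,0)|=\dist(Y_0,\Gamma_0)=r/M$, so $Y_0$ is a legitimate low-scale corkscrew and
\[
\omega^{Y_0}(\Delta_r(x))\ \geq\ \tfrac12.
\]

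Finally, I would connect $Y_0$ to the target point $A = A_{\Delta_r(x)}$ by a Harnack chain within $\Omega_0$ of length $O(\log M)$. Without loss of generality $A$ lies on the same ray from $(x,0)$ as $Y_0$; otherwise a short additional chain at scale $r$ handles the change of direction (here one uses the connectedness of $\Omega_0$, which is immediate since $n-d\geq 2$). Define $Y_k := (x,(r/M)(5/4)^k e)$ for $k=0,1,\dots,K$ with $K \sim \log_{5/4}(M)$, chosen so that $Y_K$ lies in $B(A, r/8)$. Each ball $B_k := B(Y_k, |Y_k-(x,0)|/4)$ satisfies $2B_k \subset \Omega_0$ and contains both $Y_k$ and $Y_{k+1}$. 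Applying Lemma~\ref{HarnackI} to the nonnegative weak solution $X\mapsto \omega^X(\Delta_r(x))$ across each $B_k$ and iterating gives
\[
\omega^{A}(\Delta_r(x))\ \geq\ C^{-(K+1)}\, \omega^{Y_0}(\Delta_r(x))\ \geq\ \tfrac12 C^{-(K+1)}\ =:\ c,
\]
where $c$ depends only on $n$, $d$, and the ellipticity constants, proving the lemma.

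The argument is of a standard flavor and the main step is the first one; the only subtle point is ensuring no circularity, namely that Lemmas~\ref{HarnackI} and \ref{HolderB} are proved in \cite{DFMprelim} independently of nondegeneracy. The Harnack chaining in the last step presents no additional difficulty in our higher codimension, since the corkscrew region is in fact open and connected along radial directions in the transverse plane.
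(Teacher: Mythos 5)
The paper does not give its own proof of this lemma; it simply cites \cite[Section 11]{DFMprelim}, so there is nothing in the present manuscript to compare against line by line. That said, your proof is correct and follows the standard route: apply the boundary H\"older estimate to the solution $u_H$ with $H = \Gamma_0\setminus\Delta_{Mr}(x)$ to get nondegeneracy at a large scale $Mr$ and close to the boundary, rescale, and then use a Harnack chain to descend from a point at height $r/M$ to the corkscrew point $A_{\Delta_r(x)}$ at height $r$. The observation that the chain has length $O(\log M)$ with $M$ a fixed constant (depending only on $n$, $d$, and the ellipticity constants) is correct and gives a uniform bound. Your remark that $S^{n-d-1}$ is connected (since $n-d\geq 2$) justifies the ``change of direction'' chain at scale $r$. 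Two small technical points you may want to polish: with the factor $5/4$ and radius $\rho_k/4$, the point $Y_{k+1}$ lies on $\partial B_k$ rather than in its interior, so either shrink the ratio slightly (say $9/8$) or enlarge $B_k$ to $B(Y_k,\rho_k/3)$ so that $2B_k\subset\Omega_0$ still holds and $Y_{k+1}\in B_k$; and the supremum in Lemma~\ref{HolderB} is of course over $B(x,0;s)\cap\Omega_0$ since $u_H$ is defined on $\Omega_0$. Neither affects the argument. There is no circularity: the boundary H\"older estimate and Harnack inequality are interior/boundary regularity statements for weak solutions and are established in \cite{DFMprelim} independently of the nondegeneracy of harmonic measure.
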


\begin{lemma}[Doubling]\label{ldoubling} For any $x\in\rd$, $r>0$, and  
$Y\in \Omega_0\setminus B_{4r}(x,0)$, 
$$\omega^Y(\D_{2r}(x)) \leq C\,  \omega^Y(\D_{r}(x)),$$
where $C>0$ depends on $n$, $d$, and the ellipticity constants only.
In particular, if
$\Delta \subset \R^d$ is a ball satisfying $2\Delta \subset \Delta_r(x)$, then
$$\omega^{A_{\D_r(x)}}(2\Delta) \leq C\,  \omega^{A_{\D_r(x)}}(\Delta).$$
\end{lemma}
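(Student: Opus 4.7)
My plan is to establish the doubling estimate $u(Y)\leq Cv(Y)$ for $u(Y):=\omega^Y(\Delta_{2r}(x))$ and $v(Y):=\omega^Y(\Delta_r(x))$ on $\Omega_0\setminus B_{4r}(x,0)$ by a maximum-principle argument on the exterior region, reducing everything to a pointwise comparison on the spherical cap $S:=\partial B_{4r}(x,0)\cap\Omega_0$. Both $u$ and $v$ are nonnegative bounded weak $L_0$-solutions vanishing continuously on $\R^d\setminus\overline{\Delta_{4r}(x)}$ and decaying to $0$ at infinity (the latter obtained by iterating Lemma~\ref{HolderB} at successively larger scales applied to $1-\omega^{\cdot}(\text{far annulus})$). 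Once $u\leq Cv$ is known on $S$, the function $u-Cv$ is a bounded $L_0$-solution in $\Omega_0\setminus\overline{B_{4r}(x,0)}$, nonpositive on the entire topological boundary and at infinity, hence nonpositive throughout by the maximum principle for degenerate elliptic operators available in the framework of \cite{DFMprelim}.

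The sphere comparison splits into two regimes. For $Z\in S$ with $\dist(Z,\R^d)\geq r$ (the ``upper cap''), I would start from the nondegeneracy bound $v(A_{\Delta_r(x)})\geq c_0$ of Lemma~\ref{lnondeg} and propagate it through a finite Harnack chain (Lemma~\ref{HarnackI}) of balls of radius $\sim r$ kept at height $\gtrsim r$ above $\R^d$; this yields $v(Z)\geq c_1>0$, and since $u(Z)\leq 1$ trivially, $u(Z)\leq c_1^{-1}v(Z)$. For $Z=(y,s)\in S$ with $|s|<r$ (the ``equatorial strip''), the identity $|y-x|^2=16r^2-|s|^2$ places $y$ at horizontal distance $>3r$ from $x$, so both $u$ and $v$ vanish on the common boundary ball $\Delta_{r/2}(y)\subset\R^d\setminus\overline{\Delta_{2r}(x)}$. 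Here one invokes a boundary Harnack principle for the two nonnegative $L_0$-solutions $u$ and $v$ vanishing on $\Delta_{r/2}(y)$: it reduces the ratio $u(Z)/v(Z)$ to the ratio at the corkscrew $A_{\Delta_{r/4}(y)}$, which in turn is controlled by Harnack chaining back to the upper-cap regime already handled.

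The second assertion of the lemma is a routine consequence of the first: given a sub-ball $\Delta=\Delta_{r'}(x')$ with $2\Delta\subset\Delta_r(x)$, hence $r'\leq r/2$, a direct computation shows that the corkscrew $A_{\Delta_r(x)}$ lies outside $B_{4r'}(x',0)$ whenever $r'\leq r/4$; in the remaining range $r/4<r'\leq r/2$, a bounded-length Harnack chain (Lemma~\ref{HarnackI}) moves the pole from $A_{\Delta_r(x)}$ to a higher auxiliary point $A^*$ at height $\sim 5r$ which does satisfy the separation hypothesis $A^*\notin B_{4r'}(x',0)$, and this distorts both $\omega^{\cdot}(2\Delta)$ and $\omega^{\cdot}(\Delta)$ only by a multiplicative constant.

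The main obstacle is the equatorial portion of the sphere comparison, where matching the upper and lower behavior of $u$ and $v$ simultaneously at points approaching $\R^d$ in the common vanishing region is nontrivial: Lemma~\ref{HolderB} gives only upper bounds, and a lower matching bound requires a genuine boundary Harnack principle for $L_0$ on the ``box'' near $(y,0)$. This is the technical heart of the argument; in codimension one it is classical, but in the present degenerate-elliptic, higher-codimension setting it is the substantive step and is the ingredient requiring the most care (likely already proved in \cite{DFMprelim} via a combination of Lemmas~\ref{HarnackI}, \ref{HolderB}, and \ref{lnondeg} iterated along a telescopic sequence of shrinking boxes).
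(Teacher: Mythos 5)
The lemma is not proved in this paper but cited from \cite[Section~11]{DFMprelim}, where, following the classical Caffarelli--Fabes--Mortola--Salsa (CFMS) template, doubling is derived from a two-sided comparison between harmonic measure and the Green function: one first proves, via the Carleson estimate and Green-function bounds, that $\omega^Y(\Delta_\rho(x))$ is comparable to an expression of the form $m(B_\rho)\,\rho^{-1} G(Y,A_{\Delta_\rho(x)})$ for $Y$ outside $B_{2\rho}(x,0)$, and then doubling drops out because $A_{\Delta_{2r}}$ and $A_{\Delta_r}$ are joined by a short interior Harnack chain, so $G(Y,A_{\Delta_{2r}})\approx G(Y,A_{\Delta_r})$. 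This route compares each harmonic measure to the Green function separately and never needs to compare two solutions vanishing on a common boundary piece. Your maximum-principle-on-the-sphere strategy is a genuinely different route, and its outer logic (reduction to a comparison on $\partial B_{4r}(x,0)\cap\Omega_0$, decay at infinity, exterior maximum principle) is sound.

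The gap is the equatorial estimate, and it is not a minor technicality. To obtain $u\le Cv$ with a \emph{uniform} constant all the way down to the circle $\partial B_{4r}(x,0)\cap\R^d$, you must control the ratio of two nonnegative solutions that both vanish on $\Delta_{r/2}(y)\subset\R^d\setminus\overline{\Delta_{2r}(x)}$ as the evaluation point tends to $\R^d$. That is precisely the comparison (boundary Harnack) principle for two solutions. In this framework --- as in \cite{DFMprelim}, where the change-of-pole/comparison statement (Lemma~\ref{lChOfPole}, cited from Lemma~11.135 there) is established \emph{after} doubling --- the comparison principle is derived \emph{from} the CFMS bounds together with doubling. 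Invoking it to prove doubling is therefore circular. Moreover, it cannot be replaced by the ingredients you list: Lemma~\ref{HolderB} gives only an upper bound for each solution, Lemma~\ref{lnondeg} gives $v\gtrsim 1$ only at a corkscrew of a ball \emph{containing} $\Delta_r(x)$, and Lemma~\ref{HarnackI} only propagates one-sided interior bounds at fixed height. None of these combine to furnish the matching \emph{lower} bound on $v$, at the same rate that $u$ vanishes, that the equatorial strip requires. Your own parenthetical remark that the boundary Harnack is ``likely already proved'' via an iteration of these three lemmas is exactly where the argument, as presented, is incomplete: that iteration is not a minor exercise but the substantive content of the comparison principle, and in this degenerate, higher-codimension setting it is obtained through the Green-function machinery that presupposes doubling.

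Two smaller remarks. First, as you compute, the corkscrew $A_{\Delta_{r/4}(y)}$ sits at distance roughly $3r<4r$ from $(x,0)$, i.e.\ \emph{inside} $B_{4r}(x,0)$; this is harmless for the Harnack-chain transfer but should be stated, since you are comparing ratios at points that are not on the sphere $S$. Second, the treatment of the second assertion (reduction when $2\Delta\subset\Delta_r(x)$, split into $r'\le r/4$ versus $r/4<r'\le r/2$ with a bounded chain lift) is correct and does not introduce any further difficulties.
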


\begin{lemma}[Change of Pole]\label{lChOfPole} For any $x\in\rd$, $r>0$, 
any $Y\in \Omega_0\setminus B_{2r}(x,0)$, and any Borel set $E\subset \Delta:=\D_r(x)$, 
$$\frac{\omega^Y(E)}{\omega^Y(\Delta)}\approx \omega^{A_\Delta}(E), $$
where the implicit constants depend on $n$, $d$, and the ellipticity constants only. 
\end{lemma}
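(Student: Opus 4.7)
The plan is to derive the change of pole from a Boundary Harnack Principle (BHP) on $\Omega_0$, combined with the nondegeneracy statement of Lemma \ref{lnondeg} and interior Harnack chains (Lemma \ref{HarnackI}). Set $u_E(Z):=\omega^Z(E)$ and $u_\Delta(Z):=\omega^Z(\Delta)$ on $\Omega_0$; both are non-negative weak $L_0$-solutions that vanish continuously on $\Gamma_0\setminus\Delta$. The continuous vanishing on this common boundary portion follows from Lemma \ref{HolderB}: any point $z\in\Gamma_0\setminus\Delta$ admits a small surface ball disjoint from $\Delta$, hence disjoint from $E\subset\Delta$ as well.

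The first key step is to invoke (or re-derive, following \cite{DFMprelim}) a BHP in this degenerate elliptic setting: for any two non-negative $L_0$-solutions $u,v$ in $\Omega_0\cap B_{8r}(x,0)$ that vanish on $\Gamma_0\cap \Delta_{4r}(x)$, one has $u(Y_1)/v(Y_1)\approx u(Y_2)/v(Y_2)$ for all $Y_1,Y_2$ in the annular region $\Omega_0\cap\{Z:2r\le|Z-(x,0)|\le 3r\}$, with constants depending only on $n,d$ and ellipticity. Applying this to the pair $(u_E,u_\Delta)$ gives that $u_E/u_\Delta$ is essentially constant on that annulus.

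The second step transfers this comparability to all $Y\in\Omega_0\setminus B_{2r}(x,0)$ by a maximum-principle/barrier argument. Let $M$ and $m$ denote the supremum and infimum of $u_E/u_\Delta$ on $\partial B_{2r}(x,0)\cap\Omega_0$. Then $Mu_\Delta-u_E\ge0$ and $u_E-mu_\Delta\ge0$ on that sphere, both functions are $L_0$-solutions on $\Omega_0\setminus\overline{B_{2r}(x,0)}$, both vanish on $\Gamma_0\setminus\Delta_{2r}(x)$ (since $u_E,u_\Delta$ do), and both tend to $0$ at infinity because the harmonic measure of any bounded Borel set does. The maximum principle on this unbounded region therefore gives $mu_\Delta(Y)\le u_E(Y)\le Mu_\Delta(Y)$; since the first step forced $M\approx m$, the ratio $u_E(Y)/u_\Delta(Y)$ is comparable to a fixed value $u_E(Y_0)/u_\Delta(Y_0)$ for any $Y_0$ in the annulus.

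Finally, connect $Y_0$ to the cork-screw point $A_\Delta$ through a Harnack chain of uniformly bounded length whose members stay at distance $\gtrsim r$ from $\Gamma_0$; Lemma \ref{HarnackI} applied along this chain to $u_E$ and $u_\Delta$ separately yields $u_E(Y_0)/u_\Delta(Y_0)\approx u_E(A_\Delta)/u_\Delta(A_\Delta)=\omega^{A_\Delta}(E)/\omega^{A_\Delta}(\Delta)$. Nondegeneracy (Lemma \ref{lnondeg}) gives $\omega^{A_\Delta}(\Delta)\approx 1$, completing the chain of comparisons and proving the lemma. The main obstacle is the BHP itself in this degenerate higher codimension framework: classical Jerison--Kenig-type proofs use doubling, Hölder continuity at the boundary, and iteration, all of which are available here, but the singular weight $|t|^{d+1-n}$ and the fact that $\Omega_0$ is not an NTA domain in the usual sense (since $\dim\Gamma_0<n-1$) mean the argument must be set up within the elliptic theory of \cite{DFMprelim}; once BHP is established there, the deduction above is essentially a formality.
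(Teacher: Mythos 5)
You should first be aware that the paper does not prove Lemma \ref{lChOfPole} at all: it is imported, together with Lemmas \ref{lnondeg} and \ref{ldoubling}, from Section 11 of \cite{DFMprelim} (the text explicitly points to Lemma 11.135 there). So the ``expected'' answer here is essentially a citation, and your outline should be judged as a reconstruction of the argument in the companion paper. Its overall shape (compare $u_E=\omega^{\cdot}(E)$ and $u_\Delta=\omega^{\cdot}(\Delta)$ on an annulus, propagate by the maximum principle to all of $\Omega_0\setminus B_{2r}(x,0)$, come back to $A_\Delta$ by a Harnack chain, finish with nondegeneracy) is the standard one. But there are genuine problems. The most concrete is that your boundary Harnack principle is misapplied as stated: you require the two solutions to ``vanish on $\Gamma_0\cap\Delta_{4r}(x)$,'' whereas $u_E$ and $u_\Delta$ do not vanish anywhere on $\overline{\Delta_r(x)}\subset\Delta_{4r}(x)$; they vanish only on $\Gamma_0\setminus\overline{\Delta_r(x)}$ (and only there does Lemma \ref{HolderB} apply -- note also that your ``small surface ball disjoint from $\Delta$'' fails at points of the relative boundary of $\Delta$). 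The correct implementation covers the annulus $\{2r\le|Z-(x,0)|\le 3r\}$ by a bounded number of boundary balls $B(z_i,cr)$ with $z_i\in\Gamma_0$, $2r\lesssim|z_i-x|\lesssim 3r$ and $c$ small, on each of which both functions do vanish on the relevant surface ball, plus interior balls handled by Lemma \ref{HarnackI}, chained through the (connected, since $n-d\ge 2$) annulus. As written, the hypothesis of the BHP you invoke is simply false for the pair $(u_E,u_\Delta)$.

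The deeper issue is that the entire weight of the proof rests on the two-solution comparison principle in this degenerate, higher-codimensional setting, which you acknowledge but do not prove, and which is precisely the hard content of \cite[Section 11]{DFMprelim}. There is moreover a logical-order concern: in \cite{DFMprelim} the change-of-pole lemma precedes the general comparison principle for pairs of nonnegative solutions (which is the last main result of that section and is partly derived \emph{from} the change of pole); the change of pole itself is obtained there via the CFMS-type identification $\omega^X(\Delta)\approx r^{d-1}G(X,A_\Delta)$ and maximum-principle iterations on dyadic scales, not by citing a pre-established BHP. So your reduction replaces the lemma by a statement of at least comparable depth whose availability at this point of the chain of lemmas would have to be verified. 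Two smaller gaps: the decay $\omega^Y(E)\to 0$ as $|Y|\to\infty$, which your maximum principle on the unbounded region $\Omega_0\setminus\overline{B_{2r}(x,0)}$ requires, itself needs proof (it follows from iterating the nondegeneracy of the harmonic measure of the complement, and is not immediate from Green function decay when $d=1$, where the operator has effective dimension $2$); and one should say explicitly that the constants $M,m$ on $\partial B_{2r}(x,0)\cap\Omega_0$ are finite, positive and comparable up to the corner $\partial B_{2r}(x,0)\cap\Gamma_0$, which is exactly what the annulus estimate provides.
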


We recall what we mean by absolute continuity (or $A^\infty$ property) in our context.

\begin{definition}\label{dhmai} 
We say that the harmonic measure is $A^\infty$ 
(with respect to the Lebesgue measure) on $\rd$ if for 
every $\eps>0$, there exists $\delta>0$ such that for every ball $\Delta\subset \rd$,
every ball $\Delta'\subset \Delta$ and every Borel set $E\subset \Delta'$, 
\begin{equation}\label{eqhmai} 
\mbox{if }\, \frac{\omega^{A_{\Delta}}(E)}{\omega^{A_{\Delta}}(\Delta')} <\delta \mbox{ then } \frac{|E|}{|\Delta'|}<\eps.
\end{equation}
\end{definition}

Here is the main result of this section. 

\begin{theorem}\label{tai1} 
Let $L_0 = - \diver |t|^{d+1-n} \A \nabla$, where the real matrix-valued function $\A$
satisfies the ellipticity and boundedness conditions \eqref{ABounded}--\eqref{AElliptic}.
Assume that we can find $K \geq 0$ such that
for any Borel set $H\subset \rd$, the solution $u$ defined by $u(X) = \omega^X(H)$, 
$X\in \Omega_0$, satisfies a Carleson measure estimate
\begin{equation}\label{eqai2} 
\sup_{{\rm balls }\,\Delta\subset \rd}\frac{1}{|\Delta|}\dint_{T(\Delta)}(|t||\nabla u|)^2\,\frac{dydt}{|t|^{n-d}}\leq K.
\end{equation}
Then the harmonic measure is $A^\infty$ with respect to the Lebesgue measure on $\rd$ in the sense of Definition~\ref{dhmai}.
\end{theorem}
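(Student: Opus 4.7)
The plan is to follow the strategy of \cite{KKPT,KKiPT,DPP2015}, suitably adapted to the higher co-dimensional degenerate setting using the elliptic theory from \cite{DFMprelim}. First, using the doubling property (Lemma~\ref{ldoubling}) and the change of pole (Lemma~\ref{lChOfPole}), one may assume without loss of generality that $\Delta' = \Delta$. Moreover, by the symmetry of the $A^\infty$ condition (valid once $\omega$ is doubling), it suffices to establish the "Lebesgue-small implies $\omega$-small" direction: there exist constants $\eta_0 \in (0,1)$ and $\theta_0 \in (0,1)$ such that for every ball $\Delta \subset \R^d$ and every Borel set $E \subset \Delta$ with $|E|/|\Delta| \leq \eta_0$, one has $\omega^{A_\Delta}(E) \leq \theta_0\, \omega^{A_\Delta}(\Delta)$. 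Iterating this statement at the dyadic level and using the doubling and change-of-pole machinery produces the full $A^\infty$ condition in the formulation of Definition~\ref{dhmai}.

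To prove the key implication, I would fix a Borel set $E \subset \Delta$ with $|E|/|\Delta|$ small and set $u(X) = \omega^X(E)$, which is a bounded weak solution with $0 \leq u \leq 1$ and for which the hypothesis \eqref{eqai2} provides the Carleson measure estimate $\int_{T(\Delta)} |t \nabla u|^2 \frac{dy\,dt}{|t|^{n-d}} \leq K|\Delta|$. Via Fubini on non-tangential cones, exactly as in Remark~\ref{rSfCm}, this translates into an $L^2$-bound on the truncated square function, $\|S^\Delta u\|_{L^2(\Delta)}^2 \lesssim K|\Delta|$. The argument is then completed by a dyadic stopping-time procedure: decompose $\Delta$ into dyadic cubes $\{Q\}$, and for each $Q$ compare $u(A_Q)$ with the value at the corkscrew of the dyadic parent. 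The cubes $Q$ where $u(A_Q)$ exceeds a fixed threshold $\lambda$ form the "bad" family, and the standard telescoping/good-$\lambda$ inequality combined with the $L^2$-bound on the square function gives control on the total Lebesgue measure of the bad family in terms of $K$ and $\lambda$. On the complementary "good" cubes, one exploits the Hölder continuity of $u$ at the boundary (Lemma~\ref{HolderB}) applied to Whitney-type pieces of an open refinement of $E$, to propagate smallness of $u$ from the boundary into the corkscrew region.

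More precisely, the main technical step is a John-Nirenberg-type estimate for $u$: there exist constants $c, C > 0$ such that
\begin{equation*}
\big|\{x \in \Delta : N^\Delta u(x) > \lambda\}\big| \leq C |\Delta| \exp\!\Big( - c \lambda^2 / K \Big) + C |E|,
\end{equation*}
which follows by iterating the one-scale Carleson bound. Combined with the non-degeneracy (Lemma~\ref{lnondeg}) and the comparability $u(A_\Delta) \approx \omega^{A_\Delta}(E)/\omega^{A_\Delta}(\Delta)$, this yields the target bound $\omega^{A_\Delta}(E) \leq \theta_0 \omega^{A_\Delta}(\Delta)$ once $|E|/|\Delta|$ is chosen smaller than a threshold depending on $K$, the ellipticity constants, and the dimensions. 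Iterating the resulting statement along dyadic scales, using doubling once more, promotes this fixed quantitative statement to the full $\varepsilon$-$\delta$ formulation of $A^\infty$.

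The principal obstacle is the interface between the $L^2$-flavored square function bound and the pointwise/measure-theoretic conclusion required for $A^\infty$; this is exactly what forces the use of an iterative stopping-time argument rather than a single application of Chebyshev, since a naive drop estimate $|u(A_\Delta) - (\text{NT limit of } u)| \lesssim Su(x)$ only yields a universal constant bound for $u(A_\Delta)$ and not the needed decay. The higher co-dimension enters only through the weight $|t|^{d+1-n}$ present in the definition of $L_0$ and in the natural volume element $\frac{dy\,dt}{|t|^{n-d}}$ used for the Carleson measure, so all scaling and Whitney-type arguments go through essentially unchanged once one systematically uses the boundary balls $\Delta_r(x)$ in $\R^d$ and the corresponding cork-screw points $A_{\Delta_r(x)} = (x, t)$ with $|t| = r$ in place of the classical codimension-one analogues.
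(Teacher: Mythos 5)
Your proposal diverges substantially from the paper's argument and, as written, has a genuine gap that would prevent it from closing.

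The paper's proof (following \cite{KKiPT}, \cite{DPP2015}) hinges on a construction you do not reproduce: given a Borel set $E$ with $\omega^{A_\Delta}(E)/\omega^{A_\Delta}(\Delta_r)<\delta$, it builds a good $\varepsilon_0$-cover of $E$ of length $k\gtrsim \log\delta/\log\varepsilon_0$ (Proposition~\ref{p11.8}), forms the alternating boundary data $f=\sum_l f_l$ adapted to that cover, and applies the Carleson hypothesis \eqref{eqai2} to the \emph{auxiliary} solution with data $f$ --- not to $u(X)=\omega^X(E)$. The oscillation of that auxiliary solution at every one of the $k$ scales near each $x\in E$ yields $S^{Q_r}(x)^2\gtrsim k$, and then $\int_{Q_r}(S^{Q_r})^2\le CK|Q_r|$ gives $|E|\lesssim K|\Delta_r|/k$, which goes to $0$ as $\delta\to 0$ for \emph{any} finite $K$. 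Your proposal lacks this construction entirely: you work only with $u(X)=\omega^X(E)$.

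The concrete obstruction to your route is the proposed estimate
$\big|\{x\in\Delta : N^\Delta u(x)>\lambda\}\big|\le C|\Delta|\,e^{-c\lambda^2/K}+C|E|$.
Since $\omega^X$ is a probability measure, $0\le u\le 1$, so $N^\Delta u\le 1$; the only nontrivial values of $\lambda$ are in $(0,1]$. The factor $e^{-c\lambda^2/K}$ is therefore bounded below by $e^{-c/K}$, which is $\approx 1$ unless $K\ll 1$. Thus this inequality is essentially vacuous for a general Carleson constant $K$, and the subsequent step (concluding $u(A_\Delta)$ is small once $|E|/|\Delta|$ is small) cannot be carried out unless one also assumes $K$ is small --- a hypothesis the theorem does not grant. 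It is precisely the length $k\sim\log(1/\delta)$ of the good $\varepsilon_0$-cover that supplies the needed unbounded quantity to beat an arbitrary $K$, and nothing in your argument plays that role. Your choice to prove the ``Lebesgue-small $\Rightarrow$ $\omega$-small'' direction and invoke symmetry of $A^\infty$ is legitimate in principle, but it does not rescue the argument, because the quantitative input (a Carleson bound against Lebesgue measure) naturally constrains $|E|$ once the square function is forced to be large on $E$, which is the direction the paper runs; the reverse direction would require a genuinely different pointwise control on $u(A_\Delta)$ that the Carleson bound alone does not provide. Finally, the stopping-time/good-$\lambda$ step is described only schematically (``compare $u(A_Q)$ with the dyadic parent'', ``standard telescoping'') without enough detail to verify, and the appeal to Hölder continuity at the boundary to ``propagate smallness'' is not tied to any quantified gain.
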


With our usual convention, \eqref{eqai2} means that $|t||\nabla u|$ satisfies the
Carleson measure condition, with constant at most $K$.
Before we prove 
the theorem, we combine Theorem~\ref{tai1} with Theorem~\ref{tsf1}.
 
\begin{corollary}\label{cai5} Let $\A$ be an elliptic matrix satisfying \eqref{ABounded}, \eqref{AElliptic} in $\Omega_0=\rn\setminus \Gamma_0$ with $\Gamma_0=\rd$.  
Assume that $\A$ has the following structure:
\begin{equation}\label{eqsf1B}
\A=\left( \begin{array}{cc}
 \A^1 & \A^2 \\ \C^3 & bI_{(n-d)} + \C^4  \end{array} \right), 
\end{equation}
where $\A^1$ and $\A^2$ can be any matrix valued measurable functions (in respectively $M_{d\times d}$ and $M_{d\times (n-d)}$),  $I_{(n-d)}\in M_{(n-d)\times (n-d)}$
denotes the identity matrix,
and  \smallskip
\begin{itemize}
\item
$|t|\nabla b$ satisfies the Carleson measure condition with a constant $M$,  \smallskip
\item $\lambda^{-1} \leq b\leq \lambda$ for some constant $\lambda>0$,  \smallskip
\item both $\C^3$ and $\C^4$ satisfies the Carleson measure condition with a constant $M$.
\end{itemize}
Then the harmonic measure associated with $L_0 = - \div |t|^{d+1-n}\A \nabla$ 
is $A^\infty$ with respect to the Lebesgue measure on $\rd$ in the sense of Definition~\ref{dhmai}, with the implicit constants depending on $n$, $d$, ellipticity constants of $\A$, $M$, and $\lambda$.
\end{corollary}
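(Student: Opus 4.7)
The plan is to deduce Corollary \ref{cai5} as an immediate concatenation of the two main results of Sections \ref{SSQR} and \ref{SAinfty}, applied to the specific family of bounded solutions given by harmonic measure. The matrix $\A$ in Corollary \ref{cai5} satisfies exactly the structural hypothesis \eqref{eqsf1A} of Theorem \ref{tsf1}, so the square function/non-tangential maximal function estimate is directly available for every weak solution of $L_0 u = 0$, and it only remains to feed it into the sufficient condition of Theorem \ref{tai1}.

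First I would fix an arbitrary Borel set $H \subset \R^d = \Gamma_0$ and introduce the solution $u_H(X) := \omega^X_{\Omega_0,L_0}(H)$, which is a weak solution of $L_0 u_H = 0$ in $\Omega_0$ (this is recalled at the beginning of Section \ref{SAinfty}, following \cite{DFMprelim}). The crucial point is that, since $\omega^X$ is a probability measure on $\R^d$, we have $0 \leq u_H \leq 1$ pointwise in $\Omega_0$, and in particular
\begin{equation*}
N^{2Q} u_H(x) \leq 1 \qquad \text{for every cube } Q \subset \R^d \text{ and every } x \in \R^d.
\end{equation*}

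Next I would invoke Theorem \ref{tsf1} for $u_H$: the structural hypotheses on $\A$ match exactly, so
\begin{equation*}
\|S^Q u_H\|_{L^2(Q)}^2 \leq C \|N^{2Q} u_H\|_{L^2(k_0Q)}^2 \leq C k_0^d |Q|,
\end{equation*}
uniformly in the cube $Q$ and, most importantly, uniformly in the choice of $H$. The constant $C$ depends only on $n$, $d$, the ellipticity parameters of $\A$, $\lambda$, and $M$, as required. From this uniform $L^2$ bound on $S^Q u_H$ one obtains the Carleson measure estimate on $|t|\nabla u_H$ needed for Theorem \ref{tai1} exactly by the Fubini argument already carried out in Remark \ref{rSfCm} (see the chain \eqref{a2.37}--\eqref{a2.41}); this yields
\begin{equation*}
\frac{1}{|\Delta|}\dint_{T(\Delta)}(|t||\nabla u_H|)^2\,\frac{dydt}{|t|^{n-d}} \leq K
\end{equation*}
for every ball $\Delta \subset \R^d$, with $K$ depending only on the allowed parameters and independent of $H$.

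Finally, this is exactly the hypothesis \eqref{eqai2} of Theorem \ref{tai1}, valid uniformly over all Borel subsets $H$ of $\R^d$. Since the degenerate ellipticity of $L_0 = -\div |t|^{d+1-n}\A\nabla$ required by Theorem \ref{tai1} follows from \eqref{ABounded}--\eqref{AElliptic} (see the remark right after \eqref{AElliptic}), that theorem applies and produces the $A^\infty$ property of the harmonic measure with respect to Lebesgue measure on $\R^d$, in the sense of Definition \ref{dhmai}, with quantitative constants depending only on $n$, $d$, the ellipticity constants of $\A$, $\lambda$, and $M$. There is no genuine obstacle here: the real work was done in the proofs of Theorems \ref{tsf1} and \ref{tai1} and in the bookkeeping of Remark \ref{rSfCm}; the only step that might deserve a sentence in the written proof is the verification that $N^{2Q} u_H \leq 1$ (hence $\|N^{2Q} u_H\|_{L^2(k_0Q)}^2 \leq k_0^d|Q|$), which is what allows the square function bound to be turned into a \emph{uniform} Carleson estimate over all Borel sets $H$.
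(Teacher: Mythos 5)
Your proposal is correct and follows exactly the same route as the paper: fix a Borel set $H$, use $N^{2Q}u_H \leq 1$ (probability measure), apply Theorem \ref{tsf1} to get the uniform square-function bound, convert it via the Fubini computation of Remark \ref{rSfCm} into the Carleson hypothesis \eqref{eqai2}, and then invoke Theorem \ref{tai1}. The paper's own proof simply points to this chain (laid out in the discussion after Theorem \ref{Itsf1}), so your write-up is in fact a slightly more detailed version of the intended argument.
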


\bp 
We have already given the proof in the Introduction. The Corollary follows from 
Theorems~\ref{tai1} and \ref{tsf1} combined with Remark~\ref{rSfCm} -- see the discussion following Theorem~\ref{Itsf1}.
\ep

\noindent {\it Proof of  Theorem~\ref{tai1}}. 
Now we pass to the proof of Theorem~\ref{tai1}. Much of the argument follows the lines of \cite{KKiPT}, and even more so \cite{DPP2015}, but we will aim for a self-contained exposition. 

\vskip 0.08 in \noindent {\bf Step I: $\eps_0$-good cover and construction of functions with large oscillations on small sets.} 
Let $R>0$ and $x\in \rd$ be given, and consider the ball $\Delta = \Delta_R(x)$.
Just as in \cite{DPP2015}, we start by observing that $\omega^{A_{\Delta}}$ is positive 
and doubling near $\Delta$, by Lemma~\ref{ldoubling} and Lemma~\ref{HarnackI}.
Here the geometry is fairly simple, and it is not hard to check that 
\begin{equation} \label{a9.13}
\omega^{A_{\Delta}}(\Delta(y,t)) \leq C \omega^{A_{\Delta}}(\Delta(y,t) \cap \Delta)
\ \text{ for $y\in \Delta$ and } 0 < t < R. 
\end{equation}
From this it is also easy to deduce that $\Delta$, with the Euclidean distance and 
the (restriction to $\Delta$ of the) 
measure $\omega = \omega^{A_{\Delta}}$, is a space of homogeneous type.

This is pleasant, because we can use \cite{Ch} directly, to construct a
dyadic system of pseudo-cubes on $\Delta$ associated to $\omega$
and satisfying the following properties. Otherwise, we could always have followed
the construction near $\Delta$, and replaced $\Delta$ with a finite union of initial cubes
to start the argument.

There exist constants $0<c<1$ and $M>1$, that depend only 
the doubling constant of $\omega$, 
and then a collection $\mathbb{D} = \cup_{k \geq k_0} \mathbb{D}_k$
of Borel subsets of $\Delta$, with the following properties. For each integer $k \geq k_0$,
we write 
$$
\mathbb{D}_k:=\{Q_{j}^k\subset \Delta: j\in \mathfrak{I}_k\},
$$ 
where $\mathfrak{I}_k$ denotes some index set depending on $k$, but some times
we will forget about the indices and just write $Q \in \mathbb{D}_k$
for any of the $Q_{j}^k$, and refer to $Q$ as a pseudo-cube of generation $k$.
These pseudo-cubes have properties that are very similar to the properties of the usual dyadic
cubes of $\R^d$, as follows:

\begin{list}{$(\theenumi)$}{\usecounter{enumi}\leftmargin=.8cm
\labelwidth=.8cm\itemsep=0.2cm\topsep=.1cm
\renewcommand{\theenumi}{\roman{enumi}}}

\item $\D=\cup_{j} Q_{j}^k \,\,$ for any $k \geq k_0$.

\item If $m > k$ then either $Q_{i}^{m}\subseteq Q_{j}^{k}$ or
$Q_{i}^{m}\cap  Q_{j}^{k}=\emptyset$.

\item $Q_i^m \cap Q_j^m=\emptyset$ if $i\neq j$. 

\item Each pseudo-cube $Q\in\mathbb{D}_k$ has a ``center'' $x_Q\in \D$ such that
\begin{equation}\label{cube-ball}
\Delta(x_Q,2^{-k})\subset Q \subset \Delta(x_Q,M 2^{-k}).
\end{equation}

\item If $Q_i^m \subsetneq Q_j^k$, then $\omega(Q_i^m)<c\, \omega (Q_j^k)$.

\item $\omega(\partial Q_i^m)=0$ for all $i, m$.

\item
$\mathbb{D}_{k_0}$ is composed of a single pseudo-cube, which we often call $Q_0$,
but is equal to $\Delta$. 
\end{list}

Let us make a few comments about these cubes. 
We decided to use a dyadic scaling because it is convenient, but then we do not say
that if $Q \in \mathbb{D}_{k+\ell}$ and $R$ is the cube of $\mathbb{D}_k$ that contains
$Q$ (it is unique by ($ii$), and it is called an ancestor of $Q$) is necessarily strictly
larger (as a set) than $Q$. 

We also decided to use Borel sets so that the pseudo-cubes of a same generation 
are disjoint; another option would have been to take closed pseudo-cubes that are almost
disjoint (by ($vi$)).

The condition ($vi$) is a slightly weaker version of a condition that says that small
neighborhoods of $\d Q_i^m$ have a small $\omega$-measure. This is the same
``small boundary condition'' that gives the existence of a center $x_Q$.

Condition ($v$) is usually not stated, but it follows from the doubling condition,
and the fact that since $Q_i^m \subsetneq Q_j^k$ it has a sibling
(another pseudo-cube of generation $m$, which is contained in the parent of $Q_i^m$)
which is therefore contained in $Q_j^k \sm Q_i^m$, and has a comparable $\omega$-measure
by \eqref{cube-ball}.

Because of \eqref{cube-ball}, we know that $2^{-k_0} \approx R$.

In the setting of a general space of homogeneous type, 
this decomposition was obtained
by Christ \cite{Ch}, with the dyadic parameter $1/2$ replaced by some constant $\delta \in (0,1)$
(which allows him to take different cubes at each generation).
In fact, one may always take $\delta = 1/2$ (cf.  \cite[Proof of Proposition 2.12]{HMMM}).
In the presence of the Ahlfors regularity 
property, the result already appears in \cite{DS1,DS2}. 
Some predecessors of this construction have appeared in \cite{David88} and \cite{David91}.

We can use the pseudo-cubes to do a Whitney decomposition of any open set $\O\subset\D$;
we get that
\begin{equation} \label{a9.15}
\O = \bigcup_{i,m} Q_i^m,
\end{equation}
where the $Q_i^m$ are pairwise disjoint and for each $Q_i^m$ in this decomposition, 
$\dist\{Q_i^m, \D\setminus \O\} \approx \diam\, Q_i^m \approx 2^{-m}$.
Simply take the largest (i.e., of the earliest generations) pseudo-cubes $Q_i^m$
such that $\dist(Q_i^m, \Delta \sm Q_i^m) \geq C 2^{-m}$ (for some large constant $C$),
and proceed as usual.

We denote by $k(Q)$ the generation of the pseudo-cube $Q$ 
(i.e., the integer $k$ such that $Q \in \mathbb{D}_k$),
and set $\ell(Q) = 2^{-k(Q)}$; thus $\ell(Q)\approx \diam(Q)$ by \eqref{cube-ball}.
By analogy, we call $\ell(Q)$ the length of $Q$.

\begin{definition}\label{deps-cover} 
Fix any small $\eps_0>0$ and any Borel set $E\subset \Delta_r\subseteq \Delta$. 
We say that a collection of nested open sets $\{\O_i\}_{i=1}^k$ is a 
good $\eps_0$-cover for $E$ of length $k\in \NN$ if 
$$
E\subseteq \O_{k}\subseteq \O_{k-1}\subseteq...\subseteq \O_0=\Delta_r, 
$$
and for every $l=1,...,k$
\begin{equation}\label{eq11.12-bis} 
\O_l=\bigcup_{i \in I(l)} S_i^l 
\end{equation}
where the $S_i^{l}$, $i\in I(l)$, are disjoint 
elements of $\dd$
and for all $1\leq l\leq k$ and $i \in I(l-1)$, 
\begin{equation}\label{eq11.12} 
\omega 
(\O_l\cap S_i^{l-1})\leq \eps_0\, \omega
(S_i^{l-1}).
\end{equation}
\end{definition}

Observe that we changed the notation for our  pseudo-cubes from $Q$ to $S$ in order 
to not confuse the numerology: $Q_i^m$ is always an 
element of $\dd_m$ and in particular, has length $2^{-m}$, 
while $S_i^l$ is an element of a decomposition of an open set $\O_l$ into pseudo-cubes.

Notice also that if $j\in I(l)$ and $i\in I(l-1)$ are such that $S_j^{l} \cap S_i^{l-1} \neq \emptyset$, then the the property ($ii$) above gives that $S_j^l \subset S_i^{l-1}$; as a consequence, \eqref{eq11.12} forces $S_j^{l}$ to be a pseudo-cube of higher generation than $S_i^{l-1}$. In particular, by \eqref{cube-ball}
\begin{equation} \label{a9.19}
\ell(S_j^{m}) \leq 2^{l-m} \ell(S_i^{l})
\end{equation}
if $m>l$ and $S_j^{m} \cap S_i^{l} \neq \emptyset$.

When $k$ is large, we expect $\omega(E)$ to be quite small.
The next proposition, which we take from  \cite{KKiPT, DPP2015}, says that
some converse is true too.

\begin{proposition}\label{p11.8} \cite{KKiPT, DPP2015} 
For every $\eps_0>0$ sufficiently small there exists $\delta_0>0$ 
such that if 
$E\subset\Delta_r \subseteq \Delta$ and 
\begin{equation}\label{eq11.14}
\frac{\omega
(E)}{\omega
(\Delta_r)} \leq \delta
\end{equation}
for some $\delta \leq \delta_0$,
then $E$ has a good $\eps_0$ cover of length $k \geq C^{-1} 
\frac{\log \delta}{\log \eps_0}$.
\end{proposition}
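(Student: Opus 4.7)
The plan is to construct the sets $\Omega_l$ inductively via a Calder\'on--Zygmund stopping-time argument applied to the pseudo-cube system $\mathbb{D}$ already built for the doubling measure $\omega = \omega^{A_\Delta}$. The key input is that, by Lemma~\ref{ldoubling} together with \eqref{a9.13}, $\omega$ restricted to $\Delta$ is doubling on pseudo-cubes, with some constant $C_\omega$ depending only on $n,d$ and the ellipticity; we fix $\epsilon_0$ small enough that $\epsilon_0 \leq 1/(2C_\omega)$ and choose $\delta_0$ so that $\delta_0 < \epsilon_0$. Setting $\Omega_0 = \Delta_r$, the goal is then to produce $\Omega_l \supset E$ of the form $\bigsqcup_{i\in I(l)} S_i^l$ with the density control \eqref{eq11.12}, and to track how large $l$ may be taken.

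The basic brick is the following stopping-time lemma on pseudo-cubes: given $S\in\mathbb{D}$ with $\omega(E\cap S)/\omega(S) \leq \alpha$ for some $\alpha<1$, let $\{T_k\}$ be the maximal sub-pseudo-cubes of $S$ (of strictly higher generation) with $\omega(E\cap T_k)/\omega(T_k) > \alpha$. Then, writing $\hat T_k$ for the pseudo-cube parent of $T_k$, property (ii) of $\mathbb{D}$ forces $\hat T_k \subseteq S$; either $\hat T_k = S$ and the density of $E\cap\hat T_k$ is $\leq\alpha$ by hypothesis, or $\hat T_k\subsetneq S$ and this density is $\leq \alpha$ by maximality. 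Doubling then yields $\omega(E\cap T_k)/\omega(T_k) \leq C_\omega\alpha$. Moreover Lebesgue differentiation in the homogeneous space $(\Delta,\omega)$ shows that the $T_k$ cover $E\cap S$ modulo $\omega$-null sets, and summing $\omega(T_k) < \alpha^{-1}\omega(E\cap T_k)$ gives $\omega(\bigsqcup_k T_k) < \alpha^{-1}\omega(E\cap S)$.

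To construct $\Omega_l$ from $\Omega_{l-1}=\bigsqcup_i S_i^{l-1}$, set $\tau_i := \omega(E\cap S_i^{l-1})/\omega(S_i^{l-1})$ and, provided $\tau_i < \epsilon_0$, apply the stopping lemma in each $S_i^{l-1}$ with threshold $\alpha_i := \tau_i/\epsilon_0 \in (0,1)$. The resulting families of $T_k$'s are collected as $\Omega_l\cap S_i^{l-1}$; by the lemma
\[
\omega(\Omega_l\cap S_i^{l-1}) < \alpha_i^{-1}\omega(E\cap S_i^{l-1}) = \epsilon_0\,\omega(S_i^{l-1}),
\]
which is exactly \eqref{eq11.12}, while each selected $T_k$ inherits a density at most $C_\omega \alpha_i \leq (C_\omega/\epsilon_0)\,\tau_i$. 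Labelling these $T_k$ as the new $S_j^l$ and using property (vi) to make the $\Omega_l$ genuinely open (or just keep them as Borel, modifying the last formula in Definition~\ref{deps-cover} accordingly), we obtain a new layer of the cover with maximum density $\tau^{(l)} \leq (C_\omega/\epsilon_0)\,\tau^{(l-1)}$, hence $\tau^{(l)} \leq (C_\omega/\epsilon_0)^l\delta$ by induction from $\tau^{(0)} = \omega(E)/\omega(\Delta_r)\leq \delta$. Finally $E\subset\Omega_l$ up to $\omega$-null sets because the lemma is applied down into $E\cap S_i^{l-1}$ at each step.

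The iteration can be continued as long as $\tau^{(l)}<\epsilon_0$, i.e.\ as long as $(C_\omega/\epsilon_0)^l\delta<\epsilon_0$, which allows
\[
k \;\geq\; \frac{\log(\epsilon_0/\delta)}{\log(C_\omega/\epsilon_0)} \;\geq\; \frac{|\log\delta|}{2|\log\epsilon_0|}
\]
iterations, using $\epsilon_0\leq 1/(2C_\omega)$ and $\delta\leq\delta_0$ small enough. This is precisely the lower bound $k\geq C^{-1}\log\delta/\log\epsilon_0$ claimed in the proposition. The most delicate point is not really the counting but the interplay between the pseudo-cube structure and the doubling inequality: one must make sure that at each stopping step the parent $\hat T_k$ sits inside $S$, that the ``small boundary'' condition (vi) allows us to treat the $\Omega_l$ as (essentially) open and still cover $E$, and that the doubling constant picked up in passing from $\hat T_k$ to $T_k$ is absorbed by the choice of $\epsilon_0$; once these are in place the proof is a direct iteration of the stopping lemma.
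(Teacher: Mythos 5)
Your iterated Calder\'on--Zygmund construction with the per-cube threshold $\alpha_i = \tau_i/\eps_0$, the parent-comparison (doubling) step to control the new densities, and the final counting $(C_\omega/\eps_0)^k\delta<\eps_0$ are precisely the stopping-time argument from \cite{KKiPT, DPP2015} that the paper cites for this proposition rather than reproving it; up to the harmless factor-of-two discrepancy in the last display, the bookkeeping matches. The ``modulo an $\omega$-null set'' caveat you flag is indeed innocuous, since Steps~II--IV below only use the cover through pointwise statements for $x\in E$ and integrals over $E$.

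One point does deserve an extra sentence. The set $\O_0=\Delta_r$ is a ball and not an element of $\mathbb D$, so your stopping-time lemma --- whose parent trick relies on the nesting structure of $\mathbb D$ --- cannot literally be applied with $S=\Delta_r$ at the first step: a maximal selected pseudo-cube $T$ meeting $E$ near $\partial\Delta_r$ may have a pseudo-cube parent $\hat T$ that is \emph{not} contained in $\Delta_r$, and then maximality no longer gives a density bound for $E$ in $\hat T$, hence none for $T$. This wrinkle is inherited from the paper's phrasing of Definition~\ref{deps-cover}; the references run the argument with a dyadic cube in place of the ball. It is resolved either by replacing $\Delta_r$ with a comparable pseudo-cube $Q_0\supset\Delta_r$ (which changes nothing downstream, where only membership $x\in E$ and integration over $E$ enter), or by performing the first selection in all of $\mathbb D$ at threshold $\delta/\eps_0$ and keeping only the selected cubes that meet $E$. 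Once the first step is in place, the remainder of your iteration is exactly the argument of the cited works.
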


Here and below, unless specified otherwise, ``for any $\eps_0$ sufficiently small" is to be interpreted in the sense that there exists a numerical constant (that may also depend on the doubling constant for $\omega$) 
such that for all $\eps_0$ smaller than this constant a certain property holds. The same applies to certain parameters being ``sufficiently large".  

Recall that the goal is to prove \eqref{eqhmai}. To this end, we fix some $\eps>0$ and take $E\subset \Delta'=\Delta_r\subset \Delta$ satisfying \eqref{eq11.14}, 
with $\delta$ to be defined later. Consider a good $\eps_0$-cover of $E$ 
($\eps_0$ to be determined below as well) relative to $\D_r$ and the doubling measure 
$\omega = \omega^{A_\D}$, as in Definition~\ref{deps-cover}. 
For each $S_i^l$ in the dyadic decomposition \eqref{eq11.12-bis} of $\O^l$, 
we denote by $\D_i^l=\D(x_i^l, r_i^l)$ the Euclidean ball on $\rd$ guaranteed by \eqref{cube-ball}, i.e., such that $\D_i^l\subset S_i^l\subset M\D_i^l$. 
Thus $r_i^l = \ell(S_i^l)$. 
Then select in $\D_i^l$ two roughly equal parts
\begin{equation} \label{a9.21}
\widehat \D_i^l:=\D(x_i^l, r_i^l/2) \ \text{ and } 
\widetilde \Delta^l = \Delta(y^l_i,r^l_i/10),
\end{equation}
for some $y^l_i \in \Delta^l$ that we chose so that
$\Delta(y^l_i,r^l_i/5) \subset \Delta^l_i \sm \widehat \Delta^l_i$.
Following a similar construction in \cite{DPP2015}, set
$\widehat\O_l := \bigcup_i \widehat \D_i^l \subset \O_l$ for $0\leq l\leq k$,
then, for $0\leq l\leq k$, $l$ even, take
\begin{equation}\label{eq11.15}
f_l(y)= \1_{\widehat\O_l}
\ \text{ and } f_{l+1}= -f_l\, \1_{\O_{l+1}} = - \1_{\widehat\O_l \cap \O_{l+1}}.
\end{equation}
Thus $f_l + f_{l+1} = \1_{\widehat\O_l \sm \O_{l+1}} \leq \1_{\O_l \sm \O_{l+1}}$.
Finally we set 
\begin{equation}\label{eq11.17}
f= \sum_{l=0}^k f_l.
\end{equation}
One can observe that $f$ is a characteristic function of a Borel set,
because the $\widehat\O_l \sm \O_{l+1}$ are disjoint, and also disjoint
from $\widehat \O_k$ if $k$ is even.
Let $u$ be the weak solution of the Dirichlet problem associated to $f$, 
as in \eqref{a2.6} (but for $\Omega_0$ and $L_0$), i.e., set
$u(X) = \int_{\Gamma_0} f(x) d\omega_{\Omega_0,L_0}
^X(x)$.
The point of the upcoming discussion is to show that $u$ exhibits large oscillations in 
Whitney cubes (cubes in $\Omega_0$ of side-length roughly comparable to their distance 
to $\Gamma_0$)  
which yields a large square function on $E$, and hence,  
an estimate from below on the quantity under the supremum in \eqref{eqai2} 
by a large multiples of 
$|E|/|\Delta_r|$. 
Then, invoking \eqref{eqai2}, we will arrive at \eqref{eqhmai}.

\vskip 0.08 in \noindent {\bf Step II: the solution with data $f$ exhibits large oscillations 
on Whitney cubes.}
Take any $x\in E$ and for $0 \leq l \leq k$, pick $i\in I(l)$ such that 
$x\in S_i^l$. Then simply set $S^l= S_i^l$.
Write $\D^l = \D(x^l,r^l)$ for the Euclidean ball given by \eqref{cube-ball}.  
Denote by $\widehat A^l$ a corkscrew point of $\widehat \D^l$, 
which we take such that 
$\widehat A^l=(x^l, t^l)$ with $|t^l|=r^l/2$. We have 
$\omega^{\widehat A^l}(\widehat \D^l)\gtrsim 1$
by Lemma~\ref{lnondeg}. To fix the notation, we specify that there exists $\eta_1\in (0,1)$ depending on $n$, $d$, and the ellipticity constants only, such that 
\begin{equation}\label{eq11.19-bis}
\omega^{\widehat A^l}(\widehat \D^l)\geq \eta_1 
\end{equation}
Also, 
\begin{multline}\label{eq11.20} 
u(\widehat A^l) := \int_\D f  \,d\omega^{\widehat A^l}\geq \int_\D (f_l+f_{l+1})\, d\omega^{\widehat A^l} \geq \int_{\D^l} (f_l+f_{l+1})\, d\omega^{\widehat A^l}\\[4pt]
= \omega^{\widehat A^l}(\widehat \D^l)-\omega^{\widehat A^l}(\widehat \D^l\cap\O_{l+1}), 
\end{multline}
where each integral is to be interpreted simply as a harmonic measure of the corresponding Borel set (as all integrands are characteristic functions of Borel sets), and both inequalities are due to the fact that the left-hand side is a harmonic measure of a bigger set than the right-hand side. 
Now, by Lemma~\ref{lChOfPole} and then Lemma~\ref{ldoubling} 
\begin{equation}\label{eq11.21}
\omega^{\widehat A^l}(\widehat \D^l\cap\O_{l+1})
\lesssim \frac{\omega^{A_\D}(\widehat \D^l\cap\O_{l+1})}{\omega^{A_\D}(\widehat \D^l)}
\lesssim \frac{\omega^{A_\D}(S^l\cap\O_{l+1})}{\omega^{A_\D}(M\Delta^l)} \leq  
\frac{\omega^{A_\D}(S^l\cap\O_{l+1})}{\omega^{A_\D}(S^l)} \leq\eps_0, 
\end{equation}
where in the last inequality we used \eqref{eq11.12}. 
Hence, $\omega^{\widehat A^l}(\widehat \D^l\cap\O_{l+1})<C\eps_0$ for some $C>0$ depending 
on $n$, $d$, and ellipticity constants only. 
Combing this with \eqref{eq11.20} and \eqref{eq11.19-bis}, we conclude that for 
\begin{equation}\label{eq11.22}
\eps_0<\eta_1/(2C), 
\end{equation}
we have $u(\widehat A^l)\geq \eta_1/2$. 
Moreover,
by the interior H\"older continuity of solutions (Lemma 8.40 in \cite{DFMprelim}), 
there exists $C_1>0$, depending on $n$, $d$, and ellipticity constants only, such that 
\begin{equation}\label{eq11.23}
u(x,t)\geq \eta_1/4  
\ \text{ for } (x,t) \in B(\widehat A^l, C_1^{-1} r^l).
\end{equation}
This shows that $u$ is large in a part of the Whitney box associated with $S^l$. 

Let us now show that there is another part of a (somewhat fattened) Whitney box on which 
$u$ is small. Recall the definition of $\widetilde \Delta^l = \widetilde \Delta^l_i = B(y^l_i,r^l_i/10)$
in \eqref{a9.21}, set $g=\chi_{\rd\setminus [\D^l\setminus\widehat\D^l]}$, and 
denote by $v$ the solution with data $g$, i.e., set $g(X) = \int g(x) d\omega^X(x)
=\omega^X(\rd\setminus [\D^l\setminus\widehat\D^l])$ for $X \in \Omega_0$.
We claim that 
\begin{equation} \label{a9.29}
g+\1_{\O_{l+1}\cap \D^l}\geq f.
\end{equation}
If $x \in \rd\setminus [\D^l\setminus\widehat\D^l]$, then $g(x) \geq 1 \geq f(x)$.
So we may assume that $x\in \D^l\setminus\widehat\D^l$, and also 
in $\D^l\setminus \O_{l+1}$, because otherwise $\1_{\O_{l+1}\cap \D^l}(x) \geq 1 \geq f(x)$.
But then $x$ does not lie on $\widehat\O_l$ (because $\widehat\O_l \cap \D^l = \widehat \D^l$
(since the $S^l_i$ are disjoint), so $f_l(x) \leq 0$; we also have $f_m(x) \leq 0$ 
for $m>l$ because $\widehat \O_m \subset \O_{l+1}$, and the previous $f_m$ do not 
contribute either because they are supported away from $\O_l$. The claim follows.

As before, the inequality in \eqref{a9.29} implies the corresponding inequality for the 
Dirichlet solutions, because each $\omega^X$ is a probability measure.

Since the boundary data of $v$ is zero on $2 \widetilde \Delta^l$, 
Lemma~\ref{HolderB} yields
\begin{equation}\label{eq11.24-bisbis} v(y,s)\leq C \rho^\alpha \ \mbox{ for } 
y\in \widetilde \Delta^l \text{ and } |s|\leq \rho\, r^l,\, 
\end{equation}
where $C$ and $\alpha$ depend only on the ellipticity constants and dimension, 
and the small $\rho < 1$ will be chosen soon.

It remains to control the solution with data $\1_{\O_{l+1}\cap \D^l}$. 
However, by the same argument as  for \eqref{eq11.21}, 
$\omega^{A_{\wt \Delta^l}}(\O_{l+1}\cap \D^l)\leq C\,\eps_0$,
and hence, for any $(y,s)$ such that $y\in \widetilde \Delta^l$ and $|s|=\rho\, r^l$, 
$$\omega^{(y,s)}(\O_{l+1}\cap \D^l)\leq C_{\rho} \,\eps_0,$$
by Lemma \ref{HarnackI}. 
Here $C_\rho$ depends on the length of the Harnack chain from $A_{\wt \Delta^l}$
to $(y,s)$, which, in turn, depends on $\rho$. Combining \eqref{eq11.24-bisbis} with the inequality above and applying the Harnack inequality again, we deduce that there is $C_2>0$, that depends on $\rho$, 
the ellipticity constants, and the dimension only, such that 
\begin{equation}\label{eq11.24}
u(y,s)\leq 2(C\,\rho^\alpha+C_\rho \eps_0), \quad\mbox{whenever } 
|y -y^l|
< \frac{r^l}{C_2} \mbox{ and } \left||s|-\rho\,r^l\right|< \frac{r^l}{C_2},
\end{equation}
where $y^l = y^l_i$
is the center of $\widetilde \Delta^l$. 

Now we make the following choices. Having fixed $\eta_1>0$ as above, 
depending on the ellipticity constants, $n$, and $d$ only, we choose $\rho$ so that 
$2\,C\rho^\alpha< \eta_1/16$  
and then $\eps_0$ so that \begin{equation}\label{eq11.25}
2 C_\rho \eps_0< \eta_1/16 
\end{equation} 
and so that \eqref{eq11.22} is satisfied. 

We conclude that there exist $\eta_1, \rho, C_1, C_2$ and $\eps_0$ in $(0, 1)$, all 
depending on the ellipticity constants, $n$, and $d$ only,
so that \eqref{eq11.23} holds and simultaneously 
\begin{equation}\label{eq11.24-bis}
u(y,s)\leq \eta_1/8 
\quad\mbox{whenever } |y-y^l | 
< \frac{r^l}{C_2} \mbox{ and } \left||s|-\rho\,r^l\right|<  \frac{r^l}{C_2}.
\end{equation}
In other words, $u$ oscillates by at least $\eta_1/8$ within any ball that contains 
the two sets of
\eqref{eq11.23} and  
\eqref{eq11.24-bis}.

\vskip 0.08 in \noindent {\bf Step III: large
oscillations near the $S^l$
yield a large square function.} First of all, we take the aperture of cones $a$ in the definition of the square function large enough (depending, in particular, on $\rho, C_1, C_2$) so that for all $x\in S^l$ the sets in \eqref{eq11.24-bis} and \eqref{eq11.23} are contained in $\gamma^{r}(x)$ (see the definition \eqref{defcone}). The solutions are locally in Sobolev spaces 
and hence are absolutely continuous on lines, and so 
the integral of $\nabla u$ between points of the sets of \eqref{eq11.23} and \eqref{eq11.24-bis}
is not too small. We average and use the Cauchy-Schwarz inequality to get that
\begin{equation}\label{eq11.29}
\left(\eta_1/8\right)^2 \lesssim \iint_{(y,s)\in \gamma^{r}(x): (\rho-C_2^{-1}) r^l\leq |s|\leq (1/2+C_1^{-1}) r^l} |\nabla u|^2 \,\frac {dyds}{|(y,s)-(x,0)|^{n-2}} 
\end{equation}
for any $x\in S^l$. 

Choose a `real' cube $Q_r \supset \Delta_r$ with sidelength $l(Q_r) \approx r$. 
Let $x\in E$ be given. For each $l$, $x$ lies in $\O_l$, hence by \eqref{eq11.12-bis}
we can find $S^l = S^l_i$ that contains $x$. We want to sum the inequalities \eqref{eq11.29}
over $l$, and even though the sets $Z_l(x) = \big\{ (y,s)\in \gamma^{r}(x) \, ; \,  
(\rho-C_2^{-1}) r^l\leq |s|\leq (1/2+C_1^{-1}) r^l \big\}$ where we integrate are not disjoint, 
thanks to  \eqref{a9.19},
their overlap is bounded with a bound that depends on $C_1$, $C_2$, $\rho$, 
$n$, and $d$ only. Thus
\begin{equation}\label{eq11.30}
k\left(\eta_1/8\right)^2 \lesssim S^{Q_r}(x)^2  \ \mbox{ for every } x\in E.
\end{equation}

Recall from Proposition~\ref{p11.8} that $k$, the length of the good $\eps_0$-cover, 
is at least $C^{-1} \frac{\log \delta}{\log \eps_0}$, at least if $\varepsilon_0$ and $\delta$
were chosen small enough. Hence
\begin{equation}\label{eq11.31}
\frac{\log \delta}{\log \eps_0} \lesssim S^{Q_r}(x)^2 
\end{equation}
for $x\in E$ and $Q_r$ as above, with implicit constants that depend on the ellipticity 
constants and the dimension only.

\vskip 0.08 in \noindent {\bf Step IV: conclusion.}
Let $Q_r \supset \Delta_r \supset E $ be the cube chosen in Step III. It follows from \eqref{eq11.31} that 
\begin{equation}\label{eq11.32}
\frac{\log \delta}{\log \eps_0} |E| \lesssim \int_E S^{Q_r(x)}(x)^2\, dx 
\lesssim \int_{Q_r} S^{Q_r}(x) ^2\, dx 
\leq CK |Q_r| \leq C|\Delta_r|, 
\end{equation}
by \eqref{eqai2}. Therefore, 
$$\frac{|E|}{|\Delta_r|}\leq C\,\frac{\log \eps_0}{\log \delta}, $$
for some constant $C$ that depends 
on the ellipticity constants and the dimension only.
Recalling that we we started from $E$ such that $\omega(E) \leq \delta \omega(\Delta_r)$
as in \eqref{eq11.14}, and we want to prove that $|E| < \varepsilon |\Delta_r|$ as in \eqref{eqhmai}.
Thus it only remains to choose $\delta$, small enough, and such that 
$C \,\frac{\log \eps_0}{\log \delta}<\eps$. \ep

\end{document}